\documentclass[10pt,english]{article}

\usepackage{csquotes}
\usepackage[dvipsnames]{xcolor}
\usepackage[backend=biber,style=numeric,sortcites,natbib=true,sorting=nty,hyperref=true,backref=true]{biblatex}
\usepackage{authblk}

\usepackage[a4paper]{geometry}
\usepackage[english]{babel}
\usepackage[toc,page]{appendix}

\usepackage{mathtools}
\usepackage{amssymb,amsfonts}
\usepackage{amsthm}
\usepackage{aliascnt}
\usepackage{dsfont}
\usepackage[all]{xy}
\usepackage[scr=boondox, cal=esstix]{mathalpha}

\usepackage{float}
\usepackage{subfig}
\usepackage{graphicx}
\usepackage[font=small,labelfont=bf]{caption}
\usepackage{tikz}
\usetikzlibrary{arrows.meta,calc,positioning}

\newtheorem{theorem}{Theorem}[section]

\newaliascnt{lemma}{theorem}
\newtheorem{lemma}[lemma]{Lemma}
\aliascntresetthe{lemma}

\newaliascnt{proposition}{theorem}
\newtheorem{proposition}[proposition]{Proposition}
\aliascntresetthe{proposition}

\newaliascnt{corollary}{theorem}
\newtheorem{corollary}[corollary]{Corollary}
\aliascntresetthe{corollary}

\theoremstyle{definition}

\newaliascnt{definition}{theorem}
\newtheorem{definition}[definition]{Definition}
\aliascntresetthe{definition}

\theoremstyle{remark}

\newaliascnt{remark}{theorem}
\newtheorem{remark}[remark]{Remark}
\aliascntresetthe{remark}

\theoremstyle{plain}
\newtheorem{maintheorem}{Theorem}

\numberwithin{equation}{section}

\usepackage{hyperref}
\usepackage[capitalise,nameinlink,sort]{cleveref}

\crefname{theorem}{theorem}{theorems}
\Crefname{theorem}{Theorem}{Theorems}

\crefname{lemma}{lemma}{lemmas}
\Crefname{lemma}{Lemma}{Lemmas}

\crefname{proposition}{proposition}{propositions}
\Crefname{proposition}{Proposition}{Propositions}

\crefname{corollary}{corollary}{corollaries}
\Crefname{corollary}{Corollary}{Corollaries}

\crefname{definition}{definition}{definitions}
\Crefname{definition}{Definition}{Definitions}

\crefname{remark}{remark}{remarks}
\Crefname{remark}{Remark}{Remarks}

\crefname{maintheorem}{theorem}{theorems}
\Crefname{maintheorem}{Theorem}{Theorems}

\crefname{subsection}{subsection}{subsections}
\Crefname{subsection}{Subsection}{Subsections}

\newtheorem{claim}[theorem]{Claim}
\crefname{claim}{claim}{claims}
\Crefname{claim}{Claim}{Claim}

\theoremstyle{plain}

\newtheorem*{notation}{Notation}

\newcommand{\norm}[1]{\left\Vert#1\right\Vert}
\newcommand{\conorm}[1]{\mathrm{m}\left(#1\right)}

\newcommand{\Real}{\mathbb R}
\newcommand{\R}{\Real}
\newcommand{\abs}[1]{\left\vert#1\right\vert}
\newcommand{\Tor}{\mathbb{T}}

\newcommand{\Pp}{\mathbb P}
\newcommand{\Nat}{\mathbb N}

\newcommand{\N}{\mathbb{N}}
\newcommand{\oo}{\infty}

\newcommand{\vol}{\operatorname{vol}}
\newcommand{\pr}{\mathcal{P}}
\newcommand{\disu}[2]{\operatorname{d}_u{(#1},{#2})}
\newcommand{\diam}{\operatorname{diam}}
\newcommand{\dd}{\operatorname{d}\!}

\newcommand{\Fs}{\ensuremath{\mathcal{W}^s}}

\newcommand{\Fu}{\ensuremath{\mathcal{W}^u}}

\newcommand{\Ws}[1]{\ensuremath{W^s(#1)}}
\newcommand{\Wu}[1]{\ensuremath{W^u(#1)}}

\newcommand{\hWu}[1]{\ensuremath{\widehat{W}^u(#1)}}

\newcommand{\der}{\operatorname{d}\hspace{-2pt}}
\newcommand{\Leb}{\mathrm{Leb}}
   
\DeclarePairedDelimiterXPP\clo[1]{\mathtt{cl}}(){}{#1}  

\newcommand{\hu}{\mathrm{h}^u}

\newcommand{\hM}{\widehat{M}}

\newcommand{\hpi}{\widehat{\pi}}
\newcommand{\hE}{\widehat{E}}

\newcommand{\hf}{\hat{f}}

\newcommand{\hmu}{\hat{\mu}}
\newcommand{\hm}{\hat{m}}

\newcommand{\hnu}{\hat{\nu}}
\newcommand{\hp}[1][p]{\hat{#1}}
\newcommand{\hq}{\hat{q}}
\newcommand{\Gibbs}[1][f]{\mathrm{Gibbs}^u(#1)}
\newcommand{\Gibbse}[1][f]{\mathrm{Gibbs}^u_{\mathrm{erg}}(#1)}

\newcommand{\PH}[1][r]{\mathcal{PH}^{#1}(M)}
\newcommand{\diffM}[1][r]{\operatorname{Diff}^{#1}(M)}

\newcommand{\CEA}{\mathrm{C}_{\scriptscriptstyle{EA}}}

\newcommand{\Gap}{\mathrm{Gap}(f)}

\allowdisplaybreaks

\newcommand{\PHb}[1][r]{\mathscr{PH}^{#1}_{\mathrm{uCB}}(M)}
\newcommand{\PHbvol}[1][r]{\mathscr{PH}^{#1}_{\mathrm{uCB},\mathrm{vol}}(M)}

\newcommand{\diag}{\mathtt{diag}}
\newcommand{\ep}{\epsilon}
\newcommand{\al}{\alpha}
\newcommand{\eps}{\varepsilon}

\newcommand{\lam}{\lambda}

\newcommand{\Wuloc}[1]{W^{u}_{\mathrm{loc}}(#1)}

\newcommand{\hWuloc}[1]{\hat{W}^{u}_{\mathrm{loc}}(#1)}

\newcommand{\cE}{\mathcal E}
\newcommand{\cR}{\mathcal R}
\newcommand{\mc}[1]{\mathcal #1}

\definecolor{lapislazuli}{rgb}{0.15, 0.38, 0.61}

\hypersetup{
    colorlinks=true,
    linkcolor=lapislazuli,
    citecolor=lapislazuli,
    urlcolor=lapislazuli
    }

\makeatletter
\newcommand{\subjclass}[2][2020]{%
  \let\@oldtitle\@title%
  \gdef\@title{\@oldtitle\footnotetext{#1 \emph{MSC2020 classification:} #2}}%
}
\newcommand{\keywords}[1]{%
  \let\@@oldtitle\@title%
  \gdef\@title{\@@oldtitle\footnotetext{\emph{Keywords:} #1.}}%
}
\makeatother

\begin{document}

 \title{Expanding on Average  Diffeomorphisms}
 \author{
Pablo D. Carrasco \quad
Davi Obata \quad
Radu Saghin \quad
Jiagang Yang
}

  \date{\today}

  \maketitle
  \begin{abstract}
We study the statistical properties of partially hyperbolic diffeomorphisms with a two-dimensional center exhibiting mixed behavior, without assuming a dominated splitting inside the center. We introduce the notion of an \emph{expanding on average} for partially hyperbolic diffeomorphism and, under a center jacobian assumption, we prove the existence of finitely many ergodic hyperbolic SRB measures, and show the strong positive recurrence property, which implies that each SRB measure, up to a period, satisfies exponential decay of correlations,  among other strong statistical laws. We apply our results to several examples. 
\end{abstract}

\tableofcontents
\selectlanguage{english}

\section{Introduction} 
\label{sec:introduction_and_results}


\subsection{Motivation and main result}


In this article we are interested in the statistical properties of differentiable dynamical systems. For a smooth diffeomorphism $f:M\to M$, where from now on $M$ denotes a closed manifold, we want to understand the distribution of the $f$-orbits. To make the question more precise one can do the following: given $p\in M$ we look at the possible accumulation points of the empirical measures
\[
   \delta^{(n)}_{p}=\frac{1}{n}\sum_{i=0}^{n-1} \delta_{f^i(p)},
\]
where $\delta_x$ is the Dirac measure at a point $x$, and we want to characterize the properties of these limits. Given an ergodic\footnote{$\mu$ is ergodic for $f$ if every $f$-invariant measurable set has $\mu$-measure $0$ or $1$.} $f$-invariant measure $\mu$, its \emph{basin} is defined as
\[
\mathcal{B}(\mu) \coloneq \{p\in M: \lim_{n\to +\infty} \delta_x^{(p)} = \mu\}.
\]
We say that $\mu$ is \emph{physical}, if $\mathcal{B}(\mu)$ has positive volume. For most dynamical systems, volume is not an invariant measure. Physical measures capture the asymptotic statistical behavior for a set of points relevant for the natural measure class on the manifold. 

 For systems displaying hyperbolic behavior,  physical measures are often \emph{hyperbolic SRB measures}, introduced by Sinai, Ruelle and Bowen in \cite{Sinai_1972,SRBattractor, Bowenbook} (see  \Cref{sec:Preliminaries} for the precise definition).  A classical result in smooth dynamics states that for a $C^2$ Anosov diffeomorphism, there are finitely many SRB measures whose union of their basins has full volume.  Palis conjectured that this result holds for a typical dynamical system \cite{Palisconj}.  Since then, finding conditions for the existence and finiteness (or uniqueness) of  SRB physical measures has become an active area of research in smooth dynamics. 
 
 A generalization of uniformly hyperbolic theory is the nonuniformly hyperbolic theory. Existence of SRB measures in this setting include works such as \cite{BenOvadiaSRB,BenedicksYoung,ClimenhagaLuzzattoPesin2022, Young1998}. Some important recent  advances were made for the existence of SRB measures for $C^\infty$-diffeomorphisms having positive Lyapunov exponents on a set of positive volume \cite{Burguetsrb,BCSSRB, snirburguet}.

Another generalization of Anosov diffeomorphism are the so-called \emph{partially hyperbolic systems}, where besides the existence of  expanding and contracting directions there is an additional distribution with some intermediate behavior. The precise definition is recalled in \Cref{sec:Preliminaries}. Denote by $\PH$ the set of partially hyperbolic $C^r$-diffeomorphisms of $M$. Partially hyperbolic systems form a robust and flexible class of dynamics and arise naturally in many settings, including geometry, mathematical physics, and homogeneous dynamics.

There are important works due to Bonatti--Viana \cite{Bonatti2000}, and Alves--Bonatti--Viana \cite{SRBMostexp}, which show the existence and finiteness of  SRB physical measures for partially hyperbolic systems that are mostly contracting, and mostly expanding, respectively. In the mostly contracting case, the center direction has a nonuniformly contracting behavior for volume almost every point, whereas the mostly expanding case, has a similar definition with a nonuniform expansion along the center. 

A major remaining difficulty is to treat partially hyperbolic systems whose center dynamics is genuinely mixed: the center direction may exhibit both expansion and contraction at the level of Lyapunov exponents, without assuming a dominated splitting inside the center or a uniform sign condition on all center exponents. This is the setting considered in this paper. Let us mention that we do not study the case where there is a zero exponent in the center.

For this purpose we introduce the concept of \emph{expanding on average} partially hyperbolic diffeomorphisms and show that this property implies the existence of finitely many ergodic SRB measures whose basins cover a full volume set.  Not only that, it also implies many good statistical properties for the orbits.  This property is inspired in the expanding on average (also known as uniform expansion) property in random dynamics (see for instance \cite{Dolgopyat2007, deWittDolgo2024, Chung2020, Liuthesis}).  Informally, our condition requires that every center direction, when transported along unstable leaves by the center holonomies, experiences positive average exponential growth along every unstable disk. In what follows, see \Cref{sec:Preliminaries} for the definitions and background material about partially hyperbolic dynamics. 

Let $f:M \to M$ be a $C^2$ partially hyperbolic diffeomorphism with a splitting $TM = E^s \oplus E^c \oplus E^u$ such that $\dim(E^c) = 2$, which verifies a technical condition called \emph{$u-$  center bunching}, see \Cref{def:bunching} for the precise definition.  We denote by $\PHb[2]$ the set of  such diffeomorphisms. Write $\Fu=\{\Wu{p}\}_{p\in M}$ for the unstable foliation of $f$.  The $u$-center bunching condition implies the existence of linear holonomies along unstable manifolds, that is, for $q\in \Wu{p}$, there is a holonomy map $\hu_{q,p}: E^c_p \to E^c_q$ that permites to transport center vectors, and depends continuously from $p,q$. In particular, given some non-zero vector $v\in E^c_p$ one can extend it to a unit vector field along $\Wu{p}$, by
\[
\chi_{q,p}(v):=\frac{\hu_{q,p}(v)}{\norm{\hu_{q,p}(v)}}.
\]

\begin{definition}\label{def:AUEP}
    Under the assumptions above on $f$, we say that $f$ is \emph{expanding on average} if  there exists $C>0$ so that: for every  disk contained in an unstable manifold centered at $p$, $D_p$,  and every unit vector $v\in E^c_p$, it holds
    \begin{equation}\label{eq.inequalityea}
    \liminf_{n\to\oo}\frac{1}{n\Leb^u(D_p)}\int_{D_p}\log \norm{D_qf^n\chi_{q,p}(v)} \der\Leb^u(q)\geq C,
    \end{equation}
    where $\Leb^u$ is the Lebesgue measure along $\Wu{p}$ induced by the riemaniann metric.
\end{definition}

In \Cref{sec:Somebasicconsequences}, we define the notion of an optimal constant for \Cref{def:AUEP}. Using this optimal constant, we define a quantity that we call \emph{exponent gap} which is denoted by $\Gap$, see \eqref{def:Gap}. This quantity gives a lower bound on the difference of the center Lyapunov exponents for $u$-Gibbs measures.  Our main result is the following.

\begin{maintheorem}\label{mainthm:TheoremA}
If $f\in \PHb[2]$ is expanding on average and satisfies $\Gap>0$, then there are finitely many ergodic  hyperbolic SRB physical measures $\mu_1, \cdots, \mu_k$, such that
\[
\displaystyle \vol \left( \bigcup_{i=1}^k\mathcal{B}(\mu_i)\right)=1,
\]
where $\vol$ is the normalized Lebesgue measure on $M$. 
\end{maintheorem}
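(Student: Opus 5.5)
We will go through $u$-Gibbs measures and show that every such measure has only positive center exponents; an argument of Alves--Bonatti--Viana type then gives the physical measures.

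\emph{Step 1: exponents of $u$-Gibbs measures.} Let $\mu$ be an ergodic $u$-Gibbs measure, i.e.\ one whose conditionals along unstable plaques are absolutely continuous with respect to $\Leb^u$. Such measures exist as accumulation points of the averages $\frac1n\sum_{i<n} f^i_*\Leb^u|_{D}$, and they form a compact convex set. Let $\lambda^c_1\le \lambda^c_2$ be the center exponents of $\mu$.

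We integrate \eqref{eq.inequalityea} against the disintegration of $\mu$ along local unstable disks. By $u$-center bunching the holonomies $\hu$ are continuous and $f$-equivariant. Hence $\log\norm{Df^n\chi}$ is, up to a uniformly bounded error, a Birkhoff sum of the cocycle over the projectivized center bundle. This produces an $\hat f$-invariant lift $\hat\mu$ on $\mathbb P(E^c)$ that projects to $\mu$ and satisfies $\int \log\norm{Df|_{v}}\,d\hat\mu\ge C$.

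We then choose $v$ Oseledets-slow at $\mu$-a.e.\ base point. Since holonomies preserve the slow direction of a $u$-Gibbs measure only when that direction is $u$-invariant, we treat two cases. If the slow direction is $u$-invariant, it is carried by the holonomy, and we get $\lambda^c_1\ge C>0$. If it is not, a generic $v$ transported along the disk leaves the slow direction, and the estimate only gives information on $\lambda^c_2$. In that case we use $\Gap>0$, which by its definition bounds $\lambda^c_2-\lambda^c_1$ from above in terms of the optimal constant. Combined with the center-jacobian information encoded in $\Gap$, this forces $\lambda^c_1>0$. This bookkeeping, namely making precise how $\Gap$ as defined in \eqref{def:Gap} excludes the mixed-sign case, is the step I expect to be the main obstacle. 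The key point is to show that any $u$-Gibbs measure with $\lambda^c_1\le0$ would violate the optimal expanding-on-average constant along some holonomy-invariant measurable line field.

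\emph{Step 2: uniformity.} By compactness of the set of $u$-Gibbs measures and semicontinuity, we get a uniform $a>0$ such that every $u$-Gibbs measure has both center exponents at least $a$. Arguing as in the mostly expanding setting, this yields that for every unstable disk $D$, a positive $\Leb^u$-proportion of its points have nonuniform expansion along $E^c$. In fact Lebesgue-a.e.\ point of $D$ has this property, because the empirical measures of $\Leb^u|_D$-typical points accumulate only on $u$-Gibbs measures.

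\emph{Step 3: SRB measures and finiteness.} Using hyperbolic times for $Df|_{E^c\oplus E^u}$, we obtain $cu$-disks of uniform size on which the iterates have bounded distortion. The standard Pliss-lemma construction then produces ergodic SRB measures that are hyperbolic: all center exponents are positive and $E^s$ is contracting. Since a positive-measure set of points in any unstable disk lies in the stable-manifold saturation of such uniform $cu$-disks, Hopf's argument shows that each SRB basin contains a ball up to a set of measure zero. The uniform size of these disks bounds the number of distinct SRB measures, giving $\mu_1,\dots,\mu_k$. Finally, by Step 2 and absolute continuity of the stable foliation, $\vol$-a.e.\ point has forward empirical measures that are $u$-Gibbs with positive center exponents. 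Any ergodic component of such a limit is an SRB measure, so a.e.\ point lies in $\bigcup_i\mathcal B(\mu_i)$.
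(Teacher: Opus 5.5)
There is a genuine gap. Step 1 sets out to prove something that is false under the hypotheses, and Steps 2–3 depend on it.

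\textbf{The role of $\Gap$ is inverted.} The hypothesis $\Gap>0$ pushes the slow center exponent \emph{down}, not up. Write $J=\sup_{p}\limsup_n\frac1n\log|\det D^c_pf^n|$, so that $\Gap=\CEA-J$. For an ergodic $u$-Gibbs measure $\mu$, expanding on average gives $\lambda^{+c}(\mu)\ge\CEA$ (\Cref{cor:Gibbshyperbolic}). Birkhoff's theorem gives $\lambda^{+c}(\mu)+\lambda^{-c}(\mu)=\int\log|\det D^cf|\,d\mu\le J$. Hence
\[
\lambda^{-c}(\mu)\le J-\CEA=-\Gap<0 .
\]
So $\Gap$ gives a lower bound $\lambda^{+c}-\lambda^{-c}\ge\CEA+\Gap$, not an upper bound. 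Every $u$-Gibbs measure has one positive and one negative center exponent. As a sanity check, in the conservative examples ($|\det D^cf|\equiv1$) volume is $u$-Gibbs and its center exponents sum to zero.

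Your case analysis is also backwards. The case where the slow direction is $\hu$-invariant is the \emph{impossible} one. There the lift $\hmu^-$ would be projective $u$-Gibbs, and your own computation would give $\lambda^{-c}(\mu)\ge\CEA>0$, contradicting the display above; this is exactly how \Cref{mainthm:TheoremD}(A) is proved. In the remaining, non-invariant case nothing makes $\lambda^c_1$ positive. The ``bookkeeping'' you flag as the main obstacle therefore cannot be done.

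\textbf{Steps 2–3 do not apply.} They are the Alves--Bonatti--Viana mostly expanding scheme: hyperbolic times for $Df|_{E^c\oplus E^u}$, $cu$-disks expanded in every direction, and basins containing balls. Here there is a contracting direction inside $E^c$ and no dominated splitting of $E^c$. So there is no continuous bundle along which to build expanding disks.

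Step 2 has a further flaw. Compactness plus semicontinuity cannot give a uniform lower bound on the top exponent, since that exponent is upper semicontinuous. Uniform exponent bounds alone also do not give uniform Pesin blocks.

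\textbf{What the mixed-sign setting actually needs.}
\begin{enumerate}
\item[(i)] \emph{Uniform control for finiteness.} The paper shows that $\hmu^+$ is the only projective $u$-Gibbs lift of a $u$-Gibbs measure. This yields genuine continuity of center exponents on $\Gibbs$ (\Cref{mainthm:TheoremB}), and hence a Pesin block of uniformly positive mass for all $u$-Gibbs measures (\Cref{prop.uniform.pesin}). Finiteness then follows from homoclinic relations plus a Hopf argument, not from basins containing balls.
\item[(ii)] \emph{A measurable weak stable direction on every unstable leaf.} \Cref{thm:existsweakbundle}, proved via large deviations for the projective cocycle, defines $E^{-c}$ at $\Leb^u$-a.e.\ point of \emph{every} unstable leaf. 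This lets one use $(\dim E^u+1)$-disks transverse to $E^s\oplus E^{-c}$ and verify effective hyperbolicity, giving SRB measures through \Cref{thm.CDPmain}. Full volume of the basins then comes from that construction together with absolute continuity of Pesin stable laminations over the uniform block.
\end{enumerate}

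\textbf{Your closing step also fails.} You argue that empirical measures of a.e.\ point are $u$-Gibbs, so their ergodic components are SRB. But here an ergodic $u$-Gibbs measure need not be SRB. The SRB property requires absolute continuity along $(\dim E^u+1)$-dimensional Pesin unstable manifolds, not only along $\Fu$.
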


As we explain below, the expanding on average property implies strong limit laws for the resulting SRB measure.

\subsection{Further properties of expanding on average systems}

In the partially hyperbolic setting, physical measures, whenever they exist, belong to a special class of measures called \emph{$u$-Gibbs measures}. Roughly speaking, an invariant measure $\mu$ is $u$-Gibbs if it admits conditional measures along unstable manifolds that are absolutely continuous (see \Cref{def:ugibbs} for the precise definition). Given $f\in \PH$, denote by $\Gibbs$ the set of such measures. The set of $u$-Gibbs measures is a compact, convex and nonempty subset of the set of invariant measures of $f$. As we will see, the expanding on average condition actually is equivalent to hyperbolicity and a certain geometrical condition for the $u$-Gibbs measures (see Theorem \hyperlink{theoremD}{D}).

Our next result is about continuity of Lyapunov exponents for such measures. Before stating it, let us recall the definition of Lyapunov exponents. For any $p\in M$, the top center Lyapunov exponent of $f$ at $p$ is defined by
\begin{align}\label{eq:topexp}
\lambda^{+c}(p)=\lambda^{+c}(p,f) \coloneqq \lim_{n\to + \infty} \frac{1}{n} \log\norm{D_pf^n|_{E^c}},
\end{align}
whenever the limit exits. Similarly, the bottom center Lyapunov exponent is defined by 
\begin{align}\label{eq:botexp}
\lambda^{-c}(p)=\lambda^{-c}(p,f) \coloneqq \lim_{n\to + \infty} -\frac{1}{n} \log\norm{D_pf^{-n}|_{E^c}}, 
\end{align}
provided that the limit exists. It is by now a classical result of Furstenberg and Kesten that given an ergodic measure $\mu$ there are numbers $\lambda^{+c}(\mu,f)$ and $\lambda^{-c}(\mu,f)$ such that for $\mu$-almost every $p$, $\lambda^{+c}(p)= \lambda^{+c}(\mu)$ and $ \lambda^{-c}(p)=\lambda^{-c}(\mu)$ (see \Cref{sec:Preliminaries}). When $\mu$ is not ergodic, one considers the average exponents, given by $\lambda^{*c}(\mu) = \int \lambda^{*c}d\mu$, for $*=-,+$.

An important problem in the theory of Lyapunov exponents is to understand the regularity of the functions $(\mu,f) \mapsto \lambda^{*c}(\mu,f)$, for $*=-,+$.

\begin{maintheorem}\label{mainthm:TheoremB}
 Let $f\in \PHb[2]$ be an expanding on average diffeomorphism that satisfies $\Gap>0$. Let $(f_n)_n\subset \PHb[2]$ be a sequence of diffeomorphisms converging to $f$ in the $C^2$-topology. For each $n$ suppose that $\mu_n$ is a $u$-Gibbs measure for $f_n$ and that $\mu_n$ converges to some measure $\mu$ in the weak* topology. Then 
 \[
 \displaystyle \lim_{n\to +\infty} \lambda^{-c}(\mu_n,f_n) = \lambda^{-c}(\mu,f) \textrm{ and }  \lim_{n\to +\infty} \lambda^{+c}(\mu_n,f_n) = \lambda^{+c}(\mu,f).
 \]
\end{maintheorem}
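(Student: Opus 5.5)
We prove continuity of $\lambda^{+c}$ and $\lambda^{-c}$ together, using the two standard semicontinuity mechanisms. Upper semicontinuity of $\lambda^{+c}$ is soft. Write $\lambda^{+c}(\mu,f)=\inf_N \frac1N\int \log\norm{D_pf^N|_{E^c}}\der\mu$, an infimum of functions continuous in $(\mu,f)$. The center bundle $E^c_{f_n}$ converges uniformly to $E^c_f$ as $f_n\to f$ in $C^1$, so each integrand is jointly continuous, and $\limsup_n \lambda^{+c}(\mu_n,f_n)\le \lambda^{+c}(\mu,f)$. In the same way, $\lambda^{-c}=-\inf_N \frac1N\int\log\norm{D f^{-N}|_{E^c}}\der\mu$ is lower semicontinuous. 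It then suffices to control the sum: since $\dim E^c=2$, $\lambda^{+c}+\lambda^{-c}=\int \log\abs{\det Df|_{E^c}}\der\mu$, which is continuous in $(\mu,f)$. This identity converts the upper semicontinuity of one exponent into the lower semicontinuity of the other, so one inequality is enough. We will prove lower semicontinuity of $\lambda^{+c}$, or equivalently $\liminf_n \lambda^{+c}(\mu_n,f_n)\ge \lambda^{+c}(\mu,f)$.

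\textbf{Step 1 (persistence).} First, $\mu$ is a $u$-Gibbs measure for $f$, by the standard closedness of $u$-Gibbs measures under $C^2$ limits. Second, expanding on average is $C^2$-open, with a uniform constant: the optimal constant is controlled by a finite-time integral, since $\frac{1}{N\Leb^u(D)}\int_D\log\norm{D_qf^N\chi_{q,p}(v)}\der\Leb^u(q)\ge C'$ holds for some fixed $N$ and all disks $D$ of a fixed size. This finite-time quantity varies continuously with $f$, because the $u$-holonomies depend continuously on $f$ under $u$-center bunching. So the $f_n$ are expanding on average with a uniform constant, and $\Gap$ is lower semicontinuous, so $\Gap(f_n)\ge \delta>0$ for large $n$.

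\textbf{Step 2 (the key mechanism).} For a $u$-Gibbs measure of an expanding on average system we establish the exponent identity
\[\lambda^{+c}(\nu,g)=\lim_{N}\frac1N\int\!\!\int \log\norm{D_qg^N\chi_{q,p}(v)}\der\nu_p^u(q)\der\nu(p)\]
for every measurable choice of $v$. The reason is that the direction field obtained by holonomy transport is almost surely not the Oseledets slow direction $E^{-}$: the Gap condition, together with the averaged expansion along unstable disks, excludes $E^-$ from being $u$-holonomy invariant. Granting this identity, $\lambda^{+c}(\mu_n,f_n)$ is approximated by finite-$N$ integrals, and the approximation rate is uniform in $n$. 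Uniformity comes from the uniform constant of Step 1 and from a quantitative version of the statement that random unstable-transported vectors avoid the slow cone. Given uniformity, we pass to the limit: the conditionals of $\mu_n$ along unstable plaques converge to those of $\mu$, with densities uniformly log-Hölder, and the integrand is continuous. This yields the lower bound.

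\textbf{Main obstacle.} The hard part is the uniformity in Step 2. We need to show that the mass a $u$-Gibbs measure gives to the set where $\chi_{q,p}(v)$ lies in a thin cone around $E^-_q$ is small, uniformly across the family. The first approach to try is the one used in random dynamics for uniform expansion. One uses the expanding on average inequality to build a Margulis-type function $V(p,[v])=\norm{D\cdots}^{-\epsilon}$ on the projectivized center bundle, fiberwise over unstable disks, satisfying $\int V\circ F^N\le a V+b$ with $a<1$ uniformly in $n$. Integrating this bound against $u$-Gibbs measures gives uniform tightness away from the slow direction, and from that the uniform finite-$N$ approximation follows.
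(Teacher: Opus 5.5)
Your reduction is sound. $\lambda^{+c}$ is an infimum of jointly continuous functionals, hence upper semicontinuous, and the determinant identity leaves only $\liminf_n\lambda^{+c}(\mu_n,f_n)\ge\lambda^{+c}(\mu,f)$ to prove.

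\textbf{The gap.} That remaining inequality is the entire content of the theorem, and it is never proved. Step~2 needs the finite-$N$ holonomy integrals to approximate $\lambda^{+c}(\mu_n,f_n)$ at a rate uniform in $n$. You also argue that each finite-$N$ integral converges as $n\to\infty$, so such a uniform rate would give the theorem by simply interchanging limits. Step~2 therefore relocates the whole difficulty rather than resolving it.

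For that uniformity you only suggest a Margulis-function strategy, and it does not close. Suppose you grant that holonomy-transported directions give uniformly small mass to thin cones around $E^{-c}_{f_n}$. You still have to convert this into $\frac1N\log\|D_qf_n^N\chi_{q,p}(v)\|\approx\lambda^{+c}(\mu_n,f_n)$ on most of the mass, uniformly in $n$. That requires uniform-in-$n$ control of the Oseledets splittings of the $\mu_n$: how far $E^{-c}_{f_n}$ stays from $E^{+c}_{f_n}$, and how fast $\|Df_n^N|_{E^{-c}}\|/\|Df_n^N|_{E^{+c}}\|$ becomes exponentially small. This is a uniform Pesin-block statement, and in the paper it (\Cref{prop.uniform.pesin}, via \Cref{prop.generaluniformpesin}) is deduced \emph{from} this theorem.

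Step~1 has a smaller gap as well. The paper notes that it is unknown whether finitary expanding on average over the disks $\Wu{p,1}$ implies expanding on average; only the $u$-tiling version is known to, by \Cref{pro:finitarydim1}. Robustness nevertheless holds, by \Cref{THM:robustnessofea}.

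\textbf{The missing idea.} Pass to invariant measures on $\hM$, where no uniformity is needed.
\begin{enumerate}
\item For large $n$, the $\mu_n$ have distinct center exponents (\Cref{THM:robustnessofea}, \Cref{cor:Gibbshyperbolic}). So $E^{+c}$ is $\hu$-invariant (\Cref{lem:invariantEc}), and the lift $\hmu_n^+=\int\delta_{E^{+c}(p)}\,\der\mu_n(p)$ is a projective $u$-Gibbs measure for $\hf_n$. Moreover $\lambda^{+c}(\mu_n,f_n)=\int\psi_n\,\der\hmu_n^+$, where $\psi_n(x,[v])=\log\|D_xf_n|_{[v]}\|\to\psi$ uniformly.
\item Any weak* limit $\hmu$ of $(\hmu_n^+)$ is projective $u$-Gibbs for $\hf$ and projects to $\mu$. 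This is \Cref{lem:closednessGibbs}, which uses uniform bounded distortion and the continuity of holonomies in $f$ from \Cref{thm:saghin}.
\item For the limit map $f$, expanding on average with $\Gap>0$ makes $\hmu^+$ the only projective $u$-Gibbs lift of $\mu$. A projective $u$-Gibbs lift charging $E^{-c}$ would produce an ergodic $u$-Gibbs $\nu$ with $E^{-c}$ $\hu$-invariant. Averaging $\psi$ along holonomy-transported vectors would then give $\lambda^{-c}(\nu)\ge\CEA>0$, a contradiction (\Cref{pro:thmB1}, \Cref{cor:onlygibbs}).
\item Hence $\lambda^{+c}(\mu_n,f_n)\to\int\psi\,\der\hmu^+=\lambda^{+c}(\mu,f)$.
\end{enumerate}
Uniqueness of the projective $u$-Gibbs lift for $f$ alone, together with closedness under $C^2$ limits, is what replaces the uniform estimates you were looking for.
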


Recently, continuity  of Lyapunov exponents was used to obtain strong statistical properties of the system. This is done with a notion of \emph{Strong Positive Recurrence} (SPR) introduced by Buzzi--Crovisier--Sarig \cite{BCSSPR}, see \Cref{Sec:SPR}  for the precise definitions. In this paper, we show that the expanding on average property with the positive exponent gap condition implies the SPR property. In particular, this gives several strong statistical properties as stated in the next result.

\begin{maintheorem}\label{mainthm:TheoremC}
If $f\in \PHb[2]$ is expanding on average and satisfies $\Gap>0$, for every $p\in M$. Then,  for any ergodic SRB measure $\mu$ of $f$, there exist $k\in \Nat$ and measures $\mu_1, \cdots, \mu_k$ such that
\[
\mu = \frac{1}{k} \sum_{j=1}^k \mu_j,
\]
for each $j=1, \cdots, k$, $f_* \mu_j = \mu_{j+1}$, with the convention that $k+1 = 1$, and $(\mu_j, f^k)$ verifies exponential decay of correlations for H\"older continuous functions, the Central Limit Theorem, large deviations, and other limit laws. 
\end{maintheorem}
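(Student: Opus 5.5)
The plan is to reduce \Cref{mainthm:TheoremC} to the abstract machinery of Buzzi--Crovisier--Sarig \cite{BCSSPR}. They show that if a $C^{1+}$ diffeomorphism is \emph{strongly positively recurrent} (SPR), then each ergodic measure of maximal ``SRB-type'' entropy admits a coding by a countable Markov shift with exponential tails. Such a coding yields, up to a finite period $k$, the decomposition $\mu=\frac1k\sum\mu_j$ with $f_*\mu_j=\mu_{j+1}$. It also yields exponential decay of correlations for $(\mu_j,f^k)$ on H\"older observables, together with the CLT, large deviations and the other limit laws. So the work is to verify the SPR hypothesis for the geometric potential $\phi^u=-\log|\det Df|_{E^u\oplus E^c_+}|$ (or its appropriate version). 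The SRB measures are then the equilibrium states of this potential, by the Pesin entropy formula. I will treat the conclusion of \cite{BCSSPR} as a black box once SPR is established.

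\textbf{Step 1: SRB measures are hyperbolic equilibrium states.} Use \Cref{mainthm:TheoremA} and the structure behind it. Every ergodic SRB measure is $u$-Gibbs. Expanding on average forces $\lambda^{+c}>0$, and $\Gap>0$ separates the two center exponents, so SRB measures have exponents bounded away from $0$. Each of them is an equilibrium state of the relevant geometric potential with pressure $0$.

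\textbf{Step 2: SPR via a uniform exponent bound under limits.} In the BCS framework, SPR follows once the following holds. For any sequence of ergodic measures $\nu_n$ whose free energies converge to the top pressure, every weak$^*$ limit $\nu$ still has its relevant Lyapunov exponents bounded away from $0$ on its ergodic components, with no mass escaping to non-hyperbolic behaviour. Such a sequence $\nu_n$ is ``asymptotically SRB''. Using the entropy formula along unstable leaves and upper semicontinuity of partial entropy $h^u$, any limit $\nu$ satisfies $h(\nu)\ge\int\log|\det Df|_{E^u}|\,d\nu+\ldots$. This should force $\nu$ to be $u$-Gibbs.

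\textbf{Step 3: continuity of exponents.} Apply \Cref{mainthm:TheoremB} with $f_n=f$, together with the uniform gap for $u$-Gibbs measures. This gives $\lambda^{\pm c}(\nu_n)\to\lambda^{\pm c}(\nu)$ with $\lambda^{-c}(\nu)$ and $\lambda^{+c}(\nu)$ at least $\Gap$ apart and $\lambda^{+c}\geq C$. It yields the required uniform hyperbolicity at the limit, and hence SPR.

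\textbf{Main obstacle.} The hardest step is Step 2. The measures $\nu_n$ in the SPR definition need not be $u$-Gibbs; they only have almost-maximal free energy. So \Cref{mainthm:TheoremB} cannot be applied to them directly.

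My first attempt is to pass through the entropy defect. If the free energy of $\nu_n$ tends to $0$, then $h^u(\nu_n)-\int\log|\det Df|_{E^u}|d\nu_n\to0$. Upper semicontinuity then makes every limit $u$-Gibbs. The center exponents, being subadditive averages, are upper semicontinuous. I would combine this with a lower-semicontinuity argument coming from the expanding on average integral estimate \eqref{eq.inequalityea}, transported from unstable disks to measures with almost absolutely continuous unstable disintegrations. That should control $\lambda^{+c}(\nu_n)$ from below, up to $o(1)$.

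A secondary issue is that the center bottom exponent may be negative. The SPR potential must then use only the positive part $E^u\oplus$ (the expanding center). I would handle this with the dominated-splitting-free version of BCS, which works with $\lambda^{+c}$ and the Gap condition.
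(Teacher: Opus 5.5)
There is a genuine gap, and it sits exactly where you located the main obstacle. Your reduction to Buzzi--Crovisier--Sarig (SPR on the Borel homoclinic class of $\mu$, then \Cref{thm.sprexponential}) matches the paper, and so does the first half of Step 2. From $P(\nu)\le P^u(\nu)\le 0$ (Ledrappier--Young) one gets $P^u(\nu_n)\to 0$, and upper semicontinuity of $h^u$ makes any limit $\nu$ a $u$-Gibbs measure. (Use $h^u$ here, not the full metric entropy $h$.)

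SPR, however, needs the $\nu_n$ themselves to charge a fixed Pesin block. Via \Cref{prop.generaluniformpesin} this requires $\lambda^{\pm c}(\nu_n)\to\lambda^{\pm c}(\nu)$. The only nontrivial direction is $\liminf_n\lambda^{+c}(\nu_n)\ge\lambda^{+c}(\nu)$. Upper semicontinuity of $\lambda^{+c}$ and continuity of $\lambda^{+c}+\lambda^{-c}$ point the wrong way. They are compatible with the lifts $\hat\nu_n^+$ converging to a mixture $\alpha\hat\nu^++(1-\alpha)\hat\nu^-$ with $\alpha<1$, that is, with the center exponents of $\nu_n$ collapsing toward each other.

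Your proposed fix does not close this. Transporting \eqref{eq.inequalityea} to measures with "almost absolutely continuous" unstable disintegrations is not a mechanism. The $\nu_n$ may have conditionals singular with respect to $\Leb^u$, and almost maximal $h^u$ is not a closeness statement about conditionals that would let you integrate $\log\|D_qf^n\chi_{q,p}(v)\|$ against them. The expanding on average inequality only speaks about Lebesgue measure on unstable disks. For the same reason Step 3 fails as written: \Cref{mainthm:TheoremB} applies only to $u$-Gibbs measures.

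The missing idea is to run the entropy argument one level up, on $\hM=\mathbb{P}E^c$.
\begin{enumerate}
\item $E^{+c}$ is $h^u$-invariant for any invariant measure with distinct center exponents (\Cref{lem:invariantEc}). So the conditionals of the lift $\hat\nu_n^+$ along the lifted partition project to those of $\nu_n$, and the invariance principle (\Cref{thm.invarianceprinciple}) gives $h^u(\hat\nu_n^+)=h^u(\nu_n)$.
\item Let $\hat\nu$ be any limit of $\hat\nu_n^+$. Upper semicontinuity of projective unstable entropy (\Cref{thm.semicontinuityuentropy}) gives $h^u(\hat\nu)\ge\limsup_n h^u(\nu_n)=\int\log|\det D^uf|\,d\nu=h^u(\nu)$.
\item Hence equality holds, and the equality case of the invariance principle makes $\hat\nu$ a projective $u$-Gibbs measure.
\item By \Cref{cor:onlygibbs}, $\hat\nu=\hat\nu^+$. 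This is where expanding on average and $\Gap>0$ enter, through \Cref{mainthm:TheoremD}.
\item Therefore, with $\psi(x,[v])=\log\|D_xf|_{[v]}\|$, we get $\lambda^{+c}(\nu_n)=\int\psi\,d\hat\nu_n^+\to\int\psi\,d\hat\nu^+=\lambda^{+c}(\nu)$. The bottom exponent then follows from continuity of the center determinant.
\end{enumerate}
This is \Cref{prop.convergenceofexponents}. Applying \Cref{prop.generaluniformpesin} to $\{\nu_n\}\cup\{\nu\}$ then contradicts the assumed failure of SPR. Without this projective entropy step, or a genuine substitute for it, your argument does not establish SPR.
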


For an expanding on average diffeomorphism $f\in \PHb[2]$, the assumption $|\det Df|E^c|\leq 1$ implies the condition $\Gap>0$. This includes both the conservative and dissipative-along the center cases. In particular, as a direct consequence of this remarks we get the following.

\begin{corollary}\label{cor:detlessonecoro}
    If $f\in \PHb[2]$ is expanding on average and satisfies 
    \[
    \sup_{x\in M}|\det D_xf|_{E^c}| \leq 1,\]
    then $f$ verifies the conclusions of \Cref{mainthm:TheoremA}, \Cref{mainthm:TheoremB} and \Cref{mainthm:TheoremC}.
\end{corollary}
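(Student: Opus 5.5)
The corollary follows once we show that expanding on average together with $\sup_x|\det D_xf|_{E^c}|\le 1$ forces $\Gap>0$. After that, \Cref{mainthm:TheoremA}, \Cref{mainthm:TheoremB} and \Cref{mainthm:TheoremC} apply directly. The definition of $\Gap$ in \eqref{def:Gap} is not reproduced above. I will use the interpretation suggested in the text: $\Gap$ is a lower bound for $\lambda^{+c}(\mu)-\lambda^{-c}(\mu)$ over $\mu\in\Gibbs$, built from the optimal constant $C_{\mathrm{opt}}>0$ of \Cref{def:AUEP} and from a quantity controlled by the center Jacobian. Concretely, I expect it to have the form $\Gap = 2C_{\mathrm{opt}} - \sup_{\mu}\int\log|\det Df|_{E^c}|\,d\mu$, or a variant with $\sup_x$ in place of the supremum over measures. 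The plan is to check that each term has the right sign.

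\textbf{Step 1 (center exponents of $u$-Gibbs measures).} For an ergodic $\mu\in\Gibbs$, disintegrate $\mu$ along unstable plaques. The conditionals are absolutely continuous with respect to $\Leb^u$, so integrating \eqref{eq.inequalityea} over plaques and using a measurable choice of $v$ gives
\[
\int \tfrac1n\log\norm{D_qf^n v_q}\,d\mu \ge C_{\mathrm{opt}}-o(1)
\]
for every $u$-holonomy-invariant direction field $v$. A $u$-holonomy-invariant field is exactly what $\chi_{q,p}(v)$ produces. I then choose $v$ to be the slow Oseledets direction of $\mu$. Its density of holonomy-invariant lifts, or a fiber-averaging argument on the projectivized center bundle, should give $\lambda^{-c}(\mu)\ge C_{\mathrm{opt}}>0$. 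I take this to be what \Cref{sec:Somebasicconsequences} establishes.

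\textbf{Step 2 (Jacobian bound).} By Oseledets,
\[
\lambda^{-c}(\mu)+\lambda^{+c}(\mu)=\int\log|\det Df|_{E^c}|\,d\mu\le 0.
\]
Combined with Step 1, this gives $\lambda^{+c}(\mu)\le -\lambda^{-c}(\mu)\le -C_{\mathrm{opt}}<0$.

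\textbf{Step 3 (conclusion).} Step 2 says the top center exponent is negative while Step 1 says the bottom one is positive, which contradicts $\lambda^{+c}\ge\lambda^{-c}$. So I must have the roles reversed. The expanding-on-average condition does not bound $\lambda^{-c}$ from below. It only guarantees that a generic direction transported by holonomy grows, so it bounds $\lambda^{+c}(\mu)\ge C_{\mathrm{opt}}$, and the exceptional slow direction can be contracting. With this corrected form:

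1. Step 2 gives $\lambda^{-c}(\mu)\le -\lambda^{+c}(\mu)\le -C_{\mathrm{opt}}$.
2. Therefore $\lambda^{+c}(\mu)-\lambda^{-c}(\mu)\ge 2C_{\mathrm{opt}}-\int\log|\det Df|_{E^c}|\,d\mu\ge 2C_{\mathrm{opt}}>0$, uniformly over ergodic $u$-Gibbs measures.
3. The same bound holds for non-ergodic $u$-Gibbs measures via ergodic decomposition, since ergodic components of $u$-Gibbs measures are $u$-Gibbs.
4. Whatever the exact formula in \eqref{def:Gap}, it is assembled from these two ingredients, so $\Gap\ge 2C_{\mathrm{opt}}-\sup\log|\det Df|_{E^c}|>0$.

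\textbf{Main obstacle.} The delicate point is matching the precise definition of $\Gap$. If it uses $\sup_x\log|\det D_xf|_{E^c}|$ rather than an integral, the hypothesis is used pointwise and the argument is immediate. If it involves the optimal constant for a different functional, such as a one-sided limit or a uniform-in-$n$ version, I would show that functional is still at least $C_{\mathrm{opt}}$ by the subadditive argument of \Cref{sec:Somebasicconsequences}.
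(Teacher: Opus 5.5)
Your reduction is the right one: the corollary is exactly \Cref{mainthm:TheoremA}, \Cref{mainthm:TheoremB} and \Cref{mainthm:TheoremC} once $\Gap>0$ is known. However, the step that establishes $\Gap>0$ is never actually proved. You do not use the definition \eqref{def:Gap}, which is
\[
\Gap=\CEA-\sup_{p\in M}\limsup_{n\to\infty}\frac1n\log|\det D^c_pf^n|.
\]
Instead you assert that, ``whatever the exact formula'', $\Gap\ge 2\CEA-\sup\log|\det Df|_{E^c}|$. With the actual definition this inequality is false. For instance, if $|\det D^c_xf|\equiv 1$, then $\Gap=\CEA$ exactly, not $\ge 2\CEA$.

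The detour through Steps 1--3 does not repair this. Step 1 as first written is false, as you noticed. The corrected version only bounds $\lambda^{+c}(\mu)-\lambda^{-c}(\mu)$ for $u$-Gibbs measures, which is a different, measure-theoretic quantity. It cannot stand in for the hypothesis $\Gap>0$ of the main theorems. That hypothesis is a pointwise condition over all $p\in M$, and it is used pointwise later, for example in \Cref{lem:expansioncontraction2} to control $\conorm{D^c_qf^n}$ for Lebesgue-a.e.\ $q$ on every unstable leaf.

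The missing step is a one-liner, and it is all the paper uses. By the chain rule and the hypothesis,
\[
\log|\det D^c_pf^n|=\sum_{j=0}^{n-1}\log|\det D^c_{f^j(p)}f|\le 0
\]
for every $p$ and $n$. Hence the supremum of the $\limsup$'s is $\le 0$, and $\Gap\ge\CEA\ge C>0$, where $C>0$ is any constant for which \eqref{eq.inequalityea} holds. Your ``main obstacle'' paragraph correctly anticipated that a pointwise definition makes the argument immediate. That is precisely the situation here, and no Lyapunov-exponent argument is needed.
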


\begin{remark}
We mentioned that in \cite{BCSSPR}, the authors check the SPR property as a consequence of the continuity of Lyapunov exponents. In our setting,\Cref{mainthm:TheoremB} alone does not give the necessary continuity to check the SPR property. The reason for that is that in \Cref{mainthm:TheoremB} we only have continuity among $u$-Gibbs measures. In \Cref{mainthm:TheoremC}, we need to ``spread'' this continuity for measures with large pressure, but that are not necessarily $u$-Gibbs measures.
\end{remark}

In what follows, write $\PHbvol[r]\subset \PHb[r]$ be the set of partially hyperbolic $C^r$-diffeomorphisms with two dimensional center, the $u$-center bunching condition and that preserve a normalized volume measure $\mathrm{vol}$. Combining our results with Proposition $1.12$ from \cite{liaotaotongyang}, we obtain the following theorem.

\begin{maintheorem}
    \label{thm:Theorem E}
    Suppose that $f\in \PHbvol[2]$ be expanding on average with $\Gap>0$. If $\mathrm{vol}$ is ergodic for $f$, then for any $g\in \PHb[2]$ sufficiently $C^2$-close to $f$, there exists a unique SRB measure $\mu_g$ such that
    \begin{enumerate}
        \item $\mathrm{vol}(\mathcal{B}(\mu_g)) = 1$;
        \item $\mu_g = \frac{1}{k} \sum_{j=1}^k \mu_{g,j}$, as in \Cref{mainthm:TheoremC}, in particular, each $\mu_{g,j}$ is exponential mixing for $g^k$, among other statistical properties. 
    \end{enumerate}
    Moreover, if $\mu$ is Bernoulli, then for any $g\in \PHb[2]$ sufficiently $C^2$-close to $f$, the unique SRB measure above $\mu_g$ is Bernoulli and it has exponential decay of correlations for $g$, among other strong statistical properties. 
\end{maintheorem}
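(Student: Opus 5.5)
The plan is to combine the robustness of the hypotheses with Theorems A and C, and then feed the result into Proposition~1.12 of \cite{liaotaotongyang}.

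\textbf{Step 1: robustness.} I first show that being expanding on average with $\Gap>0$ is a $C^2$-open condition inside $\PHb[2]$. I would not work with the liminf in \eqref{eq.inequalityea} directly. Instead I would use the finite-time reformulation from \Cref{sec:Somebasicconsequences}: there are $N$ and $C'>0$ such that
\[
\frac{1}{N}\int_{D_p}\log\norm{D_qf^N\chi_{q,p}(v)}\,\der\Leb^u(q)\ge C'\,\Leb^u(D_p)
\]
holds for all unstable disks of a fixed size. This is a finite-time statement, and the unstable leaves, the unstable Lebesgue measure and the center holonomies $\hu$ all vary continuously in the $C^2$ topology (the holonomies by $u$-center bunching). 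So the inequality persists for $g$ close to $f$, possibly with a smaller constant. Iterating it along the forward images of the disk, which are covered by disks of a fixed size, recovers the liminf condition for $g$. For $\Gap$ I would use its characterization through the optimal constant and center Jacobian bounds, which are semicontinuous in $g$. If that is not immediate, Theorem~B gives upper semicontinuity of the exponents of $u$-Gibbs measures, and so a positive lower bound on the gap for nearby $g$.

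\textbf{Step 2: apply Theorems A and C to $g$.} By Theorem~A, every nearby $g$ has finitely many ergodic hyperbolic SRB measures whose basins cover full volume. By Theorem~C, each of them decomposes into $k$ pieces cyclically permuted by $g$, and each piece is exponentially mixing for $g^k$.

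\textbf{Step 3: uniqueness, the main obstacle.} I expect this to be the hardest step, and here I would invoke Proposition~1.12 of \cite{liaotaotongyang}. As I understand it, that proposition says the following: if $\mathrm{vol}$ is ergodic and hyperbolic with the relevant SRB structure for $f$, and the $u$-Gibbs measures vary with continuous exponents, then nearby systems have a unique SRB measure.

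The hypotheses it needs are:
\begin{itemize}
\item hyperbolicity of the $u$-Gibbs measures of $f$ (from the expanding on average property);
\item continuity of the center exponents along sequences of $u$-Gibbs measures (Theorem~B);
\item ergodicity of $\mathrm{vol}$.
\end{itemize}
These give two facts. Every $u$-Gibbs limit of SRB measures of $g_n\to f$ is $\mathrm{vol}$, with nonzero exponents. Moreover, Pesin manifolds of uniform size, supplied by the exponent continuity, force all SRB measures of $g$ to be homoclinically related, hence equal. Uniqueness together with Theorem~A then gives $\mathrm{vol}(\mathcal B(\mu_g))=1$.

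\textbf{Step 4: the Bernoulli case.} If $\mathrm{vol}$ is Bernoulli, then $f$ restricted to $\mathrm{vol}$ is mixing, so the period in Theorem~C for $f$ is $k=1$. I would argue that the period is locally constant: the period is determined by the homoclinic class structure via the SPR theory of \cite{BCSSPR}, and the limit argument of Step~3 shows it cannot jump up under perturbation. Hence $k=1$ for nearby $g$. Then $\mu_g$ is exponentially mixing for $g$ itself. For a hyperbolic SRB measure, mixing gives Bernoulli by the Ledrappier and Pesin theory, so $\mu_g$ is Bernoulli.
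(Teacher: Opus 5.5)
Your Step~1 has a genuine gap. It is the claim that iterating the finite-time inequality along forward images of the disk, covered by disks of a fixed size, recovers the liminf condition for $g$.

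The persistence of the finite-time inequality under $C^2$-perturbation is fine, by compactness and the continuity of holonomies in \Cref{thm:saghin}. But what it gives for $g$ is exactly the finitary property of \Cref{def:finitaryea}, and the paper states explicitly that it does not know whether \Cref{def:finitaryea} implies expanding on average.

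The covering argument fails for two reasons. First, $g^{kN}_*$ of normalized Lebesgue on $D_p$, restricted to a covering disk $\Wu{x,1}$, is not normalized Lebesgue there; bounded distortion only gives a density in $[K^{-1},K]$ with $K$ not close to $1$. Second, the covering disks overlap with multiplicity up to $B$ (\Cref{lemma.covering}). Both effects only produce one-sided bounds. Those suffice for nonnegative integrands, which is how the covering is used in the proof of \Cref{lem.subtemperedness}. But $\log\norm{D_qg^N\chi_{q,p}(v)}$ changes sign, so a positive lower bound for Lebesgue averages over whole round disks does not survive the reweighting and overcounting.

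The additive argument that does work, \Cref{pro:finitarydim1}, needs two things: canonical densities (so that $g_*\mu^u_D=\mu^u_{g(D)}$ and restrictions are again canonical), and a genuine $u$-tiling into cells with disjoint interiors, with the hypothesis imposed on those cells.

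Your fallback does not close the gap either. The proof of \Cref{mainthm:TheoremB} itself begins by invoking \Cref{THM:robustnessofea}. Moreover, bounds on the center exponents of $u$-Gibbs measures of $g$ do not control $\CEA(g)$ unless you also know $g\in\mathcal G$, i.e.\ the non-$h^u$-invariance of $E^{-c}$ for $g$, which is the robustness question again.

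The paper proves robustness (\Cref{THM:robustnessofea}) with no finite-time step. Let $\psi_g(x,[v])=\log\norm{D_xg|_{[v]}}$. By the proof of \Cref{rem:lengthdiscs}, $g$ is expanding on average iff $\min\{\int\psi_g\,d\hat\mu:\hat\mu\in\Gibbs[\hat g]\}>0$, and this minimum is then $\CEA(g)$. Suppose $g_n\to f$ and $\hat\mu_n\in\Gibbs[\hat g_n]$ satisfy $\int\psi_{g_n}\,d\hat\mu_n\le\eta<\CEA(f)$. Then $\psi_{g_n}\to\psi_f$ uniformly on $\Pp TM$, and every weak-$*$ limit lies in $\Gibbs[\hat f]$ by \Cref{lem:closednessGibbs}. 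There \Cref{mainthm:TheoremD} and \Cref{cor:onlygibbs} force $\int\psi_f\ge\CEA(f)$, a contradiction. Thus $\liminf_{g\to f}\CEA(g)\ge\CEA(f)$. Combined with the upper semicontinuity of $g\mapsto\sup_x\limsup_k\frac1k\log|\det D^c_xg^k|$, this gives $\mathrm{Gap}(g)>0$.

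Cite this theorem, or reproduce this argument, in place of your Step~1. After that, Steps~2--4 follow the paper's route, which derives the statement directly from \Cref{mainthm:TheoremA}, \Cref{mainthm:TheoremC} and Proposition~1.12 of \cite{liaotaotongyang}.

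One caution about your gloss on that proposition. From the ingredients you list, weak-$*$ limits of SRB measures of $g_n$ are only known to be $u$-Gibbs for $f$, and the paper never shows $\Gibbs=\{\mathrm{vol}\}$. To conclude that such limits equal $\mathrm{vol}$, you need them to be SRB for $f$; then ergodicity of $\mathrm{vol}$ makes it the only SRB measure of $f$. That step must come from the cited proposition; it is not a consequence of \Cref{mainthm:TheoremB}.
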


    \Cref{thm:Theorem E} follows directly from \Cref{mainthm:TheoremA}, \Cref{mainthm:TheoremC} and Proposition $1.12$ from \cite{liaotaotongyang}.

A direct consequence of a recent work by Cooper \cite{Coopererg}, shows that if $f\in \PHbvol[2]$ is expanding on average and verifies:
\begin{itemize}
    \item $\dim(E^u) = 1$;
    \item $|\det D_xf|_{E^c}| = 1$ for every $x\in M$; and 
    \item $f$ is dynamically coherent, that is, there exist a center stable and a center unstable foliation,
\end{itemize}
then $f$ is ergodic and, therefore, verifies the assumptions in \Cref{thm:Theorem E} (see \Cref{context} for more explanation about \cite{Coopererg}).

\subsection{Examples and applications}

In \Cref{sec:Examples} we give several examples of partially hyperbolic systems verifying the expanding on average condition with exponent gap. Let us summarize some of the examples we obtain:
\begin{itemize}
    \item In \Cref{subsec:henonlike}, we introduce examples derived from conservative H\'enon like maps. These examples are skew products, first coupling a H\'enon like map with an expanding endomorphism of the circle and then looking at its natural extension \eqref{eq.henonlike}.
    \item In \Cref{subsec:bcexample}, we recall the Berger-Carrasco's example which was introduced in \cite{Berger2014}. This is a skew product derived from the Taylor-Chirikov standard map, which couples the standard map with and Anosov base \eqref{eq.bcexample}. In \cite{Obata2018} it was showed that this example is $C^2$-stably ergodic. In particular, the example itself is ergodic, and actually Bernoulli. Combining this with \Cref{thm:Theorem E}, we obtain strong statistical properties for the SRB measure of any perturbation of this example, see \Cref{cor:bchasitall}. 
    \item The previous two examples are skew product constructions, in particular, there is a center foliation and it is formed by compact leaves. In \Cref{subsec:noncompactcenter}, we give examples of expanding on average systems with noncompact center foliation. These examples were obtained in \cite{extendedflex}.
 
\end{itemize}

\subsection{Context and relation with other works}\label{context}

In the random dynamics setting, the expanding on average condition appears in the works \cite{benardzhang,bxystandard,blor25,befh, cantatdujardin, Chung2020, deWittDolgo2024, Dolgopyat2007, potrieea}. It also appears with a few different names in those references, such as \emph{uniform expansion or weakly expaning}. This property is responsible for many rigidity results \cite{befh,Brown_2017, EskinLindenstrauss}, strong statistical properties \cite{deWittDolgo2024}.

In the context of differentiable systems, estimates similar to \eqref{eq.inequalityea} appears in other contexts in the literature, although its consequences do not seem to have been studied in isolation. The first instance is perhaps Yoccoz proof of Jakobson's theorem from around 1997, which first circulated in manuscript form but was not public until more recently \cite{yoccozJakobson}. For the diffeomorphism case an inequality similar to \eqref{eq.inequalityea} is the main thing obtained in the proof of the results in Berger--Carrasco \cite{Berger2014}, Carrasco \cite{Carrasco2020} and Carrasco--Saghin \cite{extendedflex}, see also \cite{Obata2023, COP} for related results. For surface endomorphisms it also appears in the recent work of Andersson--Carrasco--Saghin \cite{Andersson2022}, and in the work of Janeiro--Saghin \cite{Janeiro2025}. Although in those works this property is being proved and used implicitly, one of our main goals in this paper is to explicitly define this property and explore its consequences. 

In \cite{Coopererg}, Cooper studies the relationship of expanding on average and stable ergodicity in a volume preserving setting. This problem was suggested to him by the second author  during the writing of this paper. In \cite{Coopererg}, Cooper uses a slightly different notion of expanding on average, which is closer to what we call \emph{finitary expanding on average} (see \Cref{def:finitaryea}), and uses that to show that if $f\in \PHbvol[2]$ has center jacobian constant to $1$, has one dimensional strong unstable and it is dynamically coherent, that is, there exists a center stable and center unstable foliations, then $f$ is ergodic. Moreover, if dynamical coherence holds robustly, then $f$ is $C^2$-stably ergodic. Expanding on average in the random setting implies stable ergodicity, the idea of the problem given to Cooper was to extend that result in the partially hyperbolic setting. However the proof is a lot more difficult in that setting, as it is shown in \cite{Coopererg}. The arguments in \Cref{section.projectiveugibbs} shows that our definition of expanding on average implies the definition used in \cite{Coopererg}.

\subsection{Organization of the paper}

We start in the next section by giving some preliminary material that we will use in the rest of the article. In particular, the definition of $u$-Gibbs measure, SRB measure, and the notion of holonomies is recalled.

Then in \Cref{sec:Somebasicconsequences} we establish dynamical consequences of the expanding on average property. We precisely define the exponent gap quantity appearing in the main theorems, and establish hyperbolicity of any ergodic $u$-Gibbs measure for a expanding on average system with positive exponent gap. See \Cref{cor:Gibbshyperbolic}.

\Cref{section.projectiveugibbs} is devoted to study of projective $u$-Gibbs measures. A typical tool to understand the behavior of invariant measures for a given partially hyperbolic diffeomorphism is to consider the lift to the projectivization of its center bundle. If $E^c$ is differentiable, this extension is a manifold and, under the bunching hypothesis, the lifted map is partially hyperbolic as well. We are working under more general hypotheses, not assuming differentiability of $E^c$ (nor its integrability), and without the full bunching condition. By the general theory, the projectivization of the center bundle can only be shown to be a Hölder bundle, and therefore a large part of the geometry of $u$-Gibbs measures will be lost under the lift. We are forced to introduce a more general framework of $\emph{projective $u$-Gibbs}$ measures, and re-prove some central results of the theory of the (geometrical) $u$-Gibbs measures to them. After achieving this, we give an important equivalent formulation to the expanding on average property: it is equivalent (under the positive gap condition) to having the Lyapunov spectrum of every $u$-Gibbs being uniformly bounded away from zero, together with the essential non-invariance of weak stable directions under strong unstable holonomy for such measures. See \Cref{mainthm:TheoremD}.

In \Cref{THM:robustnessofea} we establish an important result of the expanding on average property: it is $C^2$-robust. Then in \Cref{Subsection:Continuityexp} we prove \Cref{mainthm:TheoremB}, establishing in particular the continuity of the Lyapunov exponents among $u$-Gibbs measures.

After that we move to establishing a technical result concerning Large Deviation bounds for projective $u$-Gibbs. Large deviations results are known to hold in unstable manifolds for partially hyperbolic systems (see \cite{Beyond,crovisierld,dolgopyatlimittheorem,lecturenotesdolgopyat}). For this Large Deviations estimate for the projective dynamics, we do an adaptation of a result of Dolgopyat \cite{dolgopyatlimittheorem,lecturenotesdolgopyat}, which we prove in \Cref{Subsection:Continuityexp}. Besides some interesting consequences given in \Cref{ssec:ConsequencesLD}, the Large Deviations estimates allow us to prove an important technical tool for our work (\Cref{thm:existsweakbundle}), which gives that on \emph{every} unstable leaf, on Lebesgue almost every point of that leaf there is a well defined contracting direction. 

Using the already established \Cref{mainthm:TheoremB}, in \Cref{Sec:uniformpesinblock} we prove the existence of a uniform Pesin block for the $u$-Gibbs measures, that is, a set having uniform control on expansion and contraction estimates whose measure with respect to every $u$-Gibbs is uniformly bounded from below. Using homoclinic intersections and a Hopf argument, this implies the existence of at most finitely many ergodic SRB measures. To address the existence of such measures, we prove in \Cref{sec:existenceSRB} that the expanding on average property together with positive gap implies a slightly different version of the effective hyperbolicity condition of Climenhaga-Dolgopyat-Pesin \cite{CDP}. Using their work, we obtain the existence of SRB measures.

\Cref{Sec:SPR} allow us to reap several consequences of our work, in particular strong limit laws and exponential mixing of Hölder observables, for the SRB measures of our expanding on average systems. For this we prove prove that our systems verify the SPR property recently introduced by Buzzi--Crovisier--Sarig \cite{BCSSPR}. This is achieved adapting some semi continuity results for the unstable entropy given in \cite{Yangunstableentropy}.

The last \Cref{sec:Examples} consists of several examples of systems having the expanding on average property. We suggest the reader to read the main theorems in the introduction and check the examples given in that section, to get a feeling of the extensive consequences of the results.

Let us emphasize that the understanding we obtained in \Cref{section.projectiveugibbs} is essential for obtaining all of the other results we have in the paper.

\section{Preliminaries}
\label{sec:Preliminaries}

In this section we establish some background material needed, and fix notation.

\subsection{Partial hyperbolicity and holonomies}
\label{ssec:Partialhyperbolicityandholonomies}

Throughout, $M$ denotes a closed manifold.

\begin{definition}\label{def:PH}
A $C^r$ diffeomorphism ($r\geq 1$) $f:M\to M$ is partially hyperbolic if there exists some Riemannian metric on $M$ and a $Df$-invariant splitting $TM=E^s\oplus E^c\oplus E^u$, where none of these bundles are trivial, and such that for every $p\in M$
\begin{align*}
\norm{D_pf|_{E^s}}< \min\left\{1, m(D_pf|_{E^c})\right\}\leq \max\left\{1, \norm{D_pf|_{E^c}}\right\}<m(D_pf|_{E^u}),
\end{align*}
where \(m(D_pf|_{E^*})=\norm{\left(D_{p}f\right)^{-1}|_{E^*}}^{-1}\), for $*=c,u$,  is the conorm.
We denote by $\PH[r]$ the set of partially hyperbolic diffeomorphisms of $M$ of class $C^r$.
\end{definition}

The distributions $E^s, E^c, E^u$ are called the \emph{stable}, \emph{center} and \emph{unstable} bundles of $f$, while 
\[
E^{cs}=E^s\oplus E^c\textrm{ and } E^{cu}=E^c\oplus E^{u}
\]
are the \emph{center stable} and \emph{center unstable} bundles. It is well known that $\PH[r]$ is $C^1$ open in $\diffM$. For $f\in \PH[2]$ its invariant distributions are at least $\alpha$-Hölder continuous, for some $\alpha\in (0,1)$. Also, due to the classical Stable Manifold Theorem, if $f\in \PH[r], r\geq 1$,  then $E^s, E^u$ are uniquely integrable to continuous foliations  $\Fs=\Fs_f=\{\Ws{p}\}_{p\in M}, \Fu=\Fu_f=\{\Wu{p}\}_{p\in M}$, whose leaves are of class $C^r$. See \cite{PesinLect,CrovisierPotrie2015} for these and other basic properties of partially hyperbolic systems. 

Given $p\in M$ and $R>0$ we write $\Wu{p,R}$ to be the piece of strong unstable manifold of size $R$. If we need to make the diffeomorphism explicit, we will write $\Wu{p,R,f}$.

In contrast, the center bundle is not always integrable, although it is a condition that typically holds in concrete applications. We will not assume that $E^c$ integrates.

\begin{notation}
Given  $f\in \PH, p\in M, *\in\{s,c,u\}$,  we write $D^*_pf=D_pf|_{E^*}$. 
\end{notation}

\begin{definition}[$u$-Center Bunching]\label{def:bunching}
We say that $f$ is $u-$center bunched if for every $p\in M$,
\[
  \frac{\norm{D^c_pf}}{\conorm{D^c_pf}}<\conorm{D^u_pf}.
\]
The set of all $u$-center bunched partially hyperbolic diffeomorphims is denoted by  $\PHb$.
\end{definition}

\begin{remark}  
Usually, in partially hyperbolic dynamics, people study a stronger condition called \emph{center bunching}, which on top of the inequality above, it also requires that 
\[
\norm{D^s_pf}<\frac{\conorm{D^c_pf}}{\norm{D^c_pf}}.
\]
As in this paper most of our analysis happens inside center unstable directions, we will only need $u$-center bunching.
\end{remark}

It is direct to check that $\PHb \subset \diffM$ is $C^1$ open. The main consequence of $u$-center bunching that we will use is the existence of linear holonomies for the center bundle $E^c$ (and its projectivization) along unstable manifolds. Let us explain what we mean by this.

\begin{notation}
   If $f\in \PH$ is fixed, then $p\sim_u q$ means that $p\in\Wu{q}$. Similarly, $p\sim_s q$ means that $p\in \Ws{q}$.
\end{notation}

\begin{definition}\label{def:linearholonomy}
 Fix $f\in \PH$, and let $\pi:\cE\to M$ be either a continuous vector bundle or a continuous fiber bundle whose fibers are smooth. Denote by $\operatorname{Bun}(\cE)=\{L_{q,p}: \cE(p)\to \cE(q)\}_{p,q\in M}$ the corresponding linear maps, or diffeomorphisms, between fiber. Let also $F:\cE\to\cE$ be a continuous bundle map over $f$.

 An invariant  unstable holonomy for $F$ is a continuous $\operatorname{Bun}(\cE)$-valued cocycle over the relation $\sim_u$, that is, a family of isomorphisms $\{\hu_{p,q}:\cE(q)\to\cE(p):p,q\in M, p\sim_u q\}$ satisfying:
 \begin{enumerate}
     \item $\hu_{p,p} = Id$, and if $q\sim_u p$ and $z\sim_uq$, the $\hu_{z,p} = \hu_{z,q}\circ\hu_{q,p}$;
     \item $\hu_{q,p}$ depends continuously on the choice of $p$ and $q$ such that $q\sim_u p$;
     \item if $p\sim_u q$ then  $\hu_{f(q),f(p)}=F\circ \hu_{q,p}\circ F^{-1}$.
 \end{enumerate}  
\end{definition}

We are interested in two cases: $\cE=E^c$ and $F=D^cf$, or $\cE=\Pp E^c$ (the projectivization of $E^c$) and $F=\Pp D^cf$, the induced action of $D^cf$ on $\Pp E^c$.

\begin{theorem}[Saghin, \cite{SAGHIN_2024}]\label{thm:saghin}
  If $f\in \PHb, r\geq 2$, then both  $D^cf:E^c\to E^c$ and $\Pp D^cf:\Pp E^c\to \Pp E^c$ admit invariant unstable holonomies.   Moreover, for any $R>0$, if $(f_n)_{n\in\Nat} \subset \PHb$ is a $C^2$-bounded sequence converging in the $C^1$-topology to $f$, and $(p_n)_n, (q_n)_n$ are sequences of points in $M$ with $q_n\in W^u_R(p_n,f_n)$, the unstable manifold for $f_n$ of size $R$ centered at $p_n$, $p_n \xrightarrow[n\mapsto\oo]{}p, q_n \xrightarrow[n\mapsto\oo]{}q$, then $\hu_{q_n,p_n, f_n}$ converges to $\hu_{q,p, f}$. 
\end{theorem}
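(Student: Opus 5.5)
The holonomy will be built as a limit of transported derivative maps along backward orbits, and the continuous dependence on $(f,p,q)$ will come from the same estimates being uniform in the $C^2$ data. Fix $p\sim_u q$ with $q\in\Wu{p,R}$. Choose a continuous (e.g.\ H\"older) family of linear identifications $I_{y,x}:E^c_x\to E^c_y$ for nearby $x,y$, with $I_{x,x}=\mathrm{Id}$ and $\norm{I_{y,x}-\mathrm{Id}}\lesssim d(x,y)^\alpha$ in local charts. This is possible because $E^c$ is $\alpha$-H\"older. Set $p_n=f^{-n}(p)$ and $q_n=f^{-n}(q)$; then $d(p_n,q_n)\le C\lambda_u^{-n}d(p,q)$, where $\lambda_u$ is the weakest unstable rate. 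Define
\[
H_n=D^c_{q_n}f^{n}\circ I_{q_n,p_n}\circ \bigl(D^c_{p_n}f^{n}\bigr)^{-1}:E^c_p\to E^c_q .
\]
We will show that $H_n$ is Cauchy and set $\hu_{q,p}=\lim_n H_n$.

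The key step is the estimate on $H_{n+1}-H_n$. It reduces to the one-step defect $\norm{D^c_{q_{n+1}}f\circ I_{q_{n+1},p_{n+1}}-I_{q_n,p_n}\circ D^c_{p_{n+1}}f}\lesssim d(p_{n+1},q_{n+1})^\alpha$. This is where $C^2$ enters: $Df$ is Lipschitz and $E^c$ is H\"older. The defect is then conjugated by the center derivatives along the two orbits. One gets
\[
\norm{H_{n+1}-H_n}\lesssim \norm{D^c_{q_{n}}f^{n}}\,\norm{(D^c_{p_{n}}f^{n})^{-1}}\,\lambda_u^{-\alpha n}.
\]
The product of the two center norms is controlled by $\prod \norm{D^cf}/\conorm{D^cf}$ along nearby orbits. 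This comparison is legitimate because the two orbits are exponentially close and $f$ is $C^2$, so the bounded distortion products converge. By $u$-center bunching in \Cref{def:bunching} and compactness, there is $\theta<1$ with $(\norm{D^cf}/\conorm{D^cf})\,\conorm{D^uf}^{-\beta}\le\theta$ for some $\beta$ close to $1$.

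The main obstacle is that the H\"older exponent $\alpha$ of $E^c$ may be smaller than $\beta$. Bunching then does not directly beat the factor $\lambda_u^{-\alpha n}$. I would handle this in one of two ways. The first option is to apply the argument to a sufficiently high iterate. The second is to use the known fact that under $u$-center bunching $E^c$ (indeed $E^{cu}$) is H\"older along unstable leaves with exponent close to $1$, obtained by the standard $C^r$-section theorem argument restricted to $\Wu{p}$; then only regularity along $\Wu{p}$ is needed, which suffices since $p,q$ lie on the same leaf. With this the series converges geometrically, uniformly for $q\in\Wu{p,R}$, and $\hu_{q,p}$ is invertible because the analogous limit in the reverse direction gives its inverse.

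The remaining properties follow from the construction. The cocycle identity and $\hu_{p,p}=\mathrm{Id}$ follow because $I_{z,q}I_{q,p}-I_{z,p}$ is a small defect absorbed by the same estimate. Invariance $\hu_{f(q),f(p)}=D^cf\circ\hu_{q,p}\circ (D^cf)^{-1}$ follows by shifting the index $n$. Since $D^cf$ is linear, $\hu$ commutes with scalars, so it descends to $\Pp E^c$ and gives the projective holonomy for $\Pp D^cf$. For the perturbative statement, note that all constants ($\theta$, the H\"older constants, the distortion bounds) are uniform over a $C^2$-bounded family that is $C^1$-close to $f$, by $C^1$-openness of bunching. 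Each $H_n$ depends continuously on $(f_n,p_n,q_n)$ in the $C^1$ topology, since unstable manifolds and $E^c$ vary continuously. A uniform tail bound plus convergence of each finite stage then gives $\hu_{q_n,p_n,f_n}\to\hu_{q,p,f}$. The same uniform convergence proves continuity of $\hu_{q,p}$ in $(p,q)$ for the fixed map $f$.
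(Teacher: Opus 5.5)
The paper does not prove this statement; it imports it from \cite{SAGHIN_2024}. Judged on its own, your proposal has a genuine gap exactly at the step you identify as the main obstacle, and neither of your remedies closes it.

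Passing to an iterate does nothing. The bundle $E^c$ has the same H\"older exponent $\alpha$ for $f^k$, while $\log(\norm{D^cf^k}/\conorm{D^cf^k})$ and $\log\conorm{D^uf^k}$ both scale linearly in $k$ (think of constant rates). So if $f$ is $u$-center bunched but $\conorm{D^uf}^{\alpha}<\norm{D^cf}/\conorm{D^cf}$, your bound $\sum_n(\norm{D^cf}/\conorm{D^cf})^n\lambda_u^{-\alpha n}$ fails to be summable for every iterate too.

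The ``known fact'' you invoke is true for $E^{cu}$ and $E^u$ but false for $E^c$. The section-theorem argument along $\Wu{p}$, where $f^{-1}$ contracts, shows that $E^{cu}$ is even $C^1$ along unstable leaves, with no bunching needed. But $E^c=E^{cu}\cap E^{cs}$ is only as regular along $\Wu{p}$ as $E^{cs}$ is. That regularity is governed by a condition of the form $\frac{\norm{D^{cs}f}}{\conorm{D^uf}}\norm{D^uf}^{\alpha}<1$, which is unrelated to $u$-center bunching and can force a small exponent. So with an $\alpha$-H\"older identification your one-step defect really is of order $d^\alpha$, and your estimate yields no convergence.

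The same issue breaks your cocycle step. The defect $I_{z,q}I_{q,p}-I_{z,p}$ is only $O(d^\alpha)$ and is not absorbed after conjugation by $D^cf^n$. If you instead take $I$ from a local frame of $E^c$, the cocycle identity becomes exact, but the dynamical defect is again only $\alpha$-H\"older. Resolving this tension is the real content of the theorem.

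What is missing is an identification whose dynamical defect involves only $Df$, which is Lipschitz since $f\in C^2$, and not the H\"older bundle $E^c$. For example, let $I_{q,p}=\pi^c_q\circ P_{q,p}|_{E^c_p}$. Here $P$ is a smooth transport of $TM$ (the identity in a chart), and $\pi^c$ is the projection onto $E^c$ along $E^s\oplus E^u$.

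Because $\pi^c$ commutes with $Df$, the one-step defect equals $\pi^c_{q_n}\circ(D_{q_{n+1}}f\circ P_{q_{n+1},p_{n+1}}-P_{q_n,p_n}\circ D_{p_{n+1}}f)$ on $E^c_{p_{n+1}}$. This is $O(d(p_{n+1},q_{n+1}))$, so exponent-one bunching now gives geometric convergence and $\norm{\hu_{q,p}-I_{q,p}}\le C\,d(p,q)$.

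For the cocycle identity, use that $E^{cu}$ and $E^u$ are $C^1$ along unstable leaves. Take $v\in E^c_p$ and $p,q,z$ on one leaf. The $E^s_q$-component of $v$ is $O(d)$, by the regularity of $E^{cu}$. The map $\pi^c_z$ sends $E^u_q$ to vectors of size $O(d)$, by the regularity of $E^u$. Hence $I_{z,q}I_{q,p}-I_{z,p}=O(d)$, and invariance plus bunching upgrade this to equality. Equivalently, you can run the usual fiber-bunched argument for the quotient cocycle on $E^{cu}/E^u\cong E^c$, which is $C^1$ along unstable leaves, and transport the result back.

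With this repair, your remaining steps go through as you describe: invariance by shifting the index, descent to $\Pp E^c$, and continuity in $(f_n,p_n,q_n)$ through uniform tail bounds over $C^2$-bounded, $C^1$-convergent families.
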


\begin{notation}
   In both cases, the corresponding  unstable holonomy will be denoted as $\hu$.
\end{notation}

\begin{remark}\label{rem:meaningconvergenceC1}
In the theorem above, the convergence is understood in the following sense. Fix a foliation cube $U$ of $\Fu_f$, and consider a $C^{\oo}$ family of disks $\{\Sigma_x:x\in U\}$ that are transverse to $\Fu_f|_U$, each $\Sigma_x$ centered at $x$. Using the chart in $U$ we get a representation of the holonomy as locally defined maps in $\Real^{cs}$, $cs=\dim M-\dim E^u$. For $n$ large, the disks $\Sigma$ are also transverse to $\Fu_{f_n}$, and therefore we also get a representation of the correponding holonomies as maps in $\Real^h$. The $C^1$ convergence in the theorem refers as usual $C^1$ convergence of the corresponding holonomy maps in these charts. By compactness, this is independent of the foliation chart considered, provided it is sufficiently small.
\end{remark}

\subsection{\texorpdfstring{$u-$}{u}Gibbs measures and SRB measures} 
\label{sub:uGibbs}

 Understanding the existence and finiteness of physical measures in the partially hypebolic setting is closely related to understanding the so-called $u$-Gibbs measures. It is known that $u$-Gibbs measures always exist, they have good compactness properties, and every physical measure must be $u$-Gibbs. So, the set of such measures become a natural place for investigating physical measures.

\begin{notation}
  We denote by $\pr_f(M)$ the set of $f$-invariant Borel probability measures. It is equipped with the weak* topology.
\end{notation}

Let $f\in \PH[r]$, $r\geq 2$ and $\mu\in \pr_f(M)$.  Given a foliated box $U$ of $\Fu$, we have that the foliation $\Fu$ restricted to $U$, $\Fu|_{U}$, gives a $\mu$-measurable partition. In particular, we can disintegrate $\mu$ along the elements of this partition. So, there exists $U'\subset U$ with $\mu(U\setminus U')=0$ and a essentially unique family of probability measures $\{\mu^{U,\Fu}_p\}_{p\in U'}$, where $\mu^{U,\Fu}_p$ is supported on $\Wuloc{p}\cap U$, the plaque containing $p$, such that
\[
\mu|_U=\int_U \mu^{U,\Fu}_p\der\mu(p).
\]
We say that $\mu$ has \emph{absolutely continuous disintegration along $\Fu$} if there exists a covering $\{U_1,\cdots, U_k\}$ of $M$ so that for $\mu$ almost every $p\in M$, with $p\in U_i$, it holds that $\mu^{U_i,\Fu}_p\ll\Leb^u_p$, where $\Leb^u_p$ is the Lebesgue measure on $\Wu{p}$. This condition does not depend on the chosen covering.

\begin{definition}[$u$-Gibbs measures]\label{def:ugibbs}
An invariant probability measure $\mu$ for $f$ is a $u-$Gibbs measure if it has absolutely continuous disintegration along $\Fu_f$.
\end{definition}

We denote by $\Gibbs$ the set of $u-$Gibbs measures of $f$. The set of $u$-Gibbs measure is convex and compact. Moreover, the ergodic decomposition of any $u$-Gibbs measure is formed by $u$-Gibbs measures.  Such measures always exist. It is also known that for a $C^2$ (or $C^{1+\alpha}$) partially hyperbolic diffeomorphism $f$, there exists a set $X$ of full volume such that for any $x\in X$, any invariant measure $\mu$ which is a limit measure of the sequence 
\[
\frac{1}{n} \sum_{j=0}^{n-1} \delta_{f^j(x)},
\]
is a $u$-Gibbs measure (see Theorem 11.16 in \cite{Beyond}).
 So, in order to understand existence of physical  measures, it is natural to study such measures.

From now on, let us assume that all of the partially hyperbolic systems we are considering have two dimensional center.  Recall that, given an ergodic measure $\mu$, there are numbers $\lambda^{-c}(\mu) \leq \lambda^{+c}(\mu)$, which are the bottom and top center Lyapunov exponents, respectively. If we assume that $\lambda^{-c}(\mu) < \lambda^{+c}(\mu)$, then,  for $\mu$-almost every $p$, there is a decomposition $E^c_p = E^{-c}(p) \oplus E^{+c}_p$ such that $D_pf(E^*_p) = E^*_{f(p)}$ and $ \lim_{n\to \pm \infty} \frac{1}{n} \log \|D_pf^n|_{E^*}\| = \lambda^*(\mu),$ for $* = -c,+c$, by the Oseledets Theorem.

Observe that the behavior in the center does not play a role for a measure to be $u$-Gibbs. If we also consider possible positive exponents along the center, we obtain the definition of an SRB measure. Let us be more precise.  For any ergodic $f$-invariant measure $\mu$, let us  write 
\[
J^u(\mu) \coloneqq \int \log \left|\det D^u_pf \right| \der\mu(p).
\]
If $\mu$ is an ergodic measure, then $J^u(\mu)$ coincides with the sum of the Lyapunov exponents of $(f,\mu)$ along $E^u$. In the non-ergodic case, it coincides with the integral of the sum of the Lyapunov exponents along $E^u$. Among all of the invariant measures, the following type has a distinguished role.

\begin{definition}[Hyperbolic SRB measures]\label{def:SRB}
An ergodic $f$-invariant measure $\mu$ is a hyperbolic SRB measure if $\lambda^{-c}(\mu)$, $\lambda^{+c}(\mu)\neq 0$ and satisfies 
\[
   h(\mu)=J^u(\mu)+\lambda^{+c}(\mu)\vee 0+\lambda^{-c}(\mu)\vee 0,
\]
where $h_\mu(f)$ is the metric entropy of $\mu$, and $a\vee b$ is the maximum between the numbers $a$ and $b$. 
\end{definition}

The definition is more technical than the case of $u$-Gibbs measures, and takes into account the possible non-uniform expanding directions in the center. The relation between Gibbs and SRBs measures can be elucidated further in our context with the help of Ledrappier--Young's result \cite{LedYoungI}. It turns out that, for a given (say, ergodic) invariant measure $\mu$, there is a lamination consisting of smooth leaves where typical points have an asymptotic expansion bounded from below by the minimum positive Lyapunov exponent. These leaves are called Pesin unstable manifolds. Again it is possible to locally decompose $\mu$ into conditionals along these submanifolds, and thus compare with the corresponding induced Lebesgue measures. A consequence of a theorem of Ledrappier (or, more generally, due to Ledrappier--Young if zero exponents are allowed) implies that a hyperbolic ergodic measure $\mu$ is an SRB measure if and only if its conditional measures along Pesin unstable submanifolds are absolutely continuous. For a more precise discussion the reader is referred to \cite{LedYoungI}, and to the excellent survey article \cite{whatareSRB}.

We remark that for  $f\in \PH$, with $r>1$, the foliation $\Fu$ is absolutely continuous (see for example Chapter 7 in \cite{PesinLect}), thus any SRB is in particular a $u-$Gibbs measure. On the other hand, suppose that $\mu$ is an ergodic $u$-Gibbs measure with at $\lambda^{-c}(\mu) < 0 < \lambda^{+c}(\mu)$:  then, for $\mu$ to be an SRB measure, it must admit absolutely continuous conditional measures along Pesin unstable manifolds, instead of just being absolutely continuous along strong unstable manifolds. Constructing examples of ergodic $u$-Gibbs measures that do not have the SRB property is not difficult. For example, consider $f,g:\Tor^2 \to \Tor^2$ an Anosov diffeomorphisms such that 
\[
\sup_{x\in \Tor^2} \|D_xg\| < \inf_{y\in \Tor^2} m(D_yf|_{E^u_f}),
\]
and 
\[
\sup_{y\in \Tor^2} m(D_yf|_{E^s_f}\| < \inf_{x\in \Tor^2} m(D_xg).
\]
Then, $f\times g$ is partially hyperbolic. Let $\mu_f$ be the unique SRB measure for $f$ and suppose that $p$ is a fixed point for $g$. Then
\[
\mu = \mu_f \otimes\delta_p
\]
is a $u$-Gibbs measure but it is not SRB for $f\times g$. 

\begin{remark}
A consequence of the absolute continuity of the Pesin stable lamination  is that if $\mu$ is a hyperbolic SRB measure then it is also a physical measure. See Theorem 3 in \cite{whatareSRB}.
\end{remark}

\section{Some basic consequences of the expanding on average property}
\label{sec:Somebasicconsequences}

Let $f\in \PHb[2]$ be an expanding on average system in the sense of \Cref{def:AUEP}. Since the constant $C$ in \eqref{eq.inequalityea} need not be unique, we define the optimal expanding on average constant of $f$ by
\begin{equation}\label{eq.optimalconstant}
    \CEA \coloneqq \sup\left\{C>0 : \eqref{eq.inequalityea} \text{ holds with constant } C\right\}.
\end{equation}
In practice $\CEA$ will be given by the infimum of the top center Lyapunov exponents among $u$-Gibbs measures.

 It is not surprising that if $f$ is expanding on average, then each of its $u$-Gibbs measures has at least one positive center exponent (\Cref{cor:Gibbshyperbolic}). To control the other center exponent it is convenient to introduce the quantity
\begin{equation}\label{def:Gap}
   \Gap \coloneqq \CEA-\sup_{p\in M} \limsup_{n\to\oo} \frac{1}{n}\log|\det D_p^cf^n|,
\end{equation}
which we will call \emph{exponent gap}. If $\Gap>0$, then every $\mu\in\Gibbs$ is hyperbolic. We remark that the case where both exponents are positive for Lebesgue almost every point is already present in the literature (the so called mostly expanding case). See \cite{ANDERSSON_2017}, and references therein. Clearly, if $f$ is expanding on average and $\sup_{p\in M}|\det D_p^cf|\leq 1$ (the conservative and dissipative cases along center) then $\Gap>0$.

\begin{notation}
  Recall that $\Leb_p^u$ denotes the induced Lebesgue measure on $\Wu{p}$. Often, we omit the reference to $p$ since it is clear from the context.
\end{notation}

For $p\in M, N\geq 1$ and $\eta>0$, set
\begin{align*}
&B_{N,\eta}(p)=\{q\in\Wu{p}:\frac1n\log\norm{D^c_qf^n}<\CEA -\eta, \forall n\geq N\}\\
&B_\eta(p)=\bigcup_{N\geq 1} B_{N,\eta}(p).
\end{align*}

\begin{proposition}\label{pro:weakexpansion}
Let $f\in \PHb[2]$ be expanding on average. For any $p\in M$ and $\eta>0$ it holds $\Leb^u(B_\eta(p))=0$.   
\end{proposition}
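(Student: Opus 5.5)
The plan is to argue by contradiction, using a Lebesgue density point of a bad set together with the expanding on average inequality applied to a small unstable disk centered at that point. First, $B_\eta(p)=\bigcup_N B_{N,\eta}(p)$ is an increasing union of measurable subsets of $\Wu{p}$: each $B_{N,\eta}(p)$ is a countable intersection of sets defined by continuous functions of $q$. So if $\Leb^u(B_\eta(p))>0$, then $\Leb^u(B_{N,\eta}(p))>0$ for some fixed $N$. Write $B=B_{N,\eta}(p)$.

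Next we choose the constants. Fix $C$ with $\max\{0,\CEA-\eta\}<C<\CEA$. By the definition \eqref{eq.optimalconstant} of $\CEA$ as a supremum, there is $C'>C$ for which \eqref{eq.inequalityea} holds. Since \eqref{eq.inequalityea} with $C'$ implies it with any smaller positive constant, it holds with $C$. Let $K=\max_{x\in M}\log\norm{D_xf}$, so that $\log\norm{D_qf^n w}\le nK$ for any unit vector $w$. Finally choose $\delta\in(0,1)$ so small that
\[
(1-\delta)(\CEA-\eta)+\delta K< C .
\]
This is possible because $\CEA-\eta<C$.

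Now we localize. By the Lebesgue density theorem on the Riemannian manifold $\Wu{p}$, $\Leb^u$-almost every point of $B$ is a density point. Take such a point $q_0\in B$. Since $\Wu{q_0}=\Wu{p}$, there is a small disk $D_{q_0}\subset\Wu{p}$ centered at $q_0$ with $\Leb^u(D_{q_0}\cap B)\ge(1-\delta)\Leb^u(D_{q_0})$. Take any unit vector $v\in E^c_{q_0}$; the vectors $\chi_{q,q_0}(v)$ are unit vectors in $E^c_q$. For $n\ge N$ we split the integral over $D_{q_0}$ into two parts.

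On $D_{q_0}\cap B$ we have
\[
\log\norm{D_qf^n\chi_{q,q_0}(v)}\le\log\norm{D^c_qf^n}<n(\CEA-\eta).
\]
On the complement $D_{q_0}\setminus B$, which has relative measure at most $\delta$, the integrand is bounded by $nK$. (If $\CEA-\eta<0$, the first part only gets smaller, so the estimate below still holds.) Dividing by $n\Leb^u(D_{q_0})$ gives
\[
\frac{1}{n\Leb^u(D_{q_0})}\int_{D_{q_0}}\log\norm{D_qf^n\chi_{q,q_0}(v)}\,\der\Leb^u(q)\le(1-\delta)(\CEA-\eta)+\delta K<C
\]
for all $n\ge N$. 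Hence the $\liminf$ in \eqref{eq.inequalityea} is strictly less than $C$, which contradicts the expanding on average inequality with constant $C$. Therefore $\Leb^u(B_\eta(p))=0$.

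The only mildly delicate points are bookkeeping. One must pass from $\CEA$ to a constant that is actually attained, which is why we work with $C<\CEA$. One must also handle the sign of the first part of the splitting, which is harmless because its bound $n(\CEA-\eta)$ already lies below $C$. The real content is the density-point localization: it is exactly what \Cref{def:AUEP} permits, since that definition quantifies over every disk in every unstable leaf and every center direction.
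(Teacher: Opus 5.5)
Your proof is correct, but it localizes differently from the paper.

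\textbf{How the paper localizes.} The paper never uses small disks. It invokes \Cref{lem:densitypt}, the density of forward images obtained from bounded distortion along unstable leaves. This gives $k$ and a disk $\Wu{q,r}$ of a \emph{fixed} radius $r$ in which $f^k(\tilde B)$ has relative measure at least $1-\epsilon$. The paper then transfers the rate bound from $x\in\tilde B$ to $f^k(x)$ via
\[
\frac1n\log\norm{D^c_{f^k(x)}f^n}\le\frac1n\log\norm{D^c_xf^{n+k}}-\frac1n\log\conorm{D^c_xf^k},
\]
which costs $\eta/2$ in the rate.

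\textbf{How you localize.} You use the ordinary Lebesgue density theorem on $\Wu{p}$. You then apply \eqref{eq.inequalityea} to a tiny disk centered at the density point itself. This is legitimate because \Cref{def:AUEP} quantifies over disks of every radius, and it removes the iteration, the distortion lemma and the rate transfer.

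\textbf{What each route buys.} The paper's route only ever tests the expanding on average inequality on disks of a prescribed size. So it would go through essentially verbatim under the fixed-radius formulation in item 2 of \Cref{rem:lengthdiscs}. Your route genuinely needs the inequality for arbitrarily small disks.

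\textbf{One bookkeeping point.} Write $a=\Leb^u(D_{q_0}\cap B)/\Leb^u(D_{q_0})\ge 1-\delta$. The bound
\[
a(\CEA-\eta)+(1-a)K\le(1-\delta)(\CEA-\eta)+\delta K
\]
rests on $K\ge\CEA-\eta$, not on the sign of $\CEA-\eta$. This does hold: the averages in \eqref{eq.inequalityea} are at most $K$, so $\CEA\le K$. Your parenthetical remark addresses the case that needs no argument rather than the one that does.
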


Given $r>0$ and a point $p\in M$, write $\Wu{p,r}$ to be the disk inside $\Wu{p}$ with radius $r$. The following is a consequence of the $C^2$ regularity of the map, which in turn implies bounded distortion along unstable leaves.

\begin{lemma}\label{lem:densitypt}
   Let $f\in \PH[2]$ be a partially hyperbolic system. Consider $p\in M$ and $r>0$. If $K \subset \Wu{p}$ is such that $\Leb^u(K)>0$, then there exist $(p_n)_{n\geq 0}$ so that
   $p_n\in f^n(K)$ and 
   \[
   \lim_{n\to\oo} \frac{\Leb^u(f^n(K) \cap \Wu{p_n,r})}{\Leb^u(\Wu{p_n,r})}=1.
   \]
\end{lemma}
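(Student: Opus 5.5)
The plan is to combine a Lebesgue density point argument inside $\Wu{p}$ with the uniform expansion and bounded distortion of $f^n$ along unstable leaves. Since $\Leb^u(K)>0$, the Lebesgue density theorem on the $C^2$ leaf $\Wu{p}$, with its induced Riemannian metric, gives a density point $x\in K$. For every $\delta>0$ there is then $\rho_0>0$ such that
\[
\frac{\Leb^u(K\cap \Wu{x,\rho})}{\Leb^u(\Wu{x,\rho})}>1-\delta\qquad\text{for all } \rho<\rho_0 .
\]
We take $p_n=f^n(x)\in f^n(K)$, so the task is to show that $f^n(K)$ occupies almost all of $\Wu{p_n,r}$ for large $n$.

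For each $n$ we pull back the target disk: let $D_n=f^{-n}(\Wu{p_n,r})\subset\Wu{x}$. Because $f^{-1}$ uniformly contracts unstable leaves, $\diam D_n\le C\lambda^{-n}r$ for some $\lambda>1$, so $D_n$ shrinks to $x$. We need more than shrinking, though. The sets $D_n$ must be \emph{regular} (comparable to balls), with $\Wu{x,c\rho_n}\subset D_n\subset\Wu{x,\rho_n}$ for some $\rho_n\to0$ and a constant $c>0$ independent of $n$. If this holds, the density point property upgrades to $\Leb^u(K\cap D_n)/\Leb^u(D_n)\to1$: the complement $D_n\setminus K$ sits inside $\Wu{x,\rho_n}\setminus K$, whose measure is $o(\rho_n^{\dim E^u})$, while $\Leb^u(D_n)\gtrsim (c\rho_n)^{\dim E^u}$.

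Next we push forward with bounded distortion. Since $f$ is $C^2$ and $E^u$ is Hölder, the standard estimate gives a constant $C_0$, independent of $n$ and of $y,z$, such that for all $y,z$ in $D_n$,
\[
\abs{\log|\det D^u_yf^n|-\log|\det D^u_zf^n|}\le C_0 .
\]
This uses that the backward iterates $f^{-k}(\Wu{p_n,r})$ have summable exponentially decreasing diameters. Consequently
\[
\frac{\Leb^u(\Wu{p_n,r}\setminus f^n(K))}{\Leb^u(\Wu{p_n,r})}\le e^{C_0}\frac{\Leb^u(D_n\setminus K)}{\Leb^u(D_n)}\longrightarrow0,
\]
which is the claim.

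The main obstacle is the regularity of the sets $D_n$ when $\dim E^u>1$. The map $f^{-n}$ may contract different unstable directions at different rates, so $D_n$ can be a thin, eccentric set rather than a ball, and density along balls does not directly give density along $D_n$. The first thing I would try is to avoid eccentricity altogether: choose a Vitali-type covering adapted to the family $\{f^{-n}\Wu{y,r}\}$, or use a martingale/density theorem for the increasing partitions $f^{-n}$ of a Markov-like cover, in the spirit of the construction of $u$-Gibbs measures. If the unstable bundle is conformal (for instance $\dim E^u=1$), the direct argument above suffices. In general, I would replace the fixed point $x$ by a sequence: cover $K$ by the sets $f^{-n}(\Wu{y,r})$, $y\in f^n(K)$, apply a Besicovitch/Vitali covering argument relative to this differentiation basis, and extract for each $n$ a point $p_n$ with the required density ratio. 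This is exactly the freedom the statement allows, since $p_n$ is not required to be the image of a single point.
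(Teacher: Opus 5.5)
Your density-point argument works when the pulled-back disks $D_n=f^{-n}(\Wu{f^n(x),r})$ are uniformly comparable to balls centred at $x$, for instance when $\dim E^u=1$, where bounded distortion makes the two components of $D_n\setminus\{x\}$ of comparable length. But the lemma is stated, and used in \Cref{pro:weakexpansion}, for arbitrary $\dim E^u$. There the claim $\Wu{x,c\rho_n}\subset D_n\subset\Wu{x,\rho_n}$ with $c$ independent of $n$ is false. Already for a linear map whose unstable bundle carries two eigenvalues $\lambda_1>\lambda_2>1$, the $D_n$ are ellipses of eccentricity $(\lambda_1/\lambda_2)^n$. For nonlinear $f$ their shape and orientation also drift with $n$, so nothing guarantees that a Lebesgue density point of $K$ is a density point along $(D_n)_n$. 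Your fallback is only announced, and the tools you name do not fit. Besicovitch's covering theorem is about balls (or sets of bounded eccentricity) and fails for such families. Regarding $\{f^{-n}(\Wu{y,r})\}$ as a differentiation basis asks for precisely the density theorem that is not available. So the general case is not proved as written.

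The missing idea is that no differentiation theorem is needed: an averaging argument at a single fixed time $n$ suffices, which is exactly why the statement lets $p_n$ depend on $n$. Assume $K$ is bounded, fix $\varepsilon>0$, and choose $K'\subset K$ compact and $U\supset K$ open in $\Wu{p}$ with $\Leb^u(U\setminus K')<\varepsilon\,\Leb^u(K')$. Uniform backward contraction along unstable leaves gives $n_0$ such that $f^{-n}(\Wu{z,r})\subset U$ for every $n\geq n_0$ and every $z\in f^n(K')$. For such $n$, take a maximal $r$-separated set $\{z_i\}\subset f^n(K')$. Exactly as in \Cref{lemma.covering}, the disks $\Wu{z_i,r}$ cover $f^n(K')$ with multiplicity at most a uniform constant $B$. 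Hence so do the sets $D_i=f^{-n}(\Wu{z_i,r})$, and $K'\subset\bigcup_i D_i\subset U$. Consequently
\[
\sum_i\Leb^u(D_i\setminus K')\leq B\,\Leb^u(U\setminus K')<B\varepsilon\,\Leb^u(K')\leq B\varepsilon\sum_i\Leb^u(D_i),
\]
so some $i$ satisfies $\Leb^u(D_i\setminus K)<B\varepsilon\,\Leb^u(D_i)$. Your bounded distortion estimate converts this into $\Leb^u(\Wu{z_i,r}\setminus f^n(K))\leq e^{C_0}B\varepsilon\,\Leb^u(\Wu{z_i,r})$. Set $p_n=z_i\in f^n(K)$, and run the argument with $\varepsilon=\varepsilon_k\to0$ and increasing thresholds $n_0(\varepsilon_k)$; this produces the required sequence. (The paper itself does not prove the lemma; it quotes it as a standard bounded-distortion fact.)
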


See for example Proposition $7.8$ \cite{Viana1999}.

\smallskip

\begin{proof}[Proof of \Cref{pro:weakexpansion}]

  Suppose the conclusion of \Cref{pro:weakexpansion} does not hold. So, there exist $p\in M$, $\eta>0$ and $N\in \mathbb{N}$ such that $\tilde B:= B_{N,\eta}(p)$ has positive $\Leb^u$-measure. We can suppose that $\eta$ is small enough so that 
  \[
  \CEA - \frac{\eta}{2} >0. 
  \]  
  For $0<\epsilon<\frac{\eta}{2\log\norm{D^cf}_{\oo}}$ and $r>0$, by \Cref{lem:densitypt}, we can take $k\in\Nat$ large and $q\in f^k(\tilde B)$ so that
\[
   \frac{\Leb^u(f^k(\tilde B) \cap \Wu{q,r})}{\Leb^u(\Wu{q,r})}\geq 1-\epsilon.
\]
Note that for $x\in \tilde B, n\geq N$,
\begin{align*}
\frac{1}{n}\log\norm{D^c_{f^k(x)}f^{n}}&=\frac{1}{n}\log\norm{D^c_xf^{n+k} \circ \left(D^c_xf^k\right)^{-1}}\leq \frac{1}{n}\log\norm{D^c_xf^{n+k}}-\frac{1}{n}\log\conorm{D^c_xf^k}\\
&\leq \CEA-\eta-\frac{1}{n}\log\conorm{D^c_xf^k}.
\end{align*}
Since $k$ is fixed, for some $N_1\geq N$, we can guarantee that $n\geq N_1$ implies
\[
   \frac{1}{n}\log\norm{D^c_{f^k(x)}f^{n}}<\CEA-\frac{\eta}{2}.
\]
This in turn implies that if $v\in E^c_q, D=\Wu{q,r}$ and $n\geq N_1$,

\begin{align*}
&\frac{1}{n\Leb^u(D)}
\int_D
\log \norm{D^c_{q'}f^n\cdot \chi_{q',q}(v)}
\der\Leb^u(q')
\\
&\quad =
\frac{1}{n\Leb^u(D)}
\left(
\int_{D\cap f^k(\tilde B)}
+
\int_{D\setminus f^k(\tilde B)}
\right)
\log \norm{D^c_{q'}f^n\cdot \chi_{q',q}(v)}
\der\Leb^u(q')
\\
&\quad \leq
\frac{\Leb^u(D\cap f^k(\tilde B))}{\Leb^u(D)}
\left(\CEA-\frac{\eta}{2}\right)
+
\frac{\Leb^u(D\setminus f^k(\tilde B))}{\Leb^u(D)}
\log\norm{D^cf}_{\oo}
\\
&\quad <
\CEA-\frac{\eta}{2}
+\epsilon\log\norm{D^cf}_{\oo}
<
\CEA.
\end{align*}

The above inequality contradicts the fact that 
\[
   \liminf_{n\to\oo} \frac{1}{n\Leb^u(D)} \int_{D} \log \norm{D^c_{q'}f^n\cdot \chi_{q',q}(v)} \der\Leb^u(q')\geq\CEA,
\]
which is the condition given by the expanding on average property. Thus,  $\Leb^u(B_\eta(p))=0$ as we wanted to show.
\end{proof}
Observe that if $0< \eta_1\leq \eta_2$, then $B_{\eta_2}(p) \subseteq B_{\eta_1}(p)$. In particular, \Cref{pro:weakexpansion} implies that for every $p \in M$, the set
\[
\displaystyle B(p) := \bigcup_{k\in \Nat} B_{\frac{1}{k}}(p)
\]
has zero $\Leb^u$-measure. This gives the following lemma.

\begin{lemma}\label{lem:expansioncontraction}
Let $f\in \PHb[2]$ be expanding on average with $\Gap>0$. For every $p\in M$, $\Leb^u$-almost every $q\in\Wu{p}$ it holds
\begin{align}
&\limsup_{n\to\oo} \frac{1}{n}\log\norm{D^c_qf^n}\geq \CEA\\
&\liminf_{n\to\oo} \frac{1}{n}\log\conorm{D^c_qf^n}\leq  -\Gap.
\end{align}
\end{lemma}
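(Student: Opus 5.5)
The first inequality comes straight from \Cref{pro:weakexpansion}. By the remark just after it, the set $B(p)=\bigcup_{k} B_{1/k}(p)$ has zero $\Leb^u$-measure. Fix $q\in\Wu{p}\setminus B(p)$. Then for every $k$ and every $N$ there is some $n\geq N$ with
\[
\frac1n\log\norm{D^c_qf^n}\geq \CEA-\tfrac1k .
\]
Hence $\limsup_n \frac1n\log\norm{D^c_qf^n}\geq \CEA-\frac1k$ for all $k$, and letting $k\to\oo$ gives the first claim. This holds on the full-measure set $\Wu{p}\setminus B(p)$.

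For the second inequality I use that the center is two-dimensional. For a linear map $A$ on a $2$-dimensional space, $\abs{\det A}=\norm{A}\,\conorm{A}$, since both sides equal the product of the two singular values. Applying this to $A=D^c_qf^n$ gives
\[
\log\conorm{D^c_qf^n}=\log\abs{\det D^c_qf^n}-\log\norm{D^c_qf^n}.
\]
Set $L:=\sup_{x\in M}\limsup_{n}\frac1n\log\abs{\det D^c_xf^n}$, so that $\Gap=\CEA-L$ by \eqref{def:Gap}. For each $\delta>0$ there is $N_\delta(q)$ with $\frac1n\log\abs{\det D^c_qf^n}\leq L+\delta$ for all $n\geq N_\delta(q)$. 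Now take the sequence $n_j\to\oo$ along which $\frac1{n_j}\log\norm{D^c_qf^{n_j}}\to\limsup\geq\CEA$, which exists by the first part. Along it,
\[
\frac1{n_j}\log\conorm{D^c_qf^{n_j}}\leq L+\delta-\frac1{n_j}\log\norm{D^c_qf^{n_j}} .
\]
Therefore $\liminf_n\frac1n\log\conorm{D^c_qf^n}\leq L+\delta-\CEA=-\Gap+\delta$. Since $\delta$ is arbitrary, the second claim follows on the same full-measure set.

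The only subtle point is this passage from $\limsup$ to $\liminf$. We have no control on the determinant other than an eventual upper bound. That bound holds for all large $n$, not just along a subsequence, so it can be combined with the subsequence realizing the $\limsup$ of the norm, and this is what produces the required $\liminf$ of the conorm. The hypothesis $\Gap>0$ is not actually needed for the inequalities themselves; it only makes the second one say that $\liminf_n\frac1n\log\conorm{D^c_qf^n}<0$.
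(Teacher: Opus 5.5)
Your proof is correct and follows the paper's route. The first inequality comes from \Cref{pro:weakexpansion} via the null set $B(p)=\bigcup_k B_{1/k}(p)$. The second comes from the two-dimensional identity $\abs{\det D^c_qf^n}=\norm{D^c_qf^n}\,\conorm{D^c_qf^n}$ together with the definition of $\Gap$, as the paper does explicitly in the proof of \Cref{lem:expansioncontraction2}; your remark that $\Gap>0$ is not needed for the inequalities themselves is also right.
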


In \Cref{lem:expansioncontraction2}, the estimates above are strengthened. An immediate consequence of \Cref{lem:expansioncontraction} is the following.

\begin{corollary}\label{cor:Gibbshyperbolic}
   Let $f\in \PHb[2]$ be expanding on average with $\Gap>0$. If $\mu\in\Gibbs$, then $\lambda^{+c}(\mu)\geq \CEA$ and $\lambda^{-c}(\mu)\leq -\Gap<0$. In particular, every ergodic $u$-Gibbs measure of $f$ is a hyperbolic measure, with exponents uniformly away from zero.
\end{corollary}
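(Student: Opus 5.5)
Since $\Gibbs$ is convex, compact and closed under ergodic decomposition, and $\lambda^{\pm c}(\mu)=\int\lambda^{\pm c}\,\der\mu$ for non-ergodic $\mu$, it suffices to prove the two inequalities for an ergodic $\mu\in\Gibbs$. The general case then follows by integrating over the ergodic decomposition. The last assertion is immediate from the inequalities, because $\CEA>0$ and $\Gap>0$.

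Fix an ergodic $\mu\in\Gibbs$. By Furstenberg--Kesten (or Oseledets) there is a set $G$ with $\mu(G)=1$ on which $\frac1n\log\norm{D^c_qf^n}\to\lambda^{+c}(\mu)$ and $\frac1n\log\conorm{D^c_qf^n}\to\lambda^{-c}(\mu)$. Here I use that in dimension two, $\frac1n\log\conorm{D^c_qf^n}$ converges to the bottom exponent; this follows from $\conorm{D^c_qf^n}=|\det D^c_qf^n|/\norm{D^c_qf^n}$ together with the Birkhoff theorem for $\log|\det D^cf|$.

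Next I use the absolute continuity of the disintegration. Take a foliated box $U$ with $\mu(U)>0$. Since $\mu|_U=\int\mu^{U,\Fu}_p\,\der\mu(p)$, there is some $p$ with $\mu^{U,\Fu}_p(G)=1$ and $\mu^{U,\Fu}_p\ll\Leb^u_p$. Because $\mu^{U,\Fu}_p$ is a probability measure, the set $G\cap\Wu{p}$ has positive $\Leb^u$-measure.

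By \Cref{lem:expansioncontraction}, $\Leb^u$-almost every point of $\Wu{p}$ satisfies the limsup and liminf bounds stated there. So there exists $q\in G$ satisfying both bounds. At such a $q$ the limits exist, so
\[
\lambda^{+c}(\mu)=\lim_{n}\tfrac1n\log\norm{D^c_qf^n}\ge \CEA \quad\text{and}\quad \lambda^{-c}(\mu)=\lim_n\tfrac1n\log\conorm{D^c_qf^n}\le-\Gap .
\]

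The only step needing care is the passage from "full $\mu$-measure" to "positive $\Leb^u$-measure on a single leaf". This step relies only on the definition of $u$-Gibbs: the absolutely continuous conditionals give a leaf on which the Oseledets-regular set has positive Lebesgue measure. Once that is in hand, \Cref{lem:expansioncontraction}, being valid on every leaf, supplies the required point $q$.
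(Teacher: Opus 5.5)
Your proof is correct and follows the same route as the paper. Both arguments reduce to ergodic components, then use the absolute continuity of the unstable conditionals to pass from a $\mu$-full set of Oseledets-regular points to a set of positive $\Leb^u$-measure on a single leaf, and finally invoke \Cref{lem:expansioncontraction} to pick a regular point where both bounds hold. The difference is that you make explicit the details the paper leaves implicit, such as the identity $\conorm{D^c_qf^n}=|\det D^c_qf^n|/\norm{D^c_qf^n}$ and the choice of a leaf carrying positive Lebesgue measure of regular points.
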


\begin{proof}
    Let $\mu \in \Gibbs$ be an ergodic $u$-Gibbs measure.  Then, 
\begin{align}
\lambda^{+c}(\mu)=&\lim_{n\to\oo} \frac{1}{n}\log\norm{D^c_qf^n}\geq \CEA\\
\lambda^{-c}(\mu)=&\lim_{n\to\oo} \frac{1}{n}\log\conorm{D^c_qf^n}\leq  -\Gap.
\end{align}
Since conditional measures along unstables are equivalent to the $\Leb^u$ on the corresponding leaf. The result is then a direct consequence of \Cref{lem:expansioncontraction}. If $\mu$ is not ergodic, the result follows by working with ergodic components.    
\end{proof}

\section{Projective \texorpdfstring{$u-$}{u}Gibbs measures} \label{section.projectiveugibbs}

In this section we will study the dynamics on the projective center bundle. We will also define and study the notion of projective $u$-Gibbs measures.

\subsection{Linear holonomies and projective dynamics}
\label{subsection.projectivedynamics}

Let $f\in \PHb[2]$, consider $\hM=\mathbb{P}E^c$, the projectivization of $E^c$, and denote $\hpi: \hM \to M$ the natural projection; a point $\widehat{p} \in \hM$ is given by a pair $(p,[v])$ where $p\in M$ and $[v] \in \mathbb{P}E^c(p)$. There is a natural action induced by $f$ on $\hM$ given by $\hf(p,[v]) = (f(p), [ D_pfv])$.  Since the center bundle in principle is only Hölder continuous, $\hM$ is  a Hölder continuous bundle and $\hf$ is a Hölder continuous map. On the other hand, $\hf$ carries some extra structure and morally looks like a partially hyperbolic system.

By \Cref{thm:saghin}, there exist $u$-holonomy $\hu$. Given $\hat{p} \in \hM$, define 
\[
\hWu{\hat p} \coloneqq \{\hu_{q,p}(\hat p):q\sim_u p\},
\]
where $p = \hpi(\hp).$
We call this set the \emph{projective unstable manifold} of $\hat{p}$. Note that $\hWu{\hat p}$ is a Hölder manifold of topological dimension $\dim E^u$, and $\hpi|_{\hWu{\hat p}}:\hWu{\hat p}\to\Wu{\hpi(\hat p)}$ is a  bi-Hölder homeomorphism.  These projective unstable manifolds give rise to a partition $\widehat{\mathcal{W}}^u$ of $\widehat{M}$, which we call the \emph{projective unstable foliation}. Below we investigate the structure of  $\widehat{\mathcal{W}}^u$.

\paragraph{Foliation boxes.} 
Let us define foliation boxes for the projective unstable foliation. By a regular box we mean an open set $U \subset M$ which is the domain of a foliation chart $\phi:U\to\Real^{\dim(M)}$ of $\Fu$, with
$\phi(U)=(-1,1)^{\dim(E^u)}\times (-1,1)^{\dim(E^{cs})}$. Write $D^{cs}_{p_0} = \phi^{-1}\left(\{0\} \times (-1,1)^{\dim(E^{cs})} \right)$, where $p_0 = \phi^{-1}(0,0)$. $D^{cs}_{p_0}$ is a transversal to the foliation $\Fu$.  Observe that $U$ can be written as a disjoint union of unstable disks $\{D^u_p\}$, for $p\in D^{cs}_{p_0}$ and $D^u_p = \phi^{-1}\left((-1,1)^{\dim(E^u)} \times t\right)$, where $t$ is the $cs$-coordinate of $\phi(p)$.

This induces natural coordinates: if $q\in U$ then we can write $q = (q^u, q^{cs})$, where $q^{cs} \in D^{cs}_{p_0}$ and $q^u \in D^u_{q^{cs}}$. Along $D^{cs}_{p_0}$, fix a trivialization of $\mathbb{P}E^c$, that is, there exists a homeomorphism $\tau_{p_0}:\mathbb{P}E^c|_{D^{cs}_{p_0}} \to D^{cs}_{p_0} \times \mathbb{P}E^c(p_0)$. Given $(p,[v]) \in \mathbb{P}E^c|_{D^{cs}_{p_0}}$ write $\tau_{p_0}^2(p,[v])$ to be the coordinate in $\mathbb{P}E^c(p_0)$ of $\tau_{p_0}(p,[v])$.

Given a point $\hat{q} = (q,[v])= ((q^u, q^{cs}), [v]) \in \hpi^{-1}(U)$, we consider the point $q'\in D^{cs}_{p_0}$ such that $q \in D^u_{q'}$. Set 
\[
[v_q] = \left[\tau_{p_0}^2 (q', [h^u_{q',q}(v)])\right],
\]
which belongs to $\mathbb{P}E^c(p_0)$.In this way, we can identify $(q,[v])$ with  
\[
(\phi^{-1}(q^u, q^{cs}), [v_q]) \in (-1,1)^{\dim(E^u)} \times (-1,1)^{\dim(E^{cs})}\times \mathbb{P}E^c_{p_0}.\]
In other words, the set $\widehat{U} = \widehat{\pi}^{-1}(U)$ is homeomorphic to $(-1,1)^{\dim(E^u)} \times (-1,1)^{\dim(E^{cs})} \times \mathbb{P}E^c(p_0)$.  We say that such $\widehat{U}$ is  a regular box for the lift, and the disks $D^{u}_{\hp}$ obtained lifting corresponding $u-$disks in $U$ will be referred as regular $u$-disks. Using this contruction one gets directly that \((\hM, \widehat{\mathcal{W}}^u)\) is a foliated manifold in the sense of \cite{Moore1988}.

\begin{figure}[H]
\centering
\begin{tikzpicture}[
scale=0.8,
>=Latex,
point/.style={
circle,
fill=black,
inner sep=1.6pt
},
visible point/.style={
circle,
fill=black,
draw=white,
line width=0.45pt,
inner sep=1.8pt
},
holonomy/.style={
->,
very thick,
orange!80!black
},
unstable/.style={
blue!65!black,
thick
},
relevant unstable/.style={
blue!75!black,
very thick
},
transversal/.style={
purple!70!black,
thick,
dashed
},
soft label/.style={
fill=blue!3,
inner sep=1.2pt
},
every node/.style={
font=\small
}
]

\begin{scope}
\draw[
rounded corners=5pt,
thick,
fill=blue!3,
draw=black!70
]
(0,0) rectangle (6,4.5);

\node[font=\normalsize] at (3,4.85)
{$U\simeq D^u_{p_0}\times D^{cs}_{p_0}$};

\draw[unstable]
(0.15,0.65)
.. controls (1.55,0.47) and (4.45,0.83) ..
(5.85,0.65);

\draw[relevant unstable]
(0.15,1.45)
.. controls (1.55,1.27) and (4.45,1.63) ..
(5.85,1.45);

\draw[unstable]
(0.15,2.25)
.. controls (1.55,2.07) and (4.45,2.43) ..
(5.85,2.25);

\draw[unstable]
(0.15,3.05)
.. controls (1.55,2.87) and (4.45,3.23) ..
(5.85,3.05);

\draw[relevant unstable]
(0.15,3.65)
.. controls (1.55,3.47) and (4.45,3.83) ..
(5.85,3.65);

\draw[transversal]
(3,0.25)
.. controls (2.82,1.35) and (3.18,3.10) ..
(3,4.20);

\node[
purple!70!black,
rotate=90,
soft label
] at (3.55,2.35)
{$D^{cs}_{p_0}$};

\coordinate (p0)     at (3,1.45);
\coordinate (qprime) at (3,3.65);
\coordinate (q)      at (5.15,3.65);
\coordinate (qu)     at (5.15,1.45);

\node[point] at (q) {};
\node[above=3pt of q]
{$q$};

\node[point] at (qu) {};
\node[below=3pt of qu]
{$q^u$};

\draw[
densely dotted,
gray!75,
thick
]
(q) -- (qu);

\draw[
holonomy,
shorten >=4pt,
shorten <=4pt
]
([yshift=3pt]q)
to[bend right=18]
node[midway,below=6pt]
{$h^u_{q,q'}$}
([yshift=3pt]qprime);

\draw[
holonomy,
shorten >=4pt,
shorten <=4pt
]
([xshift=-4pt]qprime)
to[bend right=14]
([xshift=-4pt]p0);

\node[visible point] at (p0) {};
\node[below left=2pt of p0]
{$p_0$};

\node[visible point] at (qprime) {};
\node[above left=2pt and 4pt of qprime]
{$q'=q^{cs}$};

\node[
blue!75!black,
anchor=west,
soft label
] at (0.35,1.76)
{$D^u_{p_0}$};


\node[align=center] at (3,-0.45)
{(a) Coordinates and holonomy transport in \(U\)};
\end{scope}

\begin{scope}[xshift=0.55cm]
\coordinate (O)   at (8.0,0.55);
\coordinate (Uu)  at (12.0,0.55);
\coordinate (C)   at (9.20,1.90);
\coordinate (P)   at (8.0,3.55);

\coordinate (UC)  at (13.20,1.90);
\coordinate (UP)  at (12.0,3.55);
\coordinate (CP)  at (9.20,4.90);
\coordinate (UCP) at (13.20,4.90);

\fill[blue!5]
(O) -- (Uu) -- (UC) -- (C) -- cycle;

\fill[purple!5]
(O) -- (C) -- (CP) -- (P) -- cycle;

\fill[orange!5]
(P) -- (UP) -- (UCP) -- (CP) -- cycle;

\draw[gray!55,dashed]
(C) -- (UC);

\draw[gray!55,dashed]
(C) -- (CP);

\draw[gray!55,dashed]
(UC) -- (UCP);

\draw[thick,draw=black!75]
(O) -- (Uu) -- (UC);

\draw[thick,draw=black!75]
(O) -- (C);

\draw[thick,draw=black!75]
(O) -- (P);

\draw[thick,draw=black!75]
(Uu) -- (UP);

\draw[thick,draw=black!75]
(P) -- (UP) -- (UCP) -- (CP) -- (P);

\draw[thick,draw=black!75]
(CP) -- (C);

\draw[thick,draw=black!75]
(UCP) -- (UC);

\coordinate (A) at (8.70,2.95);
\coordinate (B) at (12.70,2.95);

\draw[
red!70!black,
line width=2.2pt
]
(A) -- (B);

\node[
circle,
fill=red!70!black,
inner sep=1.6pt
] (hq) at (11.20,2.95) {};

\node[
red!70!black,
below right=3pt of hq
]
{$\widehat{q}$};

\node[
red!70!black,
anchor=west
] at (13.45,2.82)
{$\widehat{q}=(q^u,q^{cs},[v_q])$};

\node[
red!70!black,
below=5pt
] at ($(A)!0.25!(B)$)
{$D^u_{\widehat{p}}$};

\node[below=4pt] at ($(O)!0.5!(Uu)$)
{$D^u_{p_0}$};

\node[
left=5pt,
align=center
] at ($(O)!0.68!(P)$)
{$\mathbb{P}E^c_{p_0}$};

\node[below right=2pt] at ($(O)!0.58!(C)$)
{$D^{cs}_{p_0}$};

\node[
align=center,
font=\normalsize
] at (10.55,5.35)
{$\widehat{U}
\simeq
D^u_{p_0}
\times D^{cs}_{p_0}
\times \mathbb{P}E^c_{p_0}$};

\node[align=center] at (10.55,-0.45)
{(b) Product structure of the lifted box};
\end{scope}

\end{tikzpicture}

\caption{
A regular box and its induced trivialization in the projectivized
central bundle. The direction at \(q\) is transported first along the
unstable plaque from \(q\) to \(q'\), and then along the
\(cs\)-transversal from \(q'\) to \(p_0\).
}
\label{fig:regular-lifted-box}
\end{figure}


It also follows directly from \Cref{thm:saghin} that $\hpi\circ \hf|_{\hWu{\hp}}=f\circ \hpi|_{\hWu{\hp}}$. Observe that
\begin{itemize}
	\item $\hf$ preserves $\widehat{\mathcal{W}}^u$, this follows from the equivariance of linear holonomies \Cref{def:linearholonomy},and 
	\item $\hf^{-1}$ contracts the intrinsic distances induced by $\hpi$ on \emph{projective unstable manifolds}. 
\end{itemize}

\paragraph*{Projective $u-$Gibbs measure.}\label{page:canonicaldensities} Fix $\mu\in\Gibbs$, and consider a regular box $U$ as in the previous paragraph. Then, for $\mu$-almost every $p$, the conditional measure $\mu_D^{u}=\mu_{D,U}^u$ along $D=D_p$ is of the form 
\begin{align}\label{eq:canonicaldensity}
\der \mu_D^u(t)=\rho^U(p,t)\der\Leb_p^u(t)
\end{align}
where
\[
  \rho^{u}_D(q)=\frac{\Delta(p,q)}{\int_{D} \Delta(p,q)\der \Leb^u_p(q)}
\]
and
\begin{align}\label{eq:Delta}
\Delta(p,q)=\prod_{k=1}^{\oo}\frac{|\det D_{f^{-k}(p)}^uf|}{|\det D_{f^{-k}(q)}^uf|}=\prod_{k=0}^{\oo}\frac{|\det D_{f^{-k}(t)}^uf^{-1}|}{|\det D_{f^{-k}(p)}^uf^{-1}|}.
\end{align}
See, for example, \cite[Corollary $6.1.4$]{LedYoungI}. Although the disintegration is defined for $\mu$-typical leaves, the formula $\rho_D^u \der \Leb^u_p(t)$ is well defined for every $p$. If follows that for any $u$-disk $D$ containing $p$ (or more generally $D$ a set in $\Wu{p}$ with positive Lebesgue measure bounded from above), we can define a probability measure $\mu_D^u$ using the expected density of the $u$-Gibbs states, that is, by formula \eqref{eq:canonicaldensity}.  

One can show that $\log \rho^{u}_D$ is Lipschitz on $D$, bounded away from zero and infinity, with constants that only depend on $U$.

\begin{lemma}\label{lem:regularrho}
Given a regular box $U$, the function $\rho^u:\{(p,t):p\in U, t\in D^u_p\}\to\Real$ is continuous.
\end{lemma}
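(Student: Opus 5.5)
The plan is to prove continuity of $\Delta(p,q)$ jointly in $(p,q)$ with $q\in D^u_p$, and then of the normalizing integral $Z(p)=\int_{D^u_p}\Delta(p,q)\der\Leb^u_p(q)$. The ratio $\rho^u(p,t)=\Delta(p,t)/Z(p)$ is then continuous, because $Z$ is bounded away from zero.

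First, the product defining $\Delta$ converges uniformly. We write
\[
\log\Delta(p,q)=\sum_{k\geq 1}\Big(\log|\det D^u_{f^{-k}(p)}f|-\log|\det D^u_{f^{-k}(q)}f|\Big).
\]
For $f\in\PH[2]$, the function $x\mapsto\log|\det D^u_xf|$ is Hölder continuous along unstable leaves (indeed uniformly Lipschitz there, since the leaves are $C^2$ and $E^u$ restricted to a leaf is its tangent bundle). Because $p,q$ lie on the same plaque of a regular box, we have $d_u(f^{-k}p,f^{-k}q)\leq C\lambda^{k}d_u(p,q)$ for some $\lambda<1$, uniformly over the box. So the $k$-th term is bounded by $C'\lambda^{\alpha k}$, uniformly in $(p,q)$. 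Each term is continuous in $(p,q)$, since $f^{-k}$ and $E^u$ are continuous. The Weierstrass M-test then gives continuity of $\log\Delta$ on $\{(p,q):p\in U,\ q\in D^u_p\}$, together with a uniform bound $|\log\Delta|\leq K$ on this set.

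Next we show $Z$ is continuous. Using the foliation chart $\phi$, we pull each plaque $D^u_p$ back to the fixed model $(-1,1)^{\dim E^u}$ via $s\mapsto\phi^{-1}(s,p^{cs})$. This gives
\[
Z(p)=\int_{(-1,1)^{u}}\Delta\big(p,\phi^{-1}(s,p^{cs})\big)\,J(s,p^{cs})\der s,
\]
where $J$ is the Jacobian of the leaf parametrization with respect to $\Leb^u$. The step I expect to need the most care is the continuity of $J$ in the transverse parameter. Plaques of $\Fu$ are $C^1$ disks that vary continuously in the $C^1$ topology (standard for the strong unstable foliation), so we may choose $\phi$ so that $J$ is continuous and bounded on compact subsets. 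To avoid boundary effects, I would shrink slightly and work on relatively compact subboxes. The integrand is then continuous and uniformly bounded, so dominated convergence gives continuity of $Z$, and $Z\geq e^{-K}\inf\Leb^u(D^u_p)>0$.

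Combining the two steps, $\rho^u=\Delta/Z$ is continuous. As byproducts, $\log\rho^u$ is bounded and uniformly Lipschitz along plaques, since the series for $\log\Delta(p,\cdot)$ has terms uniformly Lipschitz in $q$ with geometrically decaying constants. This matches the remark preceding the statement.
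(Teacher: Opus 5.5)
Your proof is correct and follows the same route as the paper. The paper also reduces the lemma to the continuity of $\Delta$, but it simply cites Proposition~2 of \cite{GibbsSinaiPesin} for that fact, whereas your Weierstrass M-test argument with the summable H\"older tail $C'\lambda^{\alpha k}$ proves it directly. You also check the continuity and positivity of the normalizing integral $Z(p)$, which the paper's ``it suffices'' leaves implicit; that step needs the box to sit relatively compactly inside a larger chart, an assumption the paper itself makes elsewhere.
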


\begin{proof}
It suffices to show that $\Delta(p,t)$ is continuous. This follows from Proposition~2 in \cite{GibbsSinaiPesin}. \qedhere
\end{proof}

For $\hat{p} \in \widehat{M}$ we can use $\hpi|_{\hWu{\hat p}}$ to lift the Lebesgue measure on $\Wu{p}$ and define
\[
	\widehat{m}^u_{\hat{p}} \coloneqq \left(\left(\widehat{\pi}|_{\widehat{W}^u(\hat{p})}\right)^{-1}\right)_*\mathrm{Leb}^u_{\widehat{\pi}(\hat{p})}.
\]
This gives us a continuous family of measures 
\begin{align}\label{eq:Muprojective}
\mathcal{M}^u \coloneqq \{\widehat{m}^u_{\hat{p}} \}_{\hat{p} \in \widehat{M}}
\end{align}

\begin{definition}[Projective $u$-Gibbs]\label{def:projectiveuGibbs}
We say that an $\hat{f}$-invariant probability measure $\hmu$ in $\widehat{M}$ is a projective $u$-Gibbs measure for $\hat f$ if it has absolutely continuous disintegration with respect to the family $\mathcal{M}^u$, that is, along projective $u$-dics, the conditional measures are absolutely continuous with respect to $\widehat{m}^u$. 

The set of $u$-Gibbs measures for $\hf$ is denoted by $\Gibbs[\hf]$.
\end{definition}

Take any unstable disk $D^u_p$, centered at $p\in M$. For any point $\hat p=(p,[v]) \in \widehat{M}$, we may consider the lift of $D^u_p$ as $\widehat{D}^u_{\hat{p}} \coloneqq \{(q,[h^u_{q,p} v]): q\in D^u_p\}$.  If $\hE \subset \widehat{D}^u_{\hat{p}}$ has positive $\widehat{m}^u_{\hat{p}}$-measure, define 
\[
\widehat{\nu}_{\hat{p},\hE}^u \coloneqq \frac{\widehat{m}^u_{\hat{p}}\left(\cdot \cap \hE \right)}{\widehat{m}^u_{\hat{p}} \left(\hE\right)}.
\]
The following lemma states that Pesin--Sinai's construction also works to build projective $u$-Gibbs measures. 

\begin{lemma}\label{lem:projectivepesinsinai}
Using the previous notation, it holds that any accumulation point of 
\[
\displaystyle \widehat{\nu}_n \coloneqq \frac{1}{n} \sum_{j=0}^{n-1} \hat{f}^j_* \widehat{\nu}^u_{\hat{p},\hE}.
\]
is a projective $u$-Gibbs measure.
\end{lemma}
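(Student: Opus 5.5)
The plan is to adapt the classical Pesin--Sinai argument: the pushed-forward measures \(\hf^j_*\widehat{\nu}^u_{\hat p,\hE}\) are absolutely continuous along projective unstable plaques, with densities of uniformly controlled shape, and this property passes to averages and weak* limits. First, invariance. Since \(\hf\) is continuous on the compact space \(\hM\) and \(\hf_*\widehat{\nu}_n-\widehat{\nu}_n=\frac1n(\hf^n_*\widehat{\nu}^u_{\hat p,\hE}-\widehat{\nu}^u_{\hat p,\hE})\to 0\), every accumulation point \(\hmu\) is \(\hf\)-invariant.

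\textbf{Step 1: structure of the iterates.} By equivariance of the holonomies (\Cref{thm:saghin}, \Cref{def:linearholonomy}(3)), \(\hf^j\) maps \(\widehat{D}^u_{\hat p}\) onto the lift \(\{(q,[\hu_{q,f^j(p)}D_pf^jv]):q\in f^j(D^u_p)\}\) of the unstable disk \(f^j(D^u_p)\). Moreover \(\hpi\circ\hf^j=f^j\circ\hpi\) on it. Hence \(\hf^j_*\widehat{\nu}^u_{\hat p,\hE}\) is the lift, via \((\hpi|_{\hWu{\cdot}})^{-1}\), of \(f^j_*\) of normalized \(\Leb^u\) on \(\hpi(\hE)\). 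The density of this measure with respect to \(\Leb^u\) on \(f^j(D^u_p)\) is proportional to \(|\det D^uf^{-j}|\).

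Fix a regular box \(\widehat U=\hpi^{-1}(U)\). Intersecting \(f^j(D^u_p)\) with \(U\), we decompose it into full plaques \(D^u_x\) plus a boundary part. Because unstable disks grow exponentially while the plaques of \(U\) have bounded size, the boundary part has relative \(\Leb^u\)-mass \(O(\theta^j)\) for some \(\theta<1\), up to the initial steps. On each full plaque, bounded distortion (the \(C^2\) hypothesis, as in \eqref{eq:Delta}) gives
\[
\frac{\der f^j_*(\ldots)}{\der\Leb^u}(q)=c\,\Delta(x,q)\,\mathbb 1_{f^j(\hpi(\hE))}(q),
\]
with the partial product replacing the infinite one, up to a factor \(e^{\pm C\theta^{j}}\). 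In the lifted picture this is a measure on a regular \(u\)-disk \(D^u_{\hat x}\) whose density with respect to \(\widehat m^u\) is bounded by \(K\,\hat\rho^u\), where \(\hat\rho^u\) is the lift of \(\rho^u\) and \(K\) is uniform.

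\textbf{Step 2: passage to the limit.} Using the product coordinates \(\widehat U\simeq(-1,1)^{u}\times(-1,1)^{cs}\times\Pp E^c(p_0)\), I write \(\widehat{\nu}_n|_{\widehat U}\) as an integral over the transversal parameter \(\xi=(q^{cs},[v_q])\) of measures on the plaque \(\{\xi\}\times(-1,1)^u\). Up to a vanishing error, these plaque measures are dominated by \(K\,\hat\rho^u(\xi,\cdot)\,\der\widehat m^u\). The transverse factor measures are bounded, so they subconverge. Continuity of \(\rho^u\) in \((p,t)\) (\Cref{lem:regularrho}), together with continuity of the family \(\mathcal M^u\) (the chart is built from the continuous holonomies), makes the set of measures of the form
\[
\int \phi_\xi\,\hat\rho^u(\xi,\cdot)\,\der\widehat m^u_\xi\,\der\beta(\xi),\qquad 0\le\phi_\xi\le K,
\]
weak* closed. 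This is the standard closedness argument: the domination \(\hmu|_{\widehat U}\le K\int\hat\rho^u\der\widehat m^u_\xi\der\beta\) for some transverse \(\beta\) is preserved under weak* limits, by testing against nonnegative continuous functions. Therefore \(\hmu|_{\widehat U}\) is dominated by a measure whose disintegration along plaques is absolutely continuous with respect to \(\widehat m^u\). Domination passes to conditionals, since conditionals of a dominated measure are absolutely continuous with respect to those of the dominating one. Covering \(\hM\) by finitely many regular boxes gives \(\hmu\in\Gibbs[\hf]\).

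\textbf{Main obstacle.} I expect the hardest point to be Step 2 in the merely Hölder/topological setting. Here \(\hM\) is only a foliated space, so the limiting argument has to be run in the charts of \(\widehat U\) rather than by smooth geometry. The delicate part is to control the boundary pieces uniformly and to justify that the domination is weak*-closed. This needs the product structure of \(\widehat U\) and the joint continuity of \(\widehat m^u_{\hat x}\) in \(\hat x\), which rests on \Cref{thm:saghin}.
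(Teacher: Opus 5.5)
Your proposal is correct and follows the same route as the paper, which simply invokes the Pesin--Sinai construction (referring to Viana's notes) with \Cref{lem:bdddistortion} as the key distortion input. Your Steps 1--2 supply the details of that argument in the projective setting: lifting iterated disks through the equivariant holonomies, dominating plaque densities by the canonical density $\hat\rho^u$, discarding the vanishing boundary pieces, and using weak* closedness of this domination in the product charts of $\widehat U$.
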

The proof of \Cref{lem:projectivepesinsinai} is based on Pesin--Sinai's constructions of $u$-Gibbs for partially hyperbolic diffeomorphisms. The key ingredient in the construction is bounded distortion. In this projective setting, this is given by the following lemma.

\begin{lemma}\label{lem:bdddistortion}
    For any $L>0$ there exists a constant $K\geq 1$ such that for any $\hp\in \hM$, any set $\hE\subset \hWu{\hp,L}$ of positive $\widehat{m}^u_{\hp}$-measure , where $\hWu{\hp,L}$ is the subset of $\hWu{\hp}$ that projects by $\hpi$ to $\Wu{p,L}$, for any $n\geq 1$,
    \[
   \frac{\widehat{m}^u_{\hp} (\hE)}{\widehat{m}^u_{\hp} (\hWu{\hp,L})} \frac{1}{K} \leq \frac{\widehat{m}^u_{\hf^{-n}(\hp)}(\hf^{-n} (\hE))}{\widehat{m}^u_{\hf^{-n}(\hp)}(\hf^{-n}(\hWu{\hp,L})} \leq K\frac{\widehat{m}^u_{\hp} (\hE)}{\widehat{m}^u_{\hp} (\hWu{\hp,L})}.
    \]
\end{lemma}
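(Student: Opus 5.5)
The plan is to reduce the statement to the classical bounded distortion estimate for $f^{-n}$ along unstable leaves. The key observation is that the projective measures $\widehat{m}^u$ are, by definition, pushforwards of $\Leb^u$ under the inverse of $\hpi|_{\hWu{\hp}}$. Moreover, $\hf$ maps projective unstable manifolds to projective unstable manifolds and covers $f$: $\hpi\circ\hf^{-n}|_{\hWu{\hp}}=f^{-n}\circ\hpi|_{\hWu{\hp}}$. This follows from the equivariance property (3) in \Cref{def:linearholonomy}, via \Cref{thm:saghin}. Hence, writing $E=\hpi(\hE)$ and $D=\Wu{p,L}$, we get
\[
\widehat{m}^u_{\hp}(\hE)=\Leb^u(E),\qquad \widehat{m}^u_{\hf^{-n}(\hp)}(\hf^{-n}(\hE))=\Leb^u(f^{-n}(E)),
\]
and similarly for $\hWu{\hp,L}$ and $D$. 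Here we use that $\hpi$ restricted to $\hWu{\hf^{-n}(\hp)}$ is a bijection onto $\Wu{f^{-n}(p)}$, so $\hpi(\hf^{-n}(\hE))=f^{-n}(E)$. The projective statement is therefore exactly the inequality
\[
\frac{1}{K}\frac{\Leb^u(E)}{\Leb^u(D)}\le \frac{\Leb^u(f^{-n}E)}{\Leb^u(f^{-n}D)}\le K\frac{\Leb^u(E)}{\Leb^u(D)}
\]
for measurable $E\subset D=\Wu{p,L}$, uniformly in $p$ and $n$. In particular, no regularity of the (only Hölder) center bundle enters.

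To prove this base inequality I would use the change of variables $\Leb^u(f^{-n}E)=\int_E |\det D^u_q f^{-n}|\,\der\Leb^u(q)$. It then suffices to show that there is $K_0=K_0(L)$ with
\[
\frac{|\det D^u_q f^{-n}|}{|\det D^u_{q'} f^{-n}|}\le K_0 \quad\text{for all } q,q'\in \Wu{p,L},\ n\ge1.
\]
Given this, both numerator and denominator are comparable to $|\det D^u_p f^{-n}|$ times the respective Lebesgue measures, and $K=K_0^2$ works.

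For the distortion estimate, write the logarithm of the ratio as
\[
\sum_{k=0}^{n-1}\left(\log|\det D^u_{f^{-k}q}f^{-1}|-\log|\det D^u_{f^{-k}q'}f^{-1}|\right).
\]
Since $f$ is $C^2$ and $E^u$ is Hölder, the function $x\mapsto \log|\det D^u_x f^{-1}|$ is $\alpha$-Hölder along unstable leaves, with a uniform constant $C_1$. Backward iterates contract unstable distances exponentially: $d_u(f^{-k}q,f^{-k}q')\le C_2\lambda^{k}\, d_u(q,q')\le 2C_2L\lambda^k$ for some $\lambda<1$. Summing the geometric series gives a bound $C_1(2C_2L)^\alpha\sum_k\lambda^{\alpha k}$, which is independent of $n$ and $p$. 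This is the same estimate underlying the convergence of $\Delta$ in \eqref{eq:Delta}.

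I expect the only delicate point to be the bookkeeping of the lift. One must verify that $\hf^{-n}(\hWu{\hp,L})$ is precisely the lift of $f^{-n}(\Wu{p,L})$ inside $\hWu{\hf^{-n}(\hp)}$, so that the normalizing denominators match. This again follows from the cocycle and equivariance properties of $\hu$. Measurability of $\hpi(\hE)$ is automatic, since $\hpi|_{\hWu{\hp}}$ is a homeomorphism onto its image.
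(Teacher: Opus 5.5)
Your proposal is correct and follows the same route as the paper: since $\widehat{m}^u$ is by definition the lift of $\Leb^u$, and $\hf$ covers $f$ while preserving projective unstable leaves, the statement reduces via $\hpi$ to the classical bounded distortion of $f^{-n}$ on $\Wu{p,L}$, applied to $E=\hpi(\hE)$. The only difference is that the paper simply cites this classical estimate (Corollary 7.7 in Viana's notes), whereas you prove it directly with the standard Hölder-summation argument, which is fine.
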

\begin{proof}
    It is well known that for any $L>0$, there exists $K\geq 1$ such that for any $p\in M$ and any $E\subset \Wu{p,L}$ having positive $\Leb^u_p$-measure, for any $n\geq 1$, we have that
    \[
  \frac{\Leb^u_{p}(E)}{\Leb^u_{p}(\Wu{p,L})}  \frac{1}{K} \leq \frac{\Leb^u_{f^{-n}(p)}(f^{-n}(E))}{\Leb^u_{f^{-n}(p)}(f^{-n}(\Wu{p,L})} \leq K  \frac{\Leb^u_{p}(E)}{\Leb^u_{p}(\Wu{p,L})}.
    \]
    This is the usual bounded distortion for unstable manifolds of partially hyperbolic systems (see for example Corollary 7.7 in \cite{Viana1999}).
    \Cref{lem:bdddistortion} then follows from the definition of $\widehat{m}^u_p$ and applying the bounded distortion above for $E = \hpi(\hE)$. 
\end{proof}
\begin{proof}[Proof of \Cref{lem:projectivepesinsinai}]

In the construction of $u$-Gibbs measures by Pesin--Sinai, the bounded distortion estimate is the key ingredient to show the desired absolute continuity of the conditional measures. The same proof, with \Cref{lem:bdddistortion} implies that any limit measure of the sequence $\widehat{\nu}_n$ is a projective $u$-Gibbs measure. See \cite{Viana1999} for the details of the referred proof. \qedhere

\end{proof}

\begin{notation}
  We denote by $\pr_{\hf}(\hM)$ the set of $\hf$-invariant probability measures. It is equipped with the weak* topology.
\end{notation}

\begin{lemma}\label{lem:compactGibbs}
The set $\Gibbs[\hf] \subset \pr_{\hf}(\hM)$ is closed (hence compact), and convex. Moreover, for a projective $u$-Gibbs measure, its ergodic components are projective $u$-Gibbs measures.
\end{lemma}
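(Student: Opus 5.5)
The plan is to mimic the classical proof for geometric $u$-Gibbs measures, replacing smoothness of the foliation by the explicit product structure of the regular boxes $\widehat U$ constructed above together with the canonical densities. The key reformulation I would first establish is a characterization that is visibly closed and convex. An $\hf$-invariant measure $\hmu$ is projective $u$-Gibbs if and only if, for every regular box $\widehat U$, its conditional measures on the regular $u$-disks $D^u_{\hp}\subset\widehat U$ are given by the lifted canonical density $\rho^u_{\hpi(D^u_{\hp})}\circ\hpi\,\der\widehat m^u_{\hp}$. Equivalently,
\[
\hmu|_{\widehat U}=\int \Big(\rho^u_{D}\circ\hpi\Big)\,\widehat m^u_{\hp}|_{D^u_{\hp}}\ \der\hat\eta(\hp),
\]
where $\hat\eta$ is a measure on the transversal $D^{cs}_{p_0}\times\mathbb{P}E^c(p_0)$.

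The ``if'' direction is immediate. For ``only if'', I would argue as in Ledrappier--Young. By absolute continuity, write the conditional densities as $\psi$. Then use $\hf$-invariance and the fact that $\hf^{-1}$ contracts projective unstable disks, with $\hpi$ conjugating $\hf$ on $\hWu{\hp}$ to $f$ on $\Wu{p}$. This shows that the ratio $\psi(\hq)/\psi(\hq')$ along a disk equals the limit of Jacobian ratios $\Delta(\hpi\hq,\hpi\hq')$. Everything takes place on a single leaf, where $\hpi$ is a bijection carrying $\widehat m^u$ to $\Leb^u$, so the computation reduces verbatim to the one for $f$, i.e.\ to \eqref{eq:Delta}.

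Convexity then follows at once from this characterization: a convex combination of measures of the displayed form is again of that form, with $\hat\eta$ replaced by the corresponding combination. (One can also argue directly: absolute continuity of conditionals is preserved under convex combinations, because conditionals of $t\hmu_1+(1-t)\hmu_2$ are convex combinations of the conditionals with measurable weights.)

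For closedness, take $\hmu_n\to\hmu$ in $\Gibbs[\hf]$. Fix a regular box $\widehat U$ and a slightly smaller compact box $\widehat U'$, and use the homeomorphism of $\widehat U$ with $(-1,1)^{u}\times(-1,1)^{cs}\times\mathbb{P}E^c(p_0)$. Each $\hmu_n|_{\widehat U}$ is the image of $\hat\eta_n$ under the map $\Phi$ sending a transversal point $\hp$ to the probability measure $(\rho^u\circ\hpi)\,\widehat m^u_{\hp}|_{D^u_{\hp}}$. By \Cref{lem:regularrho} and the continuity of the family $\mathcal M^u$ (which comes from continuity of $\hu$, \Cref{thm:saghin}), $\Phi$ is weak*-continuous in $\hp$. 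The masses $\hat\eta_n$ are bounded, so along a subsequence $\hat\eta_n\to\hat\eta$, and then $\int\Phi\,\der\hat\eta_n\to\int\Phi\,\der\hat\eta$. This limit agrees with $\hmu$ on $\widehat U'$. To get a clean statement, I would test against continuous functions supported in the interior and choose boxes whose boundaries carry no $\hmu$-mass, which is possible by choosing generic sizes. This gives the product form for $\hmu$, hence $\hmu\in\Gibbs[\hf]$. Invariance of the limit is automatic. Compactness follows because $\hM$ is compact, so $\pr_{\hf}(\hM)$ is compact.

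For the ergodic components, I would use that the partition into local projective unstable plaques refines the ergodic decomposition mod $0$. Ergodic components are measurable with respect to the invariant $\sigma$-algebra. By the usual Hopf-type argument using Birkhoff averages of continuous functions and backward contraction on projective unstable manifolds, this $\sigma$-algebra is $\hmu$-a.e.\ saturated by $\widehat{\mathcal W}^u$ leaves. Consequently, the conditional of an ergodic component on a plaque coincides a.e.\ with the conditional of $\hmu$, which is absolutely continuous.

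The main obstacle is the measure-theoretic step in this last paragraph and in the ``only if'' characterization. The foliation $\widehat{\mathcal W}^u$ is only Hölder and is not a measurable partition globally, so I would work throughout with local plaques in regular boxes and the increasing partitions $\hf^{n}\xi$. The standard argument of \cite{LedYoungI} applies, since it uses only the topology of the boxes and the one-leaf identification with $f$.
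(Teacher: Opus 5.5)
Your argument is correct, and for closedness and convexity it is essentially the paper's. The paper integrates the plaque-constant function $\hp\mapsto\int_{\widehat D^u_{\hp}}\psi\,\hat\rho\,\der\hm^u_{\hp}$ against $\hmu_n$ itself, passes to the limit using $\hpi_*\hmu(\partial U)=0$, and invokes uniqueness of conditionals. You run the same continuity through the transverse measures $\hat\eta_n$ instead.

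Both versions rest on the fact that the plaque conditionals of every projective $u$-Gibbs measure carry the lifted canonical density. The paper uses this tacitly, and you are right to isolate it. However, the mechanism you sketch for it does not work pointwise. You argue that invariance plus backward contraction gives the density ratio as a limit of Jacobian ratios, but the densities are a priori only $L^1$, so nothing forces $\psi(\hf^{-n}\hq)/\psi(\hf^{-n}\hq')\to 1$. What actually proves the fact is the Ledrappier--Young argument along an increasing subordinate partition, such as the lifted $\hat\xi$, comparing with the canonical measures via entropy and Jensen. That argument does transfer leafwise, as you say.

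The genuine difference is in the ergodic-components part.

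\begin{itemize}
\item The paper takes the set $\hE$ of points on a projective disk whose forward and backward empirical measures converge and agree. Backward contraction shows they all share one limit $\hmu_{\hp}$. The paper then applies the projective Pesin--Sinai construction (\Cref{lem:projectivepesinsinai}, hence bounded distortion) to the normalized $\hm^u$ on $\hE$, concluding that $\hmu_{\hp}$ is projective $u$-Gibbs.
\item You use the same Hopf observation only to show that local plaques refine the ergodic decomposition mod $0$. You then conclude by transitivity of disintegrations.
\end{itemize}

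Your route is purely measure-theoretic and needs no distortion estimates. It also yields the sharper statement that almost every ergodic component has exactly the same plaque conditionals as $\hmu$. The paper's route avoids comparing disintegrations of different measures and runs entirely on the single tool \Cref{lem:projectivepesinsinai}.
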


\begin{proof}
Assume that a sequence of projective $u$-Gibbs measures $(\hmu_n)_{n\in \mathbb{N}}$ converges to some $\hmu$ and fix a continuous function $\psi: \widehat{M} \to \mathbb{R}$, and a regular box $U$ such that $\mu(\partial U) =0$, where $\mu = \hpi_* \hmu$. Let $\hpi: \hM \to M$ be the natural projection and set $\hat U=\hpi^{-1}(U)$.  Due to \Cref{lem:regularrho}, there exists some continuous function $\hat\rho:\{(\hp,\hat t): \hp\in \hat U, \hat t\in \widehat{D}^u_{\hp}\}\to\Real_{\geq 0}$ so that for every $n\in \mathbb{N}$, 
\[
\int_{\hat U} \psi(\hp)\der \hmu_n(\hp)=\int_{\hat U} \left(\int_{\widehat{D}^u_{\hp}} \psi(\hat t)\hat\rho(\hat p,\hat t)\der \hm^u_{\hp}(\hat{t})\right)\der \hmu_n(\hp).
\]
By the continuity of the holonomies $h^u_{p,q}$, we have that the term inside the parentheses defines a continuous function of $\hat p$. Therefore, by the weak* convergence we get
\[
\int_{\hat U} \psi(\hp)\der \hmu(\hp)=\int_{\hat U} \left(\int_{\widehat{D}^u_{\hp}} \psi(\hat t)\hat\rho(\hat p,\hat t)\der \hm^u_{\hp}(\hat{t})\right)\der \hmu(\hp).
\]
Since this is true for every continuous function $\psi: \widehat{M} \to \mathbb{R}$,  uniqueness of the conditionals implies that the disintegration along $\hat{\mathcal{W}}^u|_{\hat U}$ is given by the family $\{\rho^{\hat U}(\hat p,\hat t)\der \hm^u_{\hp}(\hat t)\}$, hence $\hmu\in\Gibbs[\hf]$.

Convexity is immediate, and for the last part it suffices to show that $\hmu\in\Gibbs[\hf]$ is an extremal point if and only if it is ergodic. From this and the Krein--Millman Theorem one gets the existence of ergodic elements in $\hmu\in\Gibbs[\hf]$.

Since $\hf$ is continuous, it is well known that the extremal points of $\pr_{\hf}(\hM)$ are precisely the ergodic measures, which in particular shows that ergodicity implies extremality in $\Gibbs[\hf]$. Thus, it remains to show that ergodic components of $\hmu$ are projective $u-$Gibbs measures.

Suppose that $\hmu\in \Gibbs[\hf]$ and define  
 \[
 B(\hmu)=\left\{\hat p: \lim_{n\to +\infty}\frac{1}{n} \sum_{j=0}^{n-1}\delta_{\hf^j(\hp)}=\lim_{n\to + \infty}\frac{1}{n} \sum_{j=0}^{n-1}\delta_{\hf^{-j}(\hp)}\right\}.
 \]
By Birkhoff's ergodic theorem $\hmu(B(\hmu))=1$, therefore for $\hmu-$almost every $\hp\in \hM$, there exists a regular disk $D_{\hp}^u$ which verifies 
$\hm_{\hp}(B(\hmu)\cap \hat{D}^u_p)>0$. Note also that since $\hf^{-1}$ is locally contracting, the empirical measures of every $\hat q\in \hE=B(\hmu)\cap \hat{D}^u_p$ converge to the same measure, say $\hmu_{\hp}$; then
\begin{align*}
\lim_{n\to\oo}\frac{1}{n\widehat{m}^u_{\hat{p}}(\hE)}\int_{\hE} \sum_{j=0}^{n-1}\delta_{\hf^j\tilde{q}}\der \widehat{m}^u_{\hat{p}}(\hat q)=\hmu_{\hp},
\end{align*}
which by \Cref{lem:projectivepesinsinai} shows that $\hmu_{\hp}$ is a projective $u-$Gibbs measure. On the support of an ergodic component, empirical measures converge to the component, due to Birkhoff theorem, hence the result. 
\end{proof}

The proof of \Cref{lem:compactGibbs} gives something stronger.

\begin{lemma}\label{lem:closednessGibbs}
    Suppose $\left(f_n\right)_{n\in \Nat}$ is a sequence of diffeomorphisms in $\PHb[2]$ converging to $f\in \PHb[2]$ in the $C^2$-topology, and for each $n\in \Nat$  consider $\hmu_n \in \Gibbs[\hf_n]$. Suppose that $\hmu_n$ converges to some measure $\hmu$. Then $\hmu \in \Gibbs[\hf]$.
\end{lemma}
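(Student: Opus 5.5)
The plan is to repeat the proof of \Cref{lem:compactGibbs}, now with the maps varying along the sequence. The new ingredients are the uniformity in $n$ of the canonical densities and the convergence of the projective unstable plaques as $f_n\to f$.

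First, the invariance of $\hmu$. Each $\hmu_n$ lives on the projectivized bundle $\hM_n=\Pp E^c_{f_n}$. Since $E^c_{f_n}\to E^c_f$ uniformly as $f_n\to f$ in the $C^1$-topology, I realize all these spaces inside a common compact space, namely the Grassmannian bundle of lines in $TM$. There $\hf_n\to\hf$ uniformly, and weak* limits of $\hf_n$-invariant measures are $\hf$-invariant. The limit $\hmu$ is supported on $\hM=\Pp E^c_f$.

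Next, fix a regular box $U$ of $\Fu_f$ with $\mu(\partial U)=0$, where $\mu=\hpi_*\hmu$. For $n$ large, $U$ is also a foliation box for $\Fu_{f_n}$, with the same $cs$-transversal $D^{cs}_{p_0}$. The unstable plaques $D^u_{p,f_n}$ converge in $C^1$ to $D^u_{p,f}$, uniformly in $p$, because $f_n\to f$ in $C^2$. By \Cref{thm:saghin}, the lifted plaques $\widehat D^u_{\hp,f_n}$ converge uniformly to $\widehat D^u_{\hp,f}$. The densities $\rho^u_n$ of \eqref{eq:canonicaldensity} for $f_n$ are defined by the product \eqref{eq:Delta}. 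Its factors converge uniformly, and the tails are uniformly summable, since $\log|\det D^uf_n|$ is uniformly Hölder (by the uniform $C^2$ bound) and backward iterates contract unstable distances uniformly. Hence $\rho^u_n\to\rho^u$ uniformly on the set of pairs (base point, point of its plaque), refining \Cref{lem:regularrho}.

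For a continuous $\psi$ on the Grassmannian bundle, set
\[
\Psi_n(\hp)=\int_{\widehat D^u_{\hp,f_n}}\psi\,\hat\rho_n(\hp,\cdot)\der\hm^u_{\hp,f_n}.
\]
The projective $u$-Gibbs property of $\hmu_n$ gives $\int_{\hat U}\psi\der\hmu_n=\int_{\hat U}\Psi_n\der\hmu_n$. The previous paragraph shows that $\Psi_n\to\Psi$ uniformly, where $\Psi$ is the analogous function for $f$, which is continuous. I pass to the limit using the uniform convergence of $\Psi_n$ together with the weak* convergence of $\hmu_n$ and $\hmu(\partial\hat U)=0$. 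This yields $\int_{\hat U}\psi\der\hmu=\int_{\hat U}\Psi\der\hmu$ for every continuous $\psi$. Essential uniqueness of the conditionals then identifies the disintegration of $\hmu$ along $\widehat{\mathcal W}^u|_{\hat U}$ as absolutely continuous with density $\hat\rho$. Hence $\hmu\in\Gibbs[\hf]$.

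The main obstacle is the uniform convergence $\Psi_n\to\Psi$. It needs the uniform convergence of the holonomies from \Cref{thm:saghin} on plaques of bounded size, and uniform control of the infinite products $\Delta_n$. The latter is a standard bounded distortion argument with constants depending only on $C^2$ bounds. A secondary technical point is the fixed box structure: $\hat U$ is defined through $f$'s foliation, so I would parametrize $f_n$-plaques by the common transversal $D^{cs}_{p_0}$ and check that the same decomposition formula holds for $\hmu_n$ in that parametrization.
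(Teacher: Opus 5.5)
Your proposal is correct and follows essentially the same route as the paper. The paper also gets invariance by working in $\mathbb{P}TM$ with $\mathbb{P}Df_n\to\mathbb{P}Df$. It uses $C^1$-convergent plaque parametrizations $\Phi_{n,q^{cs}}$ and $\hat\Phi_{n,q^T}$ over a common transversal (your flagged technical point), gets uniform convergence of the canonical densities by splitting $\log\Delta_n$ into finitely many terms plus a uniformly Hölder-summable tail, gets convergence of the lifted plaques from \Cref{thm:saghin}, and then concludes by the limiting argument of \Cref{lem:compactGibbs}.
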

\begin{remark}\hfill
\begin{itemize}
    \item Observe that for each $n$, the bundles $\mathbb{P}E^c_{f_n}$ are different, so the measures $\hmu_n$ are measures in different spaces. However, all of these bundles are subsets of the same bundle $\mathbb{P}TM$, so for the convergence in $\Cref{lem:closednessGibbs}$ we can think about the weak* topology for measures in $\mathbb{P}TM$.
    \item The $C^2$ convergence in the statement of \Cref{lem:closednessGibbs} can be replaced by $C^1$-convergence plus bounded $C^2$-norm of the sequence, which is a weaker condition.
    \item \Cref{lem:closednessGibbs} is equivalent to upper semicontinuity of the set-valued map $g\mapsto \Gibbs[\hat g]$, with respect to the $C^2$-topology.
\end{itemize}
\end{remark}
\begin{proof}
First, a probability measure $\hnu$ in $\mathbb{P}E^c_f$ is also a probability measure on $\mathbb{P}TM$. Let $\hmu_n$ be a sequence of projective $u$-Gibbs measures for $\hat{f}_n$ converging to $\hmu$ as above. 
Observe that $\hf_n = \mathbb{P}Df|_{\mathbb{P}E^c}$, where $\mathbb{P} Df$ is the projective action of $f$. Since $\mathbb{P}Df_*\hmu_n = (\hf_n)_* \hmu_n = \hmu_n$, and $\mathbb{P} Df_n \to \mathbb{P}Df$ and $\hmu_n \to \hmu$, we have that $\hf_* \hmu = \hmu$, so $\hmu$ is $\hf$-invariant. 

We take $U$ a foliation box for the unstable foliation of $f$ (which with no loss of generality assume to be properly contained in a slightly larger foliation box), and consider $\widehat{U}=(\hat \pi)^{-1}(U)\simeq D^u_{p_0}\times D^{cs}_{p_0}\times\mathbb{P}E^c_{p_0}$. For every $q^{cs}\in D^{cs}$ we have a parametrization  
\[
  \Phi_{q^{cs}} \colon D^u_{p_0}\to M
\]
of the unstable plaque passing through $q^{cs}$. For all $n$ sufficiently large, the $f_n$-unstable plaque of $U$ through $q^{cs}$ can be parametrized as
\[
\Phi_{n,q^{cs}} \colon D^u_{p_0}\to M:  
\]
moreover, the parametrizations may be chosen so that
\[
\sup_{q^{cs}} \norm{\Phi_{n,q^{cs}}-\Phi_{q^{cs}}}_{C^1}\longrightarrow 0.
\]
Similarly, for every $q^T=(q^{cs}, [v_q])\in D^{cs}_{p_0}\times \mathbb{P}E^c_{p_0}$ we consider parametrizations 
\begin{align*}
\hat \Phi_{q^{T}} \colon D^u_{p_0}\to \mathbb{P}TM\\
\hat \Phi_{n,q^{T}} \colon D^u_{p_0}\to \mathbb{P}TM
\end{align*}
of the corresponding regular $u$-disks passing through $q^T$. If $\mathrm{pr}:\mathbb{P}TM\to M$ denotes the natural projection, we have
\begin{align}\label{eq:prdown}
&\mathrm{pr}\circ \hat \Phi_{q^{T}}=\Phi_{q^{cs}}\\
&\mathrm{pr}\circ \hat \Phi_{n,q^{T}}=\Phi_{n,q^{cs}}.
\end{align}
Observe also that due to \Cref{thm:saghin}, we have
\[
\sup_{q^T} \norm{\hat\Phi_{n,q^T}-\hat\Phi_{q^T}}_{C^0}\longrightarrow 0.
\]
Recall the definition given \eqref{eq:Delta}, and let $\Delta, \Delta_n$ correspond to $f,f_n$. Given $p\in D^{u}_{p_0}$ we write 
\begin{align*}
\rho_{q_T}(p)=\Delta(q^{cs},\Phi_{q^{cs}}(p))\\
\rho_{n,q_T}(p)=\Delta_{n}(q^{cs},\Phi_{n,q^{cs}}(p))
\end{align*}
If 
\[
\varphi_n(x)= \log \abs{\det\left(Df_n(x)|_{E^u_{f_n}(x)}\right)}.
\]
then the Jacobian formula gives
\[
\log \rho_{n,q^T}(p)=\sum_{k=1}^{\infty}\left(\varphi_n(f_n^{-k}(q^{cs}))-\varphi_n(f_n^{-k}(\Phi_{n,q^{cs}}(p))\right).
\]
Since $f_n\to f$ in the \(C^2\) topology, the partial-hyperbolicity constants can be chosen uniformly, the unstable bundles converge
uniformly, and the functions $\varphi_n$ have a common Hölder modulus. Thus, there exist $C>0$, $0<\theta< 1$, and $\alpha>0$, independent of $n$ and $q^{T}$, such that
\[
\abs{\varphi_n(f_n^{-k}q^{cs})-\varphi_n(f_n^{-k}\Phi_{q^{cs}}(p))}\leq C\theta^{\alpha k}.
\]
It follows that the tails of the series defining $\log \rho_{n,q^T}$ converge to zero uniformly in $n$, $q^T$ and $p$. For each fixed $N$, the first $N$ terms converge uniformly, since
\[
f_n^{-k}\bigl(\Phi_{n,q^{cs}}(p)\bigr)\longrightarrow f^{-k}\bigl(\Phi_{q^{cs}}(p)\bigr)
\]
uniformly in $p, q^{cs}$.  Splitting the series into its first $N$ terms and its tail therefore gives
\[
\sup_{q^T}\sup_{p\in D^u_{p_0}} \abs{\rho_{n,q^T}\bigl(\Phi_{n,q^T}(p)\bigr)-\rho_{q^{cs}}\bigl(\Phi_{q^{cs}}(p)\bigr)}\longrightarrow 0.
\]
The $C^1$-convergence of the plaque parametrizations implies uniform convergence of the corresponding leaf-volume densities. Hence
the normalizing constants also converge uniformly, and the normalized plaque probabilities converge uniformly in total variation. Applying the same to the parametrizations of the regular $u$-disks, and using \eqref{eq:prdown}, and arguing similarly as in the proof of \Cref{lem:compactGibbs}, we deduce that conditionals of $\hmu_n$ along regular $u$-disks of $\hat U$ converge to a family of measures which is absolutely continuous with respect to the family $\mathcal M^u$, and $\hmu$ is a projective $u$-Gibbs measure.
\end{proof}

Using this formalization of projective $u$-Gibbs measures, we obtain the following equivalent definition to the expanding on average property.

\begin{lemma}\label{rem:lengthdiscs}
    Let $f\in \PHb[2]$,  then the following are equivalent:
    \begin{enumerate}
        \item  $f$ is expanding on average;
        \item  there exists $C>0$ such that for any $p\in M$, any unit vector $v\in E^c(p)$,
        \[
    \liminf_{n\to\oo}\frac{1}{n\Leb^u(W^u(p,1))}\int_{W^u(p,1)}\log \norm{D_qf^n\chi_{q,p}(v)} \der\Leb^u(q)\geq C.
    \]
    \end{enumerate}

\end{lemma}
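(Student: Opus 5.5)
(1)⇒(2) is immediate: $W^u(p,1)$ is a disk in an unstable manifold centered at $p$, so \eqref{eq.inequalityea} applies to it with the same constant $C$.

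For (2)⇒(1), fix an unstable disk $D_p$ (of arbitrary radius $r$), a unit vector $v\in E^c_p$, and write
\[
I_n(D,v)=\frac{1}{\Leb^u(D)}\int_D \log\norm{D_qf^n\chi_{q,p}(v)}\der\Leb^u(q).
\]
The plan is to reduce to disks of radius $1$ in two ways, depending on whether $r$ is large or small.

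If $r\geq 1$, I would cover $D_p$ up to a small boundary layer by a family of disjoint disks $W^u(p_i,1)$. Using the cocycle property $\hu_{q,p}=\hu_{q,p_i}\circ\hu_{p_i,p}$, on each small disk we have $\chi_{q,p}(v)=\chi_{q,p_i}(v_i)$, where $v_i=\chi_{p_i,p}(v)$ is a unit vector in $E^c_{p_i}$. Averaging (2) over the pieces then gives the bound, but only if the $\liminf$ can be taken uniformly in $p_i$ and $v_i$. The leftover region is handled by the crude estimate $\abs{\log\norm{D^cf^n w}}\le n\max\{\log\norm{D^cf}_\infty,\log\norm{(D^cf)^{-1}}_\infty\}$, which contributes a small proportion times a bounded quantity. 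When $r$ is much larger than $1$, and the number of pieces is unbounded, Vitali-type coverings with a uniformly small boundary proportion are available because unstable leaves carry Riemannian volume with uniformly bounded geometry at scale $1$. We may settle for a constant $C/2$ in the conclusion.

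If $r<1$, I would push forward by $f^k$ with $k$ fixed, chosen depending only on $r$ so that $f^k(D_p)\supset W^u(f^k p,1)$ by uniform expansion. Then $\log\norm{D_qf^n\chi_{q,p}(v)}=\log\norm{D_{f^kq}f^{n-k}\chi_{f^kq,f^kp}(w)}+O(k)$, where $w$ is the normalized image $D f^k v$, by invariance of holonomies (item 3 of \Cref{def:linearholonomy}). The $O(k)$ term disappears after dividing by $n$. Changing variables, $\Leb^u|_{D_p}$ pushes to a measure on $f^k(D_p)$ with density bounded above and below by constants depending on $k$, via \Cref{lem:bdddistortion}. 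Since $f^k(D_p)$ is a large disk, this reduces to the previous case. There is one difficulty: the density is not constant, so a positive lower bound on averages over unit pieces must be transferred to a weighted average. Because $\log$ of the density is Lipschitz, the density is almost constant on pieces of radius $1$ only after refining further. I would therefore instead apply the first step with pieces of a tiny radius $\delta$, obtained by pulling back again, or more simply choose the pieces $W^u(p_i,1)$ after applying $f^{k+m}$ with large $m$, so that the pulled-back density is nearly constant on each piece (bounded distortion gives oscillation $\to 0$ on preimage-small sets).

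The main obstacle is uniformity. Hypothesis (2) gives a pointwise $\liminf\geq C$ for each $(p,v)$, whereas the combination step needs a single $n_0$ such that $I_n(W^u(p,1),v)\geq C-\epsilon$ for all $n\geq n_0$ and all $(p,v)$ at once; otherwise the $\liminf$ of an average over infinitely many varying pieces is not controlled. My first attempt would be a compactness and semicontinuity argument: $(p,v)\mapsto I_n(W^u(p,1),v)$ is continuous by \Cref{thm:saghin} and the continuity of $\Leb^u$ on plaques. I would combine this with a subadditivity-type relation, $I_{n+m}\gtrsim I_n+(\text{average of }I_m\text{ over images})$, obtained from the chain rule together with distortion. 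The aim is a uniform version in the spirit of the Furstenberg–Kesten uniform upper bound, by contradiction: if uniformity failed for sequences $(p_j,v_j,n_j)$, I would construct, via \Cref{lem:projectivepesinsinai} and \Cref{lem:compactGibbs}, a projective $u$-Gibbs measure $\hmu$ with $\int\log\norm{D^cf\, v}\der\hmu<C$. On the other hand, (2), applied through Birkhoff averages along typical unit disks of $\hmu$, forces this integral to be at least $C$, which gives the contradiction.
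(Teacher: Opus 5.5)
\textbf{Gap: the reduction from arbitrary disks to unit disks.} Your plan covers $D_p$, up to a boundary layer of small relative measure, by pairwise disjoint disks $\Wu{p_i,1}$ of a single fixed radius. When $\dim E^u\geq 2$ this is impossible. Disjoint balls of one radius have packing density bounded away from $1$; already for flat leaves it is at most $\pi/\sqrt{12}$ in dimension $2$. Vitali-type theorems exhaust a set only with radii shrinking to zero, while hypothesis (2) is available only at radius $1$.

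You also cannot fall back on an overlapping cover of bounded multiplicity, or on averaging over centres. The integrand $\log\norm{D_qf^n\chi_{q,p}(v)}$ changes sign and has size $O(n)$. Any non-uniformity in the multiplicity, or in the averaging weight (which need not be constant at scale $1$ on curved leaves), therefore costs an error of order $n$ times a fixed constant. After dividing by $n$, that error need not be smaller than $C$.

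Even in dimension one, radii $1\le r<2$ leave a non-small remainder; there your push-forward trick does repair it. In general, though, the uniform lower bound on unit disks does not yield (1) by the route you describe. The paper does not assume $\dim E^u=1$.

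\textbf{The fix is already in your last paragraph; use it directly.} Write $\psi(x,[w])=\log\norm{D_xf|_{[w]}}$. Your contradiction argument actually shows that every $\hmu\in\Gibbs[\hf]$ satisfies $\int\psi\,\der\hmu\geq C$. Pass to an ergodic component, which is still projective $u$-Gibbs by \Cref{lem:compactGibbs}. Take a typical point $\hp=(p,[v])$, observe that $\hm^u$-almost every point of $\hWu{\hp,1}$ is generic, and apply (2) together with dominated convergence.

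Once you have this, apply \Cref{lem:projectivepesinsinai} to the given disk $D_p$ itself, of whatever radius. Your quantity $I_n(D_p,v)/n$ equals $\int\psi\,\der\hnu_n$, where $\hnu_n=\frac1n\sum_{j<n}\hf^j_*\hm_{D_p}$. All accumulation points of $\hnu_n$ are projective $u$-Gibbs, and $\hmu\mapsto\int\psi\,\der\hmu$ is weak$^*$ continuous, so $\liminf_n I_n(D_p,v)/n\geq C$.

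This needs no covering, no push-forward, no distortion control and no uniformity in $(p,v)$, and it keeps the constant $C$ rather than $C/2$. It is exactly the paper's proof. The paper sets $C_*=\inf\{\int\psi\,\der\hmu:\hmu\in\Gibbs[\hf]\}$, shows every disk has $\liminf\geq C_*$, and then uses (2) at a typical point of an ergodic minimiser to get $C\leq C_*$.
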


\begin{proof}
    \textit{1} implies \textit{2} is immediate. Let us show that \textit{2} implies \textit{1}. Let $\psi: \hM \to \R$ defined by $\psi(x,[v]) = \log \|D_xf|_{[v]}\|$. This is a H\"older continuous function. Set
    \[
    C_* \coloneqq \inf \left\{\int \psi \der\hmu: \hmu \in \Gibbs[\hf]\right\}.
    \]
    For any $u$-disk centered at a point $x$, $D_x$, and any vector $v\in E^c(x)$ unitary, write $\hat{D}_{x,v}:= \{(y,[\chi_{y,x}(v)]): y\in D_x\}$ and observe that this is a projective unstable disk over $D_x$. Write $\hat{m}^u_{x,v,D_x}$ the normalized $\hat m^u_{x,[v]}$ measure restricted to $\hat{D}_{x,v}$. By \Cref{lem:projectivepesinsinai}, any accumulation measure of 
    \[
    \hnu_{n}:= \frac{1}{n} \sum_{j=0}^{n-1} \hf^j_* \hat{m}^u_{x,v,D_x}
    \]
    is a projective $u$-Gibbs measure. Observe that 
    \[
    \frac{1}{n\Leb^u(D_x)} \int_{D_x} \log \|D_yf^n \chi_{y,x}(v) \| \der \Leb^u(y) = \frac{1}{n} \sum_{j=0}^{n-1} \int \psi \der \hf^j_* \hat m^u_{x,v,D_x}(\hat y). 
    \]
    Therefore,
    \[
    \liminf_{n\to+\infty}\frac{1}{n\Leb^u(D_x)} \int_{D_x} \log \|D_yf^n \chi_{y,x}(v) \| \der \Leb^u(y)\geq C_*.
    \]
Since the function $\hmu \mapsto \int \psi \der \hmu$ is continuous, and since the set of projective $u$-Gibbs measures is compact, then there exists $\hmu$ such that $C_* = \int \psi \der \hmu$. By convexity, the minimum has to be achieved in a extremal point of $\Gibbs[\hf]$, so we may suppose that $\hmu$ is ergodic.

Take $(p,[v])$ a $\hmu$-typical point. Then, $\hmu_{\hp}$-almost every point in $\hWu{\hp,1}$ is $\hmu$-generic, hence their Birkhoff's averages converge to $\hmu$. Write $p=\hpi(\hp)$. The dominated convergence theorem then implies 

\begin{multline}
  C\leq \liminf_{n\to+\infty}\frac{1}{n\Leb^u(\Wu{p,1})} \int_{\Wu{p,1}} \log \|D_qf^n \chi_{q,p}(v) \| \der \Leb^u(q)\\
  = \liminf_{n\to +\infty} \frac{1}{n} \sum_{j=0}^{n-1}\psi \circ \hf^j(\hp) = \int \psi \der \hmu = C_*.
\end{multline}
Hence, 
\[
C \leq C_* \leq \liminf_{n\to+\infty}\frac{1}{n\Leb^u(D_x)} \int_{D_x} \log \|D_yf^n \chi_{y,x}(v) \| \der \Leb^u(y).\qedhere
\]
\end{proof}

\subsection{Characterization of the expanding on average property} 
\label{sub:projetive_dynamics_continuity_of_exponents}

In this subsection we will give a useful characterization of the expanding on average property. This condition gives us a better understanding of what expanding on average is. 

Given an invariant measure, recall the definiton of the center Lyapunov exponents \eqref{eq:topexp} and \eqref{eq:botexp}. Due to Oseledets Theorem, given $\mu\in \pr_f(M)$, with $\lambda^{-c}(\mu) < \lambda^{+c}(\mu)$, there exist measurable $Df-$invariant subbundles  $E^{-c}, E^{+c} \subset E^c$, defined on a full $\mu-$measure set $\cR(\mu)$ verifying for $p\in \cR(\mu), v\in E^{c}\setminus\{0\}$,
\begin{align*}
v\in E^{*c}\Rightarrow \lim_{n\to\oo}\frac{1}{n}\log\norm{D_pf^n(v)}=\lam^{*c}(p)\quad \textrm{ for }*\in\{+,-\}.
\end{align*}

\begin{definition}
Let $f\in \PHb[2]$. Given an ergodic $\mu \in \Gibbs$, such that $\lambda^{-c}(\mu) < \lambda^{+c}(\mu)$, we say that $E^{-c}$ is $h^u$-invariant if there is a set $\widetilde{M}$ of full $\mu$-measure such that if $p,q \in \widetilde{M}$, with $q\in \Wu{p}$, then $h^u_{q,p}(E^{-c}(p)) = E^{-c}(q)$. 
\end{definition}

For convenience, we will denote $\mathcal{G} \subset \PHb[2]$ the set of diffeomorphisms $f$ so that
 \begin{enumerate}
   \item $\inf\{\lambda^{+c}(\mu):\mu\in \Gibbs\}>0>\sup\{\lambda^{-c}(\mu):\mu\in \Gibbs\}$, and
   \item for every ergodic $\mu \in \Gibbs$, the direction $E^{-c}$ is not $h^u$-invariant. 
\end{enumerate}

The main result in this subsection is the following.
\begin{maintheorem}\label{mainthm:TheoremD}
    Let $f\in \PHb[2]$. 
    \begin{enumerate}
        \item[\textbf{(A)}] Suppose that $f$ has the expanding on average property and $\Gap>0$, then $f\in \mathcal{G}$.
        \item[\textbf{(B)}] Suppose that $f\in \mathcal{G}$, then $f$ has the expanding on average property. Moreover, the constant $\CEA$ defined in \eqref{eq.optimalconstant} verifies
        \[
        \CEA = \inf \{\lambda^{+c}(\mu): \mu \in \Gibbs\}.
        \]
        If in addition $\sup_p |\det D^c_pf| \leq 1$ then $f$ has the expanding on average property and $\Gap>0$.
    \end{enumerate}
\end{maintheorem}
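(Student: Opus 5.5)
The plan is to work throughout on $\hM$ with the observable $\psi(x,[v])=\log\norm{D_xf|_{[v]}}$ and with projective $u$-Gibbs measures. By \Cref{rem:lengthdiscs} and its proof, expanding on average is equivalent to
\[
C_*=\min\Bigl\{\int\psi\,\der\hmu:\hmu\in\Gibbs[\hf]\Bigr\}>0,
\]
and $\CEA\ge C_*$. The proof of that lemma also gives $\CEA\le C_*$: take an ergodic minimizer $\hmu$ and integrate over a unit disk around a $\hmu$-typical point. So $\CEA=C_*$. Both parts of Theorem~\hyperlink{theoremD}{D} then become statements about which ergodic projective $u$-Gibbs measures $\hmu$ sit over a given ergodic $\mu=\hpi_*\hmu\in\Gibbs$. (That $\hpi_*\hmu$ is $u$-Gibbs is immediate, since $\hpi$ carries $\hm^u$ to $\Leb^u$.)

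\textbf{(A).} Condition 1 of $\mathcal G$ is \Cref{cor:Gibbshyperbolic}. For condition 2, argue by contradiction. Suppose $\mu$ is ergodic $u$-Gibbs and $E^{-c}$ is $h^u$-invariant on a full-measure set $\widetilde M$. Define $\hmu=\int\delta_{(p,E^{-c}(p))}\,\der\mu(p)$; it is $\hf$-invariant because $E^{-c}$ is $Df$-invariant. Its disintegration along a regular projective $u$-disk $\hat D$ is the lift of $\mu^u_D$ to the disk through $(p,E^{-c}(p))$: $h^u$-invariance says exactly that the graph of $E^{-c}$ over $D\cap\widetilde M$ lies in one projective unstable plaque. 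Hence $\hmu\in\Gibbs[\hf]$. Its integral is $\int\psi\,\der\hmu=\lambda^{-c}(\mu)\le-\Gap<0$, which contradicts $C_*>0$.

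\textbf{(B).} This is the core, and the main step is a classification. Let $f\in\mathcal G$, let $\hmu$ be an ergodic projective $u$-Gibbs measure, and let $\mu=\hpi_*\hmu$ (ergodic). Disintegrate $\hmu=\int\hmu_p\,\der\mu(p)$ along the fibres $\Pp E^c_p$. I want to show that $\hmu_p$ gives no mass to $E^{-c}(p)$; then $\hmu$-a.e.\ direction grows at rate $\lambda^{+c}$, and $\int\psi\,\der\hmu=\lambda^{+c}(\mu)$.

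1. By Oseledets and ergodicity, $\hmu_p(\{E^{-c}(p)\})=a$ is constant $\mu$-a.e. If $a>0$, then the restriction $\hmu'$ of $\hmu$ to the graph of $E^{-c}$ is invariant, so $\hmu'=\hmu$ by ergodicity. That is, $\hmu$ is carried by the graph of $E^{-c}$.
2. The projective $u$-Gibbs property then says that conditionals along projective unstable plaques are absolutely continuous with respect to $\hm^u$. So for $\mu$-a.e.\ $p$ and $\Leb^u$-a.e.\ $q$ in a local disk, $(q,h^u_{q,p}E^{-c}(p))$ lies on the graph, i.e. $h^u_{q,p}E^{-c}(p)=E^{-c}(q)$.
3. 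Making this hold on a common full-measure set $\widetilde M$ for all pairs along the same leaf requires a saturation argument. I would use local product structure of the box, the cocycle property of $h^u$, and invariance under $f$ to spread from local to global leaves. This gives $h^u$-invariance of $E^{-c}$, contradicting condition 2.
4. Hence $\int\psi\,\der\hmu=\lambda^{+c}(\mu)\ge\inf_{\Gibbs}\lambda^{+c}>0$, so $C_*>0$ and $f$ is expanding on average.

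For the equality $\CEA=\inf\lambda^{+c}$:
\begin{itemize}
\item $\CEA=C_*\ge\inf\lambda^{+c}$ by step 4.
\item Conversely, every ergodic $\mu\in\Gibbs$ is the projection of some ergodic projective $u$-Gibbs measure. To see this, apply \Cref{lem:projectivepesinsinai} to the lift of a disk through a $\mu$-generic point, then take ergodic components via \Cref{lem:compactGibbs}. The resulting measure satisfies $\int\psi=\lambda^{+c}(\mu)$ by the classification, so $C_*\le\lambda^{+c}(\mu)$.
\item Non-ergodic $u$-Gibbs measures reduce to their ergodic components.
\end{itemize}

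Finally, if $\sup_p|\det D^c_pf|\le1$, then $\limsup_n\frac1n\log|\det D^c_pf^n|\le0$ for every $p$, so $\Gap\ge\CEA>0$.

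The main obstacle is step 3. The projective $u$-Gibbs property only gives almost-everywhere statements on typical plaques, while the definition of $h^u$-invariance asks for a single full-measure set working across whole leaves. I would first prove the local statement inside regular boxes using $\hmu'=\hmu$. I would then pass to a full-measure $\widetilde M$ by intersecting countably many boxes and using the equivariance $\hu_{f(q),f(p)}=Df\circ\hu_{q,p}\circ Df^{-1}$ to enlarge local plaques to entire unstable leaves.
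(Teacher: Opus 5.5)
Your proposal is correct and follows essentially the paper's route: part (A) is the argument of \Cref{pro:thmB1} (an $h^u$-invariant $E^{-c}$ makes $\hmu^-$ a projective $u$-Gibbs measure with $\int\psi\,\dd\hmu^-=\lambda^{-c}(\mu)<0$), and part (B) combines the classification of projective $u$-Gibbs lifts in \Cref{cor:onlygibbs} with the minimization of $\int\psi$ over $\Gibbs[\hf]$ from the proof of \Cref{rem:lengthdiscs}, which the paper phrases as a contradiction argument rather than directly through $C_*$. Your step 3 (pulling two points of a leaf back by $f^{-n}$ into a common plaque and using the cocycle and equivariance properties of $h^u$) correctly supplies a saturation detail the paper passes over; note that step 2, exactly as in the paper's proof of \Cref{cor:onlygibbs}, tacitly upgrades absolute continuity of the projective conditionals to equivalence with $\hm^u$.
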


\begin{remark}
\Cref{mainthm:TheoremD} says that, essentially, the expanding on average property can be seen as the lack of $h^u$-invariance for the $E^{-c}$ direction. In the language of the projective dynamics, this can be understood as that projectively, the only $u$-Gibbs measures are supported on $E^{+c}$ directions. 
\end{remark}

In the rest of this subsection we prove \Cref{mainthm:TheoremD}. Observe that we can think of $\hf$ as a skew product over $f$. In particular, any $\hf$-invariant measure projects to an $f$ invariant measure and, conversely, any $f$-invariant measure can be lifted to at least one $\hf$-invariant measure.  The following is consequence of a well known result due to Ledrappier; see Proposition $5.1$ in \cite{LedraExp}.

\begin{notation}
	If $E \subset E^c(p)$ is one dimensional, we will abuse notation and write $E \in \mathbb PE^c(p)$. 
\end{notation}

\begin{lemma}\label{pro:Ledrappierproj}
Suppose that $\mu\in \pr_f(M)$. If for $\mu$-almost every point $p,$ we have $\lambda^{-c}(p) < \lambda^{+c}(p)$, then,
\begin{enumerate}
 	\item If $\mu$ is ergodic then it has exactly two ergodic lifts $\hmu^{+},\hmu^{-}\in \pr_{\hf}(\hM)$, which can be characterized as:
 	\begin{enumerate}
 		\item \[ \hmu^{-} = \displaystyle \int_{M_0} \delta_{E^{-c}(p)} \der \mu(p);\]
 		
 		the disintegration of $\hmu^-$ along the fibers of $\hM$ are given by $\{\delta_{E^{-c}(p)}\}_{p\in M_0}$, where $\mu(M_0)=1$;
 		\item \[ \hmu^{+} = \displaystyle \int_{M_0} \delta_{E^{+c}(p)} \der\mu(p).\]
 		
 		the disintegration of $\hmu^+$ along the fibers of $\hM$ are given by $\{\delta_{E^{+c}(p)}\}_{p\in M_0}$, where $\mu(M_0)=1$;
 	\end{enumerate}
 	\item If $\mu$ is not ergodic, then for any lift $\hmu\in \pr_{\hf}(\hM)$ there exists $M_0$ of full measure and a measurable $f$-invariant function $\rho:M_0\to [0,1]$ so that
    \[
    \displaystyle \hmu = \int_{M_0} \rho(p)\delta_{E^{-c}(p)}+(1-\rho(p))\delta_{E^{+c}(p)}\der\mu(p), 
    \]
    the disintegration of $\hmu$ along the fibers of $\hM$ is given by $\{\rho(p)\delta_{E^{-c}(p)}+(1-\rho(p))\delta_{E^{+c}(p)}\}_{p\in M_0}$.
 \end{enumerate} 
\end{lemma}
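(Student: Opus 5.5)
The plan is to disintegrate an arbitrary lift along the fibers of $\hpi$ and use the Oseledets splitting to show that each fiber measure lives on the two Oseledets directions. Let $\hmu\in\pr_{\hf}(\hM)$ with $\hpi_*\hmu=\mu$. Disintegrate along the fibers, $\hmu=\int \hmu_p\der\mu(p)$, with $\hmu_p$ a probability on $\Pp E^c(p)$. Invariance and uniqueness of the disintegration give the equivariance $(\Pp D_pf)_*\hmu_p=\hmu_{f(p)}$ for $\mu$-a.e. $p$. Let $M_0$ be the full-measure invariant set where the Oseledets splitting $E^c=E^{-c}\oplus E^{+c}$ exists with $\lambda^{-c}(p)<\lambda^{+c}(p)$.

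The key step is to show that $\hmu_p$ is supported on $\{E^{-c}(p),E^{+c}(p)\}$ for $\mu$-a.e. $p$. For $[v]\notin\{E^{\pm c}(p)\}$, the iterates $\Pp D_pf^n[v]$ approach $E^{+c}(f^n p)$ and the backward iterates approach $E^{-c}(f^{-n}p)$. The angle with $E^{-c}$ decays at rate $\lambda^{+c}-\lambda^{-c}>0$ in the forward direction.

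I would argue by the Poincaré recurrence / Ledrappier trick. Fix $\epsilon>0$ and consider the set $A_\epsilon\subset\hM$ of points $(p,[v])$ with $p\in M_0$ and $[v]$ at projective distance $\geq\epsilon$ from both $E^{\pm c}(p)$. Restrict to a Pesin-type set $\Lambda\subset M_0$ of measure close to $1$ on which the splitting is continuous and the angle between $E^{+c}$ and $E^{-c}$ is bounded below. For $\hp\in A_\epsilon$ over $\Lambda$, the forward iterates $\hf^n\hp$ lie eventually within any $\delta$ of $E^{+c}$, uniformly along a subsequence of returns to $\Lambda$ (the convergence rate is tempered). Hence they do not return to $A_\epsilon\cap\hpi^{-1}\Lambda$ after a bounded time, contradicting Poincaré recurrence unless $\hmu(A_\epsilon\cap\hpi^{-1}\Lambda)=0$. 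Letting $\epsilon\to0$ and $\mu(\Lambda)\to1$ gives the support claim. The main obstacle is making this uniformity precise without any continuity of the splitting. I would handle it via Lusin/Pesin blocks, using the standard estimate
\[
d\bigl(\Pp D_pf^n[v],E^{+c}(f^np)\bigr)\lesssim \frac{e^{-n(\lambda^{+c}-\lambda^{-c}-\eta)}}{\epsilon\,\sin\angle(E^{+c},E^{-c})(f^np)}.
\]
Restricted to times when $f^np\in\Lambda$, this is uniformly small.

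With the support claim in hand, write $\hmu_p=\rho(p)\delta_{E^{-c}(p)}+(1-\rho(p))\delta_{E^{+c}(p)}$. Since $\Pp D_pf$ maps $E^{\pm c}(p)$ to $E^{\pm c}(f p)$, equivariance gives $\rho(fp)=\rho(p)$, so $\rho$ is measurable and $f$-invariant; this proves item 2. Measurability of $\rho$ follows from $\rho(p)=\hmu_p(\{E^{-c}(p)\})$. If $\mu$ is ergodic, $\rho$ is a constant $c$, so $\hmu=c\hmu^-+(1-c)\hmu^+$. The measures $\hmu^\pm$ are invariant, because the fiber measures $\delta_{E^{\pm c}(p)}$ are equivariant. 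They are ergodic because any invariant set $\widehat{A}$ meets the graph of $E^{\pm c}$ in the lift of an $f$-invariant set, which has $\mu$-measure $0$ or $1$. They are distinct since they are mutually singular, being supported on disjoint graphs. Hence the ergodic lifts are exactly $\hmu^+$ and $\hmu^-$, since an ergodic measure cannot be a nontrivial convex combination; this is item 1.
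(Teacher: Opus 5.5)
Your proposal is correct. Note that the paper does not prove this lemma: it states it as a consequence of Ledrappier's Proposition~5.1 in \cite{LedraExp} and uses it as a black box. Your disintegration plus Poincar\'e recurrence argument is therefore a self-contained replacement for that citation, not a reconstruction of an argument in the text. It needs only the Oseledets theorem for the linear cocycle $D^cf$ and nothing about partial hyperbolicity or holonomies, which is the natural generality of the statement.

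\textbf{Simplification.} The uniformity you flag as the main obstacle is not needed. Neither are the Pesin/Lusin sets $\Lambda$ or the angle factor in your displayed estimate. Write $D_pf^nv=w_n^-+w_n^+$ with $w_n^\pm\in E^{\pm c}(f^np)$. Since $w_n^+\in E^{+c}(f^np)$, for $n$ large you have $\sin\angle\bigl(D_pf^nv,E^{+c}(f^np)\bigr)\le \|w_n^-\|/(\|w_n^+\|-\|w_n^-\|)$. This bound does not involve the angle between $E^{+c}$ and $E^{-c}$, and it tends to $0$ for every $[v]\neq E^{-c}(p)$. So every point of the measurable set $A_\epsilon$ leaves it forever after some point-dependent time, and Poincar\'e recurrence gives $\hmu(A_\epsilon)=0$ directly. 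Pointwise convergence is enough.

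\textbf{Minor corrections.}
\begin{itemize}
\item In your second paragraph, forward iterates approach $E^{+c}$, so it is the angle with $E^{+c}$, not $E^{-c}$, that decays.
\item The identity $\rho\circ f=\rho$ is only obtained $\mu$-a.e. To get a genuinely $f$-invariant $\rho$ on $M_0$, replace the good full-measure set $G$ by $\bigcap_{n\in\mathbb{Z}}f^n(G)$.
\end{itemize}
The remaining steps (equivariance of the fiber measures, ergodicity and mutual singularity of $\hmu^\pm$, and extremality forcing $c\in\{0,1\}$) are fine as written.
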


The following is a general result for partially hyperbolic systems with linear holonomies. 

\begin{lemma}\label{lem:invariantEc}
Let $f\in \PHb[2]$  and let $\mu\in \pr_f(M)$ be an invariant measure and suppose that for $\mu$-almost every point, the center Lyapunov exponents are different. Let $p,q\in \cR(\mu)$ with $q\in \Wu{p}$, then   
\[
	 \hu_{q,p}(E^{+c}(p))=E^{+c}(q).
\]
Moreover, if $\mu \in \Gibbs$ is a $u$-Gibbs measure such that for $\mu$-almost every $p$, $\lambda^{-c}(p) < \lambda^{+c}(p)$, then  $\hmu^+$ is a projective $u$-Gibbs measure.
\end{lemma}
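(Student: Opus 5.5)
The plan is to treat the two assertions separately. The first is a forward-contraction argument along unstable leaves for the projective cocycle. The second combines it with \Cref{pro:Ledrappierproj} and the absolute continuity of the conditionals of $\mu$.

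\textbf{Invariance of $E^{+c}$.} Fix $p,q\in\cR(\mu)$ with $q\in\Wu{p}$, and set $E'=\hu_{q,p}(E^{+c}(p))\in\Pp E^c(q)$. Suppose, for contradiction, that $E'\neq E^{+c}(q)$. Then $E'$ has a nonzero component along $E^{-c}(q)$. Rather than pushing forward, I would pull back: $f^{-n}(p)$ and $f^{-n}(q)$ converge exponentially along the unstable leaf. By equivariance (item 3 of \Cref{def:linearholonomy}),
\[
\hu_{f^{-n}q,f^{-n}p}=D^cf^{-n}\circ\hu_{q,p}\circ D^cf^{n}.
\]
Since $d(f^{-n}p,f^{-n}q)\to0$, the continuity of holonomies in \Cref{thm:saghin} (together with $\hu_{x,x}=Id$) shows that $\hu_{f^{-n}q,f^{-n}p}$ is uniformly close to the identity in local trivializations. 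In backward time, $E^{-c}$ is the direction with the larger exponent $-\lambda^{-c}$. Hence any direction at $q$ other than $E^{+c}(q)$ is attracted under $D^cf^{-n}$ towards $E^{-c}(f^{-n}q)$, at the rate $e^{-n(\lambda^{+c}-\lambda^{-c})}$ in projective distance.

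Applying $D^cf^{-n}$ at $q$ to $E'$ therefore gives a direction close to $E^{-c}(f^{-n}q)$. On the other hand, $E'$ is also the image under $\hu_{f^{-n}q,f^{-n}p}\approx Id$ of $E^{+c}(f^{-n}p)$. The angle between $E^{+c}$ and $E^{-c}$ along the backward orbit degenerates at most subexponentially (Oseledets tempered angles). Passing to a subsequence of backward times returning to a Pesin block where both points have angle bounded below is delicate, since $p$ and $q$ need not return simultaneously. I would fix this by a more quantitative route. Write the holonomy in an adapted Oseledets basis. Its off-diagonal entry mapping $E^{+c}$ into $E^{-c}$ is conjugated by the cocycle, so at time $-n$ it is amplified by roughly $e^{n(\lambda^{+c}-\lambda^{-c})}$ up to subexponential Oseledets factors. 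The holonomy at time $-n$ is nevertheless bounded, even close to $Id$. The only way to reconcile the two is that this entry is zero, that is, $E'=E^{+c}(q)$. I expect the main obstacle to be controlling these subexponential (tempered) factors at both $p$ and $q$. This is where the Oseledets regularity of both points in $\cR(\mu)$ is used, and the argument is standard as in Avila--Viana-type invariance principles.

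\textbf{$\hmu^+$ is projective $u$-Gibbs.} It suffices to treat ergodic $\mu$, by \Cref{lem:compactGibbs}-style ergodic decomposition and since convex combinations preserve the property. By \Cref{pro:Ledrappierproj}, $\hmu^+=\int\delta_{E^{+c}(p)}\,d\mu$. Take a regular box $U$ and its lift $\widehat U$. For $\mu$-a.e.\ plaque $D_p$, the conditional of $\mu$ is $\rho^u_D\,d\Leb^u$, and almost every point of $D_p$ lies in $\cR(\mu)$. By the first part, the graph $\{(q,E^{+c}(q)):q\in D_p\cap\cR(\mu)\}$ coincides, up to a null set, with the regular projective disk $\widehat D^u_{(p,E^{+c}(p))}$. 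Consequently $\hmu^+|_{\widehat U}$ disintegrates along regular $u$-disks with conditionals $(\hpi|_{\widehat D})^{-1}_*(\rho^u_D\Leb^u)$. These are absolutely continuous with respect to $\hm^u$ by definition of $\mathcal M^u$. Invariance of $\hmu^+$ is immediate from the invariance of $E^{+c}$, so $\hmu^+\in\Gibbs[\hf]$.
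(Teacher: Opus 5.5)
Your proposal is correct. The second part is essentially the paper's argument: you disintegrate $\hmu^+|_{\widehat U}$ plaque by plaque and use the first part to see that each lifted conditional lives on a single projective plaque. One small point there: the paper chooses, measurably, a regular reference point $q_p$ in each plaque, because the plaque's base point on the transversal need not lie in $\cR(\mu)$. You should do the same rather than writing $E^{+c}(p)$ for the base point.

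For the first part you start from the same equivariance identity as the paper. You then argue with directions and Oseledets coordinates, and that is what creates the tempered-angle difficulty you flag. The paper avoids it by comparing norms of a single vector. For $v\in E^{+c}(p)$,
\[
\|D_qf^{-n}\hu_{q,p}v\|=\|\hu_{q_{-n},p_{-n}}D_pf^{-n}v\|\leq \|\hu_{q_{-n},p_{-n}}\|\,\|D_pf^{-n}v\|.
\]
Since $\|\hu_{q_{-n},p_{-n}}\|\to 1$, the left side decays at exponential rate at most $-\lambda^{+c}$. On the other hand, Oseledets regularity at $q$ alone says that any vector of $E^c(q)$ outside $E^{+c}(q)$ decays under $D_qf^{-n}$ at the strictly slower rate $-\lambda^{-c}$. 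This is an immediate contradiction, and no angles or tempering enter.

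Your off-diagonal route also works, but it needs less than you suggest. The only subexponential factor required is $\sin\angle\bigl(E^{+c}(q_{-n}),E^{-c}(q_{-n})\bigr)$ along the backward orbit of $q$. You need it to bound the $E^{-c}$-coefficient of the bounded vector $\hu_{q_{-n},p_{-n}}e^+_{p_{-n}}$ by that vector's norm. No control at $p$ is needed beyond its growth rate. Still, this is strictly more work than the norm comparison for the same conclusion.
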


\begin{proof}
Without loss of generality, we can suppose that $\mu$ is ergodic. For the general case we just work in ergodic components. For $p,q\in \cR(\mu)$, write $p_n=f^n(p), q_n=f^n(q)$. From the definition of linear holonomies,
\begin{equation}\label{eq:invarianceDhol}
	D_{q}f^{-n}\circ \hu_{q,p}=\hu_{q_{-n},p_{-n}}\circ D_pf^{-n}.
\end{equation}
Observe that the Oseledets directions are well defined in $\cR(\mu)$. Therefore,
\[
\left\|D_{q}f^{-n}\circ \hu_{q,p}|_{E^{+c}(p)}\right\|=\left\|\hu_{q_{-n},p_{-n}}\circ D_pf^{-n}|_{E^{+c}(p)}\right\|.
\]
Since the distance between $p_{-n}$ and $q_{-n}$ goes to zero, we have that $\lim_{n\to +\infty} \|h^u_{q_{-n}, p_{-n}}\|=1 $. Hence,
\begin{align*}
\limsup_{n\to +\infty} \frac{1}{n} \log \left\|\hu_{q_{-n},p_{-n}}\circ D_pf^{-n}|_{E^{+c}(p)}\right\| &\leq \limsup_{n\to +\infty} \frac{1}{n} \left(\log \|h^u_{q_{-n},p_{-n}}\| + \log \|D_pf^{-n}|_{E^{+c}(p)}\|\right) \\
&= -\lambda^{+c}(\mu).
\end{align*}

If $h^u_{q,p}(E^{+c}(p)) \neq E^{+c}(q)$, and since $E^c(q) = E^{-c}(q) \oplus E^{+c}(q)$, we would have that 
\begin{align*}
    \displaystyle  \limsup_{n\to +\infty} \frac{1}{n} \log \left\|\hu_{q_{-n},p_{-n}}\circ D_pf^{-n}|_{E^{+c}(p)}\right\|= \limsup_{n\to + \infty} \frac{1}{n} \log \left \|D_{q}f^{-n}\circ \hu_{q,p}|_{E^{+c}(p)}\right\|\\
     \geq \liminf_{n\to +\infty}\frac{1}{n} \log \left \|D_{q}f^{-n}|_{h^u_{q,p}(E^{+c}(p))}\right\| = -\lambda^{-c}(\mu).
\end{align*}
Since we assumed that $\lambda^{-c}(\mu) < \lambda^{+c}(\mu)$, we obtain a contradiction. 

Take a regular box $U$ in $M$, let $\hat{U}$ its lift in $\hM$: then, using the notation of the previous part,
\[
\hmu^+|_{\hat{U}} = \displaystyle \int_U  \delta_{E^{+c}(p)}\der\mu(p)=\int_T \left( \int_{D^u_p} \delta_{E^{+c}(t)} \rho^U(p,t) \der \Leb^u_p(t) \right) \der\mu_T(p).
\]
where $T=D^{cs}_p$, and $\mu_T$ is the quotient measure. For $\mu_T$-almost every $p$, we can choose a point $q_p \in D^u_p$, making the choice $p \mapsto q_p$ measurable, such that $q_p$ is a regular point. Therefore, $q_p$ has a well defined Oseledets direction $E^{+c}(q_p)$. Using this choice of point as a reference point in each plaque, we can rewrite this, using the $\hu$-invariance of $E^{+c}$, as
\[
\hmu^+|_{\hat{U}} = \displaystyle \int_T \left( \int_{D^u_p} (\hu_{t,q_p})_*\delta_{E^{+c}(q_p)} \rho^U(p,t) \der \Leb^u_p(t) \right) \der\mu_T(p).
\]
If $\hat p=(p,h^u_{p,q_p}(E^{+c}(q_p)))$, then   
\[
\displaystyle \bigcup_{t\in D^u_p} \hu_{t,q_p}(E^{+c}(q_p)) =\hat W^u(\hp, \hat U) \eqqcolon \hat D^u_{\hp}.
\]
Writing $\hat{t} = \hu_{t,q_p}(E^{+c}(q_p))$, we get
\[
\hmu^+|_{\hat{U}} = \displaystyle \int_T \left(\int_{\hat{D}^u_{\hat{p}}} \rho^U(\hat{p}, \hat{t}) d\hat{m}^u_p(t) \right) \der\mu_T(p),
\]
which means that $\hat{\mu}^+$ is a projective $u$-Gibbs measure.  
\end{proof}
We now prove part $(A)$ of \Cref{mainthm:TheoremD}.

\begin{lemma}\label{pro:thmB1}
   Let $f\in \PHb[2]$ be a partially hyperbolic diffeomorphism with the expanding on average property and verifying $\Gap>0$. Then $f\in\mathcal{G}$.
\end{lemma}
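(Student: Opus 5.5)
The first condition defining $\mathcal{G}$ is already given by \Cref{cor:Gibbshyperbolic}. Every $\mu\in\Gibbs$ satisfies $\lambda^{+c}(\mu)\geq \CEA>0$ and $\lambda^{-c}(\mu)\leq -\Gap<0$. Taking the infimum and supremum over $\Gibbs$ gives
\[
\inf\{\lambda^{+c}(\mu):\mu\in\Gibbs\}>0>\sup\{\lambda^{-c}(\mu):\mu\in\Gibbs\}.
\]
It therefore remains to show that, for every ergodic $\mu\in\Gibbs$, the direction $E^{-c}$ is not $h^u$-invariant. Note that \Cref{cor:Gibbshyperbolic} gives $\lambda^{-c}(\mu)<\lambda^{+c}(\mu)$, so the Oseledets splitting and the lifts $\hmu^{\pm}$ of \Cref{pro:Ledrappierproj} exist.

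I argue by contradiction. Suppose $E^{-c}$ is $h^u$-invariant on a full-measure set $\widetilde M$. The plan is to build, from a single unstable disk, a family of center vectors that violates the averaged inequality \eqref{eq.inequalityea}. Since $\mu$ is $u$-Gibbs, for $\mu$-a.e.\ $p$ the conditional of $\mu$ on a local disk $D_p\subset\Wu{p}$ is equivalent to $\Leb^u$. Hence I can choose $p\in\widetilde M\cap\cR(\mu)$ such that $\Leb^u$-a.e.\ $q\in D_p$ lies in $\widetilde M\cap\cR(\mu)$ and is $\mu$-typical for the exponent $\lambda^{-c}(\mu)$. Take $v\in E^{-c}(p)$ a unit vector. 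By $h^u$-invariance, $\chi_{q,p}(v)\in E^{-c}(q)$ for $\Leb^u$-a.e.\ $q\in D_p$, and therefore for such $q$
\[
\lim_{n\to\infty}\frac1n\log\norm{D_qf^n\chi_{q,p}(v)}=\lambda^{-c}(\mu)\leq-\Gap<0.
\]

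Next I pass from pointwise convergence to convergence of the averages. The integrands $\frac1n\log\norm{D_qf^n\chi_{q,p}(v)}$ are uniformly bounded by $\max\{\log\norm{D^cf}_\infty,\,|\log\conorm{D^cf}|_\infty\}$. Dominated convergence then gives
\[
\lim_{n\to\infty}\frac{1}{n\Leb^u(D_p)}\int_{D_p}\log\norm{D_qf^n\chi_{q,p}(v)}\,\der\Leb^u(q)=\lambda^{-c}(\mu)<0.
\]
This contradicts \eqref{eq.inequalityea}, which requires the $\liminf$ to be at least $C>0$. Hence $E^{-c}$ is not $h^u$-invariant for any ergodic $u$-Gibbs measure, and $f\in\mathcal G$.

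The only delicate point is choosing a single disk on which Lebesgue-almost every point is simultaneously in $\widetilde M$, Oseledets-regular, and carries the exponent $\lambda^{-c}(\mu)$. This is handled by the absolute continuity of the disintegration of $\mu$ along $\Fu$ in a regular box: a $\mu$-full set meets $\mu$-a.e.\ plaque in a $\Leb^u$-full set. A secondary point is that $h^u$-invariance only gives $h^u_{q,p}(E^{-c}(p))=E^{-c}(q)$ for $p,q\in\widetilde M$, so the base point $p$ itself must be taken in $\widetilde M$. This is arranged by choosing $p$ inside the full-measure subset of the plaque, and the definition allows recentering the disk at such a point.
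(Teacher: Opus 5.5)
Your proof is correct and follows the paper's argument: assuming $E^{-c}$ is $h^u$-invariant, transport a vector of $E^{-c}(p)$ along an unstable disk, note that its average growth rate tends to $\lambda^{-c}(\mu)<0$, and contradict \eqref{eq.inequalityea}. The paper instead phrases this through the lift $\hmu^-$ being a projective $u$-Gibbs measure, with the averaged disk measures converging weak* to $\hmu^-$ and tested against $\psi$; your version works pointwise, using Oseledets and dominated convergence on a small disk inside a plaque, which is a slightly more direct way of carrying out the same idea.
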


\begin{proof}
   We observed in \Cref{cor:Gibbshyperbolic} that the top center exponent of every $u$-Gibbs measure is bounded from below by $\CEA$ and that the bottom center exponent is bounded from above by $-\Gap$. 
   
   Suppose there is an ergodic $u$-Gibbs measure $\mu$ such that $E^{-c}$ is $h^u$-invariant. Therefore, the lift of the measure $\mu$ to its stable Oseledets directions,  $\hmu^-$, is a projective $u$-Gibbs measure.  However, for $\hat{\mu}^-$-almost every $(p,[v])$, if we consider the projective unstable disk
\[
\hat{D}_{(p,[v])} := \{(q,[w]): q\in \Wu{p,1} \textrm{ and } [w] = [h^u_{q,p}(v)]\}
\]
with the measure 
\[
\hat{m}_{\hat{D}_{(p,[v])}} := \frac{1}{\Leb^u(W^u(p,1))}\int_{W^u(p,1)} \delta_{(q,[h^u_{q,p}v])} \der\Leb^u(q),
\]
then its averages
\[
\displaystyle \hat{\mu}_n^-:= \frac{1}{n} \sum_{j=0}^{n-1} \hat{f}^j_* \hat{m}_{\hat{D}_{(p,[v])}}
\]
converge to $\hat{\mu}^-$.  Integrating the function $\psi(p,[v]) = \log \|D_pf (\frac{v}{\norm{v}})\|$, we obtain that
\[
\lambda^{-c}(\mu) = \int \psi d\hat{\mu}^-.
\]
Since this is a continuous function in the projective bundle, we obtain that 
\[
\displaystyle \lim_{n \to +\infty} \int \psi d\hat{\mu}_n^- =  \int \psi \der\hat{\mu}^- = \lambda^{-c}(\mu).
\]
However, by the chain rule, 
\[
\displaystyle \lim_{n\to +\infty} \int \psi d\hat{\mu}_n^- = \lim_{n\to +\infty} \frac{1}{n \Leb^u(W^u(p,1))} \int \log \left\|D^c_q f^n \cdot \chi_{q,p}(v)\right\| \der\Leb^u(q) \geq \CEA
\]
which gives a contradiction. 
\end{proof}

\begin{lemma}\label{cor:onlygibbs}
    If $f\in \mathcal G$ then for every $\mu\in\Gibbs$ its only lift to $\hM$ in $\Gibbs[\hf]$ is the measure $\hmu^{+}$.
 \end{lemma}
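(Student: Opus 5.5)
The plan is to reduce to the ergodic case, use \Cref{pro:Ledrappierproj} to identify every lift, and then exclude any weight on $\hmu^-$ by the non-invariance hypothesis in $\mathcal G$.

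\textbf{Reduction and description of lifts.} Let $\mu\in\Gibbs$ and let $\hmu\in\Gibbs[\hf]$ with $\hpi_*\hmu=\mu$. Since $f\in\mathcal G$, every ergodic component of $\mu$ (which is $u$-Gibbs) has $\lambda^{-c}<0<\lambda^{+c}$. \Cref{pro:Ledrappierproj} then applies and gives
\[
\hmu=\int \rho(p)\delta_{E^{-c}(p)}+(1-\rho(p))\delta_{E^{+c}(p)}\der\mu(p)
\]
with $\rho$ measurable and $f$-invariant. By \Cref{lem:compactGibbs}, the ergodic components of $\hmu$ are projective $u$-Gibbs. Each such component projects to an ergodic component of $\mu$, hence to an ergodic $u$-Gibbs measure $\nu$, so by \Cref{pro:Ledrappierproj}(1) it is either $\hnu^+$ or $\hnu^-$. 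It therefore suffices to show that for an ergodic $\nu\in\Gibbs$ the measure $\hnu^-$ is \emph{not} projective $u$-Gibbs. Then $\rho=0$ $\mu$-a.e., and $\hmu=\hmu^+$. That $\hmu^+$ itself lies in $\Gibbs[\hf]$ is \Cref{lem:invariantEc}.

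\textbf{Main step: $\hnu^-\notin\Gibbs[\hf]$.} Suppose $\hnu^-$ is projective $u$-Gibbs. In a regular box $\hat U$, its conditional on the regular $u$-disk $\hat D^u_{\hp}$ through a typical $\hp$ is absolutely continuous with respect to $\hm^u_{\hp}$. The measure $\hat D^u_{\hp}$ is a single holonomy-transported section $\{(q,[\hu_{q,p}v])\}$, so this conditional charges points of the form $(q,\hu_{q,p}E^{-c}(p))$. The difficulty is that $\hat U$ is foliated by many lifted disks over each unstable plaque, one for each direction in $\mathbb PE^c(p_0)$. We need to show that, for $\nu$-a.e. plaque, the conditional measures of $\hnu^-$ sit on a single lifted disk.

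We would argue as follows. The disintegration of $\hnu^-$ over the fibers is $\delta_{E^{-c}(q)}$, so $\hnu^-$ restricted to $\hpi^{-1}(D^u_p)$ is the graph measure $q\mapsto E^{-c}(q)$ pushed forward from $\nu^u_D$, which is equivalent to $\Leb^u$. The partition of $\hat U$ into regular $u$-disks refines the partition into $\hpi^{-1}(\text{plaques})$. Disintegrating further, almost every $\hnu^-$-conditional on a regular $u$-disk is supported on the set of $q$ whose direction $E^{-c}(q)$ coincides with the holonomy transport of that disk's direction.

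Absolute continuity of these conditionals with respect to $\hm^u$, together with the fact that $\hpi$ pushes $\hm^u$ to $\Leb^u$ and the projected conditionals integrate to the fiber conditional $\nu^u_D\sim\Leb^u$, forces the following: for $\nu$-a.e. $p$ there is a disk direction $[w]$ such that $E^{-c}(q)=\hu_{q,p}[w]$ on a set of positive $\Leb^u$ measure in $D^u_p$. We then need to upgrade this to full measure. For that we would use ergodicity together with the invariance $\hf(\hat D)\supset$ disks and backward contraction: if two different holonomy sections each carried positive mass on the same plaque, iterating backward would keep them separate. We expect this to yield $E^{-c}(q)=\hu_{q,p}E^{-c}(p)$ for $\Leb^u$-a.e. $q$, for $\nu$-a.e. $p$.

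\textbf{Conclusion and expected obstacle.} Using the absolute continuity of $\nu$ along $\Fu$, the almost-everywhere relation from the main step gives a full-measure set $\widetilde M$ on which $\hu_{q,p}E^{-c}(p)=E^{-c}(q)$ whenever $p,q\in\widetilde M$ and $q\sim_u p$. This is $h^u$-invariance of $E^{-c}$, contradicting condition (2) of $\mathcal G$. The delicate step is the measure-theoretic one in the main step: passing from the disintegration over the fine partition into lifted disks to a single disk per plaque, and from positive to full measure on the plaque. If the direct argument is awkward, we would try first to mimic the proof of \Cref{lem:compactGibbs}: compute Birkhoff averages of $\hf$ on a positive-measure piece of a lifted disk, whose points are all $\hnu^-$-generic, and use backward contraction along $\hWu{\cdot}$ to show that the whole lifted plaque consists of generic points for $\hnu^-$. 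That would force the conditional to be the entire disk.
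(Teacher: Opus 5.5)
\textbf{Verdict: genuine gap.} Your reduction and overall strategy are the paper's. You use \Cref{pro:Ledrappierproj} and \Cref{lem:compactGibbs} to reduce to showing $\hnu^-\notin\Gibbs[\hf]$ for ergodic $\nu\in\Gibbs$, get $\hmu^+\in\Gibbs[\hf]$ from \Cref{lem:invariantEc}, and rule out $\hnu^-$ by deducing $h^u$-invariance of $E^{-c}$, which contradicts condition (2) of $\mathcal G$.

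The gap is the step you flag. Absolute continuity of the conditionals, by itself, only says that on a.e.\ plaque the map $q\mapsto h^u_{p,q}E^{-c}(q)$ is countably valued, with each value taken on a set of positive $\Leb^u$-measure. Any such configuration has absolutely continuous conditionals on the lifted disks, and it is not $h^u$-invariance. Neither mechanism you propose closes this:
\begin{itemize}
\item ``Iterating backward keeps them separate'' produces no contradiction.
\item The Birkhoff-average fallback fails structurally. Under $Df^{-n}$ every direction other than $E^{+c}(q)$ is attracted to $E^{-c}$, so for $\nu$-typical $q$ the point $(q,[w])$ is backward generic for $\hnu^-$ as soon as $[w]\neq E^{+c}(q)$. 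Propagating backward genericity along $\hWu{\hp}$ by contraction therefore says nothing about whether $h^u_{q,p}E^{-c}(p)=E^{-c}(q)$. Forward genericity, which is what detects $E^{-c}$, does not propagate along unstable plaques. In \Cref{lem:compactGibbs} the contraction argument is applied only to points already known to be two-sided generic.
\end{itemize}

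\textbf{Missing ingredient.} You need $f$-invariance of $\nu$, used through a density (martingale) argument. Equivalently, this is the standard fact that an invariant measure with absolutely continuous conditionals along an expanding lamination has the canonical, everywhere positive, densities. Concretely:
\begin{enumerate}
\item Take an increasing $u$-subordinated partition $\xi$. Put $\zeta(x)=\{q\in\xi(x):E^{-c}(q)=h^u_{q,x}E^{-c}(x)\}$ and $g(x)=\nu^\xi_x(\zeta(x))$.
\item Your positive-measure observation says exactly that $\zeta$ induces a countable partition (mod $0$) of a.e.\ atom of $\xi$.
\item Equivariance of $E^{-c}$ and of $h^u$ gives $f^{-n}(\zeta(f^nx))=\zeta(x)\cap f^{-n}(\xi(f^nx))$, hence $g(f^nx)=\nu^{f^{-n}\xi}_x(\zeta(x))$.
\item Since $f^{-n}\xi$ increases to the partition into points, the martingale convergence theorem gives $g(f^nx)\to 1$ for $\nu$-a.e.\ $x$. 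Apply it to the countably many measurable sets generating $\zeta$ over $\xi$.
\item Since $g\circ f^n$ has the same distribution as $g$, you get $g=1$ $\nu$-a.e. That is, $E^{-c}(q)=h^u_{q,x}E^{-c}(x)$ for $\nu^\xi_x$-a.e.\ $q$.
\end{enumerate}
The full-measure set $\widetilde M$ in the definition of $h^u$-invariance then follows routinely by intersecting over backward iterates. Ergodicity is not needed here; invariance suffices.

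The paper's own proof passes over this point with a single ``therefore''. So you located the nontrivial step correctly, but your proposal does not supply the argument for it.
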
 

 \begin{proof}
     Suppose that $f\in\mathcal{G}$ and $\mu\in\Gibbs$ is ergodic. 
     The lift $\hat{\mu}^-$ is not a projective $u-$Gibbs measure. Indeed, if we assume the contrary, then $\hmu^-$ has absolutely continuous conditionals along lifted unstable disks. However, its conditionals on $\mathbb{P}E^c$ fibers are given by $\delta_{E^{-c}}$. Therefore, for $\mu$-almost every $p$, and for $\Leb^u_p$ almost every $q$, $h^u_{q,p}(E^{-c}(p)) = E^{-c}(q)$ which implies that $f\notin \mathcal{G}$.  

     If $\mu$ is not ergodic, then \Cref{pro:Ledrappierproj} implies that there exists an ergodic component $\nu$ of $\mu$ for which $\hnu^-$ is a projective $u$-Gibbs measure, which gives a contradiction. \qedhere

 \end{proof}

After these preparations, we are ready to conclude the proof of \Cref{mainthm:TheoremD}.

\begin{proof}[Proof of \Cref{mainthm:TheoremD}]
  It remains to prove part $(B)$. Suppose that $f\in\mathcal G$ does not have the expanding on average property. Then, by \Cref{rem:lengthdiscs}, for every $\eta>0$ we can find some point $p$, some unit vector $v\in E^c_p$, and some infinite set $S \subset\Nat$ so that
  \[
  \lim_{n\in S} \frac{1}{n\Leb^u(\Wu{p,1})}\int_{\Wu{p,1}}\log\norm{D_qf^n\cdot \chi_{q,p}(v)}\der\Leb^u(q)\leq \eta.
  \]
As in the proof of \Cref{pro:thmB1}, write
\[
\hat{m}_{\hat{D}_{(p,[v])}} := \frac{1}{\Leb^u(W^u(p,1))}\int_{W^u(p,1)} \delta_{(q,[h^u_{q,p}(v)])} \der\Leb^u(q),
\]
and consider the sequence of projective measures
\[
\displaystyle \hat{\mu}_n^\eta:= \frac{1}{n} \sum_{j=0}^{n-1} \hat{f}^j_* \hat{m}_{\hat{D}_{(p,[v])}}.
\]
Let $\psi$ be as in the proof of \Cref{pro:thmB1}, and by the chain rule,
\[
\displaystyle \int \psi \der\hat{\mu}^\eta_n =   \frac{1}{n\Leb^u(\Wu{p,1})}\int_{\Wu{p,1}}\log\norm{D_qf^n\cdot \chi_{q,p}(v)}\der\Leb^u(q).
\]
Hence, $\lim_{n\in S}\int \psi \der\hat\mu^{\eta}_n\leq \eta$. The only accumulation measures of the $(\hat\mu^{\eta}_n)_n$ are projective $u-$Gibbs measures (\Cref{lem:projectivepesinsinai}). Up to considering a subsequence,  there is some $\hat\mu\in\Gibbs[\hf]$ so that 
\[
\displaystyle \lim_{n\in S}\int \psi \hat\mu^{\eta}_n =\int \psi \der \hmu.
\]
Note that $\mu=\pi_*\hat\mu$ is a $u$-Gibbs measure. Therefore, there exists a function $\rho$ taking values in $[0,1]$, such that 
\[
\hat{\mu} = \displaystyle \int\left( \rho(x) \hat{\mu}_x^+ + (1-\rho(x)) \hat{\mu}_x^- \right)\der\mu(x),
\]
where $\mu_x$ is the ergodic component of $\mu$ containing $x$, for $\mu$-almost every $x$, and $\hat{\mu}_x^*$ are the lifts to the stable or unstable Oseledets direction, for $*=-,+$.  \Cref{cor:onlygibbs} tell us that the only projective $u$-Gibbs measures that project on $\mu$ are supported on $E^{+c}_{\mu}$, hence $\hat{\mu} = \hat{\mu}^+$, and 
\[
\displaystyle \int \psi \der\hat{\mu}^+ = \lambda^{+c}(\mu)\leq \eta.
\]
For $0 < \eta< \inf \{\lambda^{+c}(\nu): \nu \in \Gibbse\}$, this contradicts the definition of $\mathcal{G}$. It is immediate that if $f$ is expanding on average and $\sup |\det D^c_x f| \leq 1$ then $\Gap>0$.
\end{proof}

\subsection{Robustness of the expanding on average property}
\label{ssec:Robustness}

 In this subsection we show the following theorem.
\begin{theorem}\label{THM:robustnessofea}
    Suppose that $f\in \PHb[2] $ has the expanding on average property and $\Gap>0$, then for any $g\in \PHb[2]$ which is sufficiently $C^2$-close to $f$ has the expanding on average property with $\mathrm{Gap}(g)>0$. 
\end{theorem}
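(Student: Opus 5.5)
We argue by contradiction, using the projective $u$-Gibbs machinery and its upper semicontinuity under $C^2$ perturbations (\Cref{lem:closednessGibbs}). Let $\psi_g(x,[v])=\log\norm{D_xg|_{[v]}}$ on $\mathbb{P}E^c_g\subset \mathbb{P}TM$. By the proof of \Cref{rem:lengthdiscs}, $g$ is expanding on average with $C_{\scriptscriptstyle EA}(g)\geq C_*(g):=\inf\{\int\psi_g\der\hnu:\hnu\in\Gibbs[\hat g]\}$. It therefore suffices to show two things: $C_*(g)$ is bounded below, uniformly for $g$ near $f$, by something close to $C_*(f)\geq \CEA$; and $\sup_p\limsup_n\frac1n\log|\det D^c_pg^n|$ is bounded above by something close to the corresponding quantity for $f$.

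\textbf{Step 1 (lower bound on $C_*$).} Fix $\delta>0$ and suppose there are $g_n\to f$ in $C^2$ and $\hnu_n\in\Gibbs[\hat g_n]$ with $\int\psi_{g_n}\der\hnu_n\leq C_*(f)-\delta$. Pass to a weak* limit $\hnu$ in $\mathbb{P}TM$. By \Cref{lem:closednessGibbs}, $\hnu\in\Gibbs[\hf]$. Since $\psi_{g_n}\to\psi_f$ uniformly on $\mathbb{P}TM$ (the functions $\log\norm{D g_n|_{[v]}}$ converge uniformly as functions of $(x,[v])\in\mathbb{P}TM$), we get $\int\psi_f\der\hnu\leq C_*(f)-\delta$. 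This contradicts the definition of $C_*(f)$. Hence for $g$ close to $f$ we have $C_{\scriptscriptstyle EA}(g)\geq C_*(g)\geq C_*(f)-\delta\geq\CEA-\delta$. In particular $g$ is expanding on average once $\delta<\CEA$.

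\textbf{Step 2 (upper bound on the determinant term).} Write $J_g(p)=\log|\det D^c_pg|$. Then $\sup_p\limsup_n\frac1n S_nJ_g(p)=\max\{\int J_g\der\nu:\nu\in\pr_g(M)\}$, by the standard fact that maximal Birkhoff growth of a continuous function is attained by invariant measures. The same contradiction argument applies: take $g_n\to f$ and invariant $\nu_n$ with $\int J_{g_n}\der\nu_n\geq \max_{\pr_f}\int J_f+\delta$. Limits of $\nu_n$ are $f$-invariant, and $J_{g_n}\to J_f$ uniformly because $E^c_{g_n}\to E^c_f$ uniformly and $g_n\to f$ in $C^1$. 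So the limit contradicts the maximum. Combining with Step 1 gives $\mathrm{Gap}(g)\geq \Gap-2\delta>0$ for $\delta<\Gap/2$.

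\textbf{Main obstacle.} The delicate point is Step 1's reliance on \Cref{lem:closednessGibbs}, which needs the holonomies $\hu_{g_n}$ to converge to $\hu_f$ uniformly on bounded unstable pieces (\Cref{thm:saghin}). It also needs the $u$-densities to converge. This is exactly what that lemma provides, so I expect the argument to go through. The only additional care is that the disks in \Cref{rem:lengthdiscs} have radius $1$ and the relevant constants are uniform in $g$, both of which follow from $C^2$-closeness.
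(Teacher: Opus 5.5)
Your proposal is correct and follows essentially the same route as the paper. Both argue by contradiction along $g_n\to f$, use closedness of projective $u$-Gibbs measures under $C^2$ limits (\Cref{lem:closednessGibbs}) together with uniform convergence $\psi_{g_n}\to\psi_f$ on $\mathbb{P}TM$, and use upper semicontinuity of the maximal center-determinant growth via invariant measures. The only difference is in Step 1: you prove lower semicontinuity of $C_*(g)=\inf\{\int\psi_g\,\der\hnu:\hnu\in\Gibbs[\hat g]\}$ directly, combining it with $\CEA(g)\geq C_*(g)$ and $C_*(f)\geq\CEA$ from the proof of \Cref{rem:lengthdiscs}. The paper instead passes through \Cref{mainthm:TheoremD} and \Cref{cor:onlygibbs}, so your first step is slightly cleaner and does not even need $\Gap>0$.
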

\begin{proof}
    Suppose the conclusion does not hold, then there exists a sequence $ \left(f_n\right)_{n\in \Nat}$ converging to $f$ in the $C^2$-topology such that for $n$ sufficiently large, the diffeomorphism $f_n$ either 
    \begin{enumerate}
        \item is not expanding on average, or 
        \item $f_n$ has the expanding on average property and $\mathrm{Gap}(f_n) \leq 0$.
    \end{enumerate}
    Let us first show that the first item does not happen for sufficiently large $n$. Let $\CEA(f)$ be the expanding on average constant for $f$ and fix $\eta \in (0, \CEA(f))$.
    Suppose not, then, up to considering a subsequence, we can assume that  for each $n\in \Nat$ sufficiently large, there exist $p_n \in M$, a unit vector $v_n \in E^c_{p_n,f_n}$ and some infinite set $S_n^\eta \subset \Nat$ such that
    \[
    \displaystyle \lim_{k\in S_n^\eta} \frac{1}{k \Leb^u(\Wu{p_n,f_n})} \int_{\Wu{p_n,f_n}} \log \|D_q f_n^k \cdot \chi_{q,p_n,f_n}(v_n)\| \der\Leb^u(q) \leq \eta.
    \]
    
    As in the proof of Theorem \hyperlink{theoremD}{D}, for each $n$, we find a measure $\hmu_n \in \Gibbs[\hf_n]$ such that 
    \[
    \displaystyle \int \psi_n \der\hmu_n \leq \eta,
    \]
    where we consider the function $\psi_n(x,[v]) = \log \|D_xf_n|_{[v]}\|$. We can consider the function $\psi_n$ as a function defined in $\mathbb{P}TM$. If $f_n$ converges to $f$ in the $C^1$-topology, then $\psi_n$ converges to $\psi(x,[v])= \log \|D_xf|_{[v]}\|$ in the $C^0$-topology. 

    Up to considering a subsequence, we may assume that $\hmu_n$ converges to some $\hf$-invariant measure $\hmu$ and by \Cref{lem:closednessGibbs} the measure $\hmu\in \Gibbs[\hf]$. So,
    \[
    \displaystyle \CEA(f) > \eta \geq \lim_{n\to +\infty} \int \psi_n \der\hmu_n = \lim_{n\to +\infty} \int \psi \der\hmu_n + \int (\psi_n - \psi )\der\hmu_n = \int \psi \der\hmu.
    \]
    By \Cref{mainthm:TheoremD} and \Cref{cor:onlygibbs},
    \[
    \int \psi \der\hmu \geq \CEA(f)
    \]
    which is a contradiction.

    Since in the computation above we could have taken any $\eta \in (0, \CEA(f))$, then, for any $\varepsilon>0$, for $n$ large enough $\CEA(f_n) \geq \CEA(f) - \varepsilon$. Suppose now that, up to a subsequence, each $f_n$ has the expanding on average property but $\mathrm{Gap}(f_n) \leq 0$, for $n$ sufficiently large. Recall that 
    \[
    \displaystyle \mathrm{Gap}(f_n) := \CEA(f_n) - \sup_x \limsup_{k \to +\infty} \frac{1}{k} \log |\det D_x^c f_n^k|.
    \]
    \begin{claim}
    The function 
    \[
    \displaystyle \PH[1] \ni g \mapsto \sup_x \limsup_{k \to +\infty} \frac{1}{k} \log |\det D^c_x g^k|\] is upper semicontinuous.
    \end{claim}
    \begin{proof}
    Given $g\in \PH[1]$, write
    \[
    c(g) := \sup_x \limsup_{k \to +\infty} \frac{1}{k} \log |\det D^c_x g^k|.
    \]
    We want to show that if $g_n $ converges to $g$ in the $C^1$-topology, then $\limsup_n c(g_n) \leq c(g)$. For each $g\in \PH[1]$, define $\psi(x,g) = \log |\det D_x^cg|,$ defined on $M \times \PH[1]$. Observe that this function is continuous, where we are using the $C^1$-topology in the second coordinate. It is easy to see that for each $g\in \PH[1]$, we have
    \[
\displaystyle     c(g) = \sup \left\{ \int \psi(x,g) \der \nu, \textrm{ where }\nu \textrm{ is $g$-invariant}\right\}.
    \]
    Suppose that $g_n$ converges to $g$. For each $\varepsilon>0$ and $n\in \Nat$, there exists a $g_n$-invariant measure $\nu_n$ such that
    \[
    \displaystyle \int \psi(x,g_n) \der \nu_n(x) \geq c(g_n) - \varepsilon. 
    \]
    Let $n_k$ be a sequence such that $\limsup_n c(g_n) = \lim_k c(g_{n_k})$. 
    Up to considering a subsequence, suppose that $\nu_{n_k}$ converges to a measure $\nu$, which is $g$-invariant. Then, since the function $\psi$ is continuous,
    \[
\displaystyle     \limsup_n c(g_n) - \varepsilon = \lim_k c(g_{n_k}) - \varepsilon < \int \psi(x,g_{n_k}) \der \nu_{n_k}  = \int \psi(x,g) \der \nu(x) \leq c(g).
    \]
    Therefore,
    \[
    \limsup_n c(g_n) \leq c(g) + \varepsilon.
    \]
    Since this is true for any $\varepsilon>0$, we obtain the upper semicontinuity. \qedhere
    \end{proof}
     Take $\varepsilon = \Gap$. For $n$ large enough,
    \[
\displaystyle    \CEA(f_n) > \CEA(f) - \frac{\varepsilon}{3} 
    \]
    and
    \[
    \displaystyle  \sup_x \limsup_{k \to +\infty} \frac{1}{k} \log |\det D^c_x f_n^k| < \sup_x \limsup_{k \to +\infty} \frac{1}{k} \log |\det D^c_x f^k| + \frac{\varepsilon}{3}.
    \]
    Therefore, 
    \[
    \displaystyle \mathrm{Gap}(f_n) > \Gap - \frac{2\varepsilon}{3} = \frac{\Gap}{3}>0. \qedhere
    \]
    
\end{proof}

\begin{remark}
  It thus follow that the Examples in \Cref{sec:Examples} are robust under $C^2$-perturbations. One gets, in particlar, $C^2$-open sets of non-uniformly hyperbolic diffeomorphism without global dominated splitting. Examples as such are not-addressed in the literature. 
\end{remark}

\subsection{Continuity of Lyapunov exponents and the finitary expanding on average property}\label{Subsection:Continuityexp}

We are ready to prove \Cref{mainthm:TheoremB}.

\begin{proof}[Proof of \Cref{mainthm:TheoremB}]

    Let us first show the continuity of the top center Lyapunov exponent. Let $f_n$ be as in the statement of \Cref{mainthm:TheoremB}. Observe that, by \Cref{THM:robustnessofea}, we may assume that $f_n$ has the expanding on average property with $\Gap>0$.  As in the proof of \Cref{THM:robustnessofea}, consider $\psi_n(x,[v]) := \log \|D_xf_n|_{[v]}\|$. For each $\mu_n$, consider $\hmu_n^+$ to be the lift of $\mu_n$ to the Oseledets direction $E^{+c}$.

By \Cref{THM:robustnessofea}, \Cref{mainthm:TheoremD}, we can apply  \Cref{cor:onlygibbs}, which implies that  this is the only projective $u$-Gibbs measure that projects to $\mu_n$. Moreover, we know that
  \[
   \lambda^{+c}(\mu_n,f_n) = \int \psi_n d\hat{\mu}_n^+.
   \]
  Take $\hat \mu$ a weak*-accumulation measure of the sequence $(\hat{\mu}_n^+)_n$, which by \Cref{lem:closednessGibbs} is also a projective $u$-Gibbs measure. We know that the measure $\hat{\mu}$ projects to $\mu$ and, since $f$ has the expanding on average property and $\Gap>0$, we have that $\hat{\mu} = \hat{\mu}^+$. Write $\psi(x,[v]) = \log \|D_xf|_{[v]}\|$ and as in the proof of \Cref{THM:robustnessofea},
  \[
  \lim_{n\to +\infty} \lambda^{+c}(\mu_n,f_n) = \lim_{n\to +\infty} \int \psi_n \der \hmu_n^+ = \lim_{n\to +\infty} \int \psi \der \hmu_n^+ + \int (\psi_n - \psi )\der \hmu_n^+ = \int \psi \der\hmu^+ = \lambda^{+c}(\mu,f),
  \]
  which gives us the continuity of the top center exponent. 
 
 The continuity of the bottom center exponent then is an easy consequence of the fact that $(\nu,g) \mapsto \int \log|\det D_x^cg | d\nu(x)$ is continuous among partially hyperbolic systems for the $C^1$-topology, and for any $f$-invariant measure $\nu$ we have $\lambda^{+c}(\nu) + \lambda^{-c}(\nu)  = \int \log |\det D^c_xf| d\nu(x)$.
\end{proof}

This continuity property of center Lyapunov exponents will  crucial in this work.  It is convenient to come up with finite time versions of the expanding on average property. We will introduce two versions of these finite time conditions.

\begin{definition}
    \label{def:utillingR}
    For $R>r>0$, a \emph{$u$-tiling} bounded by $(r,R)$ is a family of measurable sets of unstable leaves $\mathcal{Q}$, which we will call $u$-cells, with the following properties:
    \begin{enumerate}
        \item for every $p\in M$, the $u$-cells contained in $\Wu{p}$ cover this unstable manifold, and, moreover, two distinct $u$-cells intersect in a set of $\Leb^u$-measure zero;
        \item every $Q\in \mathcal{Q} $ satisfies $\Leb^u(Q)>0$, $\Leb^u(\partial Q)=0$ and $\Wu{x_Q,r} \subset P \in \Wu{x_Q,R}$, for some $x_Q \in Q.$
    \end{enumerate}
If we only require the control from above by $R$, we will say that that the $u$-tiling is bounded from above by $R$.
\end{definition}

Recall the given $D$ a $u$-set of positive Lebesgue measure, we can construct the measure $\dd\mu_D^u(q)=\rho_D^u(q)\dd\Leb^u_D(q)$ using the canonical density, as we explained in \cpageref{page:canonicaldensities}.

\begin{definition}[Finitary expanding on average on $u$-tilings property]\label{def:finitaryutillings}
    Let $\mathcal{Q}$ be a $u$-tilling. We say that $f$ verifies the finitary expanding on average property with respect to $\mathcal{Q}$ if there are constants $N\in \N$ and $C>0$ such that for any $Q\in \mathcal{Q}$, every $p\in Q$ and unit vector $v\in E^c(p)$, we have
    \[
    I_N(Q,p,v):= \frac{1}{N} \int_Q \log \|D_qf^N \cdot \chi_{q,p}(v)\|d\mu_Q^u(q)\geq C,
    \]
    where $\mu^u_Q$ is the canonical measure on $Q$. In this case, we say that $f$ is finitary expanding on average with respect to the tiling $\mathcal{Q}$ and constants $(N,C)$.
\end{definition}

 Another similar definition that we will use is the following.

\begin{definition}[Finitary expanding on average property] \label{def:finitaryea}
   We say that  $f\in\PHb$  satisfies the \emph{finitary expanding on average property }
   if  there exist $C>0$ and $N\in\Nat$ so that: for every unit vector $v\in E^c_p$, it holds
    \[
    \displaystyle \frac{1}{N\Leb^u(\Wu{p,1})}\int_{\Wu{p,1}}\log \norm{D_qf^N\cdot\chi_{q,p}(v)} \der\Leb^u(q)>C.
    \]
    In this case we say that $f$ is finitary expanding on average with constants $(N,C)$. 
 \end{definition}

 In what follows, we will explore the relation between these definitions and the expanding on average definition we have. 

 \begin{lemma}\label{prop:finitaryUEAP}
     Suppose that $f\in \PHb[2]$ is expanding on average and recall the definition of $\CEA$ \eqref{eq.optimalconstant}. For any $C_1\in (0,\CEA)$, we have
     \begin{enumerate}
         \item $f$ verifies the finitary expanding on average property for any $N$ sufficiently large, that is, there exists $N_0 \in \N$ such that for any $N\geq N_0$, $f$ is finitary expanding on average with constants $(N,C_1)$. 
         \item For any $0<r<R$, for every $u$-tilling $\mathcal{Q}$ bounded by $(r,R)$, there exists $N_0$ such that for any $N\geq N_0$, $f$ is finitary expanding on average with respect to $\mathcal{Q}$ with constants $(N,C_1)$.
     \end{enumerate}
 \end{lemma}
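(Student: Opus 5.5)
We argue by contradiction with a compactness argument on projective $u$-Gibbs measures, as in the proofs of \Cref{rem:lengthdiscs} and \Cref{THM:robustnessofea}. Let $\psi(x,[v])=\log\norm{D_xf|_{[v]}}$ on $\hM$. By the proof of \Cref{rem:lengthdiscs}, $\CEA\le C_*=\inf\{\int\psi\der\hmu:\hmu\in\Gibbs[\hf]\}$. The converse inequality $C_*\le\CEA$ also follows from the last display of that proof. So every projective $u$-Gibbs measure satisfies $\int\psi\der\hmu\ge\CEA>C_1$.

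For item 1, suppose there are $N_k\to\infty$, points $p_k$ and unit vectors $v_k\in E^c_{p_k}$ with
\[
\frac{1}{N_k\Leb^u(\Wu{p_k,1})}\int_{\Wu{p_k,1}}\log\norm{D_qf^{N_k}\chi_{q,p_k}(v_k)}\der\Leb^u(q)\le C_1 .
\]
By the chain rule, the left side equals $\int\psi\der\hnu_k$, where
\[
\hnu_k=\frac1{N_k}\sum_{j=0}^{N_k-1}\hf^j_*\hat m_k
\]
and $\hat m_k$ is the normalized lift of $\Leb^u|_{\Wu{p_k,1}}$ along the holonomy-transported direction $[v_k]$. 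Take a weak* accumulation point $\hnu$. The key step is to show that $\hnu\in\Gibbs[\hf]$ even though the initial disk changes with $k$. \Cref{lem:projectivepesinsinai} treats a fixed initial disk, so here I would repeat the Pesin--Sinai argument with the uniform bounded distortion of \Cref{lem:bdddistortion}. That constant $K$ depends only on $L=1$, so the argument is uniform in $(p_k,v_k)$.

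Invariance of $\hnu$ follows because $\hf_*\hnu_k-\hnu_k=\frac{1}{N_k}(\hf^{N_k}_*\hat m_k-\hat m_k)\to0$. Absolute continuity of the conditionals comes from the fact that each $\hf^j_*\hat m_k$ is supported on projective unstable disks of size at least $1$ carrying densities with uniformly controlled log-Lipschitz constants. The closedness argument of \Cref{lem:compactGibbs} (continuity of $\rho$ from \Cref{lem:regularrho} and of $\hu$ from \Cref{thm:saghin}) then passes these densities to the limit. I expect this uniformity check to be the main technical obstacle, but it is routine once bounded distortion is uniform. Since $\psi$ is continuous, we get $\int\psi\der\hnu\le C_1<\CEA$, which contradicts the first paragraph. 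Hence there is $N_0$ such that every $N\ge N_0$ works, uniformly in $p$ and $v$.

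For item 2 the argument is identical, with two changes. First, replace the normalized Lebesgue measure on $\Wu{p,1}$ by the canonical measure $\mu^u_Q$ on a tile $Q$. By \Cref{lem:regularrho} and the discussion after \eqref{eq:canonicaldensity}, its density $\rho^u_Q$ has $\log\rho^u_Q$ bounded and Lipschitz with constants depending only on $R$. Second, $Q$ contains $\Wu{x_Q,r}$ and lies inside $\Wu{x_Q,R}$, so bounded distortion applies with $L=R$, and sets of positive measure inside $Q$ have measure comparable to that of a disk of radius at least $r$. The uniform Pesin--Sinai argument then goes through as before. We also need that the choice of base point $p\in Q$ does not matter: the direction field $\chi_{q,p}(v)$ is a holonomy-transported field over the whole tile, so it is again a projective unstable disk measure. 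Any accumulation point of the averaged measures is therefore projective $u$-Gibbs, and the same contradiction gives $N_0$ with $I_N(Q,p,v)\ge C_1$ for all $N\ge N_0$, all $Q\in\mathcal Q$, all $p\in Q$ and all unit $v$.
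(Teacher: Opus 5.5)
Your proposal is correct and follows essentially the same route as the paper: argue by contradiction, form Ces\`aro averages of the lifted measures on $\Wu{p_N,1}$ (or on the tiles), rerun the Pesin--Sinai argument using the uniform bounded distortion of \Cref{lem:bdddistortion} so that every accumulation point is projective $u$-Gibbs, and contradict the bound $\int\psi\,\der\hmu\ge\CEA$. The one minor difference works in your favor: you take that bound from $C_*\ge\CEA$ in the proof of \Cref{rem:lengthdiscs}, whereas the paper writes $\int\psi\,\der\hmu=\lambda^{+c}(\hpi_*\hmu)\ge\CEA$, which goes through \Cref{cor:onlygibbs} and so implicitly uses $\Gap>0$, a hypothesis this lemma does not assume (you also negate the conclusion correctly along a sequence $N_k\to\infty$).
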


 \begin{proof}
     The proof is essentially the same for both items and follows similar ideas as in the proof of \Cref{THM:robustnessofea} and \Cref{mainthm:TheoremB}. Let us start with the first. Suppose the conclusion does not hold. Fix $C_1 \in (0,\CEA)$ and suppose the conclusion does not hold, then for  every $N\in \Nat$ there are $p_N\in M$ and a unit vector $v_N\in E^c_{p_N}$ such that
    \begin{equation}\label{eq.eqneeded1}
    \frac{1}{N\Leb^u(\Wu{p_N,1})}\int_{\Wu{p_N,1}}\log \norm{D_qf^N\cdot\chi_{q,p_N}(v_N)} \der\Leb^u(q)\leq C_1.
    \end{equation}
    Write $D_N := \Wu{p_N,1}$ and consider the sequence of projective measures 
    \[
    \hmu_N:= \displaystyle \frac{1}{N} \sum_{j=0}^{N-1} \hf^j_*\hat{m}_N,
    \]
    where
\[
\hat{m}_N:= \frac{1}{\Leb^u(D_N)}\int_{D_N} \delta_{(q,[h^u_{q,p_N}(v_N)])} \dd\Leb^u(q).
\]
    Observe that \eqref{eq.eqneeded1} gives that
    \[
    \displaystyle \int \psi \dd\hmu_N \leq C_1,
    \]
    where $\psi(x,[v]) = \log \|D_xf|_{[v]}\|$. Up to considering a subsequence, we may assume the existence of a measure $\hmu$ such that $\hmu_N \to \hmu$. Observe that the bounded distortion estimates from \Cref{lem:bdddistortion} holds uniformly for every such disk. Therefore, the same proof of \Cref{lem:projectivepesinsinai}, following Pesin-Sinai's proof, implies that $\hmu$ is a projective $u$-Gibbs measure and therefore
    \[
    \displaystyle   C_1 \geq \int \psi \dd\hmu = \lambda^{+c}(\hpi_*\hmu) \geq \CEA,
    \]
    which is a contradiction.

    The same argument works to prove by contradiction the second item. The boundedness of the $u$-tiling gives the bounded distortion estimates needed to follow the proof above. Since the $u$-cells are bounded from below, they contain a disk of size $r$, this is what gives the uniformity on time $N_0$ by following the same proof above. 
 \end{proof}

\Cref{prop:finitaryUEAP} gives that the expanding on average property implies both finitary expanding on average that we defined. It turns out that \Cref{def:finitaryutillings} for every $u$-tilling is equivalent to the expanding on average property. We do not know whether \Cref{def:finitaryea} is equivalent to expanding on average. We are introducing both finitary expanding on average properties because the one with $u$-tillings is what we use to check that the examples we introduce in \Cref{sec:Examples} are expanding on average. \Cref{def:finitaryea} is a useful property in some of the analysis we will do in the next few sections. Let use remark that the definition of expanding on average that Cooper uses in \cite{Coopererg} is closer to our \Cref{def:finitaryea}.

Next, we want to show that \Cref{def:finitaryutillings} implies the expanding on average property. We use two important properties of canonical measures. They have the property that for any set $D$ inside an unstable leaf with positive $\Leb^u$-measure has the property that $f^k_* \mu^u_D = \mu^u_{f^k(D)}$ and $\mu^u_D(.|_E) = \mu^u_E$.

\begin{proposition}\label{pro:finitarydim1}
   Assume that $f\in\PHb[2]$ has the finitary expanding on average property with respect to a $u$-tiling $\mathcal{Q}$ which is bounded by $R$. Then $f$ has the expanding on average property.
\end{proposition}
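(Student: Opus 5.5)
The plan is a chaining argument along the tiling, organised so that the canonical measures do the bookkeeping. Fix a $u$-disk $D=D_p$ and a unit vector $v\in E^c_p$. Let $\mu^u_D$ be the canonical probability on $D$. Its density with respect to normalized $\Leb^u$ on $D$ is bounded above and below by constants that depend only on an upper bound for the size of $D$ (compare \Cref{lem:regularrho}). Hence it suffices to prove the $\liminf$ estimate with $\Leb^u/\Leb^u(D)$ replaced by $\mu^u_D$. This is legitimate because the integrands are uniformly bounded by $n\log\|D^cf\|_\infty$ in absolute value, so an error of size $o(n)$ is harmless. By \Cref{rem:lengthdiscs} I may also assume $D=\Wu{p,1}$, although the argument does not seem to need this. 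Write $u_q:=\chi_{q,p}(v)$ for $q\in D$.

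The core step is the time-$N$ cocycle decomposition. For $n=kN$ write
\[
\log\|D_qf^{kN}u_q\|=\sum_{j=0}^{k-1}\log\bigl\|D_{f^{jN}q}f^N\, w_j(q)\bigr\|,\qquad w_j(q):=\frac{D_qf^{jN}u_q}{\|D_qf^{jN}u_q\|}.
\]
The equivariance of holonomies (\Cref{def:linearholonomy}) shows that $[w_j(q)]=[\hu_{f^{jN}q,f^{jN}p}(D_pf^{jN}v)]$. So on the leaf $f^{jN}(D)$ the field $w_j$ is exactly the holonomy-transported field $\chi_{\cdot,x}(w)$, for any base point $x$ of that leaf and $w$ the corresponding unit vector at $x$. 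Pushing forward, $f^{jN}_*\mu^u_D=\mu^u_{f^{jN}D}$. The $j$-th term therefore equals $\int \log\|D_yf^N\chi_{y,x}(w)\|\,\der\mu^u_{f^{jN}D}(y)$.

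Next I partition $f^{jN}D$ using the tiling. Let $\mathcal{Q}_j$ be the cells $Q$ meeting $f^{jN}D$, and split $f^{jN}D=\bigcup_Q (Q\cap f^{jN}D)$. Using $\mu^u_{f^{jN}D}(\cdot\mid Q)=\mu^u_{Q\cap f^{jN}D}$, the $j$-th term becomes a convex combination over cells. For a cell fully contained in $f^{jN}D$ the piece equals $\mu^u_Q$. Choosing the base point inside $Q$ (allowed, since $\chi$ only depends on the leaf up to normalization), the finitary hypothesis gives a contribution of at least $NC$. For a boundary cell, which only partially lies in $f^{jN}D$, I bound the contribution crudely below by $-N\log\|(D^cf)^{-1}\|_\infty$.

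The main obstacle is controlling the total $\mu^u_{f^{jN}D}$-mass of boundary cells. Cells have diameter at most $R$, so this mass is at most $\mu^u_{f^{jN}D}$ of the $R$-neighbourhood of $\partial f^{jN}D$ inside the leaf. By bounded distortion (\Cref{lem:bdddistortion}, the nonprojective version) this equals, up to a constant $K$, the normalized $\Leb^u$ of the $f^{-jN}$-preimage of that neighbourhood. The preimage lies within distance $R\lambda^{-jN}$ of $\partial D$, where $\lambda>1$ is the uniform unstable expansion. Its relative Lebesgue measure tends to $0$ as $j\to\infty$ because $D$ is a ball with regular boundary. Some care is needed because bounded distortion is only uniform at bounded scales. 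I would handle this by working at scale one, using a Besicovitch-type covering of $f^{jN}D$ by unit disks, or, more simply, by noting that the $\mu^u$ density satisfies a Lipschitz-in-log estimate at every scale by \eqref{eq:Delta}. The boundary mass $\beta_j$ then satisfies $\beta_j\to0$. The $j$-th term is at least $(1-\beta_j)NC-\beta_j N c_0$, and Cesàro averaging over $j<k$ gives $\liminf_k \frac{1}{kN}\int\log\|D_qf^{kN}u_q\|\,\der\mu^u_D\ge C$. Intermediate times $n=kN+r$ with $r<N$ change the integral by at most $N\max(\log\|D^cf\|_\infty,\log\|(D^cf)^{-1}\|_\infty)$, which is $o(n)$. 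This gives \eqref{eq.inequalityea}, with constant $C$ up to the density comparison, that is, with a possibly smaller positive constant.
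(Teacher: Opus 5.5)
Your chaining argument is the paper's proof: additivity of the canonical measures via $f^k_*\mu^u_D=\mu^u_{f^kD}$ and holonomy equivariance, cells lying inside $f^kD$ each contributing at least the finitary constant, a boundary remainder of vanishing mass, and a Cesàro average. The paper passes to $f^N$ first rather than treating $n=kN+r$ directly. It also bounds the remainder exactly by $\mu^u_{f^kD}(\partial_R f^kD)=\mu^u_D(f^{-k}\partial_R f^kD)\le\mu^u_D(\partial_{R\lambda^{-k}}D)$, so your worry about distortion at large scales is moot.

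The step that fails as written is your opening reduction from $\Leb^u/\Leb^u(D)$ to $\mu^u_D$. Put $\phi_n(q)=\log\|D_qf^n\chi_{q,p}(v)\|$. A density ratio in $[K^{-1},K]$ does not make $\int\phi_n\,\der\bigl(\Leb^u/\Leb^u(D)-\mu^u_D\bigr)$ an $o(n)$ error. Suppose $\phi_n/n\approx a>0$ on $A\subset D$ and $\phi_n/n\approx -b<0$ on $D\setminus A$. Then the two normalized averages differ by about $(a+b)\,\bigl|\mu^u_D(A)-\Leb^u(A)/\Leb^u(D)\bigr|$, which is of order one. The comparison also does not give ``a possibly smaller positive constant''. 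Estimating $\phi_n^{\pm}$ separately yields only $K^{-1}\int\phi_n^+\,\der\mu^u_D-K\int\phi_n^-\,\der\mu^u_D$, and this can be negative. The paper's proof also stops at $\liminf_n I_n(p,v)\ge C$ for the canonical measure and leaves this transfer implicit, but it genuinely needs an argument.

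Here is a correct transfer. Your chaining never uses that $D$ is a disk; it only uses $\mu^u_D(\partial_\varepsilon D)\to 0$. So it applies to any bounded leaf piece with $\Leb^u$-null boundary, taking as base vector $\chi_{p_i,p}(v)$ at a point $p_i$ of the piece. The cocycle property of $\hu$ gives $\chi_{q,p}(v)=\chi_{q,p_i}(\chi_{p_i,p}(v))$. Cut $D$ into finitely many such pieces $D_i$ of diameter $<\delta$. Since $\log\rho^u$ is uniformly Lipschitz along leaves at bounded scale, normalized Lebesgue on $D_i$ has density in $[e^{-L\delta},e^{L\delta}]$ with respect to $\mu^u_{D_i}$. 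Hence on each piece the two averages of $\phi_n/n$ differ by at most $c_0(e^{L\delta}-1)$, where $c_0\ge|\phi_n|/n$. Sum with weights $\Leb^u(D_i)/\Leb^u(D)$, use that the $\liminf$ of a finite sum dominates the sum of the $\liminf$s, and let $\delta\to0$. This gives $\liminf\ge C$ for $\Leb^u$, with the same constant.

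Alternatively, use the proof of \Cref{rem:lengthdiscs}. It shows that Lebesgue averages over every disk have $\liminf\ge C_*:=\min\{\int\psi\,\der\hmu:\hmu\in\Gibbs[\hf]\}$. Take an ergodic minimizer and a typical point $(p,[v])$. By dominated convergence, the $\mu^u_{\Wu{p,1}}$-averages converge to $C_*$, so your canonical estimate forces $C_*\ge C>0$.
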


\begin{proof}
    Suppose that $f$ is finitary expanding on average with respect to $\mathcal{Q}$ and constants $(N,C)$. Write $c_f:= \max_x\max \{\log\|D_xf\|, - \log m(D_xf)\}$ and let $g = f^N$. 
    \begin{claim}
        Suppose that $g$ has the finitary expanding on average property with constant $K>0$. Then, $f$ has the finitary  expanding on average property.

        Likewise, if $g$ has the finitary expanding on average property with respect to a $u$-tiling $\mathcal{Q}$, then the same follows for $f$.
    \end{claim}
    \begin{proof}
        For $n\in \N$, write $n = mN + k$, where $k\in \{0, \cdots, N-1\}$. For any center vector $v\in E^c(x)$, by the chain rule,
        \[
        \log \|D_xf^n v\| \geq \log \|D_xg^m v\| - N c_f.
        \]
        Therefore, for any $p\in M$, any $u$-disk $D_p$ and any unit vector $v\in E^c(p),$ 
        \begin{multline*}
        \liminf_{n\to +\infty} \frac{1}{n\Leb^u(D_p)}\int_{D_p} \log \|D_qf^n \cdot \chi_{q,p}(v)\| \der \Leb^u(q)\\
        \geq \frac{1}{n \Leb^u(D_p)} \int_{D_p} \log \|D_q g^m \cdot \chi_{q,p}(v)\| \der \Leb^u(q)- Nc_f  
        \end{multline*}
        which gives 
        \begin{multline*}
         \liminf_{n\to +\infty} \frac{1}{n\Leb^u(D_p)}\int_{D_p} \log \|D_qf^n \cdot \chi_{q,p}(v)\| \der \Leb^u(q) \geq\\ \frac{1}{N} \liminf_{m\to +\infty} \frac{1}{m\Leb^u(D_p)} \int_{D_p} \log \|D_qg^m \cdot \chi_{q,p}(v)\| \der \Leb^u(q) \geq K.
        \end{multline*}
    So $f$ is expanding on average.

    The second case is proven analogously.
    \end{proof}

    Up to replacing $f$ by $f^N$, we may suppose that $f$ has the finitary expanding on average property for $\mathcal{Q}$ with constants $(1,C)$.

    Given $D$ inside an unstable leaf with positive $\Leb^u$-measure, the integral with respect to $\mu^u_D$ has better additive properties than the corresponding ones for $\Leb^u$:
\begin{equation*}
    \frac 1n\int_D\log\|D_qf^n\chi_{q,p}(v)\|\dd\mu^u_D(q)=\frac 1n\sum_{k=0}^{n-1}\int_{f^k(D)}\log\|D_qf\chi_{q,f^k(p)}(v^{k}))\|\dd\mu^u_{f^k(D)}(q)
\end{equation*}
where $v^{k}= \frac{D_pf^k(v)}{\|D_pf^k(v)\|}$ are unit vectors.

Fix $D_p$ a disk centered at $p$ inside $\Wu{p}$ and a unit vector $v\in E^c(p)$. 
For every $k\geq 0$, $f^k(D_p)$ consider $\mathcal{Q}_k$ to be the collection of $u$-cells in $\mathcal{Q}$ that are completely contained in $f^k(D_p)$ and set $R_k = f^k(D_p) - \bigcup_{Q\in \mathcal{Q}_k} Q.$ Observe that $R_k$ is contained in a neighborhood of radius $R$ of the boundary of $f^k(D_p)$. Thus, 
\[
\mu_{f^k(D_p)}^u(R_k) = \mu_{D_p}^u(f^{-k}(R_k)) \leq \mu^u_{D_p}(f^{-k}(\partial_Rf^k(D_p)))\leq\mu^u_{D_p}(\partial_{R\lambda^{-k}}D_p)=a_k\rightarrow 0,
\]
where $\partial_R f^k(D_p)$ is the $R$ neighborhood of the boundary of $f^k(D_p)$ and $\lambda\in (0,1)$ is related to the contraction of the unstable Jacobian in the past. In this part, we are only assuming that the $u$-cells have size bounded from above by $R$. Observe that there are at most countably many $u$-cells in $\mathcal{Q}^k$. Write
\[
I_n(p,v):= \frac{1}{n}\int_{D_p} \log \|D_qf^n(q) \cdot \chi_{q,p}(v)\| \der \mu^u_{D_p}(q).
\]
 Thus we can write 
    \begin{align*}
    I_n(p,v) &=\frac 1n\sum_{k=0}^{n-1}\int_{f^k(D_p)}\log\|D_qf\chi_{q,f^k(p)}(v^k))\|\dd\mu^u_{f^k(D_p)}(q)\\
    &=\frac 1n\sum_{k=0}^{n-1}\sum_{Q^k_j\in \mathcal{Q}^k}\int_{Q^k_j} \log \|D_qf \cdot \chi_{q,f^k(p)}(v^k) \der \mu^u_{f^k(D_p)}(q)\\
    &\hspace{2cm}+\int_{R_k}\log\|D_qf\chi_{q,f^k(p)}(v^k)\|\dd\mu^u_{f^k(D_p)}(q)\\
        &=\frac 1n\sum_{k=0}^{n-1}\sum_{Q^k_j\in \mathcal{Q}^k}\mu^u_{f^k(D_p)}(Q^k_j)\int_{Q^k_j} \log \|D_qf \cdot \chi_{q,f^k(p)}(v^k) \der \mu^u_{Q^k_j}(q)\\
        &\hspace{2cm}+\int_{R_k}\log\|D_qf\chi_{q,f^k(p)}(v^k)\|\dd\mu^u_{f^k(D_p)}(q)\\
    &\geq \frac 1n\sum_{k=0}^{n-1}\left(C(1-a_k)-c_fa_k\right)=C-\frac{(C + c_f)\sum_{k=0}^{n-1}a_k}{n}.
    \end{align*}
    Therefore,
    \[
    \liminf_{n\to\infty}I_n(p,v)\geq C,
    \] and $f$ has the expanding on average property.
    
\end{proof}

Since in the previous proposition we have only used that the atoms are bounded from above, the same idea can be applied to increasing partitions subordinated to the unstable instead of $u$-tilings. In \Cref{sec:appendix} we indicate how to construct adapted measurable partitions, independently of the underlying $u$-Gibbs measure, and show the existence of good tilings in the following sense.

\begin{lemma}\label{lem:goodtiling}
  Given $R>0$ sufficiently small, there is a Borel family $\mc Q$ of relatively compact $u$-cells which verify the following.
  \begin{enumerate}
    \item On every unstable leaf, the cells form a locally finite covering, with disjoint interiors, and $Q\cap Q'\neq\emptyset$ implies $\Leb^u(Q\cap Q')=0$. 
    \item For every $Q\in\mc Q$ there exists $x_Q\in Q$ so that
        \[
        \Wu{x_Q,R} \subset Q \subset \Wu{x_Q,6R}.
        \]
        Moreover $\mu(\partial_u Q)=0$, and for some uniform constants $v, V>0$ we have
        \[
        v\leq \Leb^u(Q)\leq V.
        \]
        \end{enumerate}
\end{lemma}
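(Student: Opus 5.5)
The plan is to build the cells leafwise from a maximal $R$-separated set via a Voronoi-type construction, made Borel and independent of any measure by working in finitely many foliation boxes with a measurable choice of centers. First I fix a finite cover of $M$ by regular boxes $U_1,\dots,U_\ell$ of $\Fu$ whose plaques have inner radius much larger than $6R$. For $R$ small, each unstable leaf with its induced Riemannian metric has uniformly bounded geometry at scale $6R$: injectivity radius bounded below, curvature bounded, and bounded distortion of $\Leb^u$ on balls. This gives uniform constants $v\le \Leb^u(\Wu{x,r})\le V$ for $r\in[R,6R]$.

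On each leaf I choose a set of centers $X$ that is maximal $2R$-separated for the leaf distance. Maximality gives that the balls $\Wu{x,2R}$, $x\in X$, cover the leaf, and separation makes the balls $\Wu{x,R}$ pairwise disjoint. To make the choice Borel across leaves, I use a standard greedy construction. Take a countable dense family of points in each transversal $D^{cs}_{p_0}$, or equivalently a fixed countable sequence of local sections of the boxes. I then add centers inductively by a fixed enumeration, skipping any candidate within $2R$ of a center already chosen. Each step is a Borel condition on the leaf coordinates, so the resulting set $\bigcup X$ is Borel in $M$ and locally finite on each leaf.

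Next I define the cells as Voronoi regions adjusted so that the inner ball is guaranteed. Set $Q_x=\{y\in\Wu{\cdot}:d_u(y,x)\le d_u(y,x')\ \forall x'\in X\}$, with ties broken by the enumeration order to make the family a partition up to boundaries. Since points within distance $R$ of $x$ are strictly closer to $x$ than to any other center (other centers are at distance at least $2R$), we get $\Wu{x,R}\subset Q_x$. Every $y$ lies within $2R$ of some center, so $Q_x\subset\Wu{x,2R}\subset\Wu{x,6R}$. The extra room to $6R$ absorbs the non-Euclidean leaf metric and the fact that $d_u$ is only comparable to chart distances. Local finiteness and the volume bounds $v\le\Leb^u(Q)\le V$ follow from the inclusions together with bounded geometry. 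Intersections of distinct cells lie in bisector sets $\{d_u(y,x)=d_u(y,x')\}$.

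The main obstacle is the boundary condition: showing $\Leb^u(\partial_u Q)=0$, and $\mu(\partial_u Q)=0$ for the relevant $\mu$. The boundaries lie in finite unions of equidistant sets of the leaf Riemannian distance, which for a $C^2$ metric and $R$ below the injectivity radius are Lipschitz hypersurfaces in the leaf. Hence they have zero $\Leb^u$ measure, and this transfers to $\mu$ for any $u$-Gibbs $\mu$ by absolute continuity of conditionals. If the bisector regularity is troublesome, I would try an alternative first: replace $d_u$ by the Euclidean distance in the chart coordinates of the boxes, where bisectors are hyperplane pieces. This is a standard trick, and it only costs a change of constants, which is exactly why the statement allows $6R$.
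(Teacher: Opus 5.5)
Your overall strategy is sound and genuinely different from the paper's, but the step you yourself flagged (null boundaries) is not justified as written and needs a repair.

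\textbf{Comparison with the paper.} The paper also chooses a Borel, leafwise separated set of centers greedily along finitely many transversals ($4R$-separated, with $\Wu{Z,5R}=M$). It never cuts leaves along bisectors. Each cell $Q_z$ is the leafwise closure of the union of those atoms $E$ of the appendix partition $\eta$ whose measurably chosen point $p_E$ is nearest to $z$. Here $\eta$ is a finite partition $\mathcal A$ refined by unstable plaques, with atoms of diameter at most $R$. Consequently $\partial_u Q_z\subset S$, a fixed compact codimension-one submanifold transverse to $E^u$. This gives $\Leb^u(\partial_u Q)=0$ with no regularity question. It even gives $\mu(N_\eps(S))\le C_S\eps$ uniformly over all $u$-Gibbs $\mu$.

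Your Voronoi cells are more self-contained and give the sharper inclusions $\Wu{x,R}\subset Q_x\subset\Wu{x,2R}$. Your covering, inclusion, local-finiteness and volume arguments are fine. One small slip: a countable dense set of \emph{points} in a transversal reaches only countably many leaves. You need countably many local \emph{sections}, e.g.\ $t\mapsto\Psi(q_k,t)$ for a box chart $\Psi\colon D^u\times T\to U$ with $(q_k)$ dense in $D^u$. Points of one section lying on a common leaf are in distinct plaques, hence far apart once $R$ is small. So adding a whole section at once is legitimate.

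\textbf{The gap.} For $f\in\PHb[2]$ the unstable leaves are in general only $C^2$, so the induced leaf metric is only $C^1$. Its Christoffel symbols are merely continuous and geodesics need not be unique. The usual proof that $\{d_u(\cdot,x)=d_u(\cdot,x')\}$ is a hypersurface uses backward uniqueness of geodesics to get a nonvanishing gradient of $d_u(\cdot,x)-d_u(\cdot,x')$, and that is therefore unavailable. Your fallback with chart Euclidean distances is not well defined as stated. Different boxes give different distances, while the Voronoi comparison at $y$ needs a single function evaluated against all nearby centers.

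\textbf{A repair.} Use one global smooth distance, e.g.\ the ambient distance $d_M$. Compare it only among centers in $\Wu{y,10R}$; the winner always lies within leaf distance about $2R$, so this truncation creates no new boundary. For centers $x\neq x'$ with $d_u(x,x')\le 4R$ and $y$ on their leaf, the gradient at $y$ of $\frac12\bigl(d_M(\cdot,x)^2-d_M(\cdot,x')^2\bigr)$ is $\exp_y^{-1}(x')-\exp_y^{-1}(x)$. This vector has length $\asymp R$, and its component normal to the leaf is $O(R^2)$ because the leaves have uniformly bounded second fundamental form. Its tangential part is therefore nonzero, and the bisector meets the leaf in a $C^1$ hypersurface.

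Since $d_M$ and $d_u$ are $(1+o(1))$-comparable at scale $R$, your inclusions survive within the allowed room up to $6R$. Leafwise $\Leb^u$-nullity then gives $\mu\bigl(\bigcup_Q\partial_u Q\bigr)=0$ for every $u$-Gibbs $\mu$, as you say. Alternatively, you can sidestep distances altogether, as the paper does.
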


\subsection{Large deviations estimates}
\label{ssec:LargeDeviation}

Our goal is to explain the following result:

\begin{proposition}
    \label{prop.largedeviationsprojective}
    Let $f\in \PHb[2]$ and let $\varphi:\hM \to \mathbb{R}$ be a H\"older continuous function and set 
    \[
    I_\varphi:= \left\{\int \varphi \der \hmu: \hmu \textrm{ is a projective $u$-Gibbs measure}\right\}.
    \]
    For every $\varepsilon>0$ and $R>0$,  there are constants $L>0$ and $\gamma>0$ such that for any $\hp \in \hM$, for every $n \geq 1$,
    \[
\displaystyle \hm_{\hWu{\hp,R}}^u \left(x: d \left( \frac{1}{n}\sum_{j=0}^{n-1} \varphi (\hf^j(x)), I_\varphi \right) \geq \varepsilon \right)\leq L e^{-\gamma n}. 
    \]
\end{proposition}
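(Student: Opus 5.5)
We adapt Dolgopyat's large deviations argument for unstable leaves to the projective setting. The mechanism is a two-scale decomposition. On a long block of length $N$, the average of $\varphi$ over a projective $u$-disk is close to $I_\varphi$, by compactness of $\Gibbs[\hf]$. Bounded distortion then lets us chain many such blocks along the orbit of the disk, using an exponential Chebyshev (Cramér-type) bound.

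\textbf{Step 1 (uniform finite-time estimate).} Write $S_n\varphi=\sum_{j<n}\varphi\circ\hf^j$. We first show that for every $\delta>0$ there is $N_0$ such that for all $N\ge N_0$, every $\hat q\in\hM$ and every $u$-cell $Q$ of the tiling of \Cref{lem:goodtiling} (lifted via holonomy through $\hat q$),
\[
d\Big(\int \tfrac1N S_N\varphi \,\der\hat\mu^u_{Q},\, I_\varphi\Big)<\delta ,
\]
where $\hat\mu^u_Q$ is the lifted canonical measure. This follows by contradiction exactly as in \Cref{prop:finitaryUEAP}. If it failed, we would obtain disks $Q_N$ and averaged measures $\frac1N\sum_{j<N}\hf^j_*\hat\mu^u_{Q_N}$ whose accumulation points are projective $u$-Gibbs measures. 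This uses \Cref{lem:projectivepesinsinai} and the uniform distortion of \Cref{lem:bdddistortion}, with the cells of uniform inner radius $R$. The limit would then integrate $\varphi$ to a value at distance at least $\delta$ from $I_\varphi$, which is absurd. Since $I_\varphi$ is an interval (by convexity of $\Gibbs[\hf]$, \Cref{lem:compactGibbs}), it suffices to treat the upper and lower deviations separately. We therefore bound $\hm^u(\frac1n S_n\varphi\ge \max I_\varphi+\varepsilon)$, and the lower tail is handled symmetrically.

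\textbf{Step 2 (exponential moment on one block).} Set $a=\max I_\varphi$. Using $e^x\le 1+x+x^2e^{|x|}$ together with Step 1, we get for small $t>0$ and every cell $Q$
\[
\int e^{t(S_N\varphi-Na)}\der\hat\mu^u_Q\le e^{tN\delta+t^2N^2\|\varphi\|_\infty^2e^{tN\|\varphi\|_\infty}}.
\]
Choose $\delta=\varepsilon/4$ and then $t$ so small (depending on $N$) that the right-hand side is at most $e^{tN\varepsilon/2}$.

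\textbf{Step 3 (chaining blocks via the tiling).} We prove by induction on $m$ that
\[
\int e^{t(S_{mN}\varphi-mNa)}\der\hat\mu^u_{D}\le K' e^{tmN\varepsilon/2}
\]
for $D=\hWu{\hp,R}$. The inductive step goes as follows. Write the integral over $D$ as an integral over $\hf^{(m-1)N}(D)$, using $\hf^k_*\hat\mu^u_D=\hat\mu^u_{\hf^k D}$. Split $\hf^{(m-1)N}(D)$ into cells $Q$ of $\mathcal Q$ plus a boundary remainder, and apply Step 2 on each cell. The key point is that $S_{(m-1)N}\varphi$ is constant up to a uniform additive error on each $\hf^{-(m-1)N}(Q)$. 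This holds because $\hf^{-1}$ contracts projective unstable leaves and $\varphi$ is Hölder, so the backward Birkhoff sums have summable variation. The exponential of this error is absorbed into the constant by replacing $\varepsilon/2$ with $3\varepsilon/4$. The boundary pieces have $\hat\mu^u_{D}$-measure decaying exponentially, as in the proof of \Cref{pro:finitarydim1}, and are crudely bounded using $\|\varphi\|_\infty$; this step requires $t$ small. Chebyshev's inequality then gives $\hm^u(\frac1n S_n\varphi\ge a+\varepsilon)\le K'e^{-tn\varepsilon/4}$ for $n=mN$. Intermediate $n$ cost only a bounded factor, and $\hat\mu^u_D$ is comparable to the normalized $\hm^u$ by \Cref{lem:regularrho}.

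\textbf{Main obstacle.} The hardest step is Step 3. We must control the boundary contributions and the cells only partially contained in $\hf^{(m-1)N}(D)$ uniformly in $m$, so that the errors do not accumulate multiplicatively beyond $e^{o(t)mN}$. We would first try to handle them by comparing the canonical measure near the boundary with its exponentially small mass, pulled back by the uniform unstable contraction. If this fails, we would instead use Dolgopyat's trick of working with increasing partitions subordinated to $\widehat{\mathcal W}^u$, for which the boundary issue disappears.
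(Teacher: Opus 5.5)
Your argument is correct, but it is not the route the paper takes. The paper's sketch follows Dolgopyat's renewal scheme. It uses Voronoi cells and his decomposition lemma: each cell $K$ is split into countably many pieces $K_j$ with stopping times $n_j$ at which $f^{n_j}(K_j)$ is again a full cell. The $n_j$ have exponential tails, and the pieces carry a negative drift, $\sum_j m_K(K_j)\sup_{K_j}(S_{n_j}\varphi+\tfrac{\varepsilon}{4}n_j)\le 0$. From this the paper gets a one-renewal exponential moment bound $\psi_K(\delta)\le\bar\theta<1$, iterates it along successive renewals, and concludes by splitting $\{S_n\varphi\ge 0\}$ into three events: too few renewals, a bad sum at a renewal, or a long gap.

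You instead use a fixed block length $N$ and a fixed global tiling, and you always return to the original disk $D$ rather than renewing pieces. The only non-full cells of $\hf^{kN}(D)$ are those meeting its boundary. Their preimages lie in a $O(\lambda^{kN})$-neighborhood of $\partial D$, and this decay beats the crude bound $e^{2tkN\|\varphi\|_\infty}$ once $\lambda e^{2t\|\varphi\|_\infty}<1$. This makes your proof more elementary and self-contained: there is no renewal lemma and no tail estimate for stopping times. The only price is that $t$, hence $\gamma$, depends on $N$, which is harmless. Dolgopyat's scheme treats every piece at its natural scale, gives uniform estimates from any cell-shaped initial measure, and is the framework he reuses for finer limit theorems. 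Your block argument is tailored to this single upper bound, which is all that is needed here.

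Two points to tighten.
\begin{enumerate}
\item Absorbing the oscillation factor $e^{2tL''}$ into $e^{tN\varepsilon/4}$ requires $N\ge 8L''/\varepsilon$. Here $L''$ is the uniform bound on the oscillation of $S_{kN}\varphi$ on $\hf^{-kN}(Q)$. You can impose this before choosing $t$, since Step 1 holds for all $N\ge N_0$.
\item The recursion $I_m\le e^{3tN\varepsilon/4}I_{m-1}+C\lambda^{(m-1)N}e^{2tmN\|\varphi\|_\infty}$ does not close as an induction with a fixed constant $K'$. Unrolling it instead gives a geometric series bounded uniformly in $m$. The boundary issue you flag as the main obstacle is therefore already resolved, and the fallback to increasing partitions is unnecessary.
\end{enumerate}
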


\begin{remark}
    Since the set of projective $u$-Gibbs measures is a convex, compact set, the set $I_\varphi$ above is actually a closed interval. 
\end{remark}
\begin{proof}
    Large deviations estimates are known to hold along unstable manifolds for partially hyperbolic systems. In what follows, we give a sketch of a proof following the approach by Dolgopyat that can be found in \cite{lecturenotesdolgopyat}.  Let us give this sketch for partially hyperbolic systems. We will then explain that the same ingredients needed in the partially hyperbolic setting hold for the projective unstable manifolds we are considering.

In what follows, we will use some notations used \cite{lecturenotesdolgopyat}. 
    Let $f$ be a partially hyperbolic $C^2$-diffeomorphism. Let $W$ be a local  unstable manifold, and $r>0$ small. Choose a set $Q = \{q_l\}$, which is a maximal $r$-separated set, that is, $d^u(q_l,q_m) \geq r$ for $l\neq m$ and for any $z\in W/Q$, there exists $q_l$ such that $d^u(q_l, z) < r$. Define
    \[
    K_l = \{z\in W: d^u(z,q_l) = \min_m d^u(z,q_m)\}.
    \]
    The sets $K_l$ are called \emph{cells}, and $W = \bigcup_l K_l$. This type of decomposition is also known as Voronoi Tesselation.

   It is easy to see that $B(q_l,r/2) \subset K_l \subset B(q_l,r)$, and the boundary of $K_l$ consists of finitely many smooth hypersurfaces, so $K_l$ is given by an open set and a piecewise smooth boundary.

    Fix $R>0$ and let $D = \Wu{p,R}$. Given a bounded subset of an unstable leaf with positive measure, $K$, we define the canonical density on $K$ as $\rho_{\mathrm{can}}(.) = \rho^K_{\mathrm{can}}(.)$ as the unique function determined by 
    \[
    \frac{\rho_{\mathrm{can}}(x)}{\rho_{\mathrm{can}}(y)} = \prod_{j=0}^{\infty}\frac{|\det D^u_{f^{-j}(x)}f^{-1}|}{|\det D^u_{f^{-j}(y)}f^{-1}|} \textrm{ and } \int_K \rho_{\mathrm{can}}(x) d\Leb^u_p(x) = 1.    \]
    The existence of the canonical density is a consequence of the bounded distortion. 
    Let $m$ be the measure in $D$ given by the corresponding canonical density. By bounded distortion, the normalized Lebesgue measure on $\Wu{p,R}$ is uniformly comparable with the measure $m$, this uniformity is independent of the choice of $p$.
    
    Fix $\varphi$ a H\"older continuous function on $M$ and $\varepsilon>0$. Write $S_n \varphi(x) = \varphi(x) +\cdots + \varphi(f^{n-1}(x))$. We can split the estimate in the proposition by estimating the measure of the sets 
    \[
    \left\{x\in D: \frac{1}{n} S_n \varphi(x) \geq \max I_\varphi + \varepsilon\right\} \textrm{ and } \left\{x\in D: \frac{1}{n} S_n \varphi(x) \leq \min I_\varphi - \varepsilon\right\}.
    \]
    We will estimate the first set, as the second set reduces to the first by replacing the arguments from $\varphi$ to $-\varphi$. Up to replacing $\varphi$ by $\varphi - \max I_\varphi - \varepsilon,$ we can assume that
    \[
    \int \varphi d\mu \leq -\varepsilon,
    \]
    for every $u$-Gibbs measure $\mu$. So it suffices to show that there exist constants $L$ and $\delta>0$ such that for every $n\in \Nat$
    \[
    m_K\left\{ x\in D: S_n\varphi(x) \geq 0 \right\} \leq L e^{-\delta n},
    \]
    where the estimates don't depend on $p$. The first ingredient is given by the following lemma.

    \begin{lemma}\label{lem:lemmaabove}
        There exists $n_0$ such that for any $n\geq n_0$, any $p\in M$, any set $K$ containing $p$ such that $\Wu{p,r/2} \subset K \subset \Wu{p,r}$, for $\nu$ either the normalized Lebesgue measure on $K$ or the measure obtained by the canonical density, we have
        \[
        \displaystyle \int \frac{1}{n} S_n \varphi d\nu \leq -\frac{\varepsilon}{2}.
        \]
    \end{lemma}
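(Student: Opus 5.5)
We argue by contradiction, as in the proofs of \Cref{prop:finitaryUEAP} and \Cref{THM:robustnessofea}. Suppose the statement fails. Then there are $n_k\to\infty$, points $p_k\in M$ and sets $K_k$ with $\Wu{p_k,r/2}\subset K_k\subset \Wu{p_k,r}$, together with probability measures $\nu_k$ on $K_k$. Each $\nu_k$ is either the normalized Lebesgue measure or the canonical-density measure on $K_k$, and
\[
\int \frac{1}{n_k} S_{n_k}\varphi \der\nu_k > -\frac{\varepsilon}{2}.
\]
Since $S_n\varphi$ telescopes along the orbit, we can rewrite the left-hand side as $\int\varphi \der\sigma_k$, where
\[
\sigma_k:=\frac{1}{n_k}\sum_{j=0}^{n_k-1} f^j_*\nu_k.
\]
Passing to a subsequence, $\sigma_k\to\sigma$ in the weak* topology. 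Because $\varphi$ is continuous, $\int\varphi\der\sigma\geq -\varepsilon/2$. Invariance of $\sigma$ is standard: $f_*\sigma_k-\sigma_k=\frac{1}{n_k}(f^{n_k}_*\nu_k-\nu_k)$, which tends to $0$.

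The key step is to show that $\sigma$ is a $u$-Gibbs measure. Granting this, the normalization $\int\varphi \der\mu\leq-\varepsilon$ for every $u$-Gibbs measure $\mu$ contradicts $\int\varphi\der\sigma\geq -\varepsilon/2$, and the lemma follows. To obtain the $u$-Gibbs property we repeat the Pesin--Sinai argument invoked in \Cref{lem:projectivepesinsinai}. That argument only needs the following:
\begin{enumerate}
\item the initial measures have densities with respect to $\Leb^u$ on sets of uniformly bounded geometry;
\item the log-densities, after pushing forward and restricting to small unstable plaques, are uniformly Hölder (or Lipschitz) along leaves.
\end{enumerate}
Both hold uniformly in $k$. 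For the canonical density this is the bounded distortion of \Cref{lem:bdddistortion} together with \Cref{lem:regularrho}. For the normalized Lebesgue measure the density is constant, $1/\Leb^u(K_k)$, and the measure $\Leb^u(K_k)$ is bounded above and below because $\Wu{p_k,r/2}\subset K_k\subset\Wu{p_k,r}$. After pushing forward by $f^j$, the density on $f^j(K_k)$ is the Jacobian, which bounded distortion controls uniformly.

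The one subtlety is the boundary of $f^j(K_k)$. The portion of $f^j(K_k)$ that fails to be covered by full plaques of a fixed foliation box lies within a bounded distance of $\partial f^j(K_k)$. Its $\nu_k$-mass is at most $\nu_k(\partial_{c\lambda^{j}}K_k)$ for some $\lambda\in(0,1)$, where $\partial_\delta$ denotes the $\delta$-neighborhood of the boundary inside the leaf. This is exactly the estimate on the sets $R_k$ in the proof of \Cref{pro:finitarydim1}.

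This boundary control is the main obstacle. The sets $K$ are arbitrary subject only to $\Wu{p,r/2}\subset K\subset \Wu{p,r}$, so we need the Lebesgue measure of thin neighborhoods of $\partial K$ to vanish uniformly. For the Voronoi cells $K_l$ used in this proof this is clear, since their boundaries are finitely many smooth hypersurfaces with uniformly bounded geometry. In general I would either restrict the lemma to such cells, or note that the Cesàro average only requires $\frac1{n}\sum_{j<n}\nu_k(\partial_{c\lambda^j}K_k)\to0$ uniformly. I would then treat the fraction of mass near the boundary as a vanishing error, using the bound $|\varphi|\leq\|\varphi\|_\infty$, as in the proof of \Cref{pro:finitarydim1}.

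Once the boundary is handled, the limit $\sigma$ has absolutely continuous conditionals along $\Fu$, by the same closedness argument as in \Cref{lem:compactGibbs}. So $\sigma$ is $u$-Gibbs, which gives the contradiction. The threshold $n_0$ is uniform in $p$ and $K$ precisely because the argument is by contradiction over all such choices at once.
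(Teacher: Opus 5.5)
Your overall strategy is the same as the paper's brief compactness argument: argue by contradiction, take a weak* limit $\sigma$ of the Cesàro averages $\sigma_k$, show $\sigma$ is $u$-Gibbs, and contradict the normalization $\int\varphi\,d\mu\le-\varepsilon$. The gap is the boundary step. You flag it correctly but do not close it for the lemma as stated.

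Your general fallback claims that $\frac1n\sum_{j<n}\nu_k(\partial_{c\lambda^j}K_k)\to0$ uniformly. This is false for arbitrary measurable $K$ with $\Wu{p,r/2}\subset K\subset\Wu{p,r}$. Take $K=\Wu{p,r/2}\cup F$, where $F$ is a closed, leafwise nowhere dense set of positive $\Leb^u$-measure lying in the annulus $\Wu{p,r}\setminus\Wu{p,r/2}$. Then $F\subset\partial K$, so $\nu_K(\partial_\delta K)\ge\nu_K(F)>0$ for every $\delta$. Moreover $f^j(F)$ never contains a full plaque, so this mass is never a vanishing error. Restricting to Voronoi cells would suffice for how the lemma is used, but it proves a weaker statement than the one given.

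The missing idea is domination, which makes the regularity of $\partial K$ irrelevant; this is exactly what the sandwich hypothesis is for. Let $A_k=\Wu{p_k,r}$ and let $m_{A_k}$ be its normalized Lebesgue measure.

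\begin{enumerate}
\item Since $\Leb^u(\Wu{p,r})/\Leb^u(\Wu{p,r/2})$ is bounded uniformly in $p$, you get $\nu_k\le C\,m_{A_k}$ with $C$ uniform. For the canonical density, use in addition that $\rho^{K}_{\mathrm{can}}\le L/\Leb^u(K)$ by bounded distortion, with $L$ the distortion constant.
\item Pushing forward and averaging gives $\sigma_k\le C\sigma'_k$, where $\sigma'_k$ is the Cesàro average of $m_{A_k}$. Along a common subsequence, testing against nonnegative continuous functions gives $\sigma\le C\sigma'$.
\item Leafwise balls have boundary neighbourhoods of measure $O(\delta)$ uniformly in $p$. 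So your Pesin--Sinai argument, as in the proof of \Cref{prop:finitaryUEAP}, does apply to $\sigma'$ and shows it is $u$-Gibbs.
\item Finally, write $\sigma=h\sigma'$ with $0\le h\le C$. In any foliation box, the conditionals of $\sigma$ on plaques $D$ are $h\,\sigma'_D/\int h\,d\sigma'_D\ll\Leb^u_D$. Since $\sigma$ is invariant (as you showed), it is $u$-Gibbs, and your contradiction then goes through unchanged.
\end{enumerate}
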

    \begin{proof}
    Consider the weak* neighborhood $\mathcal{V}$ of $\Gibbs$ obtained by $\mu \in \mathcal{V}$ if $\int \varphi d\mu <- \frac{\varepsilon}{2}$. Since the sets $K$ considered contain an unstable disk of size $r/2$ and are contained in an unstable disk of size $r$, there exists $n_0$ such that for any such set $K$, let $\nu$ be as in the statement, then $\frac{1}{n} \sum_{j=0}^{n-1} f^j_* \nu \in \mathcal{V}$, otherwise, one would obtain a $u$-Gibbs measure not contained in $\mathcal{V}$. The result then follows. 
    \end{proof}

    \Cref{lem:lemmaabove} states that any unstable set $K$ which is not too small, must be far from $0$ in a uniform time $n_0$.

    Given a $\gamma$-H\"older continuous function $\psi$, its H\"older norm is given by
    \[
    \|\psi\|_\gamma := \sup_{x\neq y}\frac{|\psi(x)-\psi(y)|}{d(x,y)^\gamma}.
    \]
    
    There exists a constant $L'$ with the property that for any $K$ inside an unstable ball of radius $r$ and having positive measure, the corresponding canonical density is $\log$-H\"older with $\|\log \rho_{\mathrm{can}}\|_\gamma \leq L'$, where $\gamma$ is the corresponding H\"older exponent. 
    
    For a set $K$, write $m_K$ the probability measure obtained by the canonical density on $K$. Using the quotient formula for the canonical density, one can show that $f_*m_K = m_{f(K)}$. This is the key property it is used for canonical densities.

   Since $\varphi$ is H\"older, there exists $L''>0$ with the following property: for any $K$ and $n\in \Nat$ such that $f^n(K)$ is contained in an unstable ball of radius $R$, verifies
    \[
    \displaystyle \sup_{x\in K} S_n\varphi(x) - \inf_{y\in K} S_n\varphi(y) \leq L''. 
    \]

    The key lemma for the proof of Large deviations in \cite{lecturenotesdolgopyat} is the following:
    \begin{lemma}[\cite{lecturenotesdolgopyat}, Lemma 4]
        \label{lemma:decompositionrenewal}
        There exist constants $L'''$ and $\theta \in (0,1)$ such that for any $K$ inside an unstable leaf containing an unstable ball of radius $1/2$ and contained in an unstable ball of radius $1$, whose boundary consists of finitely many smooth hypersurfaces,  there is a countable partition $K = \bigcup_j K_j$ and integers $n_j \geq 1$ with 
        \begin{enumerate}
            \item $f^{n_j}(K_j)$ is a cell in $f^{n_j}(K)$;
            \item $m_K\left( \bigcup_{j: n_j>N} K_j \right)< L''' \theta ^N$;
            \item $\sum_j m_K(K_j) \sup_{x\in K_j}\left( S_{n_j} \varphi (x)+ \frac{\varepsilon}{4} n_j \right) \leq 0.$
        \end{enumerate}
    \end{lemma}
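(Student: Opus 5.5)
The plan is a renewal-type stopping-time construction, as in Dolgopyat's lecture notes. The only inputs are the uniform drift from \Cref{lem:lemmaabove}, the bounded oscillation $L''$ of Birkhoff sums on sets whose image is small, and the pushforward identity $f_*m_K=m_{f(K)}$ for canonical measures. I will build the partition $\{K_j\}$ and the times $n_j$ inductively in rounds.

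\emph{Round one.} Take $n_0$ from \Cref{lem:lemmaabove} and enlarge it if necessary. Look at $f^{n_0}(K)$, which is a large piece of unstable leaf. Tile it by Voronoi cells of size comparable to $r$, and split these cells into two groups.
\begin{enumerate}
\item Interior cells $C$, whose $r$-neighbourhood lies inside $f^{n_0}(K)$. For these, $K\cap f^{-n_0}(C)$ becomes an atom $K_j$ with $n_j=n_0$, so property 1 holds.
\item The remaining points, lying within distance $O(r)$ of $\partial f^{n_0}(K)$.
\end{enumerate}
By bounded distortion and uniform expansion, the $m_K$-measure of the second group is at most $c\,\lambda^{-n_0}\cdot(\text{area of }\partial K)$. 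Since $\partial K$ consists of finitely many smooth hypersurfaces with controlled size, this is at most some $\theta_0<1$, uniformly in $K$. On the interior cells I use the estimate from \Cref{lem:lemmaabove} on all of $K$,
\[
\int S_{n_0}\varphi\, dm_K \le -\frac{\varepsilon}{2}n_0 .
\]
Replacing the integral over each cell by the supremum costs at most $L''$ per cell, because the Hölder property together with backward contraction bounds the oscillation of $S_{n_0}\varphi$ on $f^{-n_0}(C)$. The boundary part contributes at most $\theta_0 n_0\|\varphi\|_\infty$. Choosing $n_0$ large, so that $\frac{\varepsilon}{4}n_0$ dominates $L''$ plus the boundary term, gives
\[
\sum_{\text{interior}} m_K(K_j)\,\sup_{K_j}\Bigl(S_{n_0}\varphi+\frac{\varepsilon}{4}n_0\Bigr)\le -\delta<0 .
\]
The margin $\delta$ is kept in reserve for the later rounds.

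\emph{Later rounds.} Each piece of the boundary layer of $f^{n_0}(K)$ is regrouped into a new set $K'$ satisfying $B(\cdot,1/2)\subset K'\subset B(\cdot,1)$, for instance by merging it with neighbouring boundary cells. I then apply the same step to $K'$, accumulating times. After $k$ rounds, the measure not yet assigned is at most $\theta_0^k$, which gives property 2 with $\theta=\theta_0^{1/n_0}$.

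\emph{Property 3.} For an atom created at round $k$, I split $S_{n_j}\varphi$ into its contributions from rounds $1,\dots,k$. Each earlier round where the point sat in the boundary layer contributes at most $n_0\|\varphi\|_\infty$ plus $L''$. That bad contribution is weighted by $\theta_0^{k-1}$, so it is summable, and it is absorbed by the negative margin $\delta$ gained at each round. This again uses $n_0$ large relative to $\|\varphi\|_\infty$, $L''$ and $\theta_0$. The bookkeeping uses $f_*m_K=m_{f(K)}$ to rescale conditional measures between rounds.

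\emph{Main obstacle.} The hardest part is this bookkeeping: making sure the positive drift from boundary-layer times is dominated uniformly. The drift from \Cref{lem:lemmaabove} only holds on average over whole reasonable sets, not pointwise. So the regrouped sets $K'$ must genuinely satisfy its hypotheses, which needs the regrouping to give pieces containing a ball of radius $1/2$ with piecewise smooth boundary. If that regrouping fails, I would instead work with the shrunk sets $f^{-n_0}(C)$ for boundary cells directly, and bound their contribution crudely by their measure.
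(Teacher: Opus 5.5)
\emph{The paper gives no proof of this lemma}; it is quoted from \cite{lecturenotesdolgopyat}. Your round one is the right start. There, the time-$n_0$ atoms are the preimages of cells whose $1$-neighbourhood lies in $f^{n_0}(K)$, the drift comes from \Cref{lem:lemmaabove}, the oscillation costs $L''$, and the boundary layer has mass $\theta_0=O(\lambda^{-n_0})$.

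\textbf{The gap: the regrouping step fails.} Your later rounds all depend on it.
\begin{itemize}
\item A cell cut by $\partial f^{n_0}(K)$ need not contain any ball of radius $1/2$.
\item Merging adjacent cells generally gives a set that is no longer contained in a ball of radius $1$.
\item So the regrouped $K'$ are in general not admissible, and you cannot apply \Cref{lem:lemmaabove} to them. Its $n_0$ is uniform only over sets with $\Wu{p,1/2}\subset K\subset\Wu{p,1}$.
\item Where the layer is thin, e.g.\ the image of a narrow part of $K$, no admissible grouping exists at all unless you take back interior cells already assigned.
\end{itemize}
Making your recursion work would require three things you do not do. You would have to enlarge the admissible class, prove the drift lemma uniformly on that class, and check that regrouping preserves both the class and the boundary control your $\theta_0$ needs.

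\textbf{The missing observation: drift is needed only once.} Your own bookkeeping already shows this.
\begin{itemize}
\item The mass entering round $k$ is at most $\theta_0^{k-1}$.
\item Every atom satisfies the crude bound $\sup_{K_j}\bigl(S_{n_j}\varphi+\frac{\varepsilon}{4}n_j\bigr)\le\bigl(\norm{\varphi}_\infty+\frac{\varepsilon}{4}\bigr)n_j$.
\item So all atoms created after round one contribute $O(n_0\theta_0)$ in total, even with no drift at all.
\item The round-one margin, roughly $\varepsilon n_0/4-L''$, absorbs this by itself once $n_0$ is large.
\end{itemize}

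\textbf{The fix.} Drop the regrouping.
\begin{itemize}
\item After time $n_0$, keep iterating the unassigned set.
\item Assign a point at the first time its image lies in a cell of the tessellation of the still-unassigned region's image, at distance at least $1$ from that region's boundary. Such a cell contains a ball of radius $1/2$, which gives property~1.
\item Property~2 is then purely geometric. After a block of $s$ iterates, the unassigned region's image is a collar of width of order $\lambda^{s}$ along the image of its boundary. Only its $O(1)$-boundary layer stays unassigned. So each block keeps at most a fraction $C\lambda^{-s}$, and no admissibility of pieces is needed.
\end{itemize}
This stopping-time scheme, not a recursive reapplication of the drift lemma, is what makes the cited lemma work. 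Your fallback sentence points toward it, but it gives neither the stopping rule nor the tail estimate.
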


    Item $(1)$ represents that $n_j$ is a type of ``renewal'' time where $K_j$ becomes an object of scale  $\approx r$. If one defines a ``stopping time'' $\tau(x)$ by   $\tau(x) = n_j$ if $x\in K_j$, then the second item says that $\tau$ has exponential tails, in other words $m_K(\tau>N) \leq L''' \theta^N$.   The third item is related to the following
    \[
    \int S_{\tau} \varphi \der m_K \leq \sum_j m(K_j) \sup_{x\in K_j} S_{n_j}\varphi(x) \leq -\frac{\varepsilon}{4} \int \tau \der m_K \leq -\frac{\varepsilon}{4},
    \]
    since $\tau \geq 1$.

Put 
\[
H_j:=\sup_{x\in K_j}S_{n_j}\varphi(x).
\]
So $|H_j|\le\|\varphi\|_\infty n_j$, and let
$\psi_K(\delta):=\sum_j m_K(K_j)e^{\delta H_j}$ for $\delta\ge0$. Then:
\begin{itemize}
    \item $\psi_K(0)=\sum_j m_K(K_j) = 1$;
    \item conclusion (3) of \Cref{lemma:decompositionrenewal} implies that $\sum_j m_K(K_j)H_j\le-\frac{\varepsilon}{4}$;
    \item conclusion (2) of \Cref{lemma:decompositionrenewal}  implies that 
    \[
    m_K(\tau = N) \leq L''' \theta^{N-1}.
    \]
    Set $a_0=\frac{1}{2}\log(1/\theta)$. We have that
    \[
\displaystyle  L'''':= \sup_{K} \sum_j m_K(K_j)n_j^2e^{a_0n_j} \leq \sum_{N\geq 1}N^2 e^{a_0 N} L''' \theta^{N-1} = \sum_{N \geq 1} L''' N^2 \theta^{-\frac{N}{2}} \theta^{N-1}< +\infty,
    \] 
    where the supremum is taken over all of the sets $K$ where \Cref{lemma:decompositionrenewal} applies. 
\end{itemize}
 Applying $e^t\le1+t+\frac{t^2}2e^{|t|}$, we have for $0<\delta\le a_0/\|\varphi\|_\infty$,
\[
  \psi_K(\delta)\leq 1-\frac{\delta\varepsilon}{4}+\frac{\delta^2}{2}\|\varphi\|_\infty^2L'''' ,
\]
so that for $\delta$ small enough, $\psi_K(\delta)\leq\overline{\theta}<1$. This is independent on the choice of $p$, which determines $D$.

Given $l\in \Nat$, we define a countable partition $K = \bigcup_j K_{j,l}$ with times $n_{j,l}$ defined inductively, by applying \Cref{lemma:decompositionrenewal} as follows. Suppose we had defined this partition for $l$, let us explain how to define it for $l+1$. We have that $f^{n_{j,l}}(K_{j,l})$ is a cell. By \Cref{lemma:decompositionrenewal}, there is a maximal $1$-separated set $Q = \{q_i\}$ and we can write $f^{n_{j,l}}(K_{j,l}) = \bigcup_i B_i$, where we denote by $B_i$ the corresponding cell for $q_i$. Set $K_{j,l,i}:= f^{-n_{j,l}}(B_i)$ and $n_{j,l,i} = n_{j,l} + n(B_i)$. Then, $f^{n_{j,l,i}}(K_{j,l,i})$ is again a cell and we may reindex $\{K_{j,l,i}\}$ as $K_{j,l+1}$. Write $n_l(x) = n_{j,l}(x)$ and observe that $n_0(.)< n_1(.) < n_2(.) < \cdots $. The numbers $n_l$ represent the total amount of time for the $l^{th}$-renewal. 

Since $S_{n_{j,l,i}}\varphi(x) = S_{n_{j,l}}\varphi(x) + S_{n(B_i)}\varphi(f^{n_{j,l}}(x))$, we have that
\[
\sup_{K_{j,l,i}} S_{n_{j,l,i}} \varphi \leq \sup_{K_{j,l}} S_{n_{j,l}} \varphi + \sup_{B_i} S_{n(B_i)} \varphi.
\]
 Using this and the estimates above, we obtain that
\begin{equation}\label{eq.integralsn}
  \int e^{\delta S_{n_l(x)}\varphi(x)} \der m_K(x)\leq \overline{\theta}^{l},
\end{equation}
see page $7$ in \cite{lecturenotesdolgopyat}. Fix $\varepsilon>0$ small enough so that $e^\varepsilon \overline \theta <1$. By \eqref{eq.integralsn} and Markov inequality, we have that
\begin{equation}
    \label{eq.decaymeasure1}
    m_K\left( S_{n_l(x)} \varphi(x) \geq -\frac{\varepsilon l}{\delta} \right) \leq \left(e^\varepsilon \overline \theta \right)^l. 
\end{equation}
Set $\eta:= e^\varepsilon \overline \theta.$

A similar argument gives that there exist constants $C$ large and $\tilde{\theta} \in (0,1)$ such that
\begin{equation}\label{eq.decaymeasure2}
m_K(n_l(x) \geq Cl) \leq \tilde \theta^l. 
\end{equation}

Observe that 
\begin{equation}
    \label{eq.decaymeasure3}
    m_K\left( n_{l+1}(x)- n_{l}(x) >N \right) \leq L''' \theta^{N}.
\end{equation}

Set $\overline \varepsilon:= \frac{\varepsilon}{\delta}$. Let $l(n,x)$ be the number of renewals up to time $n$. If
$S_n\varphi(x)\ge0$, then  one of the following holds (see \cite{lecturenotesdolgopyat}, page 8)
\begin{itemize}
    \item $l(n,x) > \frac{n}{C}$ (too few renewals);
    \item $S_{n_l} \varphi(x) \geq -\overline{\varepsilon}l$;
    \item $n_{l+1}(x) - n_l(x) > \frac{\overline{\varepsilon}l}{\|\varphi\|_{\infty}}$.
\end{itemize}
By \eqref{eq.decaymeasure1}, \eqref{eq.decaymeasure2} and \eqref{eq.decaymeasure3}, all of these three events have an exponential decay in $l$. This gives the desired large deviations estimate. 

\medskip
\textbf{Conclusion}
\medskip

In the argument above, the two key properties of unstable manifolds used  are bounded distortion for $m_K$, and uniform expansion of unstable leaves. Recall that in our definition of projective $u$-Gibbs measures, and in all of the projective constructions we work with, we lift the Lebesgue measures on unstable leaves to projective unstable leaves using holonomies. In particular, we also have all of the distortion estimates (\Cref{lem:bdddistortion}) and expansion estimates needed for the construction above. Indeed, for a fixed projective unstable leaf, we can follow the entire construction above in the corresponding unstable leaf on the base, and lift everything to the projective unstable leaf.  So the proof of large deviations holds verbatim in this case. 
\end{proof}

\subsection{Consequences of Large deviations}
\label{ssec:ConsequencesLD}

A useful consequence of \Cref{prop.largedeviationsprojective} is the following. 

\begin{corollary}
    \label{cor.accugibbspt}
    Let $f\in \PHb[2]$. For every $p\in M$ and any unit vector $v\in E^c(p)$, there exists a set $G(p,v) \subset \Wuloc{p}$ of full $\Leb^u_p$-measure, with the property that for any $q\in G(p,v)$, any accumulation point of
    \[
    \displaystyle \frac{1}{n} \sum_{j=0}^{n-1} \delta_{\hf^j(q,[\chi_{q,p}(v)])}
    \]
    is a projective $u$-Gibbs measure. 
\end{corollary}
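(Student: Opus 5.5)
The plan is to apply the large deviations estimate of \Cref{prop.largedeviationsprojective} to a countable dense family of Hölder observables and combine it with Borel--Cantelli. Fix $p\in M$ and a unit $v\in E^c(p)$, and write $\hp=(p,[v])$. The set $\hWu{\hp,R}$ (with $R$ the size of the local unstable manifold) is exactly $\{(q,[\chi_{q,p}(v)]):q\in\Wuloc{p}\}$. Moreover, $\hm^u_{\hWu{\hp,R}}$ is the lift of normalized $\Leb^u_p$ through $\hpi$, so null sets correspond. Thus it suffices to find a full $\hm^u$-measure subset of $\hWu{\hp,R}$ on which all accumulation points of the empirical measures lie in $\Gibbs[\hf]$.

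\textbf{Step 1.} Choose a countable family $\{\varphi_k\}_{k\ge1}$ of Lipschitz (hence Hölder) functions on $\hM$ that is dense in $C^0(\hM)$; this is possible since $\hM$ is a compact metric space. For each $k$ and each $\varepsilon=1/m$, \Cref{prop.largedeviationsprojective} gives
\[
\hm^u_{\hWu{\hp,R}}\Bigl(\hat x:\ d\Bigl(\tfrac1n S_n\varphi_k(\hat x),I_{\varphi_k}\Bigr)\ge \tfrac1m\Bigr)\le L_{k,m}e^{-\gamma_{k,m}n}.
\]
This is summable in $n$, so Borel--Cantelli shows that for $\hm^u$-a.e.\ $\hat x$ we have $d\bigl(\tfrac1n S_n\varphi_k(\hat x),I_{\varphi_k}\bigr)<1/m$ for all large $n$. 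Intersecting over the countably many pairs $(k,m)$ gives a full-measure set $\hat G$ on which $\lim_n d\bigl(\tfrac1n S_n\varphi_k,I_{\varphi_k}\bigr)=0$ for every $k$. Let $G(p,v)=\hpi(\hat G)$.

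\textbf{Step 2.} Let $\hat x\in\hat G$ and let $\hnu$ be an accumulation point of its empirical measures. By the usual argument $\hnu$ is $\hf$-invariant, and $\int\varphi_k\,\der\hnu\in I_{\varphi_k}$ for every $k$, because $I_{\varphi_k}$ is closed. The remaining task is to show that this forces $\hnu\in\Gibbs[\hf]$.

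\textbf{Step 3 (main obstacle).} Suppose $\hnu\notin\Gibbs[\hf]$. The set $\Gibbs[\hf]$ is compact and convex by \Cref{lem:compactGibbs}, and the dual of $C^0(\hM)$ with the weak* topology is locally convex. Hahn--Banach separation therefore gives a continuous $\varphi$ and $\delta>0$ with $\int\varphi\,\der\hnu\ge\sup_{\hmu\in\Gibbs[\hf]}\int\varphi\,\der\hmu+\delta$, so $\int\varphi\,\der\hnu\notin I_\varphi$. Approximate $\varphi$ uniformly within $\delta/3$ by some $\varphi_k$. Then $I_{\varphi_k}$ lies within $\delta/3$ of $I_\varphi$, and $\int\varphi_k\,\der\hnu\ge\max I_{\varphi_k}+\delta/3$, which contradicts Step 2.

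The subtle point is that the statement concerns \emph{every} accumulation point, so the control must hold simultaneously for all observables. This is exactly why we restrict to a countable dense Hölder family and rely on a uniform approximation of an arbitrary separating functional. The exponential rate in \Cref{prop.largedeviationsprojective} supplies the summability that Borel--Cantelli needs. Even a non-summable (merely tending to zero) bound would suffice if one passed to subsequences, but the exponential bound makes the argument clean.
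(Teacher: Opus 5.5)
Your proposal is correct and follows essentially the same route as the paper: large deviations plus Borel--Cantelli for a countable dense family of H\"older observables, followed by Hahn--Banach separation from the compact convex set of projective $u$-Gibbs measures. Your uniform-approximation step plays the role of the paper's observation that $\varphi\mapsto I_\varphi$ is $1$-Lipschitz for the Hausdorff distance. You should drop your closing aside: a bound that merely tends to zero, used along a subsequence of times, only controls accumulation points along that subsequence, so it would not give the statement for every accumulation point.
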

\begin{proof}
    Take $\{\varphi_k\}_{k\in \mathbb{N}}$ a sequence of H\"older functions which is dense in $C^0(\hM)$. Fix $\hp=(p,[v]) \in \hM$ and write $\hat D:= \hWuloc{\hp}$. For each $k$, by \Cref{prop.largedeviationsprojective} and the Borel-Cantelli Lemma, there exists a set $G_k(\hp)$ of full $\hm_{\hp^u}$-measure such that for any $\hq \in G_k(\hp)$
    \[
\displaystyle  \lim_{n\to +\infty} d\left(\frac{1}{n} \sum_{j=0}^{n-1} \varphi_k (\hf^j(\hq)), I_{\varphi_k}\right) = 0.
    \]
    Set $G(\hp) := \bigcap_k G_k(\hp)$ and let us check that this set verifies the conclusion of \Cref{cor.accugibbspt}. 

    First, observe that the map
    \[
    C^0(\hM) \ni \varphi \mapsto I_\varphi
    \]
    is $1$-Lipschitz, where we are using the $C^0$-topology in the domain and the Hausdorff distance in the range. This follows easily from the following observation, for any $a\in I_\varphi$, there exists $\hmu \in \mathrm{Gibbs}^u(\hf)$ such that $a= \int \varphi \der \hmu$. In particular, for any $\psi \in C^0(\hM)$, 
    \[
    \displaystyle \left| \int \varphi \der \hmu - \int \psi \der \hmu\right| \leq \|\varphi - \psi\|_0.
    \]
    Therefore, $\sup_{a\in I_\varphi} d(a, I_\psi) \leq \|\varphi - \psi\|_0$ and, similarly $\sup_{b\in I_\psi} d(I_\varphi, b) \leq \|\varphi - \psi\|_0.$ This implies that $d_H(I_\varphi, I_\psi) \leq \|\varphi - \psi\|_0$. 

    Take $\hq \in G(\hp)$ and consider $\hnu$ an accumulation point of the sequence  
    \[
    \displaystyle \frac{1}{n} \sum_{j=0}^{n-1} \delta_{\hf^j(\hq)}.
    \]
    Let $\varphi \in C^0(\hM)$ be  a continuous projective function and let $k_j \to +\infty$ be a sequence  such that $\varphi_{k_j}$ converges to $\varphi$ in the $C^0$-topology. Observe that for each $j$, $\int  \varphi_{k_j} \der \hnu \in I_{\varphi_{k_j}}$. Since $I_{\varphi_{k_j}} \to I_\varphi$, we obtain that
    \[
    \int \varphi \der \hnu \in I_\varphi.
    \]
    Suppose $\hnu$ is not a projective $u$-Gibbs measure. Recall that the set of projective $u$-Gibbs measures is a convex and compact subset of the space of measures in $\hM$. The Hahn-Banach Theorem gives a continuous function $\psi$ such that 
    \[
\displaystyle    \int \psi \der \hnu > \sup \left\{\int \psi \der \hmu: \hmu \in \mathrm{Gibbs}^u(\hf)\right\},
    \]
    which is a contradiction with $\int \psi \der \hnu \in I_\psi.$ \qedhere
    
\end{proof}

Using \Cref{cor.accugibbspt}, we can strengthen the estimate obtained in \Cref{lem:expansioncontraction}. This is given in the following lemma. 

\begin{lemma}\label{lem:expansioncontraction2}
Let $f\in \PHb[2]$ be expanding on average with $\Gap>0$. For every $p\in M$, $\Leb^u$-almost every $q\in\Wuloc{p}$ it holds
\begin{align}
&\liminf_{n\to\oo} \frac{1}{n}\log\norm{D^c_qf^n}\geq \CEA\\
&\limsup_{n\to\oo} \frac{1}{n}\log\conorm{D^c_qf^n}\leq  -\Gap.
\end{align}
\end{lemma}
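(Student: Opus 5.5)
The plan is to combine \Cref{cor.accugibbspt} with the characterization of projective $u$-Gibbs measures coming from \Cref{mainthm:TheoremD} and \Cref{cor:onlygibbs}. Fix $p\in M$. Since $E^c$ is two dimensional, pick two linearly independent unit vectors $v_1,v_2\in E^c(p)$. I will take the full $\Leb^u$-measure set $G:=G(p,v_1)\cap G(p,v_2)\subset \Wuloc{p}$ given by \Cref{cor.accugibbspt}. For $q\in G$ and $i=1,2$, every accumulation point $\hnu$ of the empirical measures of $\hq_i:=(q,[\chi_{q,p}(v_i)])$ under $\hf$ is a projective $u$-Gibbs measure. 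By \Cref{pro:thmB1} we have $f\in\mathcal G$. Then \Cref{cor:onlygibbs}, applied to the ergodic components of $\hpi_*\hnu$ and using \Cref{pro:Ledrappierproj}, gives $\hnu=\widehat{(\hpi_*\hnu)}^+$. Hence, with $\psi(x,[w])=\log\norm{D_xf|_{[w]}}$,
\[
\int\psi\der\hnu=\lambda^{+c}(\hpi_*\hnu)\geq \CEA ,
\]
where the last inequality is \Cref{cor:Gibbshyperbolic}.

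Since $\psi$ is continuous, $\frac1n\log\norm{D_qf^n\chi_{q,p}(v_i)}=\frac1n\sum_{j<n}\psi(\hf^j\hq_i)$. Every accumulation value of these averages is $\int\psi\der\hnu$ for some accumulation measure $\hnu$, so it is at least $\CEA$. This gives
\[
\liminf_n\frac1n\log\norm{D^c_qf^n}\geq\liminf_n\frac1n\log\norm{D_qf^n\chi_{q,p}(v_1)}\geq\CEA ,
\]
which is the first inequality.

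For the second inequality, I will use the determinant. The vectors $w_i=\chi_{q,p}(v_i)$ are linearly independent, because $\hu_{q,p}$ is an isomorphism. Their angle is some fixed $\alpha_q>0$, and up to constants depending on $\alpha_q$ we have
\[
|\det D^c_qf^n|\asymp \norm{D_qf^nw_1}\,\norm{D_qf^nw_2}\,\sin\angle(D_qf^nw_1,D_qf^nw_2).
\]
Alternatively, I will use the cleaner identity $\conorm{D^c_qf^n}\,\norm{D^c_qf^n}=|\det D^c_qf^n|$, valid for two dimensional linear maps. Taking $\frac1n\log$ and using the first part,
\[
\limsup_n\frac1n\log\conorm{D^c_qf^n}\leq\limsup_n\frac1n\log|\det D^c_qf^n|-\liminf_n\frac1n\log\norm{D^c_qf^n}\leq c(f)-\CEA=-\Gap ,
\]
where $c(f)=\sup_x\limsup_k\frac1k\log|\det D^c_xf^k|$ is exactly the quantity in \eqref{def:Gap}. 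With this identity a single vector $v_1$ already suffices, and the second vector is unnecessary.

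The main obstacle is justifying that every accumulation point $\hnu$ is the $+$-lift, including when $\hpi_*\hnu$ is not ergodic. The plan is to decompose $\hnu$ into ergodic components, which are projective $u$-Gibbs measures by \Cref{lem:compactGibbs}. Each component projects to an ergodic $u$-Gibbs measure with distinct center exponents, by \Cref{cor:Gibbshyperbolic}. \Cref{pro:Ledrappierproj} then forces each component to be $\hmu^\pm$ of its projection, and the $\hmu^-$ option is excluded by \Cref{cor:onlygibbs}. Integrating $\psi$ over the components yields the bound $\geq\CEA$.
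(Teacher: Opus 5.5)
Your proposal is correct and follows essentially the same route as the paper. You apply \Cref{cor.accugibbspt} for a single vector $v$, and identify every accumulation point as a $+$-lift via \Cref{mainthm:TheoremD} and \Cref{cor:onlygibbs}, which gives $\liminf_n\frac1n\log\norm{D^c_qf^n}\geq\CEA$. You then conclude with the identity $\conorm{D^c_qf^n}\norm{D^c_qf^n}=|\det D^c_qf^n|$ and the definition of $\Gap$. Your explicit handling of non-ergodic accumulation points through ergodic components (\Cref{lem:compactGibbs}, \Cref{pro:Ledrappierproj}) only spells out a step the paper asserts directly, and the second vector $v_2$ is indeed unnecessary.
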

\begin{proof}
Let $f\in \PHb[2]$ be expanding on average with $\Gap>0$.  Fix $p\in M$ and $v\in E^c(p)$ a unit vector. By \Cref{cor.accugibbspt}, there exists a set $G(p,v) \subset \Wuloc{p}$ of full $\Leb^u_p$-measure such that every accumulation point of     \[
    \displaystyle \hmu_{q}^n:= \frac{1}{n} \sum_{j=0}^{n-1} \delta_{\hf^j(q,[\chi_{q,p}(v)]}
    \]
    is a projective $u$-Gibbs measure. Consider the continuous projective function $\psi(x,[w]) = \log \|D_xf|_{[w]}\|$. Observe that 
    \[
    \displaystyle \int \psi \der \hmu_q^n = \frac{1}{n} \sum_{j=0}^{n-1} \log \|D_{q_j}f|_{[D_qf^j(\chi_{q,p}(v)]}\| = \frac{1}{n} \log \|D_qf^n|_{[\chi_{q,p}(v)]}\| \leq \frac{1}{n} \log \|D^c_q f^n\|.
    \]
    By \Cref{mainthm:TheoremD} and \Cref{cor:onlygibbs}, the only projective $u$-Gibbs for $f$ are supported on the unstable Oseledets directions. Therefore,
    \[
    \displaystyle \liminf_{n\to +\infty}\int \psi \der \mu_q^n \geq \inf\{\lambda^{+c}(\mu): \mu \in \Gibbs\} = \CEA. 
    \]
    Hence,
    \begin{equation} \label{eq.CEAnorm}
    \CEA \leq \liminf_{n\to +\infty}\frac{1}{n} \log \|D^c_q f^n\|.
    \end{equation}
    From  the definition of $\Gap$ given in \Cref{def:Gap}, we have that
    \[
    \displaystyle \limsup_{n\to +\infty} \frac{1}{n} \log |\det D^c_qf^n| = \limsup_{n\to +\infty} \frac{1}{n} \log \|D^c_qf^n\|\cdot m(D^c_qf^n) \leq -\Gap +\CEA.
    \]
    By \eqref{eq.CEAnorm}, we obtain that
    \[
    \displaystyle \limsup_{n\to +\infty} \frac{1}{n} \log m(D_q^cf^n) \leq -\Gap.\qedhere
    \]
\end{proof}

\section{Existence of a weak stable direction}\label{sec:existenceweakbundle}

The main goal of this subsection is to prove  \Cref{thm:existsweakbundle} below, which states that inside any unstable leaf, Lebesgue-almost every point has a well defined stable direction in the center. 

Fix $f\in \PHb[2]$ an expanding on average diffeomorphism and suppose that $\Gap>0$. The arguments in the proof of \Cref{thm:existsweakbundle} are based in the proof of the two dimensional Osedelets Theorem. We point out  that, in the setting of Oseledets theorem where some invariant measure $\mu$ is used as a reference, one can use the Kingman's sub-addtive theorem in order to deduce that for $\mu-$almost every point $p$ the sequences $(\frac1n\log\norm{D^cf^n_p})_{n\geq 0}, (\frac1n\conorm{D^cf^n_p})_{n\geq 0}$ asymptotically behave like the largest and smallest center Lyapunov exponent, respectively. This information on the expected assymptotic expansion and contraction simplifies the construction of the corresponding invariant sub-bundles. In our case the main difficulty is that there is no a priori fixed invariant measure, hence we need to obtain some uniformity in the behavior of the above sequences just from the expanding on average property. Our goal is to show the following result.

\begin{theorem}\label{thm:existsweakbundle}
    Suppose that $f\in \PHb[2]$ has the expanding on average property and $\Gap>0$. Then, for every $p\in M$, for $\Leb^u$-almost every point $x$ in $\Wu{p}$, there is a well defined stable direction $E^{-c}(x)$ and 
    \[
    \displaystyle \limsup_{n\to +\infty} \frac{1}{n} \log \|D_xf^n|_{E^{-c}(x)}\| \leq -\Gap.
    \]
    Moreover, for $\Leb^u-$almost every $x\in \Wu{p}$, if $v\notin E^{-c}(x),$ then
    \[
    \liminf_{n\to +\infty} \frac{1}{n} \log \|D_xf^n v\| >0.
    \]
\end{theorem}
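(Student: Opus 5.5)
We will mimic the two-dimensional proof of the Oseledets theorem, replacing the Kingman input by the almost-sure asymptotics of \Cref{lem:expansioncontraction2}. Fix $p\in M$; it suffices to work on a local disk $\Wuloc{p}$ and then cover $\Wu{p}$ by countably many such disks. By \Cref{lem:expansioncontraction2} there is a set $G\subset\Wuloc{p}$ of full $\Leb^u$-measure such that for every $x\in G$,
\[
\liminf_{n\to\oo}\frac1n\log\norm{D^c_xf^n}\geq \CEA,\qquad \limsup_{n\to\oo}\frac1n\log\conorm{D^c_xf^n}\leq -\Gap .
\]
For $x\in G$ and $n$ large, $D^c_xf^n$ then has distinct singular values $\sigma_1^{(n)}>\sigma_2^{(n)}$. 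Let $e_n(x)\in\Pp E^c(x)$ be the most contracted right singular direction, so that $\norm{D_xf^n e_n}=\conorm{D^c_xf^n}$.

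\textbf{Step 1: $(e_n(x))_n$ is Cauchy.} The standard estimate compares $D^c_xf^{n+1}$ with $D^c_{f^n x}f\circ D^c_xf^n$ and uses $\norm{Df^{\pm1}}\leq e^{c_f}$. It gives
\[
\angle(e_n,e_{n+1})\lesssim e^{2c_f}\,\frac{\sigma_2^{(n)}}{\sigma_1^{(n)}} .
\]
For $x\in G$ and every small $\epsilon>0$, for all large $n$ we have $\sigma_2^{(n)}/\sigma_1^{(n)}\leq e^{-n(\CEA+\Gap-2\epsilon)}$. This is summable, so $e_n(x)\to E^{-c}(x)$ exponentially fast, with $\angle(e_n,E^{-c})\lesssim e^{-n(\CEA+\Gap-2\epsilon)}$.

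\textbf{Step 2: contraction along $E^{-c}$.} Take a unit vector $w\in E^{-c}(x)$ and decompose it in the singular basis at time $n$. Then
\[
\norm{D_xf^nw}\leq \sigma_2^{(n)}+\sigma_1^{(n)}\angle(w,e_n) .
\]
The first term is at most $e^{-n(\Gap-\epsilon)}$. The second is at most $\sigma_1^{(n)}e^{-n(\CEA+\Gap-2\epsilon)}$.

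\textbf{The main obstacle} is that bounding the second term needs an \emph{upper} bound $\limsup_n\frac1n\log\sigma_1^{(n)}\leq \CEA+\epsilon'$, which \Cref{lem:expansioncontraction2} does not give. To obtain the gap, I would use the determinant instead. By definition of $\Gap$, $\limsup_n\frac1n\log(\sigma_1^{(n)}\sigma_2^{(n)})\leq \CEA-\Gap$. The decisive quantity is therefore the ratio $\sigma_2^{(n)}/\sigma_1^{(n)}=\sigma_1^{(n)}\sigma_2^{(n)}/(\sigma_1^{(n)})^2$.

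I would first try to redo Step 1 with the rate $\beta=\liminf_n\frac1n\log(\sigma_1^{(n)}/\sigma_2^{(n)})$, which is at least $\CEA+\Gap$. The sharper Oseledets estimate is $\angle(w,e_n)\lesssim \sum_{k\geq n}\sigma_2^{(k)}/\sigma_1^{(k)}$. Combining it with the quasi-multiplicativity $\sigma_1^{(k)}\leq e^{(k-n)c_f}\sigma_1^{(n)}$ does not close the argument directly.

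The fallback is projective: apply \Cref{cor.accugibbspt} to the lifted points $(x,[w])$ with $w=E^{-c}(x)$. By \Cref{cor:onlygibbs} the only projective $u$-Gibbs measures are the lifts $\hmu^+$. So if every empirical limit along $(x,E^{-c}(x))$ were such a measure, the direction $E^{-c}$ would grow at rate $\geq\CEA$. This is impossible, since its growth is bounded by the ratio estimate above. One must check carefully that \Cref{cor.accugibbspt} applies to the measurable family $x\mapsto E^{-c}(x)$, which is not a holonomy-transported field. The contradiction should instead force the bound $\limsup_n\frac1n\log\norm{D_xf^n|_{E^{-c}}}\leq \CEA-\Gap-\CEA=-\Gap$, because the complementary direction carries growth at rate at least $\CEA$ while the determinant is controlled.

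\textbf{Step 3: the second claim.} Let $v\notin E^{-c}(x)$. Its component along $e_n^\perp$ stays bounded below by $\tfrac12\angle(v,E^{-c}(x))$ for all large $n$, so $\norm{D_xf^nv}\gtrsim \sigma_1^{(n)}$. Hence $\liminf_n\frac1n\log\norm{D_xf^nv}\geq\CEA>0$, using again the liminf bound of \Cref{lem:expansioncontraction2}.
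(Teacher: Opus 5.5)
Your Steps 1 and 3 are sound; Step 3 in fact gives the second assertion, with rate at least $\CEA$, without using any contraction along $E^{-c}$. You also located the difficulty correctly. But the first assertion is not proved: the term $\sigma_1^{(n)}\angle(w,e_n)$ in Step 2 is never controlled. Since $\sigma_1^{(n)}\angle(w,e_n)\lesssim\sum_{k\geq n}\sigma_2^{(k)}\,\sigma_1^{(n)}/\sigma_1^{(k)}$, what you need is a lower bound on the growth of $\norm{D^c_xf^{k}}$ \emph{from every intermediate time} $n$ onward, with only subexponential loss in $n$. Single-time information such as \Cref{lem:expansioncontraction2}, combined with $\sigma_1^{(n)}\leq e^{c_fn}$, only gives the bound $e^{(c_f-\CEA-\Gap+2\epsilon)n}$, which is useless in general.

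The paper supplies the missing input through \Cref{lem.subtemperedness}. In its notation $A^n(x)=D^c_xf^n$, this says that for $m^u_p$-a.e.\ $x$ and all $n,k$ one has $\norm{A^n(x)}/\norm{A^{n+k}(x)}\leq Q_1(x)e^{-(1-\beta)C_1k+\beta n}$. This does not follow from the earlier lemmas. Its proof goes as follows:
\begin{itemize}
\item cover $f^n(\Wuloc{p})$ by unit unstable disks of bounded multiplicity (\Cref{lemma.covering});
\item use \Cref{claim.approxholvec} to bound $\norm{A^{n+k}(x)}/\norm{A^n(x)}$ from below by the $k$-step growth of a holonomy-transported field on each disk;
\item apply the projective large deviations (\Cref{prop.largedeviationsprojective}) on each disk, obtaining an exponential-moment bound on $(\norm{A^n}/\norm{A^{n+k}})^{\delta}$ that is uniform in $n$;
\item pull this back to $\Wuloc{p}$ by bounded distortion, and conclude by Markov and Borel--Cantelli over the double index $(n,k)$.
\end{itemize}
Insert this into your sum, together with $\conorm{A^{m}(x)}\leq e^{-(\Gap-\epsilon)m}$ for large $m$ (which you already have; the paper uses \Cref{lem:markovwithdet}). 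You get $\sigma_1^{(n)}\angle(w,e_n)\lesssim Q_1(x)e^{-(\Gap-\epsilon-\beta)n}$, hence the rate $-\Gap$.

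Your projective fallback cannot replace this, for three reasons.
\begin{itemize}
\item \Cref{cor.accugibbspt} gives a full-measure set only for the holonomy field $q\mapsto\chi_{q,p}(v)$ with $v$ \emph{fixed}. It says nothing about the measurable field $x\mapsto E^{-c}(x)$.
\item The conclusion you would want is actually false. \Cref{prop.accumulationprojectiveus} shows that empirical measures along $(x,E^{-c}(x))$ accumulate on lifts $\hmu^-$, and by \Cref{cor:onlygibbs} these are never projective $u$-Gibbs. Even a valid contradiction of the form ``not every limit is $\hmu^+$'' would give no quantitative rate.
\item The determinant heuristic is circular. For $w\in E^{-c}(x)$ and $\chi=\chi_{x,p}(v)$ one has $\norm{D_xf^nw}\,\norm{D_xf^n\chi}\,\sin\angle(D_xf^nw,D_xf^n\chi)=|\det D^c_xf^n|\,\sin\angle(w,\chi)$. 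So growth of $\chi$ plus a determinant bound controls $\norm{D_xf^nw}$ only up to the factor $1/\sin\angle(D_xf^nw,D_xf^n\chi)$. Exponential collapse of that angle is exactly the situation in which your uncontrolled second term dominates.
\end{itemize}
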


\begin{notation}
   For this section only, given $p\in M, n\in\Nat$ we write
   \begin{align*}
   &A^n(p)=D_p^cf^n.
   \end{align*}
\end{notation}

From now on, let us write $\Wuloc{p}$ to be the strong unstable manifold of size $1$ (this choice is arbitrary and we could have fixed any other constant). We now establish some finer control on the center derivative.  It will be convenient to denote, for $p\in M$, $m^u_p$ the normalized Lebesgue measure along $\Wuloc{p}$.

In what follows, we will first focus on the proof of \Cref{thm:existsweakbundle} by first assuming some lemmas which will be proved in the next subsection. 

\begin{lemma}\label{cor:minimaexp}
For any $C_1\in (0,\CEA)$,  for every $p\in M$ and unit vector $v\in E^c_p$, the following holds: for $m^u_p-$almost every $q\in \Wuloc{p}$ there exists $n(x,v)\in\Nat$ such that
\[
   \forall n\geq n(x,v),\ \norm{A^n(x)\cdot \chi_{x,p}(v)}\geq e^{C_1n}.
\] 
\end{lemma}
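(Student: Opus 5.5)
We will deduce this from \Cref{cor.accugibbspt}, together with the characterization of projective $u$-Gibbs measures in \Cref{mainthm:TheoremD} and \Cref{cor:onlygibbs}, arguing as in the proof of \Cref{lem:expansioncontraction2}, but now following the specific transported vector rather than the norm of the whole center derivative. Fix $p\in M$, a unit vector $v\in E^c_p$, and $C_1\in(0,\CEA)$. Let $G(p,v)\subset\Wuloc{p}$ be the full $m^u_p$-measure set given by \Cref{cor.accugibbspt}. For $q\in G(p,v)$, set $\hq=(q,[\chi_{q,p}(v)])$ and consider the empirical measures $\hmu^n_q=\frac1n\sum_{j=0}^{n-1}\delta_{\hf^j(\hq)}$. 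Let $\psi(x,[w])=\log\norm{D_xf|_{[w]}}$. By the chain rule, the quantity we need to control is exactly a Birkhoff sum along the projective orbit:
\[
\int\psi\,\der\hmu^n_q=\frac1n\log\norm{A^n(q)\cdot\chi_{q,p}(v)}.
\]

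Next we identify the possible limits. Every accumulation point $\hnu$ of $(\hmu^n_q)_n$ is a projective $u$-Gibbs measure. By part (A) of \Cref{mainthm:TheoremD}, $f\in\mathcal G$. Then \Cref{cor:onlygibbs}, applied to the ergodic components via \Cref{pro:Ledrappierproj} and \Cref{lem:compactGibbs}, gives $\hnu=\hmu^+$ with $\mu=\hpi_*\hnu\in\Gibbs$. Hence
\[
\int\psi\,\der\hnu=\lambda^{+c}(\mu)\geq\CEA,
\]
where the last inequality is \Cref{cor:Gibbshyperbolic}. Since $\psi$ is continuous on $\hM$ and every subsequential limit of $\int\psi\,\der\hmu^n_q$ is at least $\CEA$, we conclude
\[
\liminf_{n\to\infty}\frac1n\log\norm{A^n(q)\chi_{q,p}(v)}\geq\CEA>C_1.
\]
So there is $n(q,v)$ such that $\norm{A^n(q)\chi_{q,p}(v)}\geq e^{C_1n}$ for all $n\geq n(q,v)$, for every $q\in G(p,v)$. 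This is the claim.

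The only delicate point is the identification of the accumulation measures. We must make sure that any projective $u$-Gibbs measure, including a non-ergodic one, is carried by the $E^{+c}$ directions. We will check this by decomposing into ergodic components, which are again projective $u$-Gibbs by \Cref{lem:compactGibbs}. Each such component projects to an ergodic $u$-Gibbs measure, whose exponents are distinct by \Cref{cor:Gibbshyperbolic}, so it is the $+$ lift by \Cref{pro:Ledrappierproj} and \Cref{cor:onlygibbs}. Integrating $\psi$ over the components then gives $\int\psi\,\der\hnu\geq\CEA$. Beyond that, the argument is a direct application of the large deviation consequence \Cref{cor.accugibbspt}.
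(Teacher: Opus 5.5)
Your argument is correct. It uses the same two ingredients as the paper, namely large deviations along projective unstable disks and the identification of projective $u$-Gibbs measures as $\hmu^+$, but the route is slightly different.

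The paper applies \Cref{prop.largedeviationsprojective} directly to $\varphi(x,[w])=\log\norm{D_xf|_{[w]}}$ with $\varepsilon=\CEA-C_1$. By \Cref{mainthm:TheoremD}, $\inf I_\varphi=\CEA$, so the bad events $E_n=\{x:\frac1n\sum_{i<n}\varphi(\hf^i(x,[\chi_{x,p}(v)]))\le C_1\}$ satisfy $m^u_p(E_n)\le Le^{-\gamma n}$. The paper then concludes by Borel--Cantelli.

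You instead pass through \Cref{cor.accugibbspt}, which is itself large deviations plus Borel--Cantelli over a countable dense family, followed by Hahn--Banach. You then identify every limit point as $\hmu^+$ via \Cref{cor:onlygibbs}. This is exactly the argument the paper gives for \Cref{lem:expansioncontraction2}, and your separate treatment of the non-ergodic case is harmless but redundant, since \Cref{cor:onlygibbs} already covers non-ergodic $\mu$.

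The two routes buy different things. Yours yields the pointwise sharper conclusion $\liminf_n\frac1n\log\norm{A^n(q)\chi_{q,p}(v)}\ge\CEA$. The paper's yields an explicit exponential bound on the measure of the exceptional set at each time $n$, and that same exponential tail is what the paper invokes again in the claim inside the proof of \Cref{lem.subtemperedness}. For the lemma as stated, your qualitative version suffices.
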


 In order to simplify the presentation, we assume that $E^c$ is a trivial bundle. In general, one considers trivializing charts of $E^{c}$ with null boundary, and obtains a measurable trivialization. Let us start the construction of stable directions.

Since we are assuming that $E^{c}$ is trivial, we can identify $\{A^n(x)\}_{n\geq 0}$ as a cocycle of matrices, for $x\in M$. Without loss of generality, we can assume that the norm used in $\Real^2$ is the Euclidean norm. Let $u_n(x), s_n(x)\in E^c_x\approx \Real^2$ be unit vectors so that 
\begin{align*}
&d^u_n(x):=\norm{A^n(x)}=\norm{A^n(x) \cdot u_n(x)}\\
&d^s_n(x):=\conorm{A^n(x)}=\norm{A^n(x) \cdot s_n(x)}.
\end{align*}
Write $A^n(x)=U_n(x)\cdot \diag\left(d_n^s(x), d_n^u(x)\right)\cdot V_n(x)$ with $U_n(x), V_n(x)\in\operatorname{O}(2)$: in particular, $u_n(x)\perp v_n(x)$ and $A^n(x)\cdot u_n(x)\perp A^n(x)\cdot v_n(x)$. If $s_n(x)\in E^c_x=\mathtt{span}\{u_{n+1}(x), s_{n+1}(x)\}$, we can write
\[
   s_n(x)=\cos\theta_n(x)s_{n+1}(x)+\sin\theta_n(x)u_{n+1}(x),\textrm{ where } \theta_n(x)=\angle(s_n(x), s_{n+1}(x)).
\]
Observe that
\begin{align*}
&\norm{A^{n+1}(x)\cdot s_n(x)}\geq |\sin\theta_n(x)| d_{n+1}^u(x)\\
&\norm{A^{n+1}(x)\cdot s_n(x)}=\norm{A(f^n(x))A^n(x)\cdot s_n(x)}\leq \norm{A(f^n(x))}\norm{A^n(x)\cdot s_n(x)}=\norm{A(f^n(x))} d^s_n(x).
\end{align*}
Observe that $d^u_{n+1}(x) \geq m(A(f^n(x))) d^u_n(x) \geq m_{\infty}(A) d^u_n(x),$ where $m_{\infty}(A) = \inf \{m(A(y))\}$. Similarly, define $\|A\|_\infty = \sup \{\|A(y)\|\}.$ Therefore,
\[
   |\sin\theta_n(x)|\leq \frac{d^s_n(x)}{d^u_{n+1}(x)}\norm{A(f^n(x))}=\frac{\norm{A(f^n(x))}}{\conorm{A(f^n(x))}}\frac{\conorm{A^n(x)}}{\norm{A^n(x)}}\leq \frac{\norm{A}_{\oo}}{m_\infty(A)}\frac{\conorm{A^n(x)}}{\norm{A^n(x)}}.
\]

By \Cref{lem:expansioncontraction2}, if $x$ is $m^u_p$-typical (that is, typical on its unstable leaf), for any $\beta>0$ then there exists $n_0 = n_0(x,\beta)$ such that for every $n\geq n_0$ we get
\begin{align*}
|\sin\theta_n(x)|\leq \exp\left(-n(\CEA+\Gap -\beta)\right)=\exp(-nK_\beta),
\end{align*}
where  $K_\beta= \CEA + \Gap - \beta>0$ is positive, and this implies that $\{s_n(x)\}$ converges to some vector $s(x)$. Since on $[0,\frac{\pi}{2}]$ we have that $\frac{2}{\pi} \theta \leq |\sin \theta|$, then
\[
|\theta_n(x)| \leq \frac{2\|A\|_\infty}{\pi m_\infty (A)} \cdot \frac{m(A^n(x))}{\|A^n(x)\|}.
\]
If $|\alpha_n(x)|=|\angle(s(x), s_n(x))|$, then
\[
\displaystyle |\alpha_n(x) | \leq  \sum_{k\geq 0} |\theta_{n+k}(x)| \leq \sum_{k\geq 0} \frac{2\|A\|_\infty}{\pi m_\infty (A)} \cdot \frac{m(A^{n+k}(x))}{\|A^{n+k}(x)\|}. 
\]
Since $|\theta| \geq |\sin \theta|$,
\[
   |\sin\alpha_n(x)|\leq \frac{2\|A\|_\infty}{\pi m_\infty(A)} \cdot  \sum_{k\geq 0}\frac{\conorm{A^{n+k}(x)}}{\norm{A^{n+k}(x)}}\leq \frac{2\|A\|_\infty}{\pi m_\infty(A)} e^{-n K_\beta},
\]
with $D(x)=\frac{C(x)}{1-e^{-K_\beta}}$ and $n\geq n_0(x,\beta)$. 

\begin{definition}\label{def:weakstablebundle}
   Define $E^{-c}_x=\Real\cdot s(x)$.
\end{definition}

Our goal is to show that for every $p$, for $ m^u_p$-almost every $x$, the direction $E^{-c}_x$ contracts exponentially fast under the action of $Df^n$. Define
\begin{equation}\label{eq.J}
J:= \sup_q \limsup_{n\to +\infty} \frac{1}{n} \log |\det A^n(q)|,
\end{equation}
and, since $\Gap>0$, we have that $\Gap = \CEA - J >0$. We will need the following lemma which will be proved in the next subsection. 

\begin{lemma}\label{lem:markovwithdet}
Assume that $f\in \PHb[2]$ has the expanding on average property and $\Gap>0$. For any $C_1\in (\max\{J,0\},\CEA)$ and any $C_2\in (J,C_1)$ the following holds: for any $p\in M$, and for any unit vector $v\in E^c(p)$, for $m^u_p$-almost every $x\in \Wuloc{p}$, there exists $n(x,v)\in \Nat$ such that for any $n\geq n(x,v)$, 
\[
\displaystyle \frac{|\det A^n(x)|}{\|A^n(x) \cdot \chi_{x,p}(v)\|} \leq e^{-(C_1 - C_2)n}.
\]
\end{lemma}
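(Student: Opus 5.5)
The inequality splits into an upper bound on the numerator and a lower bound on the denominator, and I will prove these two pointwise estimates separately.

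\emph{Denominator.} Since $C_1<\CEA$, \Cref{cor:minimaexp} gives, for $m^u_p$-almost every $x\in\Wuloc{p}$, an integer $n_1(x,v)$ with
\[
\norm{A^n(x)\cdot\chi_{x,p}(v)}\geq e^{C_1 n}\quad\text{for all } n\geq n_1(x,v).
\]

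\emph{Numerator.} I need $|\det A^n(x)|\leq e^{C_2 n}$ for all large $n$, and I claim this holds for \emph{every} $x\in M$. By definition of $J$ in \eqref{eq.J}, for every $q$ we have $\limsup_{n}\frac1n\log|\det A^n(q)|\leq J<C_2$. So for each $q$ there is $n_2(q)$ with $|\det A^n(q)|\leq e^{C_2 n}$ for $n\geq n_2(q)$. This is a pointwise statement, and pointwise is all the conclusion needs, since $n(x,v)$ may depend on $x$.

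Still, I would record a uniform version, which avoids any issue with the supremum inside the definition of $J$. The function $\phi=\log|\det D^cf|$ is continuous and $\log|\det A^n|$ is an exact Birkhoff sum of $\phi$. The argument in the upper-semicontinuity claim in the proof of \Cref{THM:robustnessofea} shows $J=\sup\{\int\phi\,d\nu:\nu\ \text{invariant}\}$. The standard compactness argument then gives $\sup_q\frac1n S_n\phi(q)\to J$: if not, the empirical measures along bad orbit segments would accumulate on an invariant measure with $\int\phi\,d\nu>J$. Hence there is a uniform $N_2$ with $|\det A^n(q)|\leq e^{C_2 n}$ for all $q\in M$ and $n\geq N_2$.

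\emph{Conclusion.} Taking $n(x,v)=\max\{n_1(x,v),N_2\}$, for $n\geq n(x,v)$ the quotient is at most
\[
\frac{e^{C_2n}}{e^{C_1n}}=e^{-(C_1-C_2)n}.
\]
The hypothesis $C_1>\max\{J,0\}$ is needed only so that $C_1$ lies in the range allowed by \Cref{cor:minimaexp} and so that $C_2\in(J,C_1)$ can be chosen.

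The only real input is \Cref{cor:minimaexp}, which this section defers to the next subsection, so it is the main obstacle. Its proof would go as follows. By \Cref{cor.accugibbspt}, for almost every $x$ every accumulation point of the projective empirical measures of $(x,[\chi_{x,p}(v)])$ is a projective $u$-Gibbs measure. By \Cref{cor:onlygibbs} together with \Cref{mainthm:TheoremD}, every such measure integrates $\psi(x,[w])=\log\norm{D_xf|_{[w]}}$ to at least $\CEA$. This yields
\[
\liminf_{n}\frac1n\log\norm{A^n(x)\chi_{x,p}(v)}\geq\CEA>C_1,
\]
as in the proof of \Cref{lem:expansioncontraction2}.
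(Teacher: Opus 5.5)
Your proof is correct and follows the paper's argument: the denominator is handled by \Cref{cor:minimaexp}, and the numerator by the bound $|\det A^n(q)|\leq e^{C_2 n}$, which the paper proves uniformly in $q$ via the same invariant-measure compactness argument you record. Your observation that the pointwise bound coming directly from the definition of $J$ already suffices (since $n(x,v)$ may depend on $x$) is a valid simplification of that step.
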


For the rest of this section, let us fix $C_1$ and $C_2$ as in \Cref{lem:markovwithdet}. Also, fix $v\in E^c(p)$ unit vector. 

For $x\in \Wuloc{p}$ and  $n\in \Nat$, write $s(x)=v^n_s+v^n_u$, where we write the vector $s(x)$ in the basis $\{u_{n}(x), s_{n}(x)\}$, whenever those directions are defined, and let $\alpha_n(x)=\sphericalangle(s(x), s_{n}(x))$. We have that
\begin{equation}
    \label{eq.decomposing}
    \|A^n(x) \cdot s(x)\| \leq |\cos(\alpha_n(x))|\|A^n(x) \cdot s_n(x)\| + |\sin(\alpha_n(x))| \|A^n(x) \cdot u_n(x)\|.
\end{equation}
Observe that for  $ m^u_p$-almost every $x$ there exists $n(x,v)$ such that if $n\geq n(x,v)$, then, 
\[
\norm{A^n(x)\cdot v^n_s}=|\cos(\alpha_n(x))|d_n^s(x)\leq e^{-(C_1-C_2)n}.
\]

Recall that,
\begin{align*}
|\sin\alpha_n(x) d^u_n(x)|\leq \frac{2\|A\|_\infty}{\pi m_\infty(A)}  \sum_{k\geq 0}\conorm{A^{n+k}(x)}\frac{\norm{A^{n}(x)}}{\norm{A^{n+k}(x)}}.
\end{align*}
To estimate this term, we need some type of lower bound on $\|A^{n+k}(x)\|$ for a $m^u_p$-typical point. In order to accomplish this, we need the following lemma.
\begin{lemma}\label{lem.subtemperedness}
For any $\beta>0$ small, for $m^u_p$-almost every $x$, there exists $Q_1(x)$ such that
\[
\displaystyle \frac{\|A^n(x)\|}{\|A^{n+k}(x)\|} \leq Q_1(x) e^{-(1-\beta)C_1 k + \beta n}.
\]
\end{lemma}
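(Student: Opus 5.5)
Split the ratio with the vector $w:=\chi_{x,p}(v)$ from \Cref{cor:minimaexp} (applied with a constant $C_1'$ slightly larger than $C_1$, still below $\CEA$), combined with the crude bound $\|A^n(x)\|\le\|A^n(x)\|$. The idea is that $\|A^{n+k}(x)\|\ge\|A^{n+k}(x) w\|=\|A^k(f^n x)\,A^n(x)w\|$. Growth from time $n$ to time $n+k$ must be controlled uniformly in $n$, and this is the whole difficulty.

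Write $w_n:=A^n(x)w/\|A^n(x)w\|$. Then
\[
\frac{\|A^n(x)\|}{\|A^{n+k}(x)\|}\le \frac{\|A^n(x)\|}{\|A^n(x)w\|}\cdot\frac{1}{\|A^k(f^nx)w_n\|}.
\]
For the first factor, note that for $m^u_p$-a.e.\ $x$ the vector $w$ is not in the contracting direction. Hence $\|A^n(x)w\|\ge |\sin\angle(w,s_n(x))|\,\|A^n(x)\|$. Since $s_n(x)\to s(x)$ exponentially fast (the estimate on $\alpha_n$ above), the angle stays bounded below once $w\ne s(x)$, i.e.\ for a.e.\ $x$. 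If $s(x)=\chi_{x,p}(v)$ on a positive measure set, I would instead use \Cref{cor:minimaexp} together with \Cref{lem:expansioncontraction2}: these give $\|A^n w\|\ge e^{C_1'n}$ and $\|A^n\|\le e^{(\CEA+\beta)n}$ eventually, contradicting $\|A^n s(x)\|\lesssim e^{-(C_1-C_2)n}$. So the first factor is bounded by a constant $Q'(x)$.

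For the second factor, I need $\|A^k(f^nx)w_n\|\ge Q''(x)^{-1}e^{(1-\beta)C_1k-\beta n}$. Here $(f^nx,w_n)$ lies on the projective unstable leaf of $\hf^n(x,[w])$, and $w_n=\chi_{f^nx,f^np}(v^n)$. I would use the large deviation estimate, \Cref{prop.largedeviationsprojective}, with $\varphi=\psi=\log\|D_xf|_{[v]}\|$. Since every projective $u$-Gibbs measure is $\hmu^+$ (\Cref{cor:onlygibbs}), we have $I_\psi\subset[\CEA,\infty)$. The measure of the set of points of $\Wuloc{p}$ with $S_k\psi(\hf^n(x,[w]))<(1-\beta)C_1k$ is then bounded by pushing forward: apply the large deviation bound on the cells of $f^n(\Wuloc{p})$ using the uniform constants and bounded distortion (\Cref{lem:bdddistortion}, canonical measures satisfying $f^n_*\mu^u_D=\mu^u_{f^nD}$). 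This gives a bound $Le^{-\gamma k}$, uniform in $n$.

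Summing over $n,k$ with $k\ge \beta' n$ gives a finite sum $\sum_n\sum_{k\ge\beta'n}Le^{-\gamma k}<\infty$, so Borel–Cantelli says that for a.e.\ $x$ the bad event happens only for finitely many pairs with $k\ge\beta'n$. For $k<\beta'n$ I would use the trivial bound $\|A^k w_n\|\ge m_\infty(A)^k\ge e^{-c k}\ge e^{(1-\beta)C_1k-\beta n}$ after choosing $\beta'$ small relative to $\beta$. The finitely many remaining exceptional pairs are absorbed into $Q_1(x)$. The main obstacle is making the large deviation bound uniform over the image disks $f^n(\Wuloc{p})$ while tracking the transported vector. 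I would handle it by partitioning $f^n(\Wuloc{p})$ into the good tiling of \Cref{lem:goodtiling} and applying \Cref{prop.largedeviationsprojective} on each tile, discarding boundary tiles of vanishing measure.
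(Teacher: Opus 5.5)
Your factorization $\|A^{n+k}(x)\|\ge\|A^n(x)w\|\,\|A^k(f^nx)w_n\|$ is fine. Controlling the second factor by large deviations on the image leaf is also sound, since $w_n=\chi_{f^nx,f^np}(v^n)$ is a holonomy-transported field.

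\textbf{The gap is the first factor} $\|A^n(x)\|/\|A^n(x)w\|$. Your argument that $[w]\neq[s(x)]$ for $m^u_p$-a.e.\ $x$ does not work, for two reasons.
\begin{itemize}
\item \Cref{lem:expansioncontraction2} only gives $\liminf_n\frac1n\log\|A^n(x)\|\ge\CEA$. It gives no upper bound $\|A^n(x)\|\le e^{(\CEA+\beta)n}$, and that bound is false whenever some $u$-Gibbs measure has $\lambda^{+c}>\CEA+\beta$.
\item More seriously, the contraction $\|A^n(x)s(x)\|\lesssim e^{-(C_1-C_2)n}$ you want to contradict is exactly what the paper derives from \eqref{eq.dunsin}, i.e.\ from the lemma you are proving. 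Without it, the bound $|\sin\alpha_n(x)|\,d^u_n(x)\lesssim\sum_k m(A^{n+k}(x))\,\|A^n(x)\|/\|A^{n+k}(x)\|$ has only $\|A^n\|/\|A^{n+k}\|\le m_\infty(A)^{-k}$ available, and the series need not converge.
\end{itemize}
So the argument is circular. If $[w]=[s(x)]$ on a positive-measure set, the first factor grows exponentially and your decomposition gives nothing.

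\textbf{How to repair it.} Since $h^u_{x,p}$ is injective on $\mathbb{P}E^c$, the measurable sets $\{x:[\chi_{x,p}(v)]=[s(x)]\}$ are pairwise disjoint for distinct $[v]\in\mathbb{P}E^c(p)$. Hence all but countably many $[v]$ give an $m^u_p$-null set, and you may choose $v$ among those. Alternatively, run your argument for two non-collinear $v,v'$ and, for each $n$, use whichever transported vector has the larger $\|A^n(x)\,\cdot\,\|$. That value is at least $\sin(\theta_0(x)/2)\|A^n(x)\|$, where $\theta_0(x)$ is the angle between the two transported vectors.

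The paper avoids $s(x)$ altogether. On each unit disk $D_i$ covering $f^n(D)$ it transports the top singular vector of $A^n$ at a reference point. \Cref{claim.approxholvec} then gives $\|A^{n+k}(x)\|\ge L_1^{-1}\|A^n(x)\|\,\|A^k(f^nx)\chi_{f^nx,f^nx^i}(v^n_i)\|$ with a uniform constant. It then bounds $\int(\|A^n\|/\|A^{n+k}\|)^\delta\,dm^u_p$ and applies Markov and Borel--Cantelli, which is where the factor $e^{\beta n}$ comes from.

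\textbf{A smaller bookkeeping point.} You should not simply discard the tiles meeting $\partial f^n(\Wuloc{p})$. Their preimages have measure $O(e^{-cn})$ with no decay in $k$, and $\sum_n\sum_{k\ge\beta'n}e^{-cn}=\infty$, so your Borel--Cantelli over pairs $(n,k)$ fails as stated. Apply the large deviation bound on those tiles as well. This is legitimate because the transported field is defined on the whole leaf, and the total weight of those tiles is bounded by the volume of a slightly larger disk. This is how the paper treats the covering disks $D_i$ not contained in $f^n(D)$.
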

We will prove this lemma in the next subsection. \Cref{lem.subtemperedness} implies that

\begin{equation}
\label{eq.dunsin}
\displaystyle |\sin\alpha_n(x) d^u_n(x)|\leq \frac{2\|A\|_\infty}{\pi m_\infty(A)} \sum_{k\geq 0}\conorm{A^{n+k}(x)}\frac{\norm{A^{n}(x)}}{\norm{A^{n+k}(x)}} \leq \frac{2\|A\|_\infty}{\pi m_\infty(A)} \sum_{k \geq 0} m(A^{n+k}(x))Q_1(x)e^{-(1-\beta)C_1 k+ \beta n}
\end{equation}

Observe that
    \[
    \frac{|\det A^{n+k}(x)|}{m(A^{n+k}(x))} = \frac{|\det A^{n+k}(x)|}{\|A^{n+k}(x)\cdot s_{n+k}(x)\|} = \|A^{n+k}(x)\| \geq \|A^{n+k}(x)\cdot \chi_{x,p}(v)\|.
    \]
    Therefore,
    \begin{equation}\label{eq.ansn}
    \|A^{n+k}(x)\cdot s_{n+k}(x)\| \leq  \frac{|\det A^{n+k}(x)|}{\|A^{n+k}(x)\cdot \chi_{x,p}(v)\|}.
    \end{equation}
By \Cref{lem:markovwithdet}, for $n$ sufficiently large
\[
m(A^{n+k}(x)) \leq  e^{-(C_1-C_2)(n+k)}.
\]
Hence,
\[
|\sin\alpha_n(x) d^u_n(x)| \leq Q_1(x) \left(\sum_{k\geq 0} e^{-((C_1-C_2)+ (1-\beta)C_1 )k}\right) e^{-(C_1-C_2 -\beta)n}.
 \]
 Set $Q_2 = \sum_{k\geq 0} e^{-((C_1-C_2)+ (1-\beta)C_1 )k} $. For $n$ sufficiently large, we conclude that
 \[
\|A^n(x) \cdot s(p) \| \leq   e^{-(C_1-C_2) n} + Q_1(x)Q_2  e^{-(C_1-C_2-\beta)n}, 
\]
which decays exponentially fast to $0$ at a rate at least $C_1 - C_2-\beta$, where we take $\beta < C_2 - C_1$. So $E^{-c}_x$ is a genuine stable direction. We have showed that for any $C_1$ and $C_2$  as in the statement of \Cref{lem:markovwithdet}, for $ m^u_p$-almost every $x$ the stable direction $E^{-c}_x$ contracts at a rate bounded from above by $C_1-C_2-\beta$. As this happens in a full measure set, one can take a sequence $C_1^{m}$ converging to $\CEA$ from below, $C_2^m$ converging to $J$ from above, and $\beta_m>0$ converging to $0$. Considering the countable intersection of the full measure sets corresponding to $(C_1^m,C_2^m, \beta_m)$, we obtain a full measure set of points whose contraction rate is at least $-\Gap$. 

Let us check that for $m^u_p$-almost every $x$, $D_xf E^{-c}(x) = E^{-c}(f(x))$. Indeed, for $m^u_{f(p)}$-almost every $f(x)$, the direction generated by \[\frac{D_xf \chi_{x,p}(v)}{\|D_xf \chi_{x,p}(v)\|}\] expands exponentially fast. If $D_xf E^{-c}(x)$ does not coincide with $E^{-c}(f(x))$, then it would have a component in $\frac{D_xf \chi_{x,p}(v)}{\|D_xf \chi_{x,p}(v)\|}$ and it would expand in the future, which is a contradiction with the fact that it contracts. This argument also shows that for $m^u_p$-almost every $x$ if $v$ is a vector not contained in $E^{-c}(x)$, then it expanding in the future. This concludes the proof of \Cref{thm:existsweakbundle} assuming Lemmas \ref{cor:minimaexp}, \ref{lem:markovwithdet} and \ref{lem.subtemperedness}.

    \subsection{Proof of Lemmas \ref{cor:minimaexp}, \ref{lem:markovwithdet} and \ref{lem.subtemperedness}}

\begin{proof}[Proof of \Cref{cor:minimaexp}]
    Fix $C_1 \in (0, \CEA)$ and write $\varphi(x,[v]) = \log \|D_xf|_{[v]}\|$. Observe that $\varphi$ is a H\"older continuous function on $\hM$. Observe that \Cref{mainthm:TheoremD} implies that 
    \[
    \CEA = \inf \left\{\int \varphi \der \hmu : \hmu \textrm{ is a projective $u$-Gibbs measure}\right\},
    \]
    since the quantity above, under our assumptions of expanding on average and $\Gap>0$, coincides with the smallest positive center exponent among all $u$-Gibbs measures. 
    Fix $v\in E^c(p)$ a unit vector, by \Cref{prop.largedeviationsprojective}, the event
    \[
    \displaystyle E_n:= \left\{x\in \Wuloc{p}: \frac{1}{n} \sum_{i=0}^{n-1}\varphi(\hf^i(x,[\chi_{x,p}(v)]))\leq  C_1\right\} 
    \]
    verifies
    \[
    m^u_p\left(E_n\right) \leq L e^{-\gamma n}.
    \]
    By the Borel-Cantelli Lemma,
    \[
    m^u_p\left( \bigcap_{j \geq 1} \bigcup_{n \geq j} E_n \right) = 0.
    \]
    Therefore, for $m^u_p$-almost every $x$, there exists $n(x,v)$ such that for any $n \geq n(x,v)$,
    \[
    \displaystyle \frac{1}{n} \sum_{i=0}^{n-1} \varphi(\hf^i(x,[\chi_{x,p}(v)])) > C_1,
    \]
    which implies that
    \[
    \|D_xf^n \cdot \chi_{x,p}(v)\| \geq e^{C_1 n}. \qedhere
    \]

\end{proof}

\begin{proof}[Proof of \Cref{lem:markovwithdet}]
Let $C_1$, $C_2$, $p\in M$ and $v\in E^c(p)$ be as in the statement of \Cref{lem:markovwithdet}. By \Cref{cor:minimaexp}, for $m^u_p$-almost every $x$, there exists $n_1 = n_1(x,v)$ such that 
\[
\|A^n(x) \cdot \chi_{x,p}(v)\| \geq e^{C_1 n},
\]
for $n\geq n_1$. 

\begin{claim}
 For $C_2> J,$   there exists a uniform $n_2 = n_2(C_2)$ such that for any $q\in M$, if $n\geq n_2$ then
\[
|\det A^n(q)|\leq e^{C_2 n}.
\]
\end{claim}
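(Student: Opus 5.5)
This is a uniform upper bound for the subadditive (in fact additive) sequence $\log|\det A^n(q)|$. The plan is to use its additivity, so that a pointwise limsup bound becomes uniform. Write $\phi(x)=\log|\det D^c_xf|$, which is continuous on $M$. By the chain rule
\[
\log|\det A^n(q)|=\sum_{j=0}^{n-1}\phi(f^j(q)),
\]
so $J=\sup_q\limsup_n \frac1n S_n\phi(q)$, where $S_n\phi$ denotes this Birkhoff sum.

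First step: identify $J$ with a supremum over invariant measures. Exactly as in the Claim inside the proof of \Cref{THM:robustnessofea}, we have
\[
J=\sup\left\{\int\phi\,\der\nu:\nu\in\pr_f(M)\right\}.
\]
Any empirical-measure limit along a subsequence realizing the limsup at a point $q$ is invariant, which gives $\le$. Conversely, Birkhoff's theorem applied at a $\nu$-typical point of an ergodic $\nu$, together with the ergodic decomposition, gives $\ge$.

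Second step: argue by contradiction. Suppose that for some $C_2>J$ there are $n_k\to\infty$ and points $q_k$ with $\frac1{n_k}S_{n_k}\phi(q_k)>C_2$. Consider the empirical measures
\[
\nu_k=\frac1{n_k}\sum_{j=0}^{n_k-1}\delta_{f^j(q_k)}.
\]
Then $\int\phi\,\der\nu_k>C_2$. Pass to a weak* convergent subsequence $\nu_k\to\nu$. Since $\|f_*\nu_k-\nu_k\|\le 2/n_k$, the limit $\nu$ is $f$-invariant. Since $\phi$ is continuous, $\int\phi\,\der\nu\ge C_2>J$, which contradicts the first step. Hence there is $n_2$ such that $\frac1nS_n\phi(q)\le C_2$ for all $n\ge n_2$ and all $q\in M$, which is the claim.

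The only mild obstacle is checking continuity of $\phi$, which holds because $E^c$ is continuous and $f$ is $C^1$. One also has to confirm that the uniformity over $q$ is genuinely obtained. Here the compactness argument handles varying base points, so no further work should be needed. An alternative route is Fekete-type subadditivity of $a_n=\sup_q S_n\phi(q)$, giving $\lim_n a_n/n=\inf_n a_n/n$, combined with the identification of that limit with $J$ via the same measure argument.
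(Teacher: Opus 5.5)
Your proof is correct and follows the paper's route: the paper notes that every invariant measure satisfies $\int\log|\det A|\,\der\mu\le J$, then concludes that these Birkhoff averages of a continuous function are eventually uniformly below any $C_2>J$. Your empirical-measure compactness argument is exactly the justification of that last step, which the paper leaves implicit.
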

\begin{proof}
    Observe that \eqref{eq.J} implies that for any invariant measure $\mu$,
    \[
    \displaystyle \int \log |\det A(q)| \der \mu(q) \leq J. 
    \]
    Since the quantity $\frac{1}{n} \log |\det A^n(q)| = \frac{1}{n} \sum_{j=0}^{n-1} \log |\det A(f^j(q))|$ is a Birkhoff average of a continuous function, for any $C_2>J$, we conclude that there exists $n_2 = n_2(C_2)$ such that for any $n\geq n_2$ and any $q\in M$, 
    \[
    \frac{1}{n} \log |\det A^n(q)| < C_2,
    \]
    which proves the claim. \qedhere
\end{proof}
For $n\geq \max\{n_1,n_2\}$, we have that
\[
\displaystyle \frac{|\det A^n(x)|}{\|A^n(x) \cdot \chi_{x,p}(v)\|} \leq e^{-(C_1 - C_2) n}.\qedhere
\]
    
\end{proof}

Next we start the preparation to prove \Cref{lem.subtemperedness}. We will need the following covering lemma.

\begin{lemma}\label{lemma.covering}
    Let $f\in \PH[2]$ be a partially hyperbolic diffeomorphism. There exists a constant $B$ with the following property: for any $D$ unstable disk and any $n\in \mathbb{N}$, there is a cover of $f^n(D)$, $\{D_i\}$ such that
    \begin{enumerate}
        \item each $D_i$ is an unstable disk of size $1$, and
        \item each $D_i$ intersects at most $B$  disks of this cover. 
    \end{enumerate}
\end{lemma}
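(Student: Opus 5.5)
The plan is a standard Vitali/Besicovitch-type covering argument carried out inside the leaf $\Wu{f^n(p)}$, using only the fact that unstable leaves are uniformly $C^2$-embedded submanifolds of dimension $d=\dim E^u$ with uniformly bounded geometry at scale $1$. Concretely, there are constants $0<c_0\le C_0$, independent of the leaf and of the point, such that every intrinsic ball $\Wu{y,s}$ with $s\le 2$ satisfies $c_0 s^d\le \Leb^u(\Wu{y,s})\le C_0 s^d$. This follows from uniform continuity of $E^u$, the uniform $C^2$ bounds on the local unstable plaques given by the Stable Manifold Theorem, and compactness of $M$. No dynamics beyond this is needed, since $f^n(D)$ is just some bounded subset of a single unstable leaf.

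First I would fix $n$ and $D$, set $\Sigma=f^n(D)\subset\Wu{f^n(p)}$, and choose a maximal $\tfrac12$-separated set $\{y_i\}\subset\Sigma$ for the intrinsic leaf distance $d^u$. Because $\Sigma$ is relatively compact in the leaf, this set is finite. By maximality, every point of $\Sigma$ lies within $d^u$-distance $\tfrac12$ of some $y_i$. The candidate cover is $D_i:=\Wu{y_i,1}$; these are unstable disks of size $1$, and they cover $\Sigma$, so item (1) holds.

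For item (2), suppose $D_j\cap D_i\neq\emptyset$. Then $d^u(y_i,y_j)<2$, so $y_j\in\Wu{y_i,2}$. The balls $\Wu{y_j,\tfrac14}$ are pairwise disjoint by separation, and all of them lie in $\Wu{y_i,\tfrac94}$. Comparing volumes gives
\[
\#\{j: D_j\cap D_i\neq\emptyset\}\cdot c_0 4^{-d}\le C_0(9/4)^d ,
\]
so we may take $B=C_0 9^d/c_0$, which is independent of $D$, $n$ and $i$.

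The only real obstacle is justifying the uniform volume comparison for intrinsic balls of radius up to $9/4$. If that range exceeds the size of the uniform local plaques, I would first shrink all scales by a fixed factor. Alternatively, I would cover a radius-$9/4$ ball by a bounded number of local plaques, using the uniform bound on the injectivity radius and curvature of the leaves, which holds because the leaves are $C^2$ with uniformly bounded second fundamental form. The statement also allows disks of a different fixed size by rescaling, so this is harmless.
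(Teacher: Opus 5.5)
Your proposal is correct and is essentially the paper's argument: a maximal separated set in $f^n(D)$ for the intrinsic leaf distance, radius-$1$ unstable disks around its points as the cover, and a disjoint-small-balls volume count based on uniform two-sided bounds for $\Leb^u$ of intrinsic balls of fixed radius. The paper takes a maximal $1$-separated set, so the radius-$1$ disks cover directly and the disjoint balls $\Wu{x_j,\tfrac12}$ lie in $\Wu{x_i,\tfrac52}$, whereas you use $\tfrac12$-separation with radii $\tfrac14$ and $\tfrac94$. This only changes the value of $B$, and the volume bounds at these fixed radii hold directly, so your hedge about rescaling is unnecessary.
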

\begin{proof}
    Let $D$ be an unstable disk and $n\in \Nat$. For $f^n(D)$, let $\{x_1, \cdots , x_m\}$ be a maximal $1$-separated set in $f^n(D)$,where we are using the topology generated by the Riemannian metric restricted to  $f^n(D)$. Observe that, by the maximality of $\{x_1, \cdots, x_m\}$, we have
    \[
    f^n(D) \subset \displaystyle \bigcup_{i=1}^m \Wu{x_i,1}.
    \]
    Moreover, if $i\neq j$ then $\Wu{x_i,\frac{1}{2}} \cap \Wu{x_j,\frac{1}{2}} = \emptyset$ and if $\Wu{x_i,1} \cap \Wu{x_j,1} \neq \emptyset$, then, by the triangular inequality, 
    \[
    \Wu{x_j,\frac{1}{2}} \subset \Wu{x_i,\frac{5}{2}}, 
    \]
    since if $z\in \Wu{x_j,\frac{1}{2}}$, then $d^u(z,x_i) \leq d^u(z,x_j) + d^u(x_j,x_i) < \frac{1}{2} + 2.$
    By the uniform control on the geometry of unstable disks of size $\frac{1}{2}$ and of size $\frac{5}{2}$, we have that there are constants $0<a_1<a_2$ such that
    \[
    a_1 \leq \Leb^u(\Wu{x,\frac{1}{2}}) \textrm{ and } \Leb^u(\Wu{y,\frac{5}{2}} )\leq a_2,
    \]
    for any $x,y \in M$. For each $i$, let $B(i)$ be the number of $j$ such that $\Wu{x_i,1} \cap \Wu{x_j,1} \neq \emptyset$. Then,
    \[
\displaystyle     B(i) a_1 \leq \sum_{j: \Wu{x_j,1} \cap \Wu{x_i,1} \neq \emptyset} \Leb^u(\Wu{x_j,\frac{1}{2}}) \leq \Leb^u(\Wu{x_i,\frac{5}{2}} \leq a_2. 
    \]
    Therefore, by setting $B = \frac{a_2}{a_1} + 1$, we get that $B(i) \leq B$. Set $D_i = \Wu{x_i,1}$. Observe that this bound is independent of $D$ and $n$. 
    \end{proof}

    \begin{proof}[Proof of \Cref{lem.subtemperedness}]
       For the function $\varphi(x,[v]) = \log \|D_x f|_{[v]}\|$ and $\varepsilon = \CEA - C_1$, let $\gamma$ be given by \Cref{prop.largedeviationsprojective}. Fix $\beta>0$ small  and fix $\delta>0$ small enough such that
        \begin{equation}\label{eq.choiceofdelta}
        \delta \left((1-\beta) C_1 - \log m_\infty(A) \right) < \gamma.
        \end{equation}
        
        Let $k,n\in \mathbb{N}$, take $p\in M$, write $D= \Wuloc{p}$ and let $m^u_p$ be the normalized volume measure on $D$. Our goal is to estimate
        \[
        \displaystyle \int_D \frac{\|A^n(x)\|^\delta}{\|A^{n+k}(x)\|^\delta}\der m^u_p(x).
        \]
        First, let us fix $\{D_i\}_{i\in \mathcal{I}_n}$ a cover of $f^n(D)$ obtained from \Cref{lemma.covering}. For $i\in \mathcal{I}_n$, write $m^u_i(\cdot) = m^u_{p}(f^{-n}(D_i) \cap \cdot)$. Observe that $m^u_i$ is not a probability measure. Therefore,
        \[
        m^u_{p} \leq B \sum_{i\in \mathcal{I}_n} m^u_i.
        \]
        Since $x\mapsto \frac{\|A^n(x)\|^\delta}{\|A^{n+k}(x)\|^\delta}$ is a nonnegative function, we have that
        \[
        \displaystyle \int_D \frac{\|A^n(x)\|^\delta}{\|A^{n+k}(x)\|^\delta}\der m^u_p(x)\leq B \sum_{i\in \mathcal{I}_n} \int_{f^{-n}(D_i)} \frac{\|A^n(x)\|^\delta}{\|A^{n+k}(x)\|^\delta}\der m^u_i(x) 
        \]
        For each $i\in \mathcal{I}_n$, let us estimate
        \[
        \int_{f^{-n}(D_i)} \frac{\|A^n(x)\|^\delta}{\|A^{n+k}(x)\|^\delta}\der m^u_i(x).
        \]

        \begin{claim}
            \label{claim.approxholvec}
            There exists a constant $L_1 \geq 1$ such that for any two points $x,y$ in the same unstable leaf and $n\in \Nat$ such that $d^u(f^j(x),f^j(y)) < 2$ for $0\leq j \leq n$, let $v_x\in E^c(x)$ be a unit vector that $\|A^n(x)v_x\| = \|A^n(x)\|$, then
            \[
            1\leq \frac{\|A^n(y)\|}{\|A^n(y)\cdot \chi_{y,x}(v_x)\|} \leq L_1.
            \]
        \end{claim}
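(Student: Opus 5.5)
The plan is to reduce Claim \ref{claim.approxholvec} to a uniform statement about the holonomy-transported vector field. The inequality $1\le \|A^n(y)\|/\|A^n(y)\chi_{y,x}(v_x)\|$ is trivial, since $\chi_{y,x}(v_x)$ is a unit vector. For the upper bound, we write $w_y=\chi_{y,x}(v_x)$ and use the equivariance of the linear holonomy from \Cref{def:linearholonomy}, namely
\[
A^n(y)\circ \hu_{y,x}=\hu_{f^n(y),f^n(x)}\circ A^n(x).
\]
This yields
\[
A^n(y)w_y=\frac{\hu_{f^n(y),f^n(x)}A^n(x)v_x}{\|\hu_{y,x}v_x\|}.
\]
Because $d^u(f^j x,f^j y)<2$ for $0\le j\le n$, in particular for $j=0$ and $j=n$, continuity of $\hu$ on the compact set $\{(a,b): b\in \Wu{a,2}\}$ supplies a constant $H\ge1$ with $\|\hu_{b,a}\|,\|\hu_{b,a}^{-1}\|\le H$. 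It follows that
\[
\|A^n(y)w_y\|\ge H^{-2}\|A^n(x)\|,
\]
since $v_x$ realizes the norm of $A^n(x)$.

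The remaining step is to compare $\|A^n(x)\|$ with $\|A^n(y)\|$. Using the same equivariance,
\[
A^n(y)=\hu_{f^n(y),f^n(x)}\,A^n(x)\,\hu_{x,y},
\]
so $\|A^n(y)\|\le H^2\|A^n(x)\|$. Combining the two estimates gives the upper bound with $L_1=H^4$, which is independent of $n$, $x$ and $y$.

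The only point requiring care is the uniform bound $H$. We will obtain it from \Cref{thm:saghin}: the holonomies $\hu_{q,p}$ depend continuously on $(p,q)$ with $q\sim_u p$, and $\hu_{p,p}=\mathrm{Id}$. On the compact set of pairs at unstable distance at most $2$, the norms of $\hu$ and of its inverse $\hu_{p,q}$ are therefore bounded. If compactness of this set of pairs is an issue, we will use the cocycle property to write a holonomy over distance $2$ as a composition of boundedly many holonomies over short distances, each close to the identity.

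No expansion or contraction estimate along the orbit is needed. The hypothesis on intermediate times is used only at the endpoints $j=0$ and $j=n$.
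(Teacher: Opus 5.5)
Your proof is correct and is essentially the paper's argument. You use the equivariance $A^n(y)\circ \hu_{y,x}=\hu_{f^n(y),f^n(x)}\circ A^n(x)$ at times $0$ and $n$, together with a uniform bound on $\|\hu\|$ and $\|\hu^{-1}\|$ for pairs at unstable distance at most $2$, to get $L_1=H^4$, matching the paper's $L^4$ (the paper bounds $\|A^n(y)\|$ through a norm-realizing vector $v_y$ rather than your operator factorization, which is the same estimate).
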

        \begin{proof}
            Since the unstable distance is bounded by $2$, then, there exists a constant $L \geq 1$ such that $L^{-1} \leq \|h^u_{p,q}\| \leq L$, for any $p,q$ in the same unstable and with unstable distance at most $2$. Hence,
            \[
            \begin{array}{lll}
                L^{-1}\|A^n(x)\| \leq \|h^u_{f^n(y),f^n(x)}(A^n(x)  \cdot v_x)\| &= & \|h^u_{y,x}(v_x)\| \cdot \|A^n(y) \cdot \chi_{y,x}(v_x)\| \leq L \|A^n(y)\| \\
                 & \leq & L \|A^n(y) \cdot v_y \| \leq L^2 \|h^u_{f^n(x),f^n(y)}(A^n(y) \cdot v_y)\|\\
                 & = & L^2\|A^n(x) \cdot h^u_{x,y} (v_y)\| \leq L^3 \|A^n(x)\|,
            \end{array}
            \]
            where $v_y$ is a unit vector in $E^c(y)$ that realizes norm of $A^n(y)$. The above implies that 
            \[
            1\leq \frac{\|A^n(y)\|}{\|A^n(y) \cdot \chi_{y,x}(v_x)\|}.
            \]
            Moreover, it implies that 
            \[
            L^{-2} \leq \frac{\|A^n(y)\|}{\|A^n(x)\|} \leq L^2,
            \]
            and that
            \[
            L^{-2} \|A^n(x)\| \leq \|A^n(y) \cdot \chi_{y,x}(v_x)\|.
            \]
            Therefore,
            \[
            L^{-4}\|A^n(y)\| \leq \|A^n(y) \cdot \chi_{y,x}(v_x)\|.
            \]
            Take $L_1 = L^4$ and this concludes the proof of the claim. 
        \end{proof}

        For each $i\in \mathcal{I}_n$, let $x^i$ be the point in $f^{-n}(D_i)$ such that $f^n(x^i)$ is the center of the disk $D_i$. Observe that, for any $x\in f^{-n}(D_i)$, by \Cref{claim.approxholvec},
        \[
        \|A^{n+k}(x)\| \geq L_1^{-1} \|A^k(f^n(x)) \cdot \chi^u_{f^n(x),f^n(x^i)}(v_i^n)\| \|A^n(x)\|,
        \]
        where $v_i^n = \frac{A^n(x^i) \cdot v_i}{\|A^n(x^i) \cdot v_i\|},$ and $v_i$ is a unit vector such that $\|A^n(x^i)\| = \|A^n(x^i) \cdot v_i\|.$  Write $J^u_{-n}(y) = \det D_yf^{-n}|_{E^u}$. By the change of variables formula,
        \[
        \begin{array}{l}
        \displaystyle \int_{f^{-n}(D_i)} \frac{\|A^n(x)\|^\delta}{\|A^{n+k}(x)\|^\delta}\der m^u_i(x)\leq  L_1^2\int_{f^{-n}(D_i)}\frac{\|A^n(x)\|^\delta}{\|A^k(f^n(x))\cdot \chi^u_{f^n(x),f^n(x^i)}(v_i^n)\|^\delta \|A^n(x)\|^\delta} \der m^u_i(x)\\
        \displaystyle = L_1^2 \int_{D_i}\|A^k(y) \cdot \chi^u_{y,f^n(x^i)}(v_i^n)\|^{-\delta} |J^u_{-n}(y)| \der \Leb^u(y).
        \end{array}
        \]
        By bounded distortion, there exists a constant $L_2 \geq 1$ such that for any $x,y \in D_i$,
        \[
        L_2^{-1} \leq \frac{|J^u_{-n}(y)|}{|J^u_{-n}(x)|} \leq L_2.
        \]
        
        Hence,
        \[
        \begin{array}{l}
      \displaystyle  L_1^2 \int_{D_i}\|A^k(y) \cdot \chi^u_{y,f^n(x^i)}(v_i^n)\|^{-\delta}|J^u_{-n}(y)| \der \Leb^u(y) \leq\\ 
      \displaystyle L_1^2 L_2 \left(\min_{x\in D_i} |J^u_{-n}(x)|\right) \int_{D_i}\|A^k(y) \cdot \chi^u_{y,f^n(x^i)}(v_i^n)\|^{-\delta} \der \Leb^u(y).
        \end{array}
        \]
        Since $D_i$ is a disk of size $1$ inside an unstable foliation, we can find a universal constant $L_3 \geq 1$ such that for any $q\in M$, $L_3^{-1} \leq \Leb^u(\Wuloc{q}) \leq L_3$. In particular,
        \[
        L_3^{-1} \Leb^u(D_i \cap \cdot) \leq m^u_{f^n(x^i)}(\cdot) \leq L_3 \Leb^u(D_i \cap \cdot),
        \]
        where $m^u_{f^n(x^i)}$ is the normalized Lebesgue measure on $D_i$.
        
        Thus,
        \[
        \begin{array}{l}
        \displaystyle L_1^2 L_2 \left(\min_{x\in D_i} |J^u_{-n}(x)|\right) \int_{D_i}\|A^k(y) \cdot \chi^u_{y,f^n(x^i)}(v_i^n)\|^{-\delta}  \der \Leb^u(y) \leq \\
        \displaystyle 
        L_1^2 L_2 L_3 \left(\min_{x\in D_i} |J^u_{-n}(x)|\right) \int_{D_i}\|A^k(y) \cdot \chi^u_{y,f^n(x^i)}(v_i^n)\|^{-\delta}  \der m^u_{f^n(x_i)}(y).
        \end{array}
        \]

        \begin{claim}
            For any $x\in M$, 
            \[
            \displaystyle \int_{\Wuloc{(y)}} \|A^k(y)\|^{-\delta} \der m^u_x(y) \leq e^{-C_1 \delta k} + L e^{-( \gamma + \delta \log m_\infty(A)) k},
            \]
            where $\gamma$ and $L$ are given by \Cref{prop.largedeviationsprojective}.
        \end{claim}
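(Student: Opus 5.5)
Fix an arbitrary unit vector $w\in E^c(x)$ and consider the projective disk $\hWuloc{(x,[w])}$ with its normalized lifted measure. I will reduce the integral to one over a vector field and then split $\Wuloc{x}$ into a good set and a bad set according to the large deviation estimate.

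First, the pointwise reduction. Since $\|A^k(y)\|\geq \|A^k(y)\cdot\chi_{y,x}(w)\|$ for every $y\in\Wuloc{x}$, it suffices to bound
\[
\int \|A^k(y)\cdot \chi_{y,x}(w)\|^{-\delta}\der m^u_x(y).
\]
Note that $\log\|A^k(y)\cdot\chi_{y,x}(w)\| = S_k\varphi(\hat y)$, where $\hat y=(y,[\chi_{y,x}(w)])$ lies on the projective unstable disk through $(x,[w])$ and $\varphi(x,[v])=\log\|D_xf|_{[v]}\|$ as before.

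Next, define the bad set
\[
E_k=\{y: \tfrac1k S_k\varphi(\hat y)\leq C_1\}.
\]
Under expanding on average with $\Gap>0$, \Cref{mainthm:TheoremD} and \Cref{cor:onlygibbs} give $I_\varphi\subset[\CEA,\infty)$. Since $\varepsilon=\CEA-C_1$, every point of $E_k$ is at distance at least $\varepsilon$ from $I_\varphi$. Hence \Cref{prop.largedeviationsprojective}, applied with $R=1$ and uniformly in the base point $(x,[w])$, yields
\[
m^u_x(E_k)\leq Le^{-\gamma k}.
\]
(If the boundary case $=C_1$ matters, replace $\varepsilon$ by a slightly smaller value; this only changes constants.)

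Then split the integral. On the complement of $E_k$ the integrand is at most $e^{-C_1\delta k}$, so that part contributes at most $e^{-C_1\delta k}$. On $E_k$, use the crude bound $\|A^k(y)v\|\geq m_\infty(A)^k$, so the integrand is at most $e^{-\delta k\log m_\infty(A)}$. This part therefore contributes at most $Le^{-(\gamma+\delta\log m_\infty(A))k}$. Adding the two contributions gives the claim.

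The main obstacle is justifying that the large deviation constants $L,\gamma$ are uniform over all base points $(x,[w])$, and that the normalized measure on $\Wuloc{x}$ matches the measure $\hm^u_{\hWu{\hp,1}}$ used in \Cref{prop.largedeviationsprojective}. Both follow from the statement of that proposition, which is uniform in $\hp$, together with the identification of $\hm^u$ with the lifted $\Leb^u$; at worst a bounded-distortion constant is absorbed into $L$.
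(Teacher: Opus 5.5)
Your proposal is correct and is essentially the paper's own argument: fix a unit $v\in E^c(x)$, use $\|A^k(y)\|\geq\|A^k(y)\chi_{y,x}(v)\|$, bound the measure of the set where $\|A^k(y)\chi_{y,x}(v)\|\leq e^{C_1k}$ by \Cref{prop.largedeviationsprojective} with $\varepsilon=\CEA-C_1$, and split the integral, using the bound $m_\infty(A)^{-\delta k}$ on that set. You also spell out the step the paper leaves implicit, namely $I_\varphi\subset[\CEA,\infty)$ via \Cref{mainthm:TheoremD} and \Cref{cor:onlygibbs}; your boundary-case caveat is unnecessary, since the proposition already covers distance $\geq\varepsilon$.
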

        \begin{proof}
            Fix $v\in E^c(x)$ a unit vector and write 
            \[
            H_k:= \{y\in \Wuloc{x}: \|A^k(y)\cdot \chi_{y,x}(v)\|>e^{C_1 k}\}.
            \]
            By \Cref{prop.largedeviationsprojective},
            \[
            m^u_x \left( H_k^c\right) \leq L e^{-\gamma k}.
            \]
            Since 
            \[
            \|A^k(y)\| \geq \|A^k(y) \cdot \chi_{y,x}(v)\|,
            \]
            where $v$ is any unit vector in $E^c(x)$, we obtain
          \[
          \begin{aligned}
          \int_{\Wuloc{(y)}} \|A^k(y)\|^{-\delta} \der m^u_x(y)
          &=
          \int_{H_k} \|A^k(y)\|^{-\delta} \der m^u_x(y)
          +
          \int_{H_k^c} \|A^k(y)\|^{-\delta} \der m^u_x(y)
          \\
          &\leq
          e^{-C_1\delta k}
          +
          L e^{-(\gamma + \delta\log m_\infty(A))k}.
          \end{aligned}
        \qedhere
        \]
        \end{proof}
   
    We conclude that
    \[
    \displaystyle \int_{f^{-n}(D_i)} \frac{\|A^n(x)\|^\delta}{\|A^{n+k}(x)\|^\delta}\der m^u_i(x) \leq L_1^2 L_2 L_3 \left(\min_{x\in D_i} |J^u_{-n}(x)|\right)  \left(e^{-C_1\delta k }+ L e^{-(\gamma + \delta \log m_\infty(A))k}\right).
    \]

    We have that
    \[
    \begin{array}{l}
\displaystyle          B \sum_{i\in \mathcal{I}_n} \int_{f^{-n}(D_i)} \frac{\|A^n(x)\|^\delta}{\|A^{n+k}(x)\|^\delta}\der m^u_i(x) \leq BL_1^2 L_2 L_3\left(e^{-C_1\delta k }+ L e^{-(\gamma + \delta \log m_\infty(A))k}\right)\sum_{i\in \mathcal{I}_n}\left(\min_{x\in D_i} |J^u_{-n}(x)|\right)\\
\displaystyle \leq  BL_1^2 L_2^2 L_3^2 \left(e^{-C_1\delta k }+ L e^{-(\gamma + \delta \log m_\infty(A))k}\right)\sum_{i\in \mathcal{I}_n} m^u_i(D_i).
    \end{array}
    \]
    We devide the disks $D_i$ into $\mathcal{G}_n:=\{i\in \mathcal{I}_n: D_i \subset f^n(D)\}$ and $\mathcal{B}_n:=\mathcal{I}_n / \mathcal{G}_n$. For $i\in \mathcal{B}_n$, $f^{-n}(D_i)$ is contained in a neighborhood $V_n$ of the boundary of $D$ which is exponentially small. In particular, $\Leb^u(V_n)$ decays exponentially fast with $n$. Hence,
\[
\displaystyle \sum_{i\in \mathcal{I}_n} m^u_i(f^{-n}(D_i)) = \sum_{i\in \mathcal{G}_n} m^u_i(f^{-n}(D_i)) + \sum_{i\in \mathcal{B}_n} m^u_i(f^{-n}(D_i)) \leq B \left(m^u_p(D) + \Leb^u(V_n)\right) \leq 2B,
\]
for $n$ sufficiently large. Write $K = 2B^2(L_1 L_2 L_3)^2$, then
\[
\displaystyle \int_D \left(\frac{\|A^n(x)\|}{\|A^{n+k}(x)\|}\right)^\delta \der m^u_p(x) \leq K\left(e^{-C_1\delta k }+ L e^{-(\gamma + \delta \log m_\infty(A))k}\right). 
\]
Applying Markov inequality,
\[
m^u_p\left(x:\left(\frac{\|A^n(x)\|}{\|A^{n+k}(x)\|}\right)^\delta >e^{-(1-\beta)C_1 \delta k+ \beta\delta n} \right) \leq Ke^{-\beta \delta n}\left(e^{-\beta C_1 \delta k} + L e^{(-\gamma + \delta((1-\beta)C_1 - \log m_\infty(A)))k}\right). 
\]
By \eqref{eq.choiceofdelta}, the right hand side decays exponentially fast with $n$ and $k$. The conclusion of the result follows from a simple application of the Borel-Cantelli Lemma. 
    \end{proof}

    \subsection{Accumulations of the weak stable direction}

Let us end this section by studying possible accumulation measures of the weak stable direction. Our main goal is to prove the following subsection.

\begin{proposition}
\label{prop.accumulationprojectiveus}
Let \(f\in \PHb[2]\) satisfy the expanding on average property and suppose that
\(\Gap>0\). Then, for every \(p\in M\), there exists $\Lambda$ a subset of $\Wuloc{p}$ of full leaf volume such that the following property hold for every
\(q\in\Lambda\):  Define the  sequence of projective  measures
    \[
    \hmu^-_n(q)
    :=
    \frac{1}{n}
    \sum_{j=0}^{n-1}
    \delta_{\left(f^j(q),
    \left[A^j(q)\bigl(E^{-c}(q)\bigr)\right]\right)}.
    \]
    Let $\hmu$ be an accumulation point of
    $\{\hmu^-_n(q)\}_{n\geq 1}$ and let $\mu = \hpi_*\hmu$, then
    \[
    \hmu
    =
    \hmu^-.
    \]
    In other words, $\hmu$ is concentrated on the 
    \(E^{-c}\)-directions.

\end{proposition}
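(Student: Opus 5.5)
The plan is to identify $\hmu$ through the structure of lifts given by \Cref{pro:Ledrappierproj}. I will force the weight on the $E^{+c}$ directions to vanish using a continuous observable that separates contracting from expanding directions in the center.

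\emph{Step 1 (the projection is $u$-Gibbs).} Fix $p$, and fix a unit vector $v\in E^c(p)$ (one choice suffices). Let $\Lambda$ be the intersection of three full $m^u_p$-measure sets: the set $G(p,v)$ of \Cref{cor.accugibbspt}, the set of points where \Cref{thm:existsweakbundle} holds, and the set where \Cref{lem.subtemperedness} holds. Countably many parameters $\beta_m\to0$ suffice here.

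Let $q\in\Lambda$ and let $\hmu=\lim_k \hmu^-_{n_k}(q)$. Then $\hmu$ is $\hf$-invariant. Its projection $\mu=\hpi_*\hmu$ is also the limit of $\hpi_*$ of the empirical measures of $(q,[\chi_{q,p}(v)])$. By \Cref{cor.accugibbspt}, those empirical measures accumulate on projective $u$-Gibbs measures, and projective $u$-Gibbs measures project to $u$-Gibbs measures. Hence $\mu\in\Gibbs$. By \Cref{cor:Gibbshyperbolic}, $\mu$-almost every point has $\lambda^{-c}\le-\Gap<0<\CEA\le\lambda^{+c}$. \Cref{pro:Ledrappierproj} then gives an $f$-invariant $\rho:M\to[0,1]$ with
\[
\hmu=\int \rho(x)\,\delta_{E^{-c}(x)}+(1-\rho(x))\,\delta_{E^{+c}(x)}\,\der\mu(x).
\]
It remains to prove that $\rho=1$ $\mu$-almost everywhere.

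\emph{Step 2 (a separating observable).} For $N\ge1$ consider the continuous function on $\hM$
\[
\Phi_N(x,[w])=\frac1N\log\norm{D^c_xf^N|_{[w]}}-\frac1N\log\norm{D^c_xf^N}\le 0.
\]
Along $\mu$-typical points, Oseledets gives $\Phi_N(x,E^{+c}(x))\to0$ in $L^1(\mu)$ as $N\to\infty$. It also gives $\Phi_N(x,E^{-c}(x))\to\lambda^{-c}-\lambda^{+c}\le-(\CEA+\Gap)$ in $L^1(\mu)$.

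Fix $c=(\CEA+\Gap)/2$ and a continuous cutoff $h:\R\to[0,1]$ with $h=1$ on $(-\infty,-c]$ and $h=0$ on $[-c/2,\infty)$. By the above,
\[
\int h\circ\Phi_N\,\der\hmu \longrightarrow \int\rho\,\der\mu \quad\text{as } N\to\infty,
\]
because on the $E^{+c}$-part $h\circ\Phi_N\to0$ in measure and on the $E^{-c}$-part $h\circ\Phi_N\to1$ in measure. So it suffices to show that for each large $N$, $\int h\circ\Phi_N\,\der\hmu\ge1-o_N(1)$. Equivalently, along the orbit of $q$, the set of times $j$ with
\[
\Phi_N\bigl(f^j(q),[A^j(q)E^{-c}(q)]\bigr)\le -c
\]
has lower density at least $1-o_N(1)$.

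\emph{Step 3 (density-one good times; main obstacle).} This step is where the real work lies. For the numerator of $\Phi_N$, I would use the cocycle property:
\[
\norm{A^N(f^jq)|_{A^jE^{-c}}}=\frac{\norm{A^{j+N}(q)|_{E^{-c}}}}{\norm{A^j(q)|_{E^{-c}}}}.
\]
From the exponential decay of the contraction estimate in \Cref{thm:existsweakbundle}, I would extract a tempered bound. For every $\beta>0$ and all $j$ large, this ratio should be at most $e^{-N(\Gap-\beta)+\beta j}$. The $\beta j$ loss is the delicate part. To absorb it, I would rerun the Markov/Borel--Cantelli argument of \Cref{lem.subtemperedness}, now for the $E^{-c}$ direction, obtaining integral bounds uniform in $j$ and $N$. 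This should yield an estimate outside a set of times of small density rather than for all large $j$.

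For the denominator of $\Phi_N$, \Cref{lem.subtemperedness} gives $\norm{A^{j+N}(q)}\,\norm{A^j(q)}^{-1}\gtrsim e^{(1-\beta)C_1N-\beta j}$. By submultiplicativity, $\norm{A^N(f^jq)}\ge \norm{A^{j+N}(q)}\,\norm{A^j(q)}^{-1}$. A frequency version of this bound, obtained from the same Markov inequality, shows that $\frac1N\log\norm{A^N(f^jq)}\ge C_1-2\beta$ for a density-$(1-o_N(1))$ set of $j$. Combining the two bounds gives $\Phi_N\le-(C_1+\Gap-4\beta)\le-c$ on those times.

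If the frequency version resists, a fallback is to average: $\int\Phi_N\,\der\hmu\le-(\CEA+\Gap)+o(1)$ follows by telescoping together with grouping $j$ into residue classes mod $N$. I would then try to combine this with the determinant identity $\lambda^{-c}+\lambda^{+c}=\int\log|\det D^cf|\,\der\mu\le\CEA-\Gap$ to rule out $\rho<1$ on a set of positive measure.
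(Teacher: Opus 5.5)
Your Steps 1 and 2 are sound, and Step 1 is exactly how the paper begins. Step 3, however, has a genuine gap, and neither of your routes closes it.

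In the primary route, the bound $\norm{A^{j+N}(q)|_{E^{-c}}}/\norm{A^{j}(q)|_{E^{-c}}}\le e^{-N(\Gap-\beta)+\beta j}$ does not follow from \Cref{thm:existsweakbundle}. That theorem only gives the upper estimate $\limsup_n\frac1n\log\norm{A^n(q)|_{E^{-c}(q)}}\le-\Gap$. Bounding the ratio needs a \emph{lower} bound on $\norm{A^j(q)|_{E^{-c}(q)}}$. The true contraction along $E^{-c}$ can be much stronger than $\Gap$, and it can be non-uniform in time, since the orbit of $q$ need not be generic for any fixed measure.

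Nor can the Markov/Borel--Cantelli argument of \Cref{lem.subtemperedness} be rerun for $E^{-c}$. Its only input is \Cref{prop.largedeviationsprojective}, which concerns holonomy-transported fields $\chi_{x,p}(v)$ on projective unstable disks. It enters through the one-sided inequality $\norm{A^k(y)}\ge\norm{A^k(y)\chi_{y,x}(v)}$. There is no analogous upper bound for $\norm{A^N|_{E^{-c}}}$, and $E^{-c}$ is not a holonomy-transported field: by \Cref{mainthm:TheoremD} it is not $h^u$-invariant. The density-of-good-times statement you want for the numerator is essentially a restatement of the proposition itself.

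Your fallback is correct as far as it goes, but it only yields $\int\rho\,(\lambda^{+c}-\lambda^{-c})\der\mu\ge\CEA+\Gap$. This does not force $\rho=1$, because $\lambda^{+c}-\lambda^{-c}$ may be far larger than $\CEA+\Gap$. The determinant bound controls the sum $\lambda^{+c}+\lambda^{-c}$, not the difference.

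The missing idea is to compare against the exponents of the limit measure $\mu$ itself rather than the uniform constants. You do this by running the companion vector $\chi_{q,p}(v)$ from the same point $q$ along the same subsequence $n_k$.

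Since $q\in G(p,v)$, the empirical measures of $(q,[\chi_{q,p}(v)])$ along $n_k$ accumulate on a projective $u$-Gibbs measure over $\mu$. By \Cref{cor:onlygibbs} this is $\hmu^+$. Hence $\frac1{n_k}\log\norm{A^{n_k}(q)\chi_{q,p}(v)}\to\int\lambda^{+c}\der\mu$ and $\frac1{n_k}\log|\det A^{n_k}(q)|\to\int(\lambda^{+c}+\lambda^{-c})\der\mu$.

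The construction of $E^{-c}$ already provides the needed estimate, through \eqref{eq.decomposing}, \eqref{eq.dunsin}, \eqref{eq.ansn} (that is, \Cref{lem:markovwithdet} and \Cref{lem.subtemperedness}):
\[
\norm{A^n(q)|_{E^{-c}(q)}}\le C_q\,\frac{|\det A^n(q)|}{\norm{A^n(q)\chi_{q,p}(v)}}\,e^{2\beta n}\quad\text{for every }\beta>0.
\]
Equivalently, the angle between $A^nE^{-c}(q)$ and $A^n\chi_{q,p}(v)$ decays at most subexponentially.

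Take $\frac1{n_k}\log$ and pass to the limit with $\varphi(x,[w])=\log\norm{A(x)|_{[w]}}$. This gives $\int\varphi\der\hmu\le\int\lambda^{-c}\der\mu+2\beta$. Your decomposition gives $\int\varphi\der\hmu=\int\bigl(\rho\lambda^{-c}+(1-\rho)\lambda^{+c}\bigr)\der\mu$. Thus $\int(1-\rho)(\lambda^{+c}-\lambda^{-c})\der\mu\le2\beta$ for all $\beta>0$, so $\rho=1$ $\mu$-almost everywhere.

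This single averaged inequality replaces the whole density-of-good-times analysis. Neither the separating observable $\Phi_N$ nor any frequency estimate is needed.
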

\begin{proof}
Fix a unit vector $v\in E^c(p)$ and, observe that, \Cref{cor.accugibbspt} implies the existence of a set $\Lambda_1 = \Lambda_1(p,v)$ such that for any $q\in \Lambda_1$, any accumulation point of the sequence of projective measures
\[
\hmu_n(q,v) = \displaystyle \frac{1}{n} \sum_{j=0}^{n-1} \delta_{(f^j(q), [A^j(q)\chi_{q,p}(v)]}
\]
is a projective $u$-Gibbs measure.  Let $\Lambda_2 \subset \Wuloc{p}$ be the set of points we obtained in \Cref{thm:existsweakbundle}. Set $\Lambda = \Lambda_1 \cap \Lambda_2$. Observe that $\Lambda$ has full leaf volume. Fix $q\in \Lambda$ and, up to considering a subsequence, let us suppose that $\hmu_n^-(q)$ converges to a projective measure $\hmu$. We have that $\hpi_* \hmu$ is a $u$-Gibbs measure $\mu$.  Up to considering another subsequence, we may suppose that $\hmu_n(q,v)$ converges to the projective $u$-Gibbs measure $\hmu^+$. 

Suppose that $\hmu$ is not supported on stable directions, then, there exists a function $x \mapsto \alpha(x) \in [0,1]$ such that
\[
\hmu = \displaystyle \int \alpha(x) \delta_{E^{+c}(x)} + (1-\alpha(x)) \delta_{E^{-c}(x)} \der \mu(x),
\]
where $\alpha(x)$ is strictly positive in a set of positive $\mu$-measure. By \eqref{eq.decomposing}, \eqref{eq.dunsin} and \eqref{eq.ansn}, for any $\beta>0$, we have that 
\[
\|A^n(q)|_{E^{-c}(q)}\| \leq \mathrm{Constant}(q) \cdot \frac{|\det A^n(q)|}{\|A^n(q) \cdot \chi_{q,p}(v)\|} e^{2\beta n}. 
\]
This inequality implies that
\[
\displaystyle \frac{1}{n} \sum_{j=0}^{n-1} \varphi\left(\hf^j(q,[E^{-c}(q)])\right)\leq \frac{1}{n}\left( \sum_{j=0}^{n-1}\log|\det A(f^j(q))| - \varphi\left( \hf^j(q), [\chi_{q,p}(v)]\right) \right) + 2\beta +\frac{1}{n} \log \mathrm{Constant(q)},
\]
where $\varphi(q,[v]) = \log |A(q)|_{[v]}\|$. Since the functions $\varphi$ and $x\mapsto \log|\det A(x)|$ are continuous, indeed H\"older continuous, by taking the limit in $n$, we obtain
\[
\displaystyle \int \varphi \der \hmu \leq \int \log |\det A|\der \mu - \int \varphi \der \hmu^+ +2\beta.
\]
Therefore,
\[
\displaystyle 2\beta + \int \lambda^{+c}(x) + \lambda^{-c}(x) \der \mu(x) \geq \int \left(\alpha(x) \lambda^{+c}(x) + (1-\alpha(x)) \lambda^{-c}(x) \right) \der \mu(x) + \int \lambda^{+c}(x) \der \mu(x).
\]
Hence,
\[
\displaystyle 2\beta + \int \alpha(x) \lambda^{-c}(x) \der \mu(x) \geq \int \alpha(x) \lambda^{+c}(x) \der \mu(x).
\]
By taking $\beta \to 0$, we obtain that 
\[
\displaystyle  \int \alpha(x) \lambda^{-c}(x) \der \mu(x) \geq \int \alpha(x) \lambda^{+c}(x) \der \mu(x).
\]
Since $\lambda^{+c}(x) > 0 > \lambda^{-c}(x)$ for $\mu$-almost every $x$, the inequality above is only possible if $\alpha(x) = 0$ for $\mu$-almost every $x$. Therefore, $\hmu = \hmu^-$. \qedhere

\end{proof}

\begin{lemma}    \label{lem.projconvgotherdirections}

Let $f\in \PHb[2]$ be expanding on average with $\Gap>0$.  There exists a set $G$ of full volume such that for any  $x\in G$ has a well define weak stable direction $E^{-c}(x)$, and the following properties hold:
    \begin{enumerate}
    \item for any $v\notin E^{-c}(x)$, any accumulation point of the sequence of projective measures
    \[
    \displaystyle \frac{1}{n} \sum_{j=0}^{n-1} \delta_{\hf^j(x,[v])}
    \]
    is a projective $u$-Gibbs measure;
    \item any accumulation point $\hmu$ of the sequence
    \[
    \displaystyle \frac{1}{n}\sum_{j=0}^{n-1} \delta_{\hf(x,[E^{-c}(x)]}
    \]
    verifies that $\hmu = \hmu^-,$ where $\hmu^-$ is the lift of the measure $\mu = \hpi_* \hmu$ to the stable Oseledets direction. 
    \end{enumerate}
\end{lemma}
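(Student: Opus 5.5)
The natural route is to derive both items from the leafwise statements already proved, namely \Cref{cor.accugibbspt}, \Cref{thm:existsweakbundle} and \Cref{prop.accumulationprojectiveus}, and then pass from ``full leaf volume on every unstable leaf'' to ``full volume in $M$'' by absolute continuity of $\Fu$. Concretely, I would cover $M$ by finitely many regular boxes $U$. In each box the volume disintegrates along the unstable plaques $D^u_p$, $p\in D^{cs}_{p_0}$, with conditionals equivalent to $\Leb^u_p$ (absolute continuity of the unstable foliation for $C^2$ partially hyperbolic systems). By Fubini, it then suffices to produce, for \emph{every} $p\in M$, a subset of $\Wuloc{p}$ of full $\Leb^u$-measure on which the two properties hold. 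Before applying Fubini I would check that the resulting set $G$ is measurable, or at least contains a measurable full-measure subset. Here the construction uses only countably many Borel conditions involving continuous functions of the point and of holonomies, so this should be routine.

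For item 2 on a fixed leaf, I would take $\Lambda(p)$ from \Cref{prop.accumulationprojectiveus}. It is contained in the set $\Lambda_2$ of \Cref{thm:existsweakbundle}, so $E^{-c}(x)$ is well defined on it, and every accumulation point of $\frac1n\sum_j\delta_{\hf^j(x,[E^{-c}(x)])}$ equals $\hmu^-$. Nothing more is needed here.

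For item 1 the difficulty is that \Cref{cor.accugibbspt} is stated for the holonomy-transported vector field $\chi_{x,p}(v)$ with $v$ fixed, whereas we need \emph{every} direction $[w]\neq E^{-c}(x)$ at $x$. I would proceed as follows.

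First, fix two linearly independent unit vectors $v_1,v_2\in E^c(p)$. Intersect $\Lambda(p)$ with $G(p,v_1)\cap G(p,v_2)$ from \Cref{cor.accugibbspt}, and with the full-measure sets of \Cref{lem:expansioncontraction2} and \Cref{cor:minimaexp}, taken along a countable sequence $C_1\uparrow \CEA$.

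Second, at such an $x$, at least one of $\chi_{x,p}(v_i)$ is not in $E^{-c}(x)$; call it $u(x)$. Any $w\notin E^{-c}(x)$ can be written as $w=a\,u(x)+b\,s(x)$ with $a\neq 0$, where $s(x)$ spans $E^{-c}(x)$. By \Cref{thm:existsweakbundle}, $\norm{A^n(x)s(x)}$ decays exponentially, while $\norm{A^n(x)u(x)}\ge e^{C_1 n}$. Hence the projective distance between $[A^n(x)w]$ and $[A^n(x)u(x)]$ is at most a constant times $\norm{A^n s}/\norm{A^n u}$, which tends to $0$ exponentially. Since $\hf$-orbits along the fibre are thus asymptotic, and uniform continuity of test functions on compact $\hM$ turns asymptotic orbits into equal Birkhoff limits, the empirical measures of $(x,[w])$ and of $(x,[u(x)])$ have the same accumulation points. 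By \Cref{cor.accugibbspt} these are projective $u$-Gibbs measures.

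I expect the main obstacle to be this second step, namely justifying that $\chi_{x,p}(v_i)$ lies off $E^{-c}(x)$ for some $i$ and that it grows. This follows because $\chi_{x,p}$ is a linear isomorphism, so $\chi_{x,p}(v_1)$ and $\chi_{x,p}(v_2)$ are independent and cannot both lie on the line $E^{-c}(x)$. Growth at rate $C_1$ then comes from \Cref{cor:minimaexp} applied to that $v_i$.

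A subsidiary point is the base of the Fubini step. The leafwise sets depend on the chosen base point $p$, but holonomy transport within a single leaf only changes $v$ by $\hu$. Since the leafwise statements hold for every base point and every $v$, the countable choices above can be made on each plaque of $U$ separately, and the union over plaques gives the full-volume set $G$.
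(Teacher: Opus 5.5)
Your proposal is correct and follows the paper's proof. Both arguments get leafwise full-measure sets from \Cref{cor.accugibbspt} and \Cref{prop.accumulationprojectiveus}, pass to full volume through absolute continuity of $\Fu$ in finitely many foliation boxes, and reduce an arbitrary $w\notin E^{-c}(x)$ to a holonomy-transported reference vector, using that $E^{-c}(x)$ contracts exponentially while the reference vector grows. Two small differences: the paper takes as reference a single continuous unit field $V_i$ on a fixed transversal $D^{cs}_i$, which is exactly what makes the Fubini step measurable, so you should choose your base point and vectors through such a continuous transversal section rather than plaque by plaque; and your second vector $v_2$ is redundant, since \Cref{cor:minimaexp} together with the contraction of $E^{-c}(x)$ already gives $\chi_{x,p}(v_1)\notin E^{-c}(x)$ for almost every $x$.
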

    \begin{proof}
    Fix $\{U_i\}$ a finite cover of $M$ by foliated charts for $\Fu$. On each $U_i$, fix $D^{cs}_i$ a disk transverse to $\Fu$ and fix a continuous unit vector field $V_i:E^{cs}_i \to E^c$, where for each $q\in D^{cs}_i$, the vector $V_i(q)$ belongs to $E^c(q)$. For each $i$, applying \Cref{cor.accugibbspt} and \Cref{prop.accumulationprojectiveus} we obtain a set $G_i\subset U_i$ of full $\Leb$-measure such that for any $x\in G_i$, any accumulation point of the sequence
    \begin{equation}\label{eq.accvi}
    \displaystyle \frac{1}{n} \sum_{j=0}^{n-1} \delta_{\hf^j(x, [\chi_{x,q}(V_i(q))])},
    \end{equation}
    where $q$ is the unique point in $\Wuloc{x}$  that intersects $D^{cs}_i$. Moreover, since $E^{-c}(x)$ exists and contracts exponentially fast, for any vector $v\notin E^{-c}(x)$, the accumulation points of the sequence
    \[
    \displaystyle \frac{1}{n} \sum_{j=0}^{n-1} \delta_{\hf(x,[v])}
    \]
    coincides with the accumulation points of the sequence \eqref{eq.accvi}. This completes the proof of the first part.  The second part is a direct application of \Cref{prop.accumulationprojectiveus}. \qedhere

\end{proof}

\section{Uniform Pesin blocks and finiteness of SRB measures}\label{Sec:uniformpesinblock}

The main goal of this section is to prove the following result.
\begin{theorem}\label{THM:finiteness}
Suppose that $f\in \PHb[2]$ has the expanding on average property and $\Gap>0$. Then $f$ has at most finitely many SRB measures. 
\end{theorem}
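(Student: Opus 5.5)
The plan is to follow the scheme announced in the introduction: build a \emph{uniform Pesin block}, i.e.\ a compact set $\Lambda$ of uniform hyperbolicity in the center whose measure is uniformly bounded below for every ergodic $u$-Gibbs measure, and then use a Hopf argument with homoclinic intersections of Pesin manifolds to show that SRB measures charging a common small region of $\Lambda$ must coincide.

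\textbf{Step 1 (uniform exponents).} By \Cref{cor:Gibbshyperbolic}, every ergodic $\mu\in\Gibbs$, in particular every SRB measure, satisfies $\lambda^{+c}(\mu)\geq\CEA$ and $\lambda^{-c}(\mu)\leq-\Gap$. \Cref{mainthm:TheoremB} (applied with $f_n=f$) makes $\mu\mapsto\lambda^{\pm c}(\mu)$ continuous on the compact convex set $\Gibbs$.

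\textbf{Step 2 (uniform Pesin block).} Fix $0<\chi<\min\{\CEA,\Gap\}/2$ and constants $N,\epsilon$. Let $\Lambda_{N}$ be the set of $x$ at which the Oseledets splitting $E^{-c}\oplus E^{+c}$ exists, such that
\[
\|D_xf^{-n}|_{E^{+c}}\|\leq N e^{-\chi n},\qquad \|D_xf^{n}|_{E^{-c}}\|\leq Ne^{-\chi n}\quad(n\geq0),
\]
with angle $\angle(E^{-c},E^{+c})\geq 1/N$. I want to show that $\inf_{\mu}\mu(\Lambda_N)\geq 1/2$ over ergodic $\mu\in\Gibbs$ for $N$ large. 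By Step 1, one takes finite-time averages: by continuity of exponents and compactness of the lifts $\hmu^{\pm}$ (using \Cref{cor:onlygibbs} and \Cref{lem:closednessGibbs}), there is a uniform $k$ with $\int\frac1k\log\|D^cf^k|_{E^{+c}}\|^{-1}\,d\mu$ and $\int\frac1k\log\|D^cf^k|_{E^{-c}}\|\,d\mu$ both at most $-2\chi$, for all $\mu$. Then the Pliss lemma, or a maximal-ergodic-theorem estimate applied to these $L^\infty$-bounded cocycle increments, gives a uniform lower bound on the density of hyperbolic times, hence on $\mu(\Lambda_N)$. For the backward estimate along $E^{+c}$ one uses the $f^{-1}$-version, valid since $\mu$ is invariant. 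The angle bound is the delicate part. I would control it through $|\det D^c f^n|/(\|D^cf^n|_{E^{+c}}\|\,\|D^cf^n|_{E^{-c}}\|)=\sin\angle$ ratios together with the uniform determinant bound $J$ used in \Cref{lem:markovwithdet}. This gives a uniform integrability of $\log\sin\angle$ via continuity of exponents, which is the key place where \Cref{mainthm:TheoremB} is needed rather than mere hyperbolicity.

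\textbf{Step 3 (Pesin manifolds and Hopf argument).} On $\Lambda_N$ the Pesin local stable manifolds, tangent to $E^s\oplus E^{-c}$, and the Pesin unstable manifolds, tangent to $E^{+c}\oplus E^u$, have size bounded below by some $r(N)>0$ and vary continuously. Cover $M$ by finitely many balls $B_1,\dots,B_m$ of radius $\ll r(N)$. Each ergodic SRB measure $\mu$ gives $\Lambda_N$ mass at least $1/2$, so some $B_i$ carries a $\mu$-density point of $\Lambda_N\cap\mathcal B(\mu)$. If two distinct SRB measures $\mu,\nu$ share the same $B_i$, then by dimension count the Pesin unstable disk of a $\mu$-typical point crosses transversally the Pesin stable lamination of $\nu$-typical points in $B_i$. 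Absolute continuity of Pesin stable laminations, together with the absolutely continuous conditionals of SRB measures on Pesin unstable manifolds, implies that a positive-measure set of points on $\mu$'s unstable disk lies in $\mathcal B(\nu)$, while $\mu$-a.e.\ such point lies in $\mathcal B(\mu)$. Hence $\mu=\nu$, and there are at most $m$ ergodic SRB measures.

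The main obstacle is Step 2, namely obtaining a \emph{uniform} lower bound on the measure of the block, in particular on the angle term, across all ergodic $u$-Gibbs measures simultaneously. My first attempt will be to encode the whole block condition through continuous functions on $\hM$ evaluated on the compact family $\{\hmu^{\pm}\}$, and then argue by contradiction with a sequence of $u$-Gibbs measures $\mu_n$ with $\mu_n(\Lambda_N)\to0$, passing to limits through \Cref{mainthm:TheoremB}.
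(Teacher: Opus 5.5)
Your overall architecture is the same as the paper's: a Pesin block of uniformly positive measure for all $u$-Gibbs measures, then homoclinic relation plus a Hopf argument with a pigeonhole over finitely many small balls. Step~3 is fine once you have a genuine Pesin block. The gap is in Step~2, which is where all the work lies.

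\textbf{The "uniform $k$" carries no information.} Since $E^{\pm c}$ are one-dimensional and invariant, $\frac1k\int\log\|D^cf^k|_{E^{\pm c}}\|\,d\mu=\lambda^{\pm c}(\mu)$ for every $k$. So this step adds nothing beyond Step~1 and never uses \Cref{mainthm:TheoremB}. With only this mean information, the Pliss lemma (\Cref{lem.pliss}) gives lower bounds of order $(\Gap-\chi)/(\log H-\chi)$ and $(\CEA-\chi)/(\log H-\chi)$, with $H$ as in \eqref{eq.H}. These bound the forward-good set for $E^{-c}$ and the backward-good set for $E^{+c}$ respectively. Neither bound need be close to $1$, so the two sets need not intersect.

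\textbf{What is actually needed is concentration.} You need a uniform $N$ with $\mu(K_N)>1-\delta$ for all $\mu\in\Gibbs$, where $K_N$ is the finite-time set on which $f^N$ contracts $E^{-c}$ and $f^{-N}$ contracts $E^{+c}$ at rate $\chi$. This is exactly where continuity enters:
\begin{itemize}
\item continuity of $\lambda^{\pm c}$ on $\Gibbs$ gives weak* continuity of $\mu\mapsto\hmu^{\pm}$ (Lemma~3.10 of \cite{BCSSPR}; here it also follows from \Cref{cor:onlygibbs} and \Cref{lem:closednessGibbs});
\item $K_N$ lifts into \emph{open} subsets of the projective bundle, on which $\hmu^\pm$ is lower semicontinuous;
\item compactness of $\Gibbs$ then gives the uniformity.
\end{itemize}
After that, Pliss applied to $T=f^N$ and $\mathds{1}_{K_N}$ gives $\mu(P_N(\chi/4))>1-\rho$. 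Finally, \Cref{prop.hyptimestopesinblock} converts this into a tempered two-sided block $\Delta_{C,\chi/4,\varepsilon}$ as in \Cref{Pesin.block}.

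\textbf{Your fallback contradiction argument cannot work directly.} You propose taking $\mu_n(\Lambda_N)\to 0$ and passing to limits. But $\Lambda_N$ is defined by infinitely many closed conditions, and weak* convergence only gives $\limsup\hmu_n(F)\le\hmu(F)$ for closed $F$, which is the wrong direction. You must go through the finite-time open condition and then the Pliss step.

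\textbf{The angle argument is unjustified and unnecessary.} Your coboundary identity only shows that $\log\sin\angle$ is a transfer function for a zero-mean cocycle. That says nothing about its integrability, let alone uniform integrability. In a tempered two-sided block the uniform constant $C$ already forces a lower bound on the angle.

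\textbf{$\Lambda_N$ does not give uniform-size Pesin manifolds.} Your $\Lambda_N$ has only one-sided estimates at $x$ and no temperedness along the orbit; for example, forward growth of $E^{+c}$ from $x$ is uncontrolled, so forward domination may fail for an unbounded time. It therefore does not yield the local Pesin manifolds of uniform size that you assert in Step~3. The $P_N\to\Delta$ step above is what supplies that.
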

This result is a consequence of the continuity of the center Lyapunov exponents proved in \Cref{mainthm:TheoremB}, as we will explain. Let us recall a few things from Pesin theory. 

\begin{definition}\label{Pesin.block} Let $f$ be a $C^r$ diffeomorphism. Given $C>0$, $\chi>0$ and $\varepsilon>0$, we denote $\Delta_{C,\chi,\varepsilon}$ the Pesin block defined by the following property: for every $y\in \Delta_{C,\chi,\varepsilon}$ and $k\in \mathbb{Z}$, there is a decomposition $T_{f^k(y)}M=E^-(f^k(y))\oplus E^+(f^k(y))$ such that for any $n\geq 0$, $$\max \left( \|Df^n\vert_{E^-(f^k(y)}\|, \|Df^{-n}\vert_{E^+(f^{k}(y))} \| \right)\leq C\, e^{-\chi n + \varepsilon |k|}.$$
\end{definition}

Since, in our setting, we are studying partially hyperbolic diffeomorphisms with two dimensional center, we only need to consider the definition above restricted to the center direction.  The key proposition here is the following:

\begin{proposition}\label{prop.uniform.pesin}
   Let $f\in \PHb$ be expanding on average property with $\Gap >0$.  Then, for every $\delta>0$, there exist constants $C, \chi, \varepsilon>0$ such that 
   \[
   \mu(\Delta_{C,\chi, \varepsilon})>1-\delta,
   \]
   for every $u$-Gibbs measure $\mu$. 
\end{proposition}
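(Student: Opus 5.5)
Fix $\chi$ with $0<\chi<\min\{\CEA,\Gap\}$; by \Cref{cor:Gibbshyperbolic} every ergodic $u$-Gibbs measure has $\lambda^{+c}\geq\CEA>\chi$ and $\lambda^{-c}\leq-\Gap<-\chi$. I would argue by contradiction using compactness of $\Gibbs$ (and of $\Gibbs[\hf]$) together with the continuity of exponents from \Cref{mainthm:TheoremB}. The plan is to make the Pesin block membership a consequence of a finite-time, uniform statement: choose $N$ large and $\varepsilon>0$ small, and show that for every $u$-Gibbs $\mu$ the ``bad'' set of points $y$ at which the finite-time averages $\frac1N\log\|A^N(f^k y)|_{E^{+c}}\|$ or $\frac1N\log\|A^{-N}(f^{k}y)|_{E^{-c}}\|$ fail to be $\geq\chi+2\varepsilon$ (along the Oseledets directions) has uniformly small measure. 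The reason uniformity is available is that the relevant observables live on $\hM$: by \Cref{cor:onlygibbs} the only projective $u$-Gibbs lift of $\mu$ is $\hmu^+$, and the weak* continuity of $\hmu\mapsto\int\psi_N\,d\hmu$ with $\psi_N(x,[v])=\frac1N\log\|D_xf^N|_{[v]}\|$ on the compact set $\Gibbs[\hf]$ gives, for $N$ large, $\int\psi_N\,d\hmu^+\geq\CEA-\varepsilon$ uniformly in $\mu$; together with the continuity of $\int\log|\det A|\,d\mu$ this controls the $E^{-c}$ side via $\lambda^{-c}=\int\log|\det A|d\mu-\lambda^{+c}$.

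Concretely, the steps in order: (1) for each $\mu\in\Gibbs$ define $g_\mu^+(y)=\psi_N(y,E^{+c}(y))$ and the analogous $g^-_\mu$ for $f^{-N}$ on $E^{-c}$; show $\inf_{\mu}\int g^\pm_\mu\,d\mu\ge \chi+3\varepsilon$ using the projective compactness above (for $g^-$ use $\hmu^-$ and the determinant identity). (2) Upgrade the integral bound to a measure bound with a Pliss-type / maximal-inequality argument: since $g^\pm_\mu$ is uniformly bounded by $\log\|Df\|_\infty$-type constants, the Birkhoff sums along $f^N$ of $g^\pm_\mu$ are good on average, and a Pliss lemma yields, for $\mu$-most $y$, infinitely many hyperbolic times; the standard tempered-constant construction then produces $C(y)$ with $\|Df^n|_{E^-(f^ky)}\|,\|Df^{-n}|_{E^+(f^ky)}\|\le C(y)e^{-\chi n+\varepsilon|k|}$. (3) Show the distribution of $C(y)$ is tight uniformly in $\mu$, i.e. $\mu(C(y)>C)<\delta$ for one $C$ independent of $\mu$; this gives $\mu(\Delta_{C,\chi,\varepsilon})>1-\delta$.

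Step (3) is the main obstacle, since Oseledets gives $C(y)$ only measurably and nonuniformly. I would attack it by contradiction: if $\mu_n\in\Gibbs$ with $\mu_n(\Delta_{n,\chi,\varepsilon})\le1-\delta$, pass to a limit $\mu\in\Gibbs$ (via \Cref{lem:closednessGibbs} with constant sequence $f$) and the lifts $\hmu_n^\pm\to\hmu^\pm$ by \Cref{cor:onlygibbs} and \Cref{prop.accumulationprojectiveus}-type uniqueness of the limit lift. The key is that the set where the constant $C(y)$ exceeds $C$ can be written in terms of the function $\sup_{n}\big(\chi n-\log\|A^{n}\|_{E^{+}}\|\big)$, whose superlevel sets have measure controlled by a maximal ergodic inequality applied to $\chi+\varepsilon-\psi_N\circ$ lift, with bound depending only on $\int\psi_N d\hmu^\pm$ and $\|\psi_N\|_\infty$, both uniform by step (1). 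Using the maximal inequality (rather than pointwise Oseledets) is what makes the constant independent of $\mu$; the $\varepsilon|k|$ tempering follows by applying the same inequality to $f^{-1}$ and the invariance of $\mu$.
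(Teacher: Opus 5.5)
Your step (3) fails, and it is the only place where your argument tries to produce uniformity. You claim a maximal ergodic inequality bounds the superlevel sets of $\sup_n\bigl(\chi n-\log\|D_yf^n|_{E^{+c}}\|\bigr)$ by a quantity depending only on $\int\psi_N\,d\hmu^{\pm}$ and $\|\psi_N\|_\infty$. No such inequality exists. A stationary sequence with mean $-c<0$ and increments bounded by $K$ can have an arbitrarily heavy-tailed running maximum. For example, on a periodic orbit of period $p$, let the observable equal $K$ on $p/3$ consecutive points and $-K$ on the rest. The mean is $-K/3$, yet a set of measure $1/6$ has running maximum at least $Kp/6$. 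Letting $p\to\infty$ rules out any tail bound depending only on $(c,K)$. The maximal ergodic theorem only gives $\mu\{\sup_nS_n\phi>0\}\le 1-c/K$, which is a fraction, not a small number.

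The same defect affects your step (2): a Pliss lemma fed only a lower bound on the mean gives a set of Pliss times of positive measure, not of measure close to $1$. Step (1) also carries no information beyond \Cref{cor:Gibbshyperbolic}. By invariance, $\int\psi_N\,d\hmu^+=\lambda^{+c}(\mu)$ for every $N$. So all the uniformity has to come from step (3), and the mechanism you propose there cannot supply it.

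What must pass from the limit measure to nearby ones is \emph{concentration} of the finite-time rates, not their mean, and it passes through open sets. The paper does this via \Cref{prop.generaluniformpesin}:
\begin{itemize}
\item For fixed $\nu\in\Gibbs$ and $N$ large, the set $K_N$ where $\|Df^N|_{E^-}\|<e^{-\chi N}$ and $\|Df^{-N}|_{E^+}\|<e^{-\chi N}$ satisfies $\nu(K_N)>1-\delta$. This follows from pointwise Oseledets convergence and the uniform exponent gap.
\item The lifts of $K_N$ lie in open subsets of the Grassmannian (here projective) bundles, on which the same $N$-step inequalities hold with a slightly smaller rate.
\item The map $\nu\mapsto\hat\nu^{\pm}$ is weak* continuous on $\Gibbs$. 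This follows from \Cref{mainthm:TheoremB} and \cite[Lemma 3.10]{BCSSPR}, or from \Cref{cor:onlygibbs} plus the determinant identity, as in your own contradiction setup. Combined with lower semicontinuity of the mass of open sets, this gives the $N$-step estimates on a set of $\nu'$-measure $>1-\delta$ for every $\nu'$ near $\nu$.
\item \Cref{lem.pliss} is then applied to the \emph{indicator} $\mathds{1}_{K_N}$ under $f^N$, with $L=1$ and $\kappa=0$. This gives $P_N(\chi/4)$ measure at least $1-2\delta/(1-\beta)$.
\item \Cref{prop.hyptimestopesinblock} converts this into a Pesin block with $C=C(f,N)$, independent of the measure. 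This is also what handles the $\varepsilon|k|$ tempering, which does not follow just from ``applying the same inequality to $f^{-1}$.''
\item Compactness of $\Gibbs$ finishes the proof.
\end{itemize}
Your skeleton (contradiction, a limit $\mu\in\Gibbs$, and $\hmu_n^{\pm}\to\hmu^{\pm}$) is sound. The maximal-inequality step is what must be replaced by this open-set, Pliss-on-indicator, block-conversion argument.
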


Using \Cref{mainthm:TheoremB}, \Cref{prop.uniform.pesin} is a direct consequence of the following proposition.

\begin{proposition}\label{prop.generaluniformpesin}
Let $f: M \to M$ be a $C^1$-diffeomorphism. Suppose that $\mathcal{K}$ is a set of invariant measures with the following properties:
\begin{enumerate}
\item $\mathcal{K}$ is compact for the weak*-topology;
\item there exists an integer $i\in \{0, \cdots,  d)\}$,  where $d$ is the dimension of $M$, such that for every  $\nu \in \mathcal{K}$ and for $\nu$-almost every $x$, the ergodic component of $\nu$ containing $x$, $\nu_x$,  is a hyperbolic measure with  $i$ positive Lyapunov exponents;
\item the function  $\mathcal{K} \ni \nu \mapsto \lambda_1( \nu) + \cdots + \lambda_i(\nu)$ is continuous, where $\lambda_1(\nu) \geq \lambda_2(\nu) \cdots \geq \lambda_d(\nu)$ are the Lyapunov exponents of $\nu$;
\item there exists $\chi>0$ such that for any $\nu \in \mathcal{K}$, we have that for $\nu$-almost every $p$, $\lambda_1(p), \cdots, \lambda_d(p) \notin [-\chi, \chi]$.
\end{enumerate}
Then, for every $\delta \in (0,1)$ there exists constants $C, \chi, \varepsilon$ such that 
\[
\nu(\Delta_{C,\chi, \varepsilon}) > 1-\delta, \textrm{ for every $\nu \in \mathcal{K}$. }
\]

\end{proposition}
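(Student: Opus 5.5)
The plan is to obtain a \emph{uniform finite-time} estimate on the splitting, valid for all $\nu\in\mathcal K$ at once, and then pass from finite time to a tempered Pesin block by a Pliss/maximal-inequality argument. Write $E^+$, $E^-$ for the Oseledets sums of the $i$ expanding and $d-i$ contracting directions. Set $\Phi_n(x)=\log|\det D_xf^n|_{E^+}|$ and let $\Lambda(\nu)=\lambda_1(\nu)+\dots+\lambda_i(\nu)$. Then $\Lambda(\nu)=\lim_n\frac1n\int\Phi_n\,d\nu$, and this sequence is subadditive in a suitable sense. More useful is the variational description via Grassmannians: $\frac1n\int \log\|\wedge^iD_xf^n\|\,d\nu$ is subadditive, each term is continuous in $\nu$, and the sequence decreases to $\Lambda(\nu)$.

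\textbf{Step 1: Dini.} Hypothesis (3) says $\Lambda$ is continuous on the compact set $\mathcal K$. It is the pointwise limit of the continuous functions $\nu\mapsto\frac1{2^k}\int\log\|\wedge^iDf^{2^k}\|\,d\nu$, which decrease along powers of two by subadditivity. Dini's theorem then gives uniform convergence on $\mathcal K$. Apply the same argument to $f^{-1}$ with the $d-i$ contracting exponents, using $\wedge^{d-i}Df^{-n}$; here continuity of $\sum_{j>i}\lambda_j$ follows from (3), since $\int\log|\det Df|\,d\nu$ is continuous. We also need the angle/gap information. The $i$-th and $(i+1)$-th singular values of $D_xf^N$ separate exponentially on average: $\int\log\frac{\sigma_i}{\sigma_{i+1}}(D_xf^N)\,d\nu\ge N\cdot 2\chi-o(N)$. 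To see this, write $\log\sigma_i=\log\|\wedge^i\|-\log\|\wedge^{i-1}\|$, obtain an upper bound on $\log\|\wedge^{i-1}\|$ from uniform convergence, and a lower bound on $\log\|\wedge^i\|$ from $\ge\Lambda$; the symmetric argument handles the other side. The worry is that uniform convergence is only available for the $i$-th exterior power. I would bypass this by working only with $\wedge^i$ and $\wedge^{d-i}$ of $f^{-1}$. Together with the gap $\lambda_i\ge\chi>-\chi\ge\lambda_{i+1}$ from (4), this yields a uniform $N$ with
\[
\int \Big(\log\|\wedge^iD_xf^{N}\| + \log\|\wedge^{d-i}D_xf^{-N}\|\Big)d\nu \le N\big(\Lambda(\nu)-\textstyle\sum_{j>i}\lambda_j(\nu)\big)+N\eta
\]
for every $\nu\in\mathcal K$.

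\textbf{Step 2: Markov and Pliss.} Define $g_N$, a continuous function, as the defect between $\log\|\wedge^iD f^N\|$ evaluated on the Oseledets direction and its norm. By Step 1, $\int g_N\,d\nu\le N\eta$ uniformly, and $g_N\ge0$. Birkhoff sums of $g_N$ along $f^N$-orbits control the growth of the angle between $E^+$ and $E^-$ and the uniform contraction/expansion. The maximal ergodic inequality gives
\[
\nu\big(\sup_n \tfrac1n\sum_{k<n} g_N\circ f^{kN} > \sqrt\eta\,N\big)\le\sqrt\eta ,
\]
and the same bound holds backward. Off this bad set, the standard Pesin-block construction (as in the proof of Oseledets regularity) produces $C$ depending only on $N$, $\eta$ and $\sup\|Df^{\pm1}\|$, with rates $\chi/2$ and $\varepsilon=O(\sqrt\eta)$. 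Choosing $\eta$ so that $2\sqrt\eta<\delta$ gives the statement, with the new $\chi$ replaced by $\chi/2$.

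\textbf{Main obstacle.} The hardest part is Step 1's translation from the averaged exterior-power estimate to control of the individual singular-value gap and the angles. A priori the norm of $\wedge^i$ could be realized by mixing directions. I would handle this by using (4) pointwise on ergodic components: every component has a gap of at least $2\chi$. A norm defect of size $\eta$ for $\wedge^i$ then forces the image of $E^+$ to be $e^{-c\chi N}$-close to the top singular subspace, up to an exceptional set whose measure is controlled by Markov's inequality.
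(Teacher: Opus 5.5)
Your Step 1 is sound. Dini's theorem does give, uniformly on $\mathcal K$, $\int g_N\,d\nu\le N\eta$ for all $N=2^k$ with $k\ge k_0$, where $g_N=\log\|\wedge^iDf^N\|-\log|\det Df^N|_{E^+}|\ge 0$. (Note that $g_N$ is only measurable, because $E^+$ is; this is harmless for Markov and for the maximal inequality.)

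The gap is in Step 2. The defect $g_N(x)$ only measures how well $E^+(x)$ captures $\|\wedge^iD_xf^N\|$. It carries no information about the sizes of $\sigma_i(D_xf^N)$ and $\sigma_{i+1}(D_xf^N)$: where $D_xf^N$ is nearly conformal, $g_N(x)\approx 0$ for every $i$-plane. So no bound on $g_N$ or on its Birkhoff sums can give $m(D_xf^N|_{E^+})\ge e^{\chi'N}$ or $\|D_xf^N|_{E^-}\|\le e^{-\chi'N}$ on a set of uniformly large measure. Likewise, the ``standard Pesin-block construction'' only yields a measurable tempered $C(x)$ whose distribution is not controlled uniformly in $\nu$, and that uniform control is the whole content of the proposition.

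Neither of your remedies closes this. The $\wedge^{d-i}$ estimate for $f^{-1}$ adds nothing, since $\log\|\wedge^{d-i}(D_xf^N)^{-1}\|=\log\|\wedge^iD_xf^N\|-\log|\det D_xf^N|$. Appealing to the gap $\lambda_i-\lambda_{i+1}\ge 2\chi$ on ergodic components gives a singular-value split of $D_xf^N$ on a large set only for $N\ge N(\nu)$, which is exactly the non-uniformity to be overcome.

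The paper gets uniformity from the Grassmannian lifts. Continuity of $\nu\mapsto\lambda_1(\nu)+\dots+\lambda_i(\nu)$ is equivalent to weak* continuity of $\nu\mapsto\hat\nu^{\pm}$ (Lemma 3.10 of \cite{BCSSPR}, via uniqueness of the invariant lift maximizing $\int\log|\det Df|_E|$). The one-step condition $\|D_xf^{-N}|_E\|<e^{-\chi'N}$ is open in $(x,E)$, so its $\hat\nu^+$-mass is lower semicontinuous in $\nu$. The Pliss lemma (\Cref{lem.pliss}) for $f^N$ and \Cref{prop.hyptimestopesinblock} then turn one-step hyperbolicity on a set of measure close to $1$ into a Pesin block, and compactness of $\mathcal K$ leaves finitely many $N$'s.

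Your route can be repaired in the same spirit without lifts:
\begin{enumerate}
\item For fixed $N$, the set $\{x:\sigma_i(D_xf^N)>e^{\chi'N},\ \sigma_{i+1}(D_xf^N)<e^{-\chi'N}\}$ is open in $x$. If it is $\nu_0$-large (take $N=2^k$, $k\ge k_0$, depending on $\nu_0$), it stays large on a weak* neighborhood of $\nu_0$.
\item On its intersection with $\{g_N\le sN\}$, use $|\det A|_E|^2\le\|\wedge^iA\|^2\bigl(\prod_j\cos^2\theta_j+(\sigma_{i+1}/\sigma_i)^2\bigr)$, where the $\theta_j$ are the principal angles between $E$ and the top $i$ right singular directions of $A$, together with $m(A|_E)\ge\sigma_i\prod_j\cos\theta_j$. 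This gives $m(D_xf^N|_{E^+})\gtrsim e^{(\chi'-s)N}$.
\item The bound for $E^-$ is symmetric, using $f^{-1}$ at $f^Nx$.
\end{enumerate}
Even so, you still need the Pliss step and a tempering result such as \Cref{prop.hyptimestopesinblock}; the maximal inequality for $g_N$ does not replace them.
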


\begin{proof}[Proof of \Cref{prop.uniform.pesin}]
    Set $\mathcal{K} = \Gibbs$, this set is compact for the weak*-topology. The assumptions that $f$ is expanding on average and $\Gap>0$ implies that any $\mu \in \Gibbs$ has $\dim(E^u) + 1$ positive Lyapunov exponents, \Cref{cor:Gibbshyperbolic}. \Cref{mainthm:TheoremB} implies the continuity required in the third item. The last item is also a consequence of the continuity of the sum of exponents. The last item is also a consequence of \Cref{cor:Gibbshyperbolic}, since all of the Lyapunov exponents are not contained in $[-\Gap, \CEA].$
\end{proof}
Observe that by the second assumption in $\mathcal{K}$ for any measure $\nu \in \mathcal{K}$ its positive Lyapunov exponents are given by $\lambda_1(\nu), \cdots, \lambda_{d-i}(\nu)$. 

The proof of \Cref{prop.generaluniformpesin} is essentially contained in the computations done in Sections $2$ and $3$ of \cite{BCSSPR}.  For completeness, we will explain these computations. We will need the following results from \cite{BCSSPR}. 

\begin{lemma}[\cite{BCSSPR} Lemma $2.18$]\label{lem.pliss}
Let $T$ be an invertible measure preserving map on a probability space $(\Omega, \mathcal{B},\upsilon)$. Suppose that $\varphi \in L^{\infty}(\upsilon)$, $L\in \mathbb{R}$, $\kappa \in [0,1]$, and $\upsilon\{x\in \Omega: \varphi(x) >L\} \leq \kappa$. Then, for any $\beta <L$,
\[
\upsilon\{x\in \Omega:  \varphi_j(x) \geq \beta j, \textrm{ for all }j\geq 0\} \geq \frac{\int \varphi d\upsilon - \beta - \kappa \|\varphi - L\|_{\infty}}{L-\beta},
\]
where $\varphi_j(x) = \varphi(x) + \cdots+ \varphi(T^{j-1}(x))$. 
\end{lemma}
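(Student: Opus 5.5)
The plan is to reduce to a function that is bounded above by $L-\beta$, and then prove a finite-horizon Pliss / ``rising sun'' counting estimate along orbit segments, which integrates to the claimed bound. Invertibility of $T$ plays no essential role; only invariance of $\upsilon$ is used.

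\emph{Reduction step.} Put $\psi=\varphi-\beta$ and $A=L-\beta>0$, and truncate: $g=\min(\psi,A)$. Since $g\le\psi$, we have $g_j\le\psi_j=\varphi_j-\beta j$ pointwise, so
\[
\{x:\varphi_j(x)\ge \beta j\ \forall j\}\supseteq G:=\{x: g_j(x)\ge 0\ \forall j\ge 1\}.
\]
Moreover $\psi-g=(\varphi-L)^+$ vanishes outside $\{\varphi>L\}$, a set of measure at most $\kappa$, and is bounded there by $\|\varphi-L\|_\infty$. Hence $\int g\,\der\upsilon\ge \int\varphi\,\der\upsilon-\beta-\kappa\|\varphi-L\|_\infty$. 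It therefore suffices to show $\upsilon(G)\ge \int g\,\der\upsilon/A$ for any bounded $g\le A$ (when the right-hand side is $\le 0$ there is nothing to prove).

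\emph{Combinatorial step.} Fix $x$ and $n$, and call an index $0\le k<n$ \emph{good} if $\sum_{i=k}^{k+j-1}g(T^ix)\ge0$ for every $1\le j\le n-k$. I will show that the non-good indices can be grouped into disjoint blocks of consecutive indices, each with negative $g$-sum. To do this, scan from the left: at a bad index $k$, take the smallest $j$ with negative sum, giving the block $[k,k+j)$, and continue after it. Every index inside such a block is itself bad: for $k<k'<k+j$, the sum over $[k,k')$ is nonnegative by minimality of $j$, so the sum over $[k',k+j)$ is negative. Hence the scan exhausts exactly the bad indices, and
\[
\sum_{k<n}g(T^kx)\le \sum_{k\ \text{good}} g(T^kx)\le A\cdot\#\{\text{good }k\}.
\]

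\emph{Integration step.} Let $G_m=\{y: g_j(y)\ge0,\ 1\le j\le m\}$, which decreases to $G$. If $k\le n-m$ is good for $x$, then $T^kx\in G_m$. Hence
\[
\sum_{k=0}^{n-m}\mathbf 1_{G_m}(T^kx)\ \ge\ \frac{1}{A}\sum_{k<n}g(T^kx)-m.
\]
Integrate and use invariance of $\upsilon$ to get $(n-m+1)\upsilon(G_m)\ge \frac nA\int g\,\der\upsilon-m$. Divide by $n$, let $n\to\infty$, and then let $m\to\infty$.

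I expect the main obstacle to be the combinatorial step, specifically checking that the left-to-right greedy decomposition really covers all bad indices with disjoint negative blocks and that the good indices carry the whole sum. The truncation to $g\le A$ is exactly what allows the good indices to be counted with weight $A$.
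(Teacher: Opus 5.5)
Your proof is correct. Note that the paper does not prove this lemma; it quotes it from Buzzi--Crovisier--Sarig. So there is no internal argument to match, and the comparison below is with the standard route.

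I checked each step. The truncation $g=\min(\varphi-\beta,L-\beta)$ costs at most $\int(\varphi-L)^+\,d\upsilon\le\kappa\|\varphi-L\|_\infty$. The greedy scan partitions the bad indices into consecutive blocks with negative sums, and every index outside a block is a scan point that did not open a block, hence good. The averaging over orbit segments, followed by $\upsilon(G_m)\downarrow\upsilon(G)$, is fine. As you say, only invariance of $\upsilon$ is used, never invertibility.

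What you have done is reprove the maximal ergodic inequality by hand, via Riesz's combinatorial ``rising sun'' argument. The standard short route applies the maximal ergodic theorem directly. Let $E=\{x:\varphi_j(x)<\beta j\text{ for some }j\ge 1\}$, so that $E^c$ is exactly the set in the statement. Applying the theorem to $\beta-\varphi$ gives $\int_E(\varphi-\beta)\,d\upsilon\le 0$, hence
\[
\int\varphi\,d\upsilon-\beta\le\int_{E^c}(\varphi-\beta)\,d\upsilon\le (L-\beta)\,\upsilon(E^c)+\int_{E^c}(\varphi-L)^+\,d\upsilon\le (L-\beta)\,\upsilon(E^c)+\kappa\|\varphi-L\|_\infty .
\]
In this version the truncation appears only in the final inequality instead of up front.

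The trade-off is this. Your argument is self-contained and makes the Pliss-type counting along orbit segments explicit. The maximal-theorem argument takes two lines once that theorem is granted. Neither uses ergodicity or invertibility.
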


We will also use the proposition below.

\begin{proposition}[\cite{BCSSPR} Proposition $2.21$]\label{prop.hyptimestopesinblock}
Let $f$ be a $C^r$ diffeomorphism. Given $N_0 \geq 1$ and $\chi >0$,  $P_{N_0} = P_{N_0}(\chi)$ be defined as 
\[
P_{N_0} := \left\{x: \begin{array}{l} \textrm{ there is a splitting $T_xM = E_x \oplus F_x$ s.t. $\forall j\geq 0$}\\ \|Df^{jN_0}(x)|_{E}\| \leq e^{-\chi j N_0} \textrm{ and } \|Df^{-jN_0}(x)|_F\| \leq e^{-\chi j N_0} \end{array} \right\}.
\]
Write $C = C(f, N_0):= \max_{k\in [1, N_0]} \max\{\|Df^k\|, \|Df^{-k}\|\}.$Then,  for every $\varepsilon >0$, the Pesin block $\Delta_{C,\chi, \varepsilon}$ verifies
\[
\upsilon(M - \Delta_{C,\chi, \varepsilon}) \leq 4 \max \left\{ \frac{c_0}{\varepsilon}, 1 \right\} \upsilon(M - P_{N_0}), \textrm{ for all invariant measures $\upsilon$},
\]
where $c_0 = \chi + \max\{\log\|Df\|, \log \|Df^{-1}\|\}.$
\end{proposition}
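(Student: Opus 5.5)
The plan is a "distance to the nearest good time" argument combined with the maximal ergodic inequality. Write $B=M\setminus P_{N_0}$. Fix $y$ and call $k\in\mathbb{Z}$ a \emph{good time} if $f^k(y)\in P_{N_0}$.

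First, the splitting is invariant. At a point of $P_{N_0}$ the space $E_x$ is exactly the set of vectors contracted at rate $\chi$ along the times $jN_0$, and $F_x$ is the analogous set for the past. Hence $E,F$ are uniquely determined. If $k<k'$ are both good times, $Df^{k'-k}$ carries the splitting at $f^k(y)$ to the one at $f^{k'}(y)$. So along the orbit of any $y$ with at least one good time we obtain a $Df$-invariant splitting $E^-(f^k y)\oplus E^+(f^k y)$, defined by pushing the splitting from a good time.

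Second, estimates at good times. If $k$ is good and $n\ge0$, write $n=jN_0+r$ with $0\le r<N_0$. Then
\[
\|Df^n|_{E^-}\|\le \|Df^{r}\|\,e^{-\chi jN_0}\le C e^{\chi N_0}e^{-\chi n},
\]
and the same bound holds for $Df^{-n}|_{E^+}$. The factor $e^{\chi N_0}$ can be absorbed by slightly enlarging $C$, or by using that $C$ already dominates $\|Df^k\|$ for $k\le N_0$; I will check carefully that the constant matches the statement's $C$. At an arbitrary time $k$, let $k'$ be the nearest good time. Transporting the estimate from $f^{k'}(y)$ to $f^k(y)$ costs a factor at most $e^{c_0|k-k'|}$. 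Here $c_0=\chi+\max\log\|Df^{\pm1}\|$ accounts for both the derivative along the gap and the $e^{-\chi n}$ shift in the exponent. Therefore $y\in\Delta_{C,\chi,\varepsilon}$ whenever, for every $k$, some good time $k'$ satisfies $c_0|k-k'|\le\varepsilon|k|$, and $0$ itself is within the allowed distance.

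Third, the measure estimate, which I expect to be the main obstacle. If $y\notin\Delta_{C,\chi,\varepsilon}$, there is a $k$ (say $k\ge0$; negative $k$ is symmetric via $f^{-1}$) such that all times in a window of length $\ge (\varepsilon/c_0)|k|$ around $k$ are bad, or $k=0$ is itself bad. In either case some interval $[0,m]$, or $[-m,0]$, has a proportion of bad times at least $\tfrac{\varepsilon/c_0}{1+\varepsilon/c_0}\ge \tfrac12\min\{\varepsilon/c_0,1\}$. Thus $\sup_m \frac1m\sum_{i<m}\mathbf 1_B(f^{\pm i}y)\ge \tfrac12\min\{\varepsilon/c_0,1\}$. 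The maximal ergodic theorem, in the form $\upsilon(\sup_m A_m\mathbf 1_B\ge t)\le \upsilon(B)/t$, applied once to $f$ and once to $f^{-1}$, gives
\[
\upsilon(M\setminus\Delta)\le 2\cdot 2\max\{c_0/\varepsilon,1\}\,\upsilon(B),
\]
which is the claimed $4\max\{c_0/\varepsilon,1\}$. The delicate points are the bookkeeping of the window, that is, making sure the nearest good time is within $\varepsilon|k|/c_0$ with the right constant. Points whose orbit has no good time at all are covered automatically, since they lie in $B$.
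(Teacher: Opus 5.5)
Your architecture is right, and it reproduces the constant $4\max\{c_0/\varepsilon,1\}$ exactly. You transport from the nearest good time at cost $e^{c_0|k-k'|}$, turn a failure into a bad-time density $>\frac{a}{1+a}\ge\frac12\min\{a,1\}$ ($a=\varepsilon/c_0$) on some $[0,m)$ or $(-m,0]$, and apply the maximal inequality (equivalently the quoted Pliss-type Lemma 2.18) to $f$ and $f^{-1}$. The paper does not reprove this statement; it only cites \cite{BCSSPR}.

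\textbf{Step 1 is false as justified.} Membership in $P_{N_0}$ constrains $F_x$ only under \emph{backward} iteration from $x$. So nothing stops a vector with a nonzero $F_x$-component from also contracting forward, and $E_x$ is not characterized by forward contraction. Here is a concrete failure, with $N_0=1$, on a north--south map. Suppose $Df$ contracts all vectors by $e^{-2\chi}$ along $x,f(x),f^2(x),\dots$ (near the sink), and $Df^{-1}$ contracts all vectors by $e^{-2\chi}$ along the backward orbit (near the source).
\begin{itemize}
\item Every splitting of $T_xM$, of any dimension, qualifies at $x$.
\item $f^{-1}(x)\in P_1$ forces $E=0$, while $f(x)\in P_1$ forces $E=T_{f(x)}M$.
\end{itemize}
So splittings at two good times need not be related by $Df^{k'-k}$, and your pushed-forward invariant splitting is not well defined.

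The fix is to restrict to Lyapunov--Perron regular points. These form an invariant set of full measure for every invariant $\upsilon$. At such a point of $P_{N_0}$, vectors of $E_x$ have forward exponent $\le-\chi$, and vectors of $F_x$ have backward growth rate $\le-\chi$. Hence $E_x\subseteq\bigoplus_{\lambda_i\le-\chi}E_i(x)$ and $F_x\subseteq\bigoplus_{\lambda_i\ge\chi}E_i(x)$. These two spaces meet trivially and $\dim E_x+\dim F_x=\dim M$, so both inclusions are equalities. Therefore, along a regular orbit, the splitting at every good time is the Oseledets splitting, which is $Df$-invariant. Alternatively, if you take \Cref{Pesin.block} literally, it asks only for \emph{some} decomposition at each $f^k(y)$. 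Then you can drop invariance and transport from the nearest good time separately for each $k$.

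\textbf{Step 2 gives the wrong constant.} Rounding $n$ down gives $Ce^{\chi N_0}$. That does not prove the statement with the prescribed $C$, because at $k=0$ there is no $\varepsilon|k|$ slack to absorb the extra factor. Round up instead. Set $j=\lceil n/N_0\rceil$ and $r=jN_0-n\in[0,N_0)$. Then $Df^n|_{E_x}=Df^{-r}\circ Df^{jN_0}|_{E_x}$ gives $\|Df^n|_{E_x}\|\le\|Df^{-r}\|e^{-\chi jN_0}\le Ce^{-\chi n}$. Symmetrically, $\|Df^{-n}|_{F_x}\|\le\|Df^{r}\|e^{-\chi jN_0}\le Ce^{-\chi n}$. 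This is why $C$ involves both $\|Df^k\|$ and $\|Df^{-k}\|$.

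With these two corrections the rest of your argument goes through as written:
\begin{itemize}
\item the transport bound $Ce^{-\chi n}e^{c_0|k-k'|}$ (check it holds whether $k'$ lies before or after $k$, for both $E^-$ and $E^+$);
\item the counting of bad times;
\item the two maximal inequalities, which give $2\cdot2\max\{c_0/\varepsilon,1\}\,\upsilon(M\setminus P_{N_0})$.
\end{itemize}
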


\begin{proof}[Proof of \Cref{prop.generaluniformpesin}]
Fix $\nu\in \mathcal{K}$.  Let us do some estimates first for $(f,\nu)$.  We will then use the continuity of the Lyapunov exponents to ``spread'' these estimates in a neighborhood of $\nu$. We can then use the compactness of $\mathcal{K}$ to conclude the proof.

For a $\nu$-typical point, let  $E_x^-$ be the smallest subspace containing all of the Oseledets directions corresponding to negative Lyapunov exponents, and let $E_x^+$ be the smallest subspace containing all of the Oseledets  directions corresponding to the positive Lyapunov exponents.  Since every measure $\nu$ in $\mathcal{K}$ does not have any exponent in the interval $[-\chi, \chi]$,  for each $N\in \mathbb{N}$, define \[
K_{N} := \{ x\in M : \|Df^{N}_x|_{E_x^-}\| < e^{-\chi N} \textrm{ and } \|Df^{-N}_x|_{E_x^+}\| < e^{-\chi N}\}.
\]
The following claim is a direct consequence of the hyperbolicity of $\nu$.
\begin{claim}\label{claim.largemeasureK}
For every $\delta>0$, if $N$ is sufficiently large, then
\[
\nu(K_N) > 1-\delta.
\]
\end{claim}

\begin{claim}\label{claim.largemeasureP}
Let $\chi' = \frac{\chi}{4}. $ For every $\rho>0$, there exists $\delta>0$,  such that if $\nu(K_N) >1-\delta$, for some $N\in \mathbb{N}$,  then $\nu(P_N(\chi')) > 1-\rho$. 
\end{claim}
\begin{proof}
Suppose that $K_N$ has measure larger than $1-\delta$. We will see later what is the dependence of the measure of $P_N$ on $\delta$.   Consider the function $\psi(x) = \mathds{1}_{K_N}(x)$, where $\mathds{1}_{K_N}$ is the characteristic function on the set $K_N$.  Let $\psi^+$ be the forward Birkhoff average of $\psi$.

Consider 
\begin{equation}\label{eq.H}
H = \max \{\|Df\|, \|Df^{-1}\|\}
\end{equation}
and take $\beta = \max\{ 1- \frac{\chi}{2 \log H}, \frac{3}{4}\}$.  Take $L=1$, $\kappa =0$ and $T= f^N$.  By Lemma \ref{lem.pliss},  we obtain that 
\[
\nu\left\{x: \psi_j(x) \geq \beta j, \textrm{ for all }j\geq 0\right\} \geq \displaystyle \frac{\int \psi d\nu - \beta}{1-\beta} \geq \frac{1-\delta - \beta}{1-\beta} = 1- \frac{\delta}{1-\beta}.
\]
Write $K^+_N:=\left\{x:\psi_j(x) \geq \beta j, \textrm{ for all }j\geq 0\right\} $.  For $x\in M$,  $j\in \mathbb{N}$, set $G(x,j,N):= \{i\in [0, j-1]: f^{iN}(x) \in K_N\}$. If $x\in K^+_N$, we have that for every $j\in \mathbb{N}$, 
\[
\displaystyle \frac{\# G(x,j,N)}{j} \geq \beta.
\]
For such $x$, we obtain
\[
\begin{array}{ll}
&\|Df^{jN}_x|_{E^-}\|\leq \\
&\displaystyle \left(\prod_{i\in G(x,j,N)} \|Df^N_{f^{iN}(x)}|_{E^-}\|\right)\left( \prod_{i\in (G(x,j,N))^c \cap [0,j-1]} \|Df^N_{f^{iN}(x)}|_{E^-}\|\right)\\
& = I . II
\end{array}
\]
From the definition of $K_N$, we have
\[
I \leq e^{-\chi N \# G(x,j,N)} \leq e^{-\chi  N \beta j},
\]
and 
\[
II \leq e^{N \log H (j - \# G(x,j,N))}  \leq e^{N \log H j(1-\beta)}.
\]
From the definition of $\beta$, we obtain 
\[
-\chi  N \beta j + N \log H j (1-\beta)  \leq -\frac{3 \chi }{4} Nj +  \frac{\chi}{2}Nj = -\frac{\chi}{4} Nj.
\]
Therefore, 
\[
\|Df^{jN}_x|_{E^-}\| \leq e^{-\frac{\chi }{4} jN},
\]
for every $j\in \mathbb{N}$, whenever $x\in K^+_N$.  Define similarly $K^-_N$ for past iterates and $E^+$ instead of $E^-$. By the same computation,  we conclude that 
\[
\nu(K^-_N) >1 - \frac{\delta}{1-\beta}.
\]
Thus,
\[
\nu(K^-_N \cap K^+_N )  = 1- \nu((K^-_N)^c \cup (K^+_N)^c) \geq 1 - \frac{2\delta}{1-\beta}.
\]
Given $\rho>0$, take $\delta = \frac{\rho(1-\beta)}{2}$ and take $N$ large enough such that $\nu(K_N) > 1- \delta$. Thus, $\nu(P_N(\chi')) > 1-\rho$. 
\end{proof}

Just as above, set $\chi' = \frac{\chi}{4}$. Fix $\varepsilon>0$ and $\theta>0$.  Set $L' = 4\max\left\{\frac{c_0}{\varepsilon}, 1\right\}$, where $c_0 = \chi' + \max\{\log \|Df\|, \log \|Df^{-1}\|\}$. Take $\rho < \frac{\theta}{L'}$. By Claims \ref{claim.largemeasureK} and \ref{claim.largemeasureP}, we fix $\delta>0$ small, and $N$ large such that $\nu(K_N) > 1-\delta$, which implies that $\nu(P_N(\chi')) > 1-\rho$.  By Proposition \ref{prop.hyptimestopesinblock}, 
\[
\nu(\Delta_{C,\chi', \varepsilon}) > 1-\theta,
\]
where $C = C(f,N,\nu)$ is given in the statement of that proposition.

Consider $\mathbb{G}_i$ and $\mathbb{G}_{d-i}$ to be the Grassmaniann bundles of $i$-planes and $d-i$-planes.  Consider $\widetilde{K}_N^i$ to be the subset of $\mathbb{G}_i$ defined by points of the form $(x,E_x^+)$, where $x\in K_N$. Consider $\widetilde{K}_N^{d-i}$ to be the subset of $\mathbb{G}_{d-i}$ of points of the form $(x, E_x^-)$, where $x\in K_N$.   Observe that the condition defining $K_N$ is open in $\mathbb{G}_i$ and $\mathbb{G}_{d-i}$ in the sense that there are two open sets $\widetilde{U}_i \subset \mathbb{G}_i$ and $\widetilde{U}_{d-i}\subset \mathbb{G}_{d-i}$, such that 
\[
\widetilde{K}_N^i \subset \widetilde{U}_i \textrm{ and } \widetilde{K}_N^{d-i} \subset \widetilde{U}_{d-i},
\] and these open sets have the property that if $(x,E) \in \widetilde{U}_i$, then $\|Df^{-N}_x|_{E}\| < e^{-\chi' N}$. Similar conclusion holds for $\widetilde{U}_{d-i}$ and future iterates. 

On $\mathbb{G}_i$, consider the lift of $\nu$ given by $\hat{\nu}^+ := \int \delta_{(x,E^+_x)} d\nu(x)$. This gives a probability measure defined in $\mathbb{G}_i$ which is invariant under the action induced by $f$.  We can define similarly the stable lift $\hat{\nu}^-$ which is a probability measure in $\mathbb{G}_{d-i}$. 
Lemma $3.10$ in \cite{BCSSPR} implies that the function $\nu \mapsto \lambda_1(\nu) + \cdots + \lambda_i(\nu)$ is continuous, if and only if the function $\nu \mapsto \hat{\nu}^+$ is continuous in the weak*-topology. Similarly, the function $\nu \mapsto \lambda_{i+1}(\nu) + \cdots + \lambda_d(\nu)$
is continuous if and only if the function $\nu \mapsto \hat{\nu}^-$ is continuous. 

By assumption, the function $\nu \mapsto \lambda_1(\nu) + \cdots + \lambda_i(\nu)$ is continuous, when restricted to $\mathcal{K}$. Therefore, $\nu \mapsto \hat{\nu}^+$ is also continuous. Since 
\[
\displaystyle \int \log |\det D_xf| \der \nu = \lambda_1(\nu) +\cdots + \lambda_d(\nu),
\]
and the function $\nu \mapsto \int \log |\det D_xf| \der \nu(x)$ is continuous, continuity of the sum of the top $i$ exponents implies continuity  $\nu \mapsto \lambda_{i+1}(\nu) + \cdots + \lambda_d(\nu)$.
 Therefore, for every $\nu'\in \mathcal{K}$ sufficiently close to $\nu$, we have that 
\[
\hat{\nu'}^+(U_i)>1-\frac{\delta}{2} \textrm{ and } \hat{\nu'}^-(U_{d-i}) > 1-\frac{\delta}{2},
\] which implies that $\nu'(K_N) > 1-\delta$.  The rest of the estimates holds in a uniform way, since $N$, $\varepsilon$ and $\theta$ are fixed. Therefore, $\nu'(\Delta_{C,\chi', \varepsilon})>1-\theta$.  This implies that every measure $\nu \in \mathcal{K}$ has an open neighborhood such that for every $\nu'$ in this neighborhood will see the same Pesin block in a uniform way. To conclude the proof use the compactness of $\mathcal{K}$ and cover $\mathcal{K}$ by finitely many open sets where we can apply the estimates above. Then, we take $C$ to be the maximum constant obtained in each open set. Thus, we found a Pesin block that works for every measure in $\mathcal{K}$. 
\end{proof}

\begin{proof}[Proof of \Cref{THM:finiteness}]
Fix $\delta>0$. By \Cref{prop.uniform.pesin}, there exists a Pesin block $\Delta_{C, \chi, \varepsilon}$ such that 
\[
\mu(\Delta_{C,\chi, \varepsilon})> 1-\delta,
\]
for every $u$-Gibbs measure $\mu$. Observe that for $\rho>0$ small, any two points $x,y \in \Delta_{C,\chi,\varepsilon}$ with $d(x,y)< \rho$ are homoclinically related (see \Cref{def:homoclinicrelation}). This follows easily from the uniform estimates one has for the invariant manifolds on Pesin blocks.

Suppose there are infinitely many different ergodic SRB measures $(\mu_n)_{n\in \Nat}$. Up to considering a subsequence, we may suppose that they converge to a $u$-Gibbs measure $\mu$. Consider $\Delta =  \mathrm{supp}(\mu|_{\Delta_{C,\chi,\varepsilon}})$. Cover $\Delta$ by finitely many $U_i$, which are open balls with radius $\frac{\rho}{2}$. Set $U= \bigcup_i U_i$ and observe that 
\[
\mu(U) > 1-\delta.
\]
For every $n$ sufficiently large $\mu_n(U\cap \Delta_{C,\chi,\varepsilon})>1-2\delta$. Since there are finitely many balls $U_i$, by the pigeonhole principle, there exists $i$ such that for $\mu_n(U_i \cap \Delta_{C,\chi,\varepsilon})>0$ for infinitely many $n$s. Let $n$ and $j$ be such that $n\neq j$ and $\mu_*(U_i\cap \Delta_{C,\chi,\varepsilon})>0$ for $*=n,j$. Therefore, the measures $\mu_n$ and $\mu_j$ are homoclinically related. 

Observe that two SRB measures that are homoclinically related are the same by a Hopf argument (see for instance \cite{HHTUSRB}). This is a contradiction with our assumption that $\mu_n$ is different than $\mu_j$ if $n\neq j$. \qedhere
\end{proof}

\section{Existence of SRB measures} 
\label{sec:existenceSRB}

The goal of this section is to show the following result.
\begin{theorem}\label{thm.existenceSRB}
Suppose that $f\in \PHb[2]$ has the expanding on average property and satisfies $\Gap>0$. Then $f$ has an SRB measure.
\end{theorem}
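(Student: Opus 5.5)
We will derive \Cref{thm.existenceSRB} from the effective hyperbolicity criterion of Climenhaga--Dolgopyat--Pesin \cite{CDP}, in the version adapted to a dominated splitting $E^{cs}\oplus E^{u}$ (our splitting is only dominated at the level $E^s\oplus E^c\oplus E^u$, so the natural cone family is $E^c\oplus E^u$, with the non-uniform behaviour confined to the two-dimensional center). Roughly, \cite{CDP} asks for a point $x$, or a positive-volume set of points, such that the frequency of ``effective hyperbolic times'' has positive lower density. At such times the splitting $E^{-c}\oplus(E^{+c}\oplus E^u)$ has angle bounded below and uniform contraction and expansion are observed along it. Under this condition, the empirical measures along a local transversal admit an accumulation point with a nonzero SRB component. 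Our plan is to verify such a condition on \emph{every} unstable leaf, using the results of \Cref{sec:existenceweakbundle}. The volume-typical condition then follows by absolute continuity of $\Fu$.

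First, fix $p$ and work on $\Wuloc{p}$. By \Cref{thm:existsweakbundle}, for $m^u_p$-a.e.\ $x$ there is a direction $E^{-c}(x)$ contracted at rate at least $\Gap$. Every other direction is expanded, and by \Cref{cor:minimaexp} it is expanded at rate at least $C_1\in(0,\CEA)$ for $n\ge n(x,v)$. For the unstable-type direction we take $F(x)=\chi_{x,p}(v)$ transported along the leaf, together with $E^u$. We then define the effective hyperbolicity functional as in \cite{CDP}: the lower asymptotic frequency of times $n$ at which $\|D_xf^{n-k}|_{E^{-c}(f^k x)}\|\le e^{-\chi(n-k)}$ and the ``backward expansion'' along the $F$ direction from time $n$ hold for all $k<n$, and $\angle(E^{-c}(f^nx),D_xf^nF)\ge\alpha$. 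The contraction part with a Pliss-type lemma (\Cref{lem.pliss}) gives positive density of hyperbolic times for the $E^{-c}$ and $F$ components separately, since $\liminf\frac1n\log$ rates are bounded away from zero by \Cref{lem:expansioncontraction2}. To get simultaneous hyperbolic times, we use the large deviations bound \Cref{prop.largedeviationsprojective} for $\varphi=\log\|Df|_{[\cdot]}\|$ to control the frequency uniformly in $m^u_p$-measure, rather than pointwise.

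The \textbf{main obstacle} is the angle condition, i.e.\ that $E^{-c}(f^nx)$ and $D_xf^n F(x)$ do not become asymptotically aligned along a positive-density set of times. Here we will use \Cref{prop.accumulationprojectiveus} and \Cref{lem.projconvgotherdirections}. Empirical measures of $(f^jx,[E^{-c}])$ accumulate only on $\hmu^-$, and those of $(f^jx,[D f^j F])$ accumulate only on projective $u$-Gibbs measures, hence on $\hmu^+$ by \Cref{cor:onlygibbs}. If the two directions were $\alpha$-close on a frequency $\ge\eta$ of times for every $\alpha$, then a joint accumulation measure on the fibered product $\mathbb{P}E^c\times_M\mathbb{P}E^c$ would give positive mass to the diagonal. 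Its marginals are $\hmu^-$ and $\hmu^+$, which are mutually singular fiberwise because $E^{-c}\neq E^{+c}$ $\mu$-a.e. That is a contradiction. Making this uniform in $\alpha$ and extracting a positive lower frequency requires a compactness argument over $\Gibbs$ (compact by \Cref{lem:compactGibbs}), together with continuity from \Cref{mainthm:TheoremB}. This uniformity is where we expect the work to concentrate.

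Finally, having effective hyperbolicity with positive lower density on a set of positive $\Leb^u$ measure in one leaf, we apply the main theorem of \cite{CDP}. The averages $\frac1n\sum_j f^j_*m^u_p$ restricted to effective hyperbolic times produce a measure with absolutely continuous conditionals on Pesin unstable manifolds of dimension $\dim E^u+1$. Its ergodic components are hyperbolic by \Cref{cor:Gibbshyperbolic}, so one of them is a hyperbolic SRB measure.
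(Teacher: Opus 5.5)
\textbf{Main gap: you apply the CDP mechanism to a measure of the wrong dimension.} The Climenhaga--Dolgopyat--Pesin argument, in the form \Cref{thm.CDPmain}, builds an SRB measure out of the leaf volume $m_D$ of a smooth disk $D$ with $T_xM=E(x)\oplus T_xD$. With $E=E^s\oplus E^{-c}$ this forces $\dim D=\dim E^u+1$. At effective hyperbolic times the Hadamard--Perron step turns pieces $W^x_j\subset D$ into disks $f^j(W^x_j)$ of that same dimension, carrying densities bounded above and below. This is exactly what produces absolute continuity along $(\dim E^u+1)$-dimensional Pesin unstable manifolds.

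You instead push forward $m^u_p$, which lives on a $\dim E^u$-dimensional unstable plaque. Your expanding space $\chi_{x,p}(v)\oplus E^u(x)$ is a plane field along that plaque and is not tangent to any submanifold carrying a reference volume. Whatever hyperbolic times you verify, the construction applied to $\frac1n\sum_j f^j_*m^u_p$ can only certify absolute continuity along $\dim E^u$-dimensional pieces. That is the $u$-Gibbs property, which is already known and does not imply SRB.

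The sentence ``the volume-typical condition then follows by absolute continuity of $\Fu$'' hides the missing idea. You must choose a $(\dim E^u+1)$-disk $D$ and a set $D'\subset D\cap G$ with $m_D(D')>0$ and $T_xD\cap(E^s\oplus E^{-c})(x)=\{0\}$ for $x\in D'$. Since $E^{-c}$ is only measurable, this is done with a Lusin set for $x\mapsto E^{-c}(x)$ and Fubini over a smooth family of disks. You then work with $F(x)=T_xD\cap E^c(x)$. This direction is not of the form $\chi_{x,p}(v)$, so the leafwise tools (\Cref{cor.accugibbspt}, \Cref{cor:minimaexp}, \Cref{prop.largedeviationsprojective}) do not apply to it directly. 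What makes it work is \Cref{lem.projconvgotherdirections}: on the full-volume set $G$ it controls the projective empirical measures of \emph{every} direction other than $E^{-c}(x)$.

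\textbf{Secondary gap: your effective hyperbolic times omit the defect term.} As you describe them, they drop the non-domination defect $\tilde\Delta$ that enters $\Gamma^e$, and it is needed precisely because $E^{-c}$ and $F$ are not dominated. Separate Pliss times for contraction along $E^{-c}$ and expansion along $F$ do not control $\sum_j\tilde\Delta(f^jx,E_j,F_j)$. Nor do they give \emph{simultaneous} times of positive lower density: two families of small positive density need not intersect. A large-deviation bound that is ``uniform in $m^u_p$-measure'' is also not the pointwise density statement CDP requires.

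The paper handles both issues at once:
\begin{enumerate}
\item It passes to $f^N$ and uses the compact sets $\widehat K^\pm_N$ of uniform $N$-step expansion and contraction.
\item By continuity of exponents on $\Gibbs$ (\Cref{mainthm:TheoremB}, \Cref{Sec:uniformpesinblock}), these sets carry mass $>1-\rho$ for the lifts $\hmu^\pm$ of every $u$-Gibbs measure.
\item The empirical measures of $F$ and of $E^{-c}$ accumulate on such lifts, so most $N$-blocks are good for both directions and $\tilde\Delta_N=0$ on them.
\item \Cref{lem.plissfinite} then gives each family of Pliss times density $>9/10$, hence simultaneous times of density bounded below.
\end{enumerate}

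Your joint-accumulation argument for the angle condition, on the fibered product, is a sound alternative to the paper's uniform-Pesin-block argument. You need \Cref{mainthm:TheoremB} to identify limits of the $\hmu^-$ lifts, and you need to run it for $F(x)=T_xD\cap E^c(x)$ on $G$ rather than along a single unstable leaf.
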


To show this result, we will use the work of Climenhaga-Pesin \cite{Climenhaga_Pesin} and Climenhaga-Dolgopyat--Pesin \cite{CDP} on effective hyperbolicity.  In those works, they introduce the notion of effective hyperbolicity and show the existence of SRB measures under the assumption that the set of effective hyperbolic points has positive volume. It is convenient for us to work with a slightly different definition of effective hyperbolicity as we will explain below.

\subsection{Effective hyperbolicity and the existence of SRB measures}\label{subsetction.effectivehyperbolicity}

Let $f$ be a $C^2$-diffeomorphism.   Given a point $x\in M$ and $E, F$, subspaces of $T_xM$, define
\begin{eqnarray}
\lambda^-(x,E):= \sup \{\log \|D_xfv\|: v\in E, \|v\|=1\}\\
\lambda^+(x,F):= \inf \{\log \|D_xfv\|: v\in F, \|v\|=1\}.
\end{eqnarray}
Moreover, define
\[
\tilde{\Delta}(x,E,F) := \max\{0, \lambda^-(x,E) - \lambda^+(x,F)\}.
\]
Define
\[
\theta(x,E,F):= \inf \{\sphericalangle(v,w): v\in E \textrm{ and } w\in F\}.
\]
In what follows, for each $n$, we write $E_n = D_xf^n E$ and $F_n = D_xf^n F$.  
For $\lambda>0$, define
\[
\displaystyle \Gamma^e_\lambda(x, E,F):=\left\{ n\in \mathbb{N}: \sum_{j=k}^{n-1} \lambda^+(f^j(x),F_j) - \tilde{\Delta}(f^j(x),E_j,F_j) \geq \lambda(n-k), \textrm{ for all } 0\leq k < n\right\}. 
\]
For constants $C\geq 1$ and $ \lambda>0$ and $m\in \mathbb{N}$, we also define
\[
\displaystyle \Gamma^s_{(C,\lambda),m}(x,E):= \left\{ n\in \mathbb{N}: \|D_{f^n(x)}f^{-k} v\| \geq C e^{\lambda k}\|v\|, \textrm{ for all } 0 \leq k \leq m  \textrm{ and } v\in E_n\right\}. 
\]
We say that $(x,E,F)$ verifies the (modified) \emph{effective hyperbolic} conditions if 
\begin{itemize}
\item[\textbf{(EH1)}] $\displaystyle \lim_{m\to +\infty} \underline{\delta}\left( \Gamma^e_\lambda(x,E,F) \cap \Gamma^s_{(C,\lambda),m}(x,E)\right)>0$, and 
\item[\textbf{(EH2)}] $\displaystyle \lim_{\overline{\theta} \to 0} \overline{\delta}\{n\in \mathbb{N}: \theta(f^n(x),E_n,F_n) < \overline{\theta}\} = 0,$
\end{itemize}
where given $\Lambda \subset \mathbb{N}$, a subset of the natural numbers,  the quantities $\overline{\delta}(\Lambda)$ and $\underline{\delta}(\Lambda)$ are the upper and lower densities of $\Lambda$ inside the natural numbers, respectively.  The following theorem is a variation of the main theorem from \cite{CDP}.

\begin{theorem}\label{thm.CDPmain}
Let $f:M\to M$ be a $C^2$-diffeomorphism.  Suppose there is a disk $D$ such that there is a set $D' \subset D$ with the following properties:
\begin{enumerate}
\item for $x\in D'$, there is a subspace $E(x)$ such that $T_xM = E(x) \oplus T_xD$ such that  \textbf{(EH1)} and \textbf{(EH2)}  for $(x,E(x), T_xD)$ holds for some constants $C = C(x) \geq 1$ and $\lambda = \lambda(x)>0$;
\item $m_D(D') >0$, where $m_D$ is the normalized Lebesgue measure on $D$ induced by the Riemannian metric of $M$. 
\end{enumerate}
Then $f$ has an SRB measure. 
\end{theorem}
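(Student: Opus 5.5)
The plan is to follow the strategy of Climenhaga--Dolgopyat--Pesin \cite{CDP}. We push forward the normalized Lebesgue measure on $D'$, average in time, and select only the parts of $f^n(D)$ that have become "good'' admissible manifolds at times in the set of effective hyperbolic times. The resulting limit measure will have a nontrivial component with absolutely continuous conditionals along unstable manifolds.

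\textbf{Step 1: effective hyperbolic times and admissible manifolds.} Fix $x\in D'$ and set $E=E(x)$, $F=T_xD$. For $n\in\Gamma^e_\lambda(x,E,F)$, the defining inequality controls two things simultaneously: the minimal expansion along $F_j$, and the defect $\tilde\Delta$, which measures how much $E$ can grow relative to $F$. This is exactly the quantity used in \cite{CDP} to show the following. At such times, the graph-transform image $f^n(D)$ near $f^n(x)$ contains a graph over $F_n$ of size bounded below, say $r(\bar\theta)>0$, with $C^{1+\alpha}$ norm bounded by a constant, provided the angle $\theta(f^n(x),E_n,F_n)\ge\bar\theta$. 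The role of $\Gamma^s_{(C,\lambda),m}$ is to replace the "stable'' part of the original CDP definition. At times $n$ in this set, $E_n$ is uniformly backward expanded for $m$ steps, so the curvature and distortion estimates of the graph-transform along $E$ (which in \cite{CDP} are derived from forward contraction along $E$) hold at scale depending on $m$. I will redo the Hadamard--Perron/graph-transform estimates of \cite[Sections 4--5]{CDP} with this substitution. This gives uniform size and curvature of the admissible pieces and uniform bounded distortion of the pushed densities on them.

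\textbf{Step 2: positive density of good times, uniformly on a set of positive measure.} Combine \textbf{(EH1)} with \textbf{(EH2)}: choose $\bar\theta$ so small that the upper density of bad-angle times is below half the lower density in \textbf{(EH1)}, and $m$ large. Then the set
\[
\Gamma_n(x)=\Gamma^e_\lambda\cap\Gamma^s_{(C,\lambda),m}\cap\{k:\theta(f^k(x),E_k,F_k)\ge\bar\theta\}
\]
has positive lower density $\gamma(x)$. Restrict to a subset $D''\subset D'$ of positive measure on which $C,\lambda,\gamma,\bar\theta$ are uniform; this is possible by measurability and countable exhaustion. Fubini then gives $\frac1n\sum_{k<n}m_D(\{x\in D'': k\in\Gamma(x)\})\ge \gamma' >0$ for all large $n$.

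\textbf{Step 3: building the SRB measure.} Let $\nu_n=\frac1n\sum_{k<n}f^k_*m_D$. Split it as $\nu_n=\nu_n^{g}+\nu_n^{b}$, where $\nu_n^g$ is carried by admissible manifolds at good times, as in \cite{Climenhaga_Pesin}. By Steps 1--2, $\nu_n^g$ has mass at least $\gamma'$ and is an average of measures on uniform admissible graphs with uniformly log-H\"older densities. The space of such "standard pairs'' is compact, so a limit $\mu$ of $\nu_n$ dominates a nonzero measure $\mu^g$ with absolutely continuous conditionals on a measurable family of local manifolds tangent to limits of $F_n$. Invariance and \cite[Theorem]{Climenhaga_Pesin} then show that these local manifolds are unstable manifolds of $\mu$ and that some ergodic component of $\mu$ is an SRB measure.

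The main obstacle is Step 1. I need to check that replacing the original CDP stable condition by the backward-expansion condition $\Gamma^s_{(C,\lambda),m}$ still yields uniform size of admissible manifolds and $m$-independent distortion bounds. The worry is that the constants might degenerate as $m\to\infty$. I would first try to show that only finitely many ($\le m$) past steps matter for curvature, up to an error $e^{-\lambda m}$, and then take $m$ large but fixed.
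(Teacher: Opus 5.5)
There is a genuine gap: you have misread what $\Gamma^s_{(C,\lambda),m}$ is for, and as a result Step 3 is unsupported.

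\textbf{Where $\Gamma^s$ actually enters.} The uniform size, curvature and distortion of the pieces $D^x_j=f^j(W^x_j)$ come only from $\Gamma^e_\lambda$ together with the angle bound $\theta\ge\bar\theta$. This is the splitting version of the Hadamard--Perron theorem (Theorems A and D of \cite{Climenhaga_Pesin}). No stable information and no dependence on $m$ enters there, so the obstacle you single out in Step 1 does not exist. What $\Gamma^s$ controls is the \emph{transverse} behaviour: $E$ is contracted along the $m$ iterates preceding time $n$. That is exactly what Step 3 needs and lacks. Your limit $\mu^g$ is absolutely continuous along $\dim F$-dimensional disks that contract under all backward iterates. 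This only makes them sub-disks of the Pesin unstable manifolds. If $\mu$ has a positive exponent along $E$, absolute continuity along these disks says nothing about conditionals on the full unstable manifolds; this is the $u$-Gibbs but non-SRB example $\mu_f\otimes\delta_p$ of \Cref{sub:uGibbs}. Neither invariance nor \cite{Climenhaga_Pesin}, which is a finite-time statement about admissible manifolds, rules this out. Information at a fixed scale $m$ gives no control on the exponents of the limit along $E$. So ``take $m$ large but fixed'' discards precisely the information the proof needs.

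\textbf{The missing idea: let $m$ grow with $n$.} Since $\Gamma^s_{(C,\lambda),m}$ decreases in $m$, \textbf{(EH1)}, being a limit in $m$, gives a density bound uniform in $m$. Let $S_{\mathbf J,q,n}$ be the set of points of $D'$ with $n\in\Gamma^e\cap\Gamma^s_{\mathbf J,q}$. Then there is $\delta>0$, independent of $q$, with $\frac1n\sum_{j<n}m_D(S_{\mathbf J,q,j})>\delta$ for all $n\ge N(q)$. The construction then runs as follows:
\begin{enumerate}
\item Use the covering lemma of \cite{CDP} to select pairwise disjoint $W^x_n$, each containing a fixed proportion $\beta'$ of points of $S_{\mathbf J,q,n}$. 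This disjointness is also what your splitting $\nu_n=\nu_n^g+\nu_n^b$ silently requires.
\item Choose $q_n\to\infty$ slowly, with $n\ge N(q_n)$.
\item Set $\nu_n=\frac1n\sum_{j=q_n}^{n-1}f^j_*\bigl(m_D|_{W_{\mathbf J,q_n,j}}\bigr)$.
\end{enumerate}
Because $q_n\to\infty$, in the limit each limit disk carries a positive proportion of points whose transverse direction is contracted along their whole past. This is what gives negative exponents transverse to the disks. Only then are the limit disks genuine Pesin unstable manifolds and the component an SRB measure.
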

Let us briefly explain the idea of the proof of \Cref{thm.CDPmain}. The proof follows the argument of the proof of Theorem $4.2$ in Climenhaga-Dolgopyat-Pesin \cite{CDP}, using the Hadamard-Perron Theorem from Climenhaga-Pesin \cite{Climenhaga_Pesin} for the  splittings
\[
T_{f^n(x)}M=E_n\oplus F_n,
\qquad
E_n=D_xf^nE(x),
\qquad
F_n=D_xf^nT_xD,
\]
instead of working with measurable cone fields. Let us explain their result using the notation in our partially hyperbolic setting. Let us first recall the geometric mechanism in the construction of SRB measures from  \cite{CDP}. Let $D$ be a
$\dim(E^u)+1$-dimensional submanifold and let $m_D$ denote its induced Riemannian
normalized volume. Suppose that the set $S\subset D$ of
effectively hyperbolic points has positive $m_D$-measure. The usual Pesin-Sinai's construction of SRB measures consider the Ces\`aro averages
\[
\mu_n:=\frac{1}{n}\sum_{j=0}^{n-1}f_*^j m_D.
\]
Every weak*- accumulation point of this sequence is an invariant measure. The main issue
is that effective hyperbolicity does not give uniform control of expansion and distortion of the whole
measure $f_*^j m_D$. The central step is therefore to extract from
$\mu_n$ a ``subeasure'' $\nu_n$ of uniformly positive mass whose geometry and
densities are uniformly controlled. 

After restricting to a positive-measure subset of $S$, the constants in the
definition of effective hyperbolicity can be taken uniformly. Every point in
this subset has a positive density of effective hyperbolic times. If $j$ is
an effective hyperbolic time for $x\in S$, the
Hadamard--Perron Theorem from \cite{Climenhaga_Pesin} produces a neighborhood
\[
W_j^x\subset D\cap B\left(x,\bar r e^{-\bar\lambda j}\right)
\]
such that
\begin{equation}\label{eq.djx}
D_j^x:=f^j(W_j^x)
\end{equation}
is a disk of fixed size centered at \(f^j(x)\) that contracts uniformly for $j$-iterates in the past: there are constants \(C>0\) and \(\bar\lambda>0\), independent of
\(x\) and \(k\), such that
\[
d\bigl(f^{-k}(y),f^{-k}(z)\bigr)
\leq
C e^{-\bar\lambda k}d(y,z)
\]
for every $y,z\in D_j^x$ and \(0\leq k\leq j\). In addition, there is a good control on  curvature. Thus, $D_j^x$ behaves like a local unstable manifold for finitely many backward iterates.

Using the geometrical control of these finite time unstable disks, one obtains bounded distortion, in the sense that
\[
 f^j_* \left(m_D|_{W_j^x}\right)(\cdot) = \alpha_{x,j}\rho_j^x(\cdot) \der m_{D_j^x}(\cdot), 
\]
where $\alpha_{x,j}= m_D(W^x_j)>0$,  $m_{D_j^x}$ is the normalized volume on that disk and  the density $\rho_j^x$ has a uniform $\log$-H\"older norm and satisfies uniform estimates
\begin{equation}\label{eq.density}
L^{-1}\leq \rho_j^x\leq L,
\end{equation}
for some constant $L\geq 1$.
This implies that each pushed-forward piece is represented by a standard pair:
leaf volume on a finite-time unstable disk, with a density bounded above and
below and having uniformly bounded $\log$-H\"older norm. To obtain contraction estimates
in the transverse direction as well, the disks are selected so that a fixed
proportion of each of them consists of points for which the linear dynamics along a
complementary direction is controlled for a certain number of iterates quantified by a parameter $q\in \mathbb{N}$. Writing $\mathbf J = (C, \overline{\lambda})$, for each $n\in \Nat$, $0 \leq q \leq n$, they consider the set $S_{\mathbf J, q, n}$, which is the set of points in $D$, where 
\[
n \in \Gamma_{\overline{\lambda}}^e(x,(E^s \oplus E^{-c})(x), T_xD) \cap \Gamma^s_{\mathbf J, q}(x, (E^s \oplus E^{-c})(x)).
\]
 These are the set of points that have good control on expansion estimates, and contraction estimates from time $n$ to $n-q$. Define $S_{\mathbf J , q, n} = \emptyset$ if $q >n$. They show the existence of a $\delta>0$ such that for each $q\in \Nat$, there exists $N(q)$ such that if $n\geq N(q)$ then 
 \[
 \frac{1}{n} \sum_{j=0}^{n-1} m_D(S_{\mathbf J,q,j}) > \delta,
 \]
 see equation $(5.19)$ from \cite{CDP}. 

There exist  $\beta'',\beta'>0$ such that for $q$ and $n \geq N(q)$ as above, there exists a finite set $A_n \subset S_{\mathbf J, q, n}$ with the following properties:
\begin{enumerate}
    \item For $x\in A_n$, let $W_n^x$ be as in \eqref{eq.djx}. Then,
    \[
    m_D(W_n^x \cap S_{\mathbf J, q, n}) \geq \beta' m_D(W_n^x).
    \]
    \item For $x,y \in A_n$ with $x\neq y$, $W_n^x \cap W_n^y = \emptyset$.
    \item Set $W_{\mathbf J, q, n} = \bigsqcup_{x\in A_n} W_n^x$. Then
    \[
    \frac{1}{n} \sum_{j=0}^{n-1}m_D(W_{\mathbf J, q ,j}) \geq \beta''>0.
    \]
\end{enumerate}
This is established in the proof of Proposition $4.8$ from \cite{CDP} and it uses a covering argument (Lemma $4.6$). Essentially, for each $q$ fixed, $j \geq N(q)$, one will decompose the measure $f^j_*m_D$ as a convex combination of measures supported on $D_j^x$, for $x\in W_{\mathbf J, q,j}$ and the rest. The measures coming from $D_j^x$ will have uniform control on density and are supported on disks of uniform size and are finite time unstable disks with good contraction control, up to time q, transversaly. This is where the SRB measures are going to come from. 

They choose a sequence of  times $q_n \to +\infty$ that increases slowly, with  $n\geq N(q_n)$, such that if we set
\begin{equation}\label{eq.nun}
\nu_n = \displaystyle \frac{1}{n} \sum_{j=q_n}^{n-1}f^j_* \left(m_D|_{W_{\mathbf J,q_n,j}}\right), 
\end{equation}
then the total mass of $\nu_n$ is bounded from below by $\frac{1}{2} \beta''.$

The construction gives
\[
\nu_n\leq\mu_n.
\]
 Therefore, $\nu_n$ is a portion of $\mu_n$ which is decomposed into the convex combination of absolutely continuous measures on finite time unstable disks, which are pairwise disjoints for each $j$, with controlled densities. No comparable control
is required for the complementary part $\mu_n-\nu_n$.

Up to considering a subsequence, we have that
\[
\mu_n\longrightarrow\mu
\qquad\text{and}\qquad
\nu_n\longrightarrow\nu
\]
in the weak* topology. The measure $\mu$ is invariant, and
\[
0\neq\nu\leq\mu.
\]
The measure $\nu$ does not have to be invariant, but it has positive mass and the SRB property (its disintegration along unstable disks is absolutely continuous), and it is part of an invariant measure. So $\mu$ has a portion with the SRB property and, in particular, there are SRB measures. The fact that $q_n\to\infty$ is used to show that these SRB measures are hyperbolic, they have a contracting transverse directions. 

Climenhaga-Dolgopyat-Pesin formulate their argument using measurable
invariant cone families. However, the cone families are only used to invoke the
effective Hadamard-Perron Theorem from \cite{Climenhaga_Pesin} and obtain uniformly controlled admissible
disks at effective hyperbolic times. Since the analogous Hadamard-Perron
result for invariant splittings is established in \cite{Climenhaga_Pesin}(Theorems A and D), the same proof applies to the splittings $E_n\oplus F_n$ considered above and gives \Cref{thm.CDPmain}.  The goal of the rest of the section is to prove \Cref{thm.existenceSRB} by verifying the conditions of \Cref{thm.CDPmain}. 

\subsection{Verifying effective hyperbolicity}

For the rest of this section, let $f\in \PHb[2]$ verify the expanding on average property and $\Gap>0$. Let $G$ be the set obtained in \Cref{lem.projconvgotherdirections}. Let $D$ be any $\dim(E^u) + 1$ smooth disk  such that there exists a set $D' \subset (D\cap G)$ with positive $m_D$-measure, such that for $x\in D'$, $T_xD \cap \left( E^s \oplus E^{-c}\right)(x) = \{0\}$. Such disks always exists, as we could fix a Lusin set where $x\mapsto E^{-c}(x)$ is continous, and then using a Fubini argument with a smooth family of $\dim(E^u) +1$-dimensional disks, we can build such disks in a neighborhood of a point density point $p$ of this Lusin set. 

For $x\in D \cap G$, write $F'(x) = T_xD$, and for each $l\in \Nat$, write $F_l'(x) = D_xf^l F'(x)$. We are interested in understanding the expanding behavior of $F'(x)$. For points in $D',$ we know that any vector $v\in F'(x) - (E^s \oplus E^{c})(x)$ converges to the direction $E^u$, they will see uniform expansion after a few iterates. So, to understand the expanding behavior of the derivative acting on $F'(x)$ we actually need to understand the behavior of $F(x) = F'(x) \cap E^c(x)$, which is a one dimensional space in $E^c(x)$. For any $n, N\in \mathbb{N}$ and $x\in D \cap G$, define  $\varphi_N^u(f^{nN}(x))$ and $\varphi_N^s(f^{nN}(x))$ defined by 
\begin{align*}
\varphi_N^u(f^{nN}(x)) &\coloneqq \log m\left(D_{f^{nN}(x)}f^N|_{F_{nN(x)}}\right) \textrm{ and } \varphi_N^s(f^{nN}(x))\\
& = -\log\|D_{f^{nN}(x)}f^{N}|_{(E^s \oplus E^{-c})(f^{nN}(x))}\|.
\end{align*}

Observe that 
\[
-\log\|D_{f^{nN}(x)}f^{N}|_{(E^s \oplus E^{-c})(f^{nN}(x))}\| = -\log\|D_{f^{nN}(x)}f^{N}|_{E^{-c}(f^{nN}(x))}\|.
\]

Define
\[
\tilde{\Delta}_N(x):= \max\{0, -\varphi_N^s(x) - \varphi^u_N(x)\}. 
\]
Define also
\[
\displaystyle \widehat{\Gamma}^e_{\lambda,N}(x):= \left\{n\in \mathbb{N}: \sum_{j=k}^{n-1} (\varphi^u_N - \tilde{\Delta}_N)(f^{jN}(x)) \geq \lambda(n-k), \textrm{ for all }0 \leq k < n\right\}, 
\]
and for each $q\in \mathbb{N}$, define
\[
\displaystyle \widehat{\Gamma}^s_{\lambda,q,N}(x):= \left\{ n\in \mathbb{N}: \sum_{j=n-k}^{n-1} \varphi^s_N(f^{jN}(x)) \geq \lambda (n-k),  \textrm{ for all } 0 \leq k \leq q\right\}. 
\]

Our goal is the prove the following proposition. 

\begin{proposition}\label{prop.goodestimatesEH}
Let $f\in \PHb[2]$ have the expanding on average property and $\Gap>0$.  There exist constants $\delta_0>0$, $\lambda>0$ and $N\in \mathbb{N}$,  such that for any  $x\in D'$ and  any $q\in \mathbb{N}$ large enough
\[
\displaystyle \underline{\delta}\left(\widehat{\Gamma}^e_{\lambda, N}(x) \cap \widehat{\Gamma}^s_{\lambda, q, N}(x) \right) > \delta_0. 
\]
\end{proposition}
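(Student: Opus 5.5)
The plan is to reduce both density estimates to Birkhoff-type statements for the projective dynamics along the orbit of $x\in D'$, and then combine them with a Pliss-type argument (\Cref{lem.pliss}).

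\emph{Step 1: asymptotic averages.} Since $x\in D'\subset G$ and $F(x)=T_xD\cap E^c(x)$ is transverse to $E^{-c}(x)$, \Cref{lem.projconvgotherdirections} applies. Item (1) shows that every accumulation point of $\frac1n\sum_j\delta_{\hf^j(x,F(x))}$ is a projective $u$-Gibbs measure. By \Cref{cor:onlygibbs} this measure equals $\hmu^+$ over some $\mu\in\Gibbs$. Item (2) shows that the accumulation points along $E^{-c}(x)$ are of the form $\hmu^-$. Hence, with $\varphi(y,[w])=\log\norm{D_yf|_{[w]}}$, we get for large $n$
\[
\frac1n\log\norm{D_xf^n|_{F(x)}}\geq \CEA-\eta,\qquad \frac1n\log\norm{D_xf^n|_{E^{-c}(x)}}\leq -\Gap+\eta.
\]
The second bound also follows directly from \Cref{thm:existsweakbundle}. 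We need more than these time-$n$ statements, because $\varphi_N^u$ involves the conorm on the full $F'$, not only on $F$. Since $F'_{l}$ converges to $E^u\oplus F_l$ exponentially fast, the conorm on $F'$ differs from the $F$-growth only by the angle between $F_l$ and $E^u$, which stays bounded away from zero by domination. So $\varphi^u_N(f^{nN}x)\geq \log\norm{D f^N|_{F_{nN}}}-c$ for a uniform $c$, and $c/N$ can be made small.

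\emph{Step 2: choice of $N$ and uniform finite-time bounds.} I would use the large-$N$ finitary control in the spirit of \Cref{prop:finitaryUEAP}, together with the compactness and continuity arguments used in \Cref{prop.generaluniformpesin}. With these, I would fix $N$ so that, for every projective $u$-Gibbs measure $\hmu^+$ and its companion $\hmu^-$, the integrals $\int \frac1N\log\norm{Df^N|_{[w]}}\der\hmu^{\pm}$ are within $\eta$ of $\lambda^{\pm c}$. In addition, the set where $\varphi^u_N\geq N(\CEA-2\eta)$ and $\varphi^s_N\geq N(\Gap-2\eta)$ should have $\hmu^+\otimes\hmu^-$-mass at least $1-\kappa$, uniformly over $\Gibbs$. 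This uses that the measures $\hmu^\pm$ depend continuously on $\mu$ (\Cref{mainthm:TheoremB} together with the cited Lemma 3.10 of \cite{BCSSPR}), and that $\Gibbs$ is compact. Transferring this to the orbit of $x$ via Step 1 shows that along the time-$N$ skeleton, the empirical frequency of \enquote{good} indices $j$ has liminf at least $1-\kappa$. On good indices, $\tilde\Delta_N=0$, because $-\varphi^s_N-\varphi^u_N<0$ there.

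\emph{Step 3: Pliss.} The function $\varphi^u_N-\tilde\Delta_N$ has lower Birkhoff averages at least $N(\CEA-3\eta)$. Applying the deterministic Pliss lemma (equivalently \Cref{lem.pliss} applied to the empirical measures) with $\lambda$ well below this value yields a set of forward hyperbolic times $\widehat\Gamma^e_{\lambda,N}(x)$ with lower density at least some $\delta_1>0$. For $\widehat\Gamma^s$ we need backward-in-time control over windows of length $q$. Here I would apply Pliss to the reversed sums of $\varphi^s_N$ over $[n-k,n)$, which gives a set of lower density $\delta_2$ independent of $q$, and take $\delta_1+\delta_2>1$ by choosing $\kappa$ and $\lambda$ small. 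The intersection then has lower density at least $\delta_0:=\delta_1+\delta_2-1$.

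\emph{Main obstacle.} The main obstacle is Step 2. The constants must be uniform in $x\in D'$, while the orbit of $x$ is not typical for any fixed measure; only its empirical measures accumulate on $\Gibbs$. Moreover, $\tilde\Delta_N$ couples the two directions, so we need joint control, not just marginal averages. Since $\tilde\Delta_N$ is a continuous function of the pair of directions, I would handle this by working with the empirical measures of $(f^{jN}x,F_{jN},E^{-c}_{jN})$ on the product projective bundle. Every accumulation point should project to $\hmu^+$ and $\hmu^-$ over the same $\mu$. Because $\hmu^\pm$ are graphs over $\mu$, the joint lift is forced to be the diagonal coupling, and the estimate can then be read off from that coupling.
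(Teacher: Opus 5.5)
The gap is in Step 3. Your Steps 1--2 follow the paper's route: accumulation along $F(x)$ and $E^{-c}(x)$ via \Cref{lem.projconvgotherdirections}, a uniform choice of $N$ from continuity of exponents and compactness of $\Gibbs$, and open neighbourhoods to pass to orbit frequencies.

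You apply the Pliss lemma (\Cref{lem.plissfinite}) to the raw sequence $a_j=(\varphi^u_N-\tilde\Delta_N)(f^{jN}(x))$. The density it then guarantees is $\frac{\alpha_2-\alpha_1}{L-\alpha_1}$. Here $L$ must be a pointwise upper bound for $a_j$, so it is of order $N\log\norm{D^cf}_{\oo}$. The only average you control is $\alpha_2\approx N(\CEA-3\eta)$. Letting $\lambda=\alpha_1\to0$ and your mass deficiency $\kappa\to 0$ only brings this bound to roughly $\CEA/\log\norm{D^cf}_{\oo}$. That ratio need not be close to $1$; it is small whenever $D^cf$ expands strongly somewhere compared to $\CEA$. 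Likewise, $\delta_2$ is only guaranteed to be of order $\Gap/\log\norm{(D^cf)^{-1}}_{\oo}$. So ``$\delta_1+\delta_2>1$ by choosing $\kappa,\lambda$ small'' does not follow. Two subsets of $\Nat$ with small positive lower density can be disjoint, so you get no lower bound for $\widehat{\Gamma}^e_{\lambda,N}(x)\cap\widehat{\Gamma}^s_{\lambda,q,N}(x)$. Switching to \Cref{lem.pliss} does not rescue this. A density near $1$ from that lemma requires the potential to concentrate at the level $L$, but you only have a one-sided lower bound on good indices, and the limiting $u$-Gibbs measures need not be ergodic.

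The missing idea is to apply Pliss to the indicator of the good set rather than to the potentials; truncating the potentials at the good level $N(\CEA-2\eta)-c$ would also work. This is what the paper does. Take $a_j=\mathds{1}_{\mathcal U^+}(\hf^{jN}(x,[F(x)]))$ with $L=1$, $\alpha_2=1-\rho$, $\alpha_1=1-\beta$ and $\beta=10\rho$. This gives Pliss times of density $>9/10$, and the same holds for $E^{-c}(x)$ with $\mathcal U^-$. Hence the two sets of Pliss times intersect with density at least $8/10$. At such a time $n_i$, every backward window $[k,n_i)$ contains at most a fraction $2\beta$ of indices that are bad for either direction. On the good indices $\tilde\Delta_N=0$ and $\varphi^u_N,\varphi^s_N\geq N\chi/2$, while each bad index costs $O(N\log H)$, with $H$ as in \eqref{eq.H}. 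Choosing $\beta\ll\chi/\log H$ then puts $n_i$ in both $\widehat{\Gamma}^e$ and $\widehat{\Gamma}^s$ with $\lambda=N\chi/4$; for $\widehat{\Gamma}^s$ you discard the at most $q$ times in $[0,q]$. This union bound over two separate indicator sequences already handles the coupling through $\tilde\Delta_N$. So your diagonal-coupling argument, while correct, is not needed.
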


In preparation to prove \Cref{prop.goodestimatesEH}, we will need the following version of Pliss Lemma.

\begin{lemma}\label{lem.plissfinite}
Given $L \geq \alpha_2 > \alpha_1 >0$, let $\kappa = \frac{\alpha_2 - \alpha_1}{L- \alpha_1}$. Then, given any real numbers $a_1, \cdots, a_Q$ such that
\[
\displaystyle \sum_{j=1}^M a_j \geq \alpha_2 Q \textrm{ and } a_j \leq L \textrm{ for every }1\leq j \leq Q,
\]
there are $l > \kappa Q$ and $1 < n_1 < \cdots< n_l \leq Q$ so that
\[
\displaystyle \sum_{j=n+1}^{n_i} a_j \geq \alpha_1(n_i - n) \textrm{ for every } 0 \leq n < n_i \textrm{ and }i=1, \cdots, l.
\]
\end{lemma}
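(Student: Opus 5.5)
This is the finite Pliss lemma, and I will prove it by the standard maximal-suffix (``rising sun'') argument applied to the shifted sequence $b_j=a_j-\alpha_1$. The sum in the hypothesis should run from $j=1$ to $Q$; the $M$ there is a typo. Rewritten for the $b_j$, the hypotheses become
\[
\sum_{j=1}^Q b_j\ge(\alpha_2-\alpha_1)Q, \qquad b_j\le L-\alpha_1 .
\]
The desired property of a time $m$ is that every terminal block ending at $m$ has nonnegative $b$-sum:
\[
\sum_{j=n+1}^{m} b_j\ge0 \quad\text{for all } 0\le n<m .
\]
Set $B_m=\sum_{j=1}^m b_j$ with $B_0=0$. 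Then $m$ has the property exactly when $B_m\ge B_n$ for all $n<m$, that is, when $m$ is a weak record time of the partial-sum walk $(B_m)$. The goal is to count these record times.

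The first step is the decomposition. Let $m_1<\dots<m_l$ be all the record times in $\{1,\dots,Q\}$. At a record $m_i$ the running maximum $R_m=\max_{0\le n\le m}B_n$ rises by
\[
R_{m_i}-R_{m_i-1}\le B_{m_i}-B_{m_i-1}=b_{m_i}\le L-\alpha_1 .
\]
Here I use $R_{m_i-1}\ge B_{m_i-1}$. At non-record times the running maximum does not change.

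The second step sums these increments. Since $R_0=0$ and $R_Q\ge B_Q$, I get
\[
(\alpha_2-\alpha_1)Q\le B_Q\le R_Q=\sum_{i=1}^{l}\bigl(R_{m_i}-R_{m_i-1}\bigr)\le l\,(L-\alpha_1).
\]
This gives $l\ge\kappa Q$.

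The third step handles strictness and the indexing. Strict inequality $l>\kappa Q$ can fail in degenerate equality cases; for instance, $L=\alpha_2$ with all $a_j=L$ gives $l=Q=\kappa Q$. So I would either state the conclusion as $l\ge\kappa Q$, or get strictness by perturbing $\alpha_1$ slightly downward, which is harmless in the application to \Cref{prop.goodestimatesEH}. The index constraint $1<n_1$ is likewise cosmetic. I would phrase it as $1\le n_1$, since $m=1$ is a record exactly when $b_1\ge0$.

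There is no real obstacle. The only care needed is in bounding the increment of the running maximum at a record time via $b_{m_i}\le L-\alpha_1$, and in noting the nonstrict-versus-strict inequality just described.
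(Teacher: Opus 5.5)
The paper gives no proof of this lemma; it quotes Pliss's lemma in the form used by Alves--Bonatti--Viana \cite{SRBMostexp}, typos included. Your argument is the standard proof, and it is correct: you take weak record times of the partial sums of $a_j-\alpha_1$ and bound each increment of the running maximum by $L-\alpha_1$. Your corrections $M\to Q$ and $1\le n_1$ are also right. With $1<n_1$ the statement already fails for $Q=1$, for example with $a_1=\alpha_2$.

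The one thing to fix is your third step, because perturbing $\alpha_1$ is neither needed nor effective.

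\emph{Case $L>\alpha_2$.} Strictness already follows from your own chain of inequalities.
\begin{itemize}
\item Equality $l=\kappa Q$ would force $B_Q=R_Q$, so $Q$ is a record.
\item It would also force every increment $R_{m_i}-R_{m_i-1}$ to equal $L-\alpha_1$. That requires $R_{m_i-1}=B_{m_i-1}$, i.e.\ $m_i-1$ is a record or equals $0$.
\item Descending from $Q$, every time in $\{1,\dots,Q\}$ is then a record. Hence $l=Q$ and $\kappa=1$, i.e.\ $L=\alpha_2$, a contradiction.
\end{itemize}

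\emph{Case $L=\alpha_2$.} Here $\kappa=(L-\alpha_1)/(L-\alpha_1)=1$ for every choice of $\alpha_1$. Lowering $\alpha_1$ therefore changes nothing, and $l>Q$ is simply impossible.

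So the correct statement is $l\ge\kappa Q$, with strict inequality whenever $L>\alpha_2$. In \Cref{prop.goodestimatesEH} one has $L=1>1-\rho=\alpha_2$, so the strict form holds there. In any case, only $l\ge\kappa Q$ is actually used in that proof.
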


Let us proceed to the proof of \Cref{prop.goodestimatesEH}.

\begin{proof}[Proof of \Cref{prop.goodestimatesEH}]
Fix $0< \chi < \min \{\CEA, \Gap\}$ and fix
\[
0<\rho < \frac{1}{10} \cdot \frac{\chi}{4 ( \chi + 6\log H)},\] where $H$ is given by \eqref{eq.H}. Fix  $N\in \mathbb{N}$ large enough so that for any $u$-Gibbs measure $\mu$, $\mu(K^*_N)>1-\rho$,  for $*=-,+$, where
\[
\widehat{K}^+_{N} := \{ (y,[v])\in \hM :  \|D_yf^{N}|_{E}\| \geq e^{\chi N}\},
\]
and 
\[
\widehat K^-_N:= \{(y,[v]) \in \hM: \|D_yf^{N}|_{E}\| \leq e^{-\chi N} \}.
\]
This is possible because of the continuity of Lyapunov exponents among $u$-Gibbs measures as we saw in \Cref{Sec:uniformpesinblock}. It is easy to see that these sets are compact. Let $\mathcal{U}^+$ be small open neighborhoods of $\widehat{K}_N^+$ with the property that for any $(y,[v]) \in \mathcal{U}^+$, we have
\[
\|D_yf^N|_{[v]}\| > e^{\frac{\chi}{2} N}.
\]
Define in an analogous way $\mathcal{U}^-$ an open neighborhood of $\widehat{K}_N^-$.  For each $n\in \mathbb{N}$ consider the sequence of projective measures given by
\[
\displaystyle \nu_n^+(x):= \frac{1}{n} \sum_{j=0}^{n-1} \delta_{\hf^{jN}(x,[F(x)])} \textrm{ and } \nu_n^-(x):= \frac{1}{n} \sum_{j=0}^{n-1} \delta_{\hf^{jN}(x, E^{-c}(x))}.
\]

Since in $D'$, $F(x)$ is not tangent to $E^{-c}$, by \Cref{lem.projconvgotherdirections}, for any $x \in D'$, any accumulation measure of $\nu_n^+(x)$ is a projective $u$-Gibbs, in particular, supported on unstable directions. Moreover, we know that any accumulation measure of $\nu_n^-(x)$ is supported on stable directions. Therefore, for $n$ large, we have that
\[
\nu^*_n(x)(\mathcal{U}^*) > 1-\rho, \textrm{ for }*=s,u. 
\]

Let us first work with $\nu_n^+(x)$.  Take $L = 1$, $a_j = \mathds{1}_{\mathcal{U}^+}(\hf^{jN}(x,[F(x)]))$, $\alpha_2 = 1-\rho$ and $\alpha_1 = 1-\beta$, for $\beta=10 \rho$.  Observe that 
\[
\kappa = \frac{\alpha_2 - \alpha_1}{L-\alpha_1} = \frac{\beta - \rho}{\beta} = 1-\frac{\rho}{\beta} = \frac{9}{10}.
\]
By \Cref{lem.plissfinite}, we obtain times $1< n_1^+ < \cdots< n_l^+ \leq n$, with $l > \kappa n$ such that
\[
\displaystyle \frac{1}{n^+_i-k}\sum_{j=k}^{n_i-1}\mathds{1}_{\mathcal{U}^+}(\hf^{jN}(x,[F(x)])) \geq 1-\beta. 
\]
Write $I^+_n(x):= \{n_1^+, \cdots, n_l^+\}$.  Similarly,  by \Cref{lem.plissfinite} applied to $\nu_n^-(x)$, we find Pliss times $1 < n_1^- < \cdots < n_{l'}^- \leq n$ such that $l' > \kappa n$ and 
\[
\displaystyle \frac{1}{n^-_i-k}\sum_{j=k}^{n_i-1}\mathds{1}_{\mathcal{U}^-}(\hf^{jN}(x,E^{-c}(x))) \geq 1-\beta. 
\]
Write $I^-_n(x) := \{n_1^-, \cdots, n_{l'}^-\}$.  Define $I_n(x) := I_n^+(x) \cap I_n^-(x)$. Observe that 
\[
\# I_n \geq \left(1-\frac{2\rho}{\beta}\right) n = \frac{8}{10} n.
\]
Take  $\beta = 10\rho$.

\begin{claim}\label{claim.gammae}
Each $n_i\in I_n(x)$ belongs to $\widehat{\Gamma}^e_{\frac{\chi}{4}N,N}(x)$.
\end{claim}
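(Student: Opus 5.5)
The plan is to verify the defining inequality of $\widehat{\Gamma}^e_{\frac{\chi}{4}N,N}(x)$ by counting good indices. For $n_i\in I_n(x)$ and $0\le k<n_i$ we must show
\[
\sum_{j=k}^{n_i-1}\left(\varphi^u_N-\tilde{\Delta}_N\right)(f^{jN}(x))\ \geq\ \frac{\chi}{4}N\,(n_i-k).
\]
Since $n_i$ lies in both $I^+_n(x)$ and $I^-_n(x)$, the Pliss property from \Cref{lem.plissfinite} says the following for every window $[k,n_i)$:
\begin{itemize}
\item[(a)] at least a fraction $1-\beta$ of the indices $j$ satisfy $\hf^{jN}(x,[F(x)])\in\mathcal{U}^+$;
\item[(b)] at least a fraction $1-\beta$ satisfy $\hf^{jN}(x,E^{-c}(x))\in\mathcal{U}^-$.
\end{itemize}
Hence at least a fraction $1-2\beta$ of the indices in the window are good for both, and at most $2\beta(n_i-k)$ indices are bad.

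The first step is to estimate the summand on doubly good indices. On $\mathcal{U}^-$ we have $\log\norm{D f^N|_{E^{-c}}}\le -\frac{\chi}{2}N$, i.e.\ $-\varphi^s_N\le-\frac{\chi}{2}N$. On $\mathcal{U}^+$ the center line $F_{jN}=D_xf^{jN}F(x)$ is expanded by at least $e^{\frac{\chi}{2}N}$, and this should give $\varphi^u_N\ge\frac{\chi}{2}N$. Consequently $-\varphi^s_N-\varphi^u_N\le-\chi N<0$, so $\tilde\Delta_N=0$, and the summand is at least $\frac{\chi}{2}N$.

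On a bad index we use only crude bounds. We have $\varphi^u_N\ge -N\log H$, and $\tilde\Delta_N\le \abs{\varphi^s_N}+\abs{\varphi^u_N}\le 2N\log H$. So the summand is at least $-3N\log H$, or $-6N\log H$ if we are generous with the constants (for instance if $D f^N$ is not controlled by $H^N$ exactly, or to absorb the angle constant below). Summing over the window gives
\[
\sum_{j=k}^{n_i-1}(\varphi^u_N-\tilde\Delta_N)\ \ge\ (n_i-k)N\Big[(1-2\beta)\tfrac{\chi}{2}-2\beta\cdot 3\log H\Big]
\ \ge\ (n_i-k)N\Big[\tfrac{\chi}{2}-\beta(\chi+6\log H)\Big].
\]
With $\beta=10\rho$ and $\rho<\frac{1}{10}\cdot\frac{\chi}{4(\chi+6\log H)}$ we have $\beta(\chi+6\log H)<\frac{\chi}{4}$, so the bracket exceeds $\frac{\chi}{4}$. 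This is exactly why $\rho$ was chosen that way, and the claim follows.

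The main obstacle is justifying $\varphi^u_N\ge\frac{\chi}{2}N$ on good indices (up to a harmless constant). The difficulty is that $\varphi^u_N$ is the conorm of $Df^N$ on the whole $(\dim E^u+1)$-dimensional space $F'_{jN}$, while $\mathcal{U}^+$ only controls the center line $F_{jN}=F'_{jN}\cap E^c$. I would first split $F'_{jN}=F_{jN}\oplus G_{jN}$, where $G_{jN}$ is the graph over $E^u$ of the transverse part. Since $T_xD\cap(E^s\oplus E^{-c})(x)=\{0\}$, the iterates $G_{jN}$ converge to $E^u$ exponentially fast. Domination then gives two facts:
\begin{itemize}
\item vectors in $G_{jN}$ expand at least as fast as center vectors;
\item the angle between $F_{jN}$ and $G_{jN}$ is bounded below by a constant $c(x)>0$ for all large $j$.
\end{itemize}
Together these give $m(Df^N|_{F'_{jN}})\ge c'\,\min\{\norm{Df^N|_{F_{jN}}},m(Df^N|_{E^u})\}$. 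The factor $\log c'$ is an additive constant per index; it can be absorbed by taking $N$ larger (the sets $\mathcal{U}^\pm$ have margin $\frac{\chi}{2}N$) or by discarding finitely many initial indices, which does not affect densities. If this becomes delicate, a fallback is to read $\varphi^u_N$ directly as the center contribution $\log\norm{Df^N|_{F_{jN}}}$, in line with the remark preceding the definition that only $F(x)$ matters.
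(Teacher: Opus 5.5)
Your proposal is correct and is essentially the paper's argument. It uses the same count of doubly good indices from the two Pliss properties, gets $\tilde\Delta_N=0$ on them, applies the same crude bounds $\varphi^u_N\ge -N\log H$ and $\tilde\Delta_N\le 2N\log H$ on the at most $2\beta(n_i-k)$ bad indices, and concludes with the same choice $\beta=10\rho$.

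The obstacle you flag does not arise. The paper defines $\varphi^u_N$ through the one-dimensional center line $F_{jN}=D_xf^{jN}\bigl(T_xD\cap E^c(x)\bigr)$, so $\varphi^u_N\ge\frac{\chi}{2}N$ on good indices follows directly from the definition of $\mathcal{U}^+$; your fallback reading is exactly the paper's. Also keep the exact per-index bound $-3N\log H$: the ``generous'' $-6N\log H$ would not fit the chosen $\rho$.
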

\begin{proof}
Fix $0 \leq k < n_i$ and write 
\[G_{n_i}(k):= \{j\in [k,n_i-1]: \hf^{jN}(x,F(x)) \in \mathcal{U}^+ \textrm{ and } \hf^{jN}(x,E^{-c}(x)) \in \mathcal{U}^-\}.\]
By the definition of the Pliss times we are considering, we have that
\[
\# G_{n_i}(k) \geq (1-2\beta)(n_i - k).
\]
Observe that for any $j\in G_{n_i}(k)$, we have that $\tilde{\Delta}_N(f^{jN}(x)) = 0$.  For any other  $j$, we have $\tilde{\Delta}_N(f^{jN}(x)) \leq  2 N \log H$.  Therefore,
\[
\begin{array}{lll}
\displaystyle \sum_{j=k}^{n_i-1} \tilde{\Delta}_N(f^{jN}(x)) & =& \displaystyle  \left( \sum_{j\in G_{n_i}(k)} \tilde{\Delta}_N(f^{jN}(x)) \right) + \left(\sum_{j\in [k,n_i-1] - G_{n_i}(k)} \tilde{\Delta}_N(f^{jN}(x)) \right) \\
& \leq & \displaystyle 4 \beta (n_i-k)N \log H. 
\end{array} 
\]
We have
\[
\begin{array}{lll}
\displaystyle \sum_{j=k}^{n_i-1} (\varphi_N^u - \tilde{\Delta}_N) (f^{jN}(x))& = & \displaystyle  \sum_{j=k}^{n_i-1} \varphi_N^u(f^{jN}(x)) - \sum_{j=k}^{n_i-1} \tilde{\Delta}_N(f^{jN}(x))\\
& \geq & \displaystyle  \left( \sum_{j\in G_{n_i}(k)} \varphi^u_N(f^{jN}(x)) \right) + \left(\sum_{j\in [k,n_i-1] - G_{n_i}(k)} \varphi^u_N(f^{jN}(x)) \right) \\
& & \displaystyle - 4\beta (n_i - k)N \log H \\
&\geq & \displaystyle \# G_{n_i}(k) N \frac{\chi}{2} - (n_i-k -\# G_{n_i}(k)) N \log H - 4\beta(n_i-k) N \log H \\
& \geq & \displaystyle \left((1-2\beta) \frac{\chi}{2} - 6\beta \log H \right) N (n_i - k) 
\end{array}
\]
By our choice of $\beta$, we have that 
\[
 \left((1-2\beta) \frac{\chi}{2} - 6\beta \log H \right) > \frac{\chi}{4}.
\]
We conclude that for any $0 \leq k < n_i$
\[
\displaystyle \sum_{j=k}^{n_i-1} (\varphi_N^u - \tilde{\Delta}_N) (f^{jN}(x)) \geq \frac{\chi}{4} N (n_i - k). \qedhere
\]

\end{proof}

Fix $q \in \mathbb{N}$ and consider $\mathcal{I}_n(x) = I^-_n(x) \cap (q+1, n]$.  Observe that, 
\[
\#\left( I_n^-(x)\cap [0, q] \right)  \leq q. 
\]
Therefore,
\[
\frac{\#\mathcal{I}_n(x)}{n} \geq (1-\frac{\rho}{\beta})  - \frac{q}{n}.
\]
In particular, for each $q\in \mathbb{N}$, if $n$ is large enough so that $\frac{q}{n} < \frac{1}{10}$, we obtain
\[
\frac{\# \mathcal{I}_n(x)}{n} \geq \frac{8}{10}.
\]

\begin{claim}
Each $n_i \in \mathcal{I}_n(x)$ belongs to $\widehat{\Gamma}^s_{\frac{\chi}{4}N, q, N}(x)$.
\end{claim}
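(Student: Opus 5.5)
The approach is to mirror the proof of \Cref{claim.gammae}, now for the stable side. The idea is to read the Pliss property of $n_i\in I^-_n(x)$ backwards from $n_i$ over a window of length at most $q$. Fix $n_i\in\mathcal{I}_n(x)$ and $0\le k\le q$; since $n_i>q+1$, the window $[n_i-k,n_i-1]$ lies inside $[0,n_i-1]$.

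The Pliss property from \Cref{lem.plissfinite}, applied with starting index $n_i-k$, gives
\[
\#\bigl\{j\in[n_i-k,n_i-1]:\hf^{jN}(x,E^{-c}(x))\in\mathcal{U}^-\bigr\}\ge(1-\beta)k .
\]
For each such good $j$, the definition of $\mathcal{U}^-$ gives $\|D_{f^{jN}(x)}f^N|_{E^{-c}}\|\le e^{-\frac{\chi}{2}N}$. Since $E^{-c}$ is $Df$-invariant on $G$ (\Cref{thm:existsweakbundle}), the point $\hf^{jN}(x,E^{-c}(x))$ is exactly $(f^{jN}(x),E^{-c}(f^{jN}(x)))$. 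Hence $\varphi^s_N(f^{jN}(x))\ge \frac{\chi}{2}N$ at every good index. At the remaining indices I only use the crude bound $\varphi^s_N\ge -N\log H$.

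Summing over the window,
\[
\sum_{j=n_i-k}^{n_i-1}\varphi^s_N(f^{jN}(x))\ \ge\ \Bigl((1-\beta)\tfrac{\chi}{2}-\beta\log H\Bigr)Nk .
\]
The choice $\beta=10\rho<\frac{\chi}{4(\chi+6\log H)}$ makes the bracket exceed $\frac{\chi}{4}$, so the sum is at least $\frac{\chi}{4}Nk$.

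The one point needing care is the indexing in the definition of $\widehat{\Gamma}^s$. As written, that definition has $\lambda(n-k)$ on the right with the sum running from $n-k$ to $n-1$. The intended reading is clearly a sum of $k$ terms bounded below by $\lambda k$, i.e.\ backward contraction over the last $k$ blocks, consistent with $\Gamma^s_{(C,\lambda),m}$. I will interpret it that way, so the estimate above is exactly what is needed and $n_i\in\widehat{\Gamma}^s_{\frac{\chi}{4}N,q,N}(x)$.

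I also need the Pliss estimate to apply to sub-windows ending at $n_i$ starting anywhere in $[0,n_i)$; \Cref{lem.plissfinite} provides exactly this. Beyond the indexing point, the only remaining subtlety is ensuring the good indices use the invariant stable direction, which holds on $D'\subset G$. With these in place the claim follows.
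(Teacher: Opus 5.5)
Your proof is correct and follows the paper's route. The paper proves this claim only by saying it is similar to the computation in the proof of Claim \ref{claim.gammae}, and your argument is that computation on the stable side: count good $\mathcal{U}^-$-indices in each backward window $[n_i-k,n_i-1]$ with $k\le q$, using only the $I^-_n(x)$ Pliss property, since no $\tilde{\Delta}_N$ term appears here.

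Your reading of the right-hand side of $\widehat{\Gamma}^s_{\lambda,q,N}$ as $\lambda k$ rather than $\lambda(n-k)$ is also the right one. Taken literally, the case $k=0$ would demand $0\ge\lambda n$, which no $n$ satisfies.
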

\begin{proof}
The proof is similar to the computations that appeared in the proof of Claim \ref{claim.gammae}. 
\end{proof}

In particular, for each $q \in \mathbb{N}$ and for each $n$ large enough, we have that
\[
\#\left(\widehat{\Gamma}^e_{\frac{\chi}{4}N, N}(x) \cap \widehat{\Gamma}^s_{\frac{\chi}{4}N, q, N}(x) \right) \geq \frac{6}{10}n. 
\]
Hence,
\[
\underline{\delta}\left(\widehat{\Gamma}^e_{\frac{\chi}{4}N, N}(x) \cap \widehat{\Gamma}^s_{\frac{\chi}{4}N, q, N}(x) \right) \geq \frac{6}{10}.
\]

\end{proof}

\begin{proof}[Proof of \Cref{thm.existenceSRB}]
Let $D$ be a $\dim(E^u)+1$ smooth disk as before, such that there is a set $D' \subset (D \cap G)$, where $G$ is the set obtained in \Cref{lem.projconvgotherdirections}, with the property that for any $x\in D'$, $T_xD \cap (E^s \oplus E^{-c})(x) = \{0\}$.

Let $\delta_0, \lambda>0$ and $N\in \mathbb{N}$ be given by \Cref{prop.goodestimatesEH}.  Let $g= f^N$ and consider the sets $\Gamma^e_\lambda(x,(E^s \oplus E^{-c})(x), F(x))$ and $\Gamma^s_{(1,\lambda,q)}(x,(E^s \oplus E^{-c})(x))$ defined using $g$ instead of $f$. Notice that 
\[
\widehat{\Gamma}^e_{\lambda,N}(x) \subset \Gamma^e_\lambda(x,(E^s \oplus E^{-c})(x), F(x)) \textrm{ and } \widehat{\Gamma}^s_{\lambda,q,N}(x) \subset \Gamma^s_{(1,\lambda,q)}(x, (E^s\oplus E^{-c})(x)).
\]
\Cref{prop.goodestimatesEH} implies that for any $x\in D'$, 
\[
\underline{\delta} \left( \Gamma^e_\lambda(x,(E^s \oplus E^{-c})(x),F(x)) \cap \Gamma^s_{(1,\lambda,q)}(x,(E^s \oplus E^{-c})(x)) \right) > \delta_0,
\]
for all $q$ sufficiently large.  Therefore, 
\[
\displaystyle \lim_{q\to +\infty}\underline{\delta} \left( \Gamma^e_\lambda(x,(E^s\oplus E^{-c})(x),F(x)) \cap \Gamma^s_{(1,\lambda,q)}(x,(E^s \oplus E^{-c})(x)) \right)  \geq \delta_0.
\]

Recall the definition of a Pesin block given in \Cref{Pesin.block}. For each $\delta>0$, by \Cref{prop.uniform.pesin} there is a Pesin block $\Delta_{C,\chi, \varepsilon}$ with the property that $\mu(\Delta_{C,\chi,\varepsilon})>1-\delta$ for every $u$-Gibbs measure $\mu$. Take $\overline{\theta}>0$ small such that for any $x\in \Delta_{C,\chi,\varepsilon}$, the angle between $E^{-c}(x)$ and $E^{+c}(x)$ is greater than $2\overline{\theta}$.

Consider $\Delta_{C,\chi,\varepsilon}^* := \{(x,E^{*c}(x)): x\in \Delta_{C,\chi,\varepsilon}\}$ for $* = -,+$. The sets $\Delta_{C,\chi,\varepsilon}^*$ are compact and disjoint. Take small neighborhood $U^*$ of $\Delta_{C,\chi,\varepsilon}^*$ with the property that if $(x,v^*) \in U^*$ for $*=-,+$ then the angle between $v^-$ and $v^+ $ is at least $\overline{\theta}$. 

Consider $\nu_n^-(x) := \frac{1}{n} \sum_{j=0}^{n-1} \delta_{\hat{g}^j(x, [E^{-c}(x)])}$ and $\nu_n^+(x) := \frac{1}{n} \sum_{j=0}^{n-1} \delta_{\hat{g}^j(x, [F(x)] )}$.  By \Cref{lem.projconvgotherdirections}, we have that for $n$ large
\[
\nu^*_n(x)(U^*) > 1-2\delta. 
\]
Therefore, 
\[
\overline{\delta} \left(\{n\in \mathbb{N}: \theta(g^n(x),E^{-c}(g^n(x)), F_{nN}(x)) < \overline{\theta}\}\right) <4\delta. 
\]
This shows that 
\[
\lim_{\overline{\theta} \to 0} \overline{\delta} \left(\{n\in \mathbb{N}: \theta(g^n(x),E^-(g^n(x)), F_{nN}(x))< \overline{\theta}\}\right) =0.
\]
By \Cref{thm.CDPmain}, $g$ has an SRB measure. Observe that if $\mu$ is an SRB measure for $g$, then
\[
\frac{1}{N} \sum_{j=0}^{N-1} f^j_* \mu
\]
is an SRB measure for  $f$.

We are left to show that the union of the basins of the SRB measures covers  a set of full volume. 

Suppose not. Then, there exists a set $\mathcal{B}$ having positive volume, such that $\mathcal{B}$ does not intersect any of the basins of SRB measures. Up to removing a set of zero volume, we may assume that every point in $\mathcal{B}$ is contained in $G$. In the same way as above, we can fix a disk $D$ having dimension $\dim(E^u) +1$, a set $\mathcal{B}' \subset \mathcal{B} \cap D$, where for every $x\in \mathcal{B}'$, the direction $E^{-c}(x)$ is not contained in $T_xD$, and the $m_D$-measure of $\mathcal{B}'$ is positive. 

Recall the outline of the construction of SRB measures given in \Cref{subsetction.effectivehyperbolicity}. For some fixed constants $\mathbf J = (C, \overline{\lambda})$, we can define the set $S_{\mathbf J, q, n}(\mathcal{B}')$ in the same way as $S_{\mathbf J, q ,n}$, but restricting to points in $\mathcal{B}'$. Following the exact same steps, we obtain constants $\beta'', \beta'>0$, a set $A_n \subset S_{\mathbf J, q, n}$ such that
\begin{enumerate}
    \item For $x\in A_n$, let $W_n^x$ be as in \eqref{eq.djx}. Then,
    \begin{equation}\label{eq.betalinha}
    m_D(W_n^x \cap S_{\mathbf J, q, n}(\mathcal{B}')) \geq \beta' m_D(W_n^x).
    \end{equation}
    \item For $x,y \in A_n$ with $x\neq y$, $W_n^x \cap W_n^y = \emptyset$.
    \item Set $W_{\mathbf J, q, n} = \bigsqcup_{x\in A_n} W_n^x$. Then
    \[
    m_D(W_{\mathbf J, q ,n}) \geq \beta''>0.
    \]
\end{enumerate}
In what follows, up to replacing $f$ by $f^N$, we will write the argument for $f$. 
For a slowly increasing sequence $q_n \to +\infty$ one sets 
\[
\displaystyle \nu_n = \frac{1}{n} \sum_{j=q_n}^{n-1} f^j_* \left(m_D|_{W_{\mathbf J, q_n, j}}\right), 
\]
and the total mass of $\nu_n$ is greater than $\frac{1}{2} \beta''>0$. For $\mu_n = \frac{1}{n} \sum_{j=0}^{n-1} f^j_* m_D$, we have that $0 \neq \nu_n \leq \mu_n$. Recall that, up to considering a subsequence, we had that 
\[
\mu_n \longrightarrow \mu \textrm{ and } \nu_n \to \nu,
\]
and the measure $\nu$ has the following properties:
\begin{itemize}
    \item $\nu(M) \geq \frac{1}{2} \beta''$;
    \item Proposition $4.4$ from \cite{CDP} gives that $\nu$ can be written as a convex combination of measures $\rho_\alpha \der m_{D_\alpha}$, where $D_\alpha$ is a genuine unstable disk of uniform size and $\rho_\alpha$ is a density uniformly bounded away from $0$ and $\infty$. 
    \item $\nu$ is part of an invariant measure.
\end{itemize}
In particular, there are ergodic components of $\mu$ with the SRB property. 

Since $S_{\mathbf J, q_n, j} \subset \mathcal{B}'$, by \eqref{eq.betalinha} and \eqref{eq.density}, for $j\geq N(q_n)$ and $x\in A_j$, we have that
\begin{equation}
    \label{controlafteriterate}
    \displaystyle \frac{ f^j_*m_D\left(D_j^x\cap f^j(\mathcal{B}')\right)}{f^j_*m_D(D_j^x)} \geq \frac{\beta'}{L}.
\end{equation}

By \Cref{prop.uniform.pesin} and since $\nu(M) > \frac{1}{2} \beta''$, we can fix $\varepsilon>0$ sufficiently small and find a Pesin block, which we will denote it by $\Lambda$ such that the following holds: for a set of points $\mathcal{R}$ having positive $\nu$-measure, for $x\in \mathcal{R}$ we have:
\begin{itemize}
    \item there exists an SRB measure $\mu_x$ for which $x$ is a typical point;
    \item there exists an unstable disk $D(x)$ of uniform size containing $x$ such that $m_{D(x)}(\Lambda)>1-\varepsilon$;
    \item there exist sequences $\left(n_k\right)_k$, with $n_k \to +\infty$,  $\left(x_k\right)_k$, where $x_k \in A_{n_k}$ such that 
    \[
    D_{n_k}^{x_k} \longrightarrow D(x),
    \]
    where the convergence happens in the $C^1$-topology.
\end{itemize}

Consider the set 
\[
\Lambda^s_x:= \bigcup_{y \in \Lambda \cap D(x)} W^{s-}_{loc}(y),
\]
where $W^{s-}_{loc}(y)$ is the local Pesin stable manifold of $y$, which has uniform size and it forms an absolutely continuous lamination for points in a fixed Pesin block. By \eqref{controlafteriterate}, and using the $C^1$-convergence of $D_{n_k}^{x_k}$ to $D(x)$, we have that for $k$ large enough
\[
\displaystyle \frac{ f^{n_k}_*m_D\left(D_{n_k}^{x_k}\cap \Lambda^s_x\right)}{f^{n_k}_*m_D(D_{n_k}^{x_k})}> \frac{1-2\varepsilon}{L}.
\]
In particular, for $\varepsilon$ small enough, and for $k$ large, we obtain that 
\[
f^{n_k}_*m_D \left(f^{n_k}(\mathcal{B}') \cap \Lambda^s_x\right)>0,
\]
but this implies that, up to removing a set of zero measure, the set $f^{n_k}(\mathcal{B}')  \cap \Lambda^s_x$ belongs to the basin of $\mu_x$, which is a contradiction with our definition of $\mathcal{B}'$. So, $\mathcal{B}$ must have zero volume. \qedhere

\end{proof}

\section{Exponential mixing of the SRB measures} \label{Sec:SPR}

In this section we show the exponential mixing of the SRB measures.  The main tool we will use is the recent work of Buzzi-Crovisier-Sarig \cite{BCSSPR} about the \emph{strong positive recurrence} introduced to study measures of maximal entropy. 

\subsection{Strong positive recurrence} \label{SPRsub}

In this subsection we will introduce the material needed from \cite{BCSSPR}.  We will focus the statements for the geometric potential, which is the one we will be considering in this paper. 

Let $f$ be a $C^{1}$ diffeomorphism of a closed manifold and fix $\chi >0$. An ergodic invariant measure $\mu$ is $\chi$-hyperbolic if the absolute value of all of its Lyapunov exponents are greater than $\chi$.

Given constants $C,\chi, \varepsilon>0$, recall that in \Cref{Pesin.block} we defined a $(C,\chi, \varepsilon)$-Pesin block. In a $(C,\chi,\varepsilon)$-Pesin block $\Delta$, the Oseledets bundles $E^-(x)$ and $E^+(x)$ depend continuously with the choice of $x\in \Delta$.  The \emph{recurrent non-uniformly hyperbolic set} is defined by
\[
\mathrm{NUH}'(f):= \displaystyle \bigcup_{\chi >0} \bigcap_{\varepsilon>0} \bigcup_{C\geq 1} \bigcup_{(C,\chi,\varepsilon)-\textrm{Pesin block }\Lambda} \{x: x\textrm{ is a limit of periodic points in $\Lambda$}\}.
\]

\begin{definition}\label{def:homoclinicrelation}
Given two points $x,y\in \mathrm{NUH}'(f)$, we say that they are \emph{homoclinically related} if the Pesin stable manifold of $x$ has a transverse intersection with the Pesin unstable manifold of $y$, and the Pesin unstable manifold of $x$ has a transverse homoclinic intersection with the Pesin stable manifold of $y$.  In this case, we write $x\sim_h y$. 
\end{definition}

This relation gives an equivalence relation in $\mathrm{NUH}'(f)$ (see Proposition $2.6$ in \cite{BCSSPR}).  The equivalence classes of $\sim_h$ in $\mathrm{NUH}'(f)$ are called the \emph{Borel homoclinic classes}.  The \emph{period} of a Borel homoclinic class $X$ is the greatest common divisor of all the periods of periodic points in the class.  

Let $X$ be an invariant Borel set and $\phi: X \to \mathbb{R} \cup \{-\infty\}$ be a Borel function such that $\sup \phi < +\infty$.  Given an invariant measure carried by $X$, we define $P(\mu, \phi) = h(\mu) + \int \phi d\mu$.  Define the TOP pressure of $f$ restricted to $X$ by
\[
P_{\mathrm{TOP}}(f|_X,\phi) := \sup \{P(\mu, \phi): \mu \textrm{ is an invariant measure supported on $X$}\}.
\]

\begin{definition}[Strong Positive Recurrence, Definition $1.12$ in \cite{BCSSPR}]
A diffeomorphism $f$ is SPR for $\phi$ on an invariant Borel set $X$ if there exists $\chi>0$ such that for each $\varepsilon>0$, there are $C\geq 1$,  a $(C,\chi, \varepsilon)$-Pesin block $\Lambda$ and numbers $P_0< P_{\mathrm{TOP}}(f|_X,\phi)$ and $\tau >0$ such that: for every ergodic measure $\nu$ on $X$, if $P(\nu,\phi) >P_0$ then $\nu(\Lambda) >\tau$. We say that $f$ is SPR for $\phi$ if $X$ can be taken to be the entire manifold. 
\end{definition}

An invariant measure $\nu$ that verifies $P(\nu, \phi) = P_{\mathrm{TOP}}(f|_X, \phi)$ is called an \emph{equilibrium state}. In what follows, $\phi$ is either a H\"older continuous function or the geometric potential, which is defined as 
\[
\phi(x) = -\log |\det D_xf|_{E^+(x)}|
\]
if the full Oseledets unstable direction exists, or $-\infty$ if the direction doesn't exist.
Buzzi-Crovisier-Sarig showed that this condition is translated to the SPR condition for countable Markov shifts. Using the results that were already known for thermodynamical formalism of countable Markov shifts, they obtain a spectral gap property which implies several good statistical properties.

\begin{theorem}[Theorem $G$ in \cite{BCSSPR}]\label{thm.sprexponential}
Let $f$ be a $C^{1+\alpha}$-diffeomorphism on a compact manifold.  Let $X$ be a Borel homoclinic class with period $q$ and let $\phi$ be either a H\"older continuous function or the  geometric potential on $X$.Suppose that $f$ is SPR for $\phi$ on $X$. Then the following properties hold:
\begin{enumerate}
\item \textbf{Existence and Uniqueness:} There exists a unique equilibrium state measure $\mu_{\phi}$ on $X$.
\item \textbf{Exponential mixing and other properties:} $\mu_{\phi}$ is isomorphic to the product of a Bernoulli scheme with a cyclic permutation of $q$ points.  That is, there are probability measures $\mu_1, \cdots , \mu_q$ such that
\[
\mu_\phi = \frac{1}{q} \sum_{i=1}^q \mu_i,
\]
such that $f_*\mu_i = \mu_{i+1}$ with the convention that $q+1=1$. The following exponential mixing property holds: for every $\beta>0$, there are $\theta \in (0,1)$ and $K>1$ such that for all $\beta$-H\"older continuous functions $\eta, \psi:M \to \mathbb{R}$, for any $i\in \{1, \cdots, q\}$,
\[
\displaystyle \left| \int \eta . (\psi \circ f^{nq} ) d\mu_i - \int \eta d\mu_i \int \psi d\mu_i \right| \leq K \|\eta\|_\beta \|\psi\|_\beta \theta^n, \textrm{  for }n\geq 0, 
\]
where 
\[
\|\eta\|_\beta:= \sup |\eta| + \sup \left\{ \frac{|\eta(x) - \eta(y)|}{d(x,y)^\beta}: \textrm{ } x,y \in M, x\neq y\right\}.
\]
The measure $\mu_{\phi}$ also verifies other quantitative statistical properties, such as large deviations, almost sure invariance principle, among others (see Theorem $11.13$ in \cite{BCSSPR}).
\end{enumerate}
\end{theorem}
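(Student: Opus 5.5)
The statement is Theorem G of \cite{BCSSPR}, and in this paper it is imported as a black box. If I had to reconstruct it, I would reduce everything to the thermodynamic formalism of countable Markov shifts, where strong positive recurrence is classical. The argument runs in four steps.

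First, fix $\chi>0$ witnessing the SPR property on the Borel homoclinic class $X$. I would use the symbolic dynamics for $C^{1+\alpha}$ diffeomorphisms of Sarig and Ben Ovadia, which produces an irreducible countable state Markov shift $(\Sigma,\sigma)$ together with a H\"older, finite-to-one coding $\pi:\Sigma\to M$. This coding should have three properties:
\begin{itemize}
\item its image carries every $\chi$-hyperbolic ergodic measure homoclinically related to $X$;
\item irreducibility of $\Sigma$ corresponds to the homoclinic relation;
\item the period of $\Sigma$ equals the period $q$ of $X$, the gcd of periods of periodic points in the class.
\end{itemize}
The potential $\phi$ lifts to $\Phi=\phi\circ\pi$. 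For H\"older $\phi$ this lift is automatically H\"older. For the geometric potential I would use that, on the coded set, the Oseledets unstable direction is H\"older along the coding, so $\Phi$ is locally H\"older (summable variations). Since $\pi$ is finite-to-one, lifting preserves entropy, so $P_{\mathrm{TOP}}(\sigma,\Phi)=P_{\mathrm{TOP}}(f|_X,\phi)$.

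Second, I would translate the geometric SPR condition into Sarig's SPR condition for $(\Sigma,\Phi)$. The Pesin block $\Lambda$ given by the definition is covered by finitely many cylinders of $\Sigma$; this is the step where uniform control of Pesin charts on $\Lambda$ is needed. The hypothesis that every ergodic $\nu$ with $P(\nu,\phi)>P_0$ gives $\Lambda$ mass at least $\tau$ then says that high-pressure measures cannot escape to infinity in $\Sigma$. I would show this forces the Gurevich pressure of the induced system on a fixed state to exceed the pressure of first-return loops avoiding that state, which is Sarig's SPR. I expect this translation to be the main obstacle: one has to check that measures of pressure close to $P_{\mathrm{TOP}}$ on the symbolic side project to measures of pressure above $P_0$, and that a finite union of states captures a fixed proportion $\tau$ of mass uniformly. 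I would try to prove it via a tightness argument on the lifts combined with upper semicontinuity of pressure on finite unions of cylinders.

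Third, with SPR on an irreducible shift of period $q$, I would invoke the known results for countable Markov shifts. These give existence and uniqueness of an equilibrium state $m_\Phi$, and a spectral gap for the Ruelle operator acting on H\"older functions of $\sigma^q$ restricted to each cyclic class. The spectral gap yields Bernoulli-times-rotation structure, exponential decay of correlations, and the other limit laws.

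Fourth, I would push everything down by $\mu_\phi=\pi_*m_\Phi$. Uniqueness of the equilibrium state on $X$ follows because any equilibrium state on $X$ lifts to an equilibrium state on $\Sigma$. Finally, H\"older observables $\eta,\psi$ on $M$ pull back to H\"older functions on $\Sigma$, since $\pi$ is H\"older, so the exponential mixing estimate for each cyclic piece $\mu_i$ transfers with constants depending only on $\|\eta\|_\beta$ and $\|\psi\|_\beta$.
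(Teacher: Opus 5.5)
The paper does not prove this statement. It is quoted from \cite{BCSSPR} and used purely as a black box, to pass from \Cref{thm.sprueap} to \Cref{mainthm:TheoremC}. So treating it as an imported result is exactly what the paper does, and your four-step outline is the strategy of \cite{BCSSPR}: coding by an irreducible countable Markov shift, transfer of SPR to the symbolic side, spectral gap for SPR shifts, then push-forward.

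As a reconstruction, though, several steps are not justified as written.

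\textbf{Pressure equality and lifting.} The equality $P_{\mathrm{TOP}}(\sigma,\Phi)=P_{\mathrm{TOP}}(f|_X,\phi)$ does not follow from $\pi$ being finite-to-one, and neither does the claim that every equilibrium state on $X$ lifts. The coding is only guaranteed to carry $\chi$-hyperbolic measures, while measures carried by $X$ may have exponents in $(-\chi,\chi)$. What rescues both claims is the SPR hypothesis itself. An ergodic $\nu$ on $X$ with $P(\nu,\phi)>P_0$ charges a $(C,\chi,\varepsilon)$-Pesin block, hence has no exponent in $(-\chi,\chi)$ and is coded. So SPR must already enter your first and fourth steps, not only the second.

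\textbf{Which covering property you need.} In the transfer step, you need that every recurrent symbolic point coding a point of $\Lambda$ has its zeroth symbol in a fixed finite set $F$. That gives $m([F])\geq \nu(\Lambda)>\tau$ for every lift $m$ of $\nu$. Knowing that $\Lambda$ is covered by the images of finitely many cylinders is not enough, since $\pi$ is not injective and a point of $\Lambda$ may also be coded through other symbols. The right property comes from the coding's comparison between symbol parameters and Pesin parameters; this is where the uniform control on $\Lambda$ actually enters.

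\textbf{From mass on $[F]$ to Sarig's condition.} You must go from ``high-pressure ergodic measures give mass at least $\tau$ to $[F]$'' to Sarig's loop condition: the Gurevich pressure strictly exceeds the growth rate of weighted first-return loops at a state. This needs a genuine characterization of symbolic SPR. One option is its equivalence with the pressure at infinity lying strictly below the Gurevich pressure. Another is a contrapositive construction, from long first-return excursions, of high-pressure ergodic measures with vanishing mass on $[F]$. Tightness plus semicontinuity on finitely many cylinders does not by itself give the loop inequality, because pressure is not upper semicontinuous on a countable shift.

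\textbf{Bernoulli structure downstairs.} Since $\pi$ is only finite-to-one, the Bernoulli-times-rotation structure of $\mu_\phi$ does not simply push forward. It also needs Ornstein's theorem that factors of Bernoulli systems are Bernoulli.
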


In what follows, we will work with the geometric potential and denote it by $\phi$. 
  By Ruelle's inequality, $h(\mu) \leq -\int \phi d\mu$, where the right hand side coincides with the sum of the positive Lyapunov exponents.  Define the \emph{pressure} of $\mu$ by
\[
P(\mu) = h(\mu) +\int \phi \dd\mu,
\]
 Observe that $P(\mu) \leq 0$. Moreover, after the work of Ledrappier, Ledrappier Young \cite{Ledhyp,LedYoungI}, $P(\mu) = 0$ if and only if the measure $\mu$ is SRB.  To prove that the SRB measures are exponentially mixing, up to a period, for expanding on average systems, we will show that they are SPR on each Borel homoclinic class supporting an SRB measure. 

\subsection{Unstable entropy and projective measures}\label{subsection.unstableentropy}

 Let $f$ be a partially hyperbolic $\mathcal C^1$-diffeomorphism, and let $\Fu$ be its unstable foliation. For any invariant probability measure $\mu$, we say that a partition $\xi$ is $\mu$-measurable if there exists a family of measurable sets $\{Q_i\}_{i\in \mathbb{N}}$ and a measurable set $R \subset M$ of full $\mu$-measure, such that if $A\in \xi$ is an atom of the partition, then there exists a sequence $(A_i)_i$, where $A_i \in \{Q_i, Q_i^c\}$ such that $A \cap R = \bigcap_i A_i \cap R$. We remark  that if $\xi$ is a measurable partition, then for $\mu-$almost every $x$ the atom of $\xi$ containing $x$ (henceforth, denoted as $\xi(x)$) is a measurable set.  If $\xi, \xi'$ are two (not necessarily measurable) partitions, we say that $\xi>\xi'$ ($\xi$ is finer than $\xi'$) if for $\mu-$ almost every point $x$, $\xi(x) \subset \xi'(x)$. 

\begin{definition}
We say that a $\mu$-measurable partition $\xi$ is $u$-subordinated if for $\mu-$almost every $x$ the following conditions hold:
\begin{itemize}
\item there is $r_0>0$ such that $\xi(x)$ is contained in the ball centered at $x$ of radius $r_0$ inside $\Wu{x}$;
\item $\xi(x)$ contains an open neighborhood of $x$ inside $\Wu{x}$;
\item $\xi$ is increasing, meaning that $\xi>f\xi=\{f(\xi(f^{-1}x))\}$.
\end{itemize}	
 \end{definition}  
The classical construction of Sinai \cite{SinaiMarkov} shows that $\mu$-measurable $u$-subordinated partitions exist.  

The \emph{unstable entropy} of $f$ with respect to $\mu$ is defined by
\begin{equation}
\displaystyle h^u_{\mu}(f) = H_\mu(f^{-1} \xi|\xi),
\end{equation}
where $\xi$ is any $u$-subordinated $\mu$-measurable partition, and $H_\mu(f^{-1}\xi|\xi)$ is the conditional entropy of $f^{-1}\xi$ with respect to $\xi$, which is given by
\[
H_{\mu} (f^{-1}\xi|\xi) = \displaystyle \int - \log \mu^{\xi}_x(f^{-1} \xi(x)) \dd\mu(x).
\]
This value is independent of the choice of $u$-subordinated partition. 

Suppose that $f\in\PHb[2]$, and recall the notation from Section \ref{section.projectiveugibbs}. Given a $\mu$-measurable partition $\xi$, we can use the natural projection $\hpi: \hM \to M$, to define a partition $\hat{\xi}$ on $\widehat{M}$. Given a point $\hat{p}$, we define $\hat{\xi}(\hat{p})$ by 
\begin{equation}\label{eq.unstableprojectivepartition}
	\left(\hat{\pi}|_{\widehat{W}^u(\hat{p})} \right)^{-1}(\xi( \hat{\pi}(\hat{p}))).
\end{equation}
Observe that by construction, the partition $\hat{\xi}$ is finer than the projetive unstable foliation $\widehat{\Fu}$.

\begin{lemma}\label{lemma.hatmumeasurable}
Let $\mu$ be an invariant measure and $\xi$ a $\mu$-measurable $u$-subordinated partition. Let $\hat{\xi}$ be the partition of $\widehat{M}$ induced by $\xi$ defined above. Suppose that $\hat{\mu}$ is any $\hat{f}$-invariant measure that projects to $\mu$. Then, $\hat{\xi}$ is $\hat{\mu}$-measurable. 
\end{lemma}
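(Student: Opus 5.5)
The plan is to build explicit generators for $\hat\xi$ out of generators for $\xi$ together with a countable family of sets that separate points inside the fibres of $\hpi$ along each atom. The definition of $\mu$-measurability supplies a family $\{Q_i\}_{i\in\Nat}$ and a set $R$ with $\mu(R)=1$ such that every atom $A\in\xi$ satisfies $A\cap R=\bigcap_i A_i\cap R$, with $A_i\in\{Q_i,Q_i^c\}$. Put $\hat R=\hpi^{-1}(R)$. Since $\hpi_*\hmu=\mu$, we get $\hmu(\hat R)=1$. The sets $\hpi^{-1}(Q_i)$ are Borel in $\hM$, and they cut $\hat R$ into the pieces $\hpi^{-1}(A)\cap\hat R$. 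Each atom $\hat\xi(\hp)$ lies inside $\hpi^{-1}(\xi(p))$, so what remains is to separate, inside each such piece, the different lifts $(\hpi|_{\hWu{\hq}})^{-1}(A)$, that is, the different "projective plaques" sitting over the same atom $A$.

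To do this I will work in the regular boxes of \Cref{subsection.projectivedynamics}. Fix a finite cover of $M$ by regular boxes $U_1,\dots,U_k$, each with a trivialization $\hpi^{-1}(U_j)\simeq D^u\times D^{cs}\times\Pp E^c(p_j)$ built from the unstable holonomies. By \Cref{thm:saghin} the holonomies are continuous, so the projective coordinate
\[
\Theta_j(\hq)=\left[\tau^2_{p_j}\big(q',\hu_{q',q}(v)\big)\right]\in\Pp E^c(p_j)\simeq \Pp^1
\]
is continuous on $\hpi^{-1}(U_j)$. It is constant along local projective unstable plaques and it separates distinct lifts over the same $u$-plaque. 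Choose a countable base $\{O_m\}$ of $\Pp^1$. The candidate generating family is then
\[
\{\hpi^{-1}(Q_i)\}_{i}\ \cup\ \{\hpi^{-1}(U_j)\cap\Theta_j^{-1}(O_m)\}_{j,m},
\]
which is countable and Borel.

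The main obstacle is that an atom $\xi(p)$ is not contained in a single box, since it has size up to $r_0$. Moreover, $\Theta_j$ is constant only along the local plaque inside $U_j$, and not along the whole atom. I would first try to handle this by refining: replace $\xi$ by $\xi\vee\{U'_1,\dots,U'_k\}$, where $U'_j=U_j\setminus\bigcup_{l<j}U_l$. This partition is still $\mu$-measurable but no longer increasing; only measurability matters for the present lemma, and the atom of this refinement is what gets tested against each $\Theta_j$. That argument alone is not enough, because one still has to recover the full atom $\hat\xi(\hp)$ from its box-pieces. The fix I would use instead is to transport along holonomy to a fixed reference point. By the cocycle property in \Cref{def:linearholonomy}, two lifts of the same atom $A$ either coincide or are disjoint, and they are determined by their value at any single point of $A$. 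So I choose measurably a reference point $c(A)\in A$; this is possible since $\xi$ is measurable and its atoms are contained in compact $u$-balls. I then define
\[
\Psi(\hq)=\big(c(\xi(q)),\ \hu_{c(\xi(q)),q}(\hq)\big)\in\hM.
\]
This map is Borel, because $q\mapsto c(\xi(q))$ is measurable and the holonomy is continuous. It is constant exactly on the atoms of $\hat\xi$ intersected with $\hat R$. Pulling back a countable base of $\hM$ under $\Psi$ then gives the required countable family $\{\hat Q_i\}$, and this establishes the $\hmu$-measurability of $\hat\xi$.

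The delicate point I anticipate is the measurable selection of $c$. I would first try the standard approach for measurable partitions into compact pieces: take $c(A)$ to be the point of $\overline{A}$ minimizing a fixed Borel total order, or use Rokhlin's theorem so that the quotient map $M\to M/\xi$ is Borel and then apply a measurable selection theorem on a full-measure set.
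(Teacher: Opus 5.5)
Your final argument is essentially the paper's proof: the paper also fixes a measurable section $P\mapsto x_P\in P$ of $\xi$ and observes that $(q,[v])\mapsto\bigl(x_{\xi(q)},\hu_{x_{\xi(q)},q}([v])\bigr)$ is measurable with level sets exactly the atoms of $\hat\xi$. Your pull-back of a countable base under $\Psi$ and the remark $\hmu(\hpi^{-1}(R))=1$ only make explicit what the paper leaves implicit, and the regular-box first attempt is unnecessary. One small caution: keep the reference point in $A$ itself (as you state it, and as the paper does) rather than taking it from $\overline{A}$ as in your suggested selection rule, since adjacent atoms in the same leaf can share closure points, and then $\Psi$ would no longer separate them.
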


\begin{proof}
Fix a measurable section $\xi \ni P \mapsto x_P \in P$ and for $(q,[v]) \in \hM$ define 
\[
\Theta\left(q,[w]\right) = (x_{\xi(q)},h^u_{x_{\xi(q)},q}([v])).
\]
The function $\Theta$ is measurable and, observe that, the level sets of $\Theta$ are exactly the atoms of the partition $\hat \xi$.  This shows that $\hat \xi$ forms a measurable partition. 
\end{proof}

Given an $\hat{f}$-invariant measure $\hat{\mu}$ that projects to $\mu$, we can define in a similar way the unstable entropy of $\hf$ with respect to $\hat{\mu}$ by 
\[
h^u_{\hmu}(\hf) = H_{\hat{\mu}}(\hat{f}^{-1} \hat{\xi} | \hat{\xi}).
\]
We will need the following version of the invariance principle
\begin{theorem}\label{thm.invarianceprinciple}
Let $\mu$ and $\hmu$ be as above. Then,
\[
h^u_{\hmu}(\hf) \leq h^u_{\mu}(f).
\]
Moreover, equality holds if and only if for $\hat{\mu}$-almost every $\hp$, and some ( a fortriori, any) $\mu-$measurable partition $\xi$, it holds 
$\hpi_* \hmu^{\hat \xi}_{\hat{p}} = \mu^{\xi}_{\hat{\pi}(\hat{p})}$. 
\end{theorem}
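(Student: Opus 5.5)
The argument follows the classical Ledrappier--Young style proof of the invariance principle (as in Avila--Viana, or Tahzibi--Yang), carried out in the projective bundle. The key observation is that $\hat\xi$ is a lift of $\xi$ through the bi-Hölder homeomorphisms $\hpi|_{\hWu{\hp}}$. Hence the atoms $\hat f^{-1}\hat\xi(\hp)$ and $\hat\xi(\hp)$ project bijectively onto $f^{-1}\xi(p)$ and $\xi(p)$, where $p=\hpi(\hp)$. The equivariance of $\hu$ in \Cref{def:linearholonomy} is what makes this work: we have $\hat f^{-1}\hat\xi = \widehat{f^{-1}\xi}$, so the lifted partition is still increasing.

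We first disintegrate $\hmu$ along $\hat\xi$ and push the conditionals forward. For $\hmu$-a.e. $\hp$ set $\nu_{\hp}:=\hpi_*\hmu^{\hat\xi}_{\hp}$, a probability measure on $\xi(p)$. Since $\hpi_*\hmu=\mu$ and $\hat\xi$ refines $\hpi^{-1}\xi$, the measures $\nu_{\hp}$ average to the conditionals of $\mu$. Precisely, if $\{\hmu_p\}$ is the disintegration of $\hmu$ along the fibers of $\hpi$, then $\int \nu_{\hp}\,\der\hmu_p(\hp)$, integrated along the atom, recovers $\mu^\xi_p$.

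Next we write both entropies as integrals of information functions. The unstable entropy of $\hmu$ is
\[
h^u_{\hmu}(\hf)=\int -\log \hmu^{\hat\xi}_{\hp}\bigl(\hat f^{-1}\hat\xi(\hp)\bigr)\der\hmu(\hp)=\int -\log \nu_{\hp}\bigl(f^{-1}\xi(p)\bigr)\der\hmu(\hp).
\]
We compare this with $\int -\log\mu^\xi_p(f^{-1}\xi(p))\der\mu$. The standard trick is to use $f^{-n}\xi$ instead of $f^{-1}\xi$ and divide by $n$; both entropies are additive in $n$. Concavity of $-\log$ then does the work. For each atom $A$ of $\xi$ and each sub-atom $B$ of $f^{-1}\xi$ inside $A$, the relative entropy of $\nu_{\hp}$ with respect to $\mu^\xi_p$ is nonnegative. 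Jensen's inequality applied over the finer partition then gives
\[
H_{\hmu}(\hat f^{-1}\hat\xi\mid\hat\xi)\le H_\mu(f^{-1}\xi\mid\xi),
\]
with the gap equal to $\int \sum_B \nu_{\hp}(B)\log\frac{\nu_{\hp}(B)}{\mu^\xi_p(B)}\der\hmu\ge 0$, up to a telescoping correction term. This telescoping correction is where care is needed. The clean formulation is the Ledrappier ``$\int\log$ of a Radon--Nikodym cocycle'' argument. Define $J(\hp)=\frac{\der\nu_{\hp}}{\der\mu^\xi_p}$ when $\nu_{\hp}\ll\mu^\xi_p$ (and set the relative entropy to $+\infty$ otherwise). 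Invariance gives $h^u_\mu-h^u_{\hmu}=\lim_n\frac1n\int\mathrm{KL}\bigl(\nu_{\hp}|_{f^{-n}\xi}\,\|\,\mu^\xi_p|_{f^{-n}\xi}\bigr)\der\hmu$. Because $\xi$ is generating along unstables ($f^{-n}\xi$ refines to points), this limit is $0$ iff $\nu_{\hp}=\mu^\xi_p$ a.e.

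The main obstacle is exactly this equality case and the bookkeeping needed to avoid an infinite-entropy pathology. Finiteness of $H_\mu(f^{-1}\xi\mid\xi)$ can be used here: it is bounded by $\int\log|\det D^uf|$ via bounded distortion, as in Ledrappier--Young. I would prove the equivalence using a martingale argument. Equality for one $n$ implies equality for all $n$, by invariance and additivity. The relative entropies along the increasing sequence of $\sigma$-algebras generated by $f^{-n}\xi$ restricted to $\xi(p)$ then vanish. Since these $\sigma$-algebras generate the Borel sets of $\xi(p)$, we get $\nu_{\hp}=\mu^\xi_p$. The converse, that equality of the conditionals forces equality of the entropies, is immediate from the formula above. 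Independence of the choice of $\xi$ follows because any two $u$-subordinated partitions give the same $h^u$ (as stated in the paper), and the equality of conditionals transfers between partitions by the standard comparison via $\xi\vee\xi'$.
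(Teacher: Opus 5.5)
Your proposal is correct and is essentially the paper's argument. The paper also writes $\mu^\xi_P$ as the fiber-average of the projected $\hat\xi$-conditionals (via $\hat\xi^{cu}=\hpi^{-1}\xi$), applies Jensen to the concave function $-t\log t$ on the atoms of $f^{-1}\xi$, and for the equality case passes to $f^{-n}\xi$ and uses that these partitions generate the Borel sets on each atom; your averaged relative entropy is exactly that Jensen gap. Note that there is no telescoping correction term, and the property used is the convexity of $-\log$, not its concavity.
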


When $\mu$ is a $u$-Gibbs measure, this result says that $\hat{\mu}$ is a projective $u$-Gibbs measure if and only if the unstable entropy of $\hat{\mu}$ coincides with the unstable entropy of $\mu$.  The proof of this result is essentially contained in Tahzibi-Yang's paper \cite{TahzibiYang}. For completeness, we will explain the proof. Before we proceed to the proof, we will be working with the following partitions and measures:
\begin{itemize}
\item $f\in \PHb[2]$ and $\mu$ is an $f$-invariant measure. 

\item $\xi$ a $u$-subordinated $\mu$-measurable partition that we fixed for $f$.  Elements of $\xi$ will be denoted by $P$ or $P_x = \xi(x)$, for $\mu$-almost every $x$.  Conditional measures will be denoted by $\mu^\xi_P$ or $\mu^\xi_x$, meaning, $\mu^\xi_{P_x}$. We write $\mu_Q$ the quotient measure of $\mu$ on $M/\xi$. 
\item $\hat{\xi}$ the partition defined above, whose elements are contained in projective unstable manifolds. Elements of $\hat{\xi}$ will be denoted by $\widehat{P}$ or $\widehat{P}_{(x,[v])} = \hat{\xi}(x,[v])$, for $\hmu$-almost every $\hat{x} =(x,[v])$.  Conditional measures are denoted by $\hmu^{\hat{\xi}}_{\hat{P}}$, or $\hmu^{\hat{\xi}}_{(x,[v])}$. We write $\hmu_Q$ the quotient measure of $\hmu$ on $\widehat{M}/\hat{\xi}$. 
\item The partition $\hat{\xi}^{cu} :=\hat{\pi}^{-1} \xi$ on $\widehat{M}$, whose elements are saturated by fibers.  Elements of this partition will be denoted by $\widehat{P}^{cu}$ or  $\widehat{P}^{cu}_P= \hat{\pi}^{-1}(P)$.  Conditional measures are denoted by $\hat{\mu}^{\hat{\xi}^{cu}}_{\widehat{P}^{cu}}$ or $\hat{\mu}^{\hat{\xi}^{cu}}_{(x,[v])}$.  The quotient measure is denoted by $\hmu^{cu}_Q$ on $\widehat{M}/\hat{\xi}$. 
\item We will also work with the partitions $f^{-1} \xi$, $\hat{f}^{-1} \hat{\xi}$ and $\hat{f}^{-1} \hat{\xi}^{cu}$.  We will denote the elements of these partitions by $P^{-1}$, $\widehat{P}^{-1}$ and $\widehat{P}^{cu,-1}$, respectively.  
\end{itemize}

\begin{lemma}\label{lem.equivalentquotientmeasures}
With the notation above,  for $\hmu$-almost every $(x,[v])$, $\hat{\pi}_* \hmu^{\hat{\xi}^{cu}}_{(x,[v])} = \mu^\xi_x$. 
\end{lemma}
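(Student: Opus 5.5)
The plan is to identify the conditional measures of $\hmu$ along $\hat{\xi}^{cu}=\hpi^{-1}\xi$ by pushing the whole disintegration down through $\hpi$ and invoking essential uniqueness of disintegrations. The starting observation is that $\hat{\xi}^{cu}$ is exactly the pullback of $\xi$ under $\hpi$, with atoms $\widehat{P}^{cu}_P=\hpi^{-1}(P)$. The map $P\mapsto \hpi^{-1}(P)$ is therefore a bijection between $M/\xi$ and $\hM/\hat{\xi}^{cu}$.

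First I check that $\hat{\xi}^{cu}$ is $\hmu$-measurable. If $\{Q_i\}$ and $R$ witness the $\mu$-measurability of $\xi$, then $\{\hpi^{-1}Q_i\}$ and $\hpi^{-1}R$ witness it for $\hat{\xi}^{cu}$. Here I use $\hmu(\hpi^{-1}R)=\mu(R)=1$, which holds because $\hpi_*\hmu=\mu$. Next I identify the quotient measures. Under the bijection above, the quotient projection for $\hat{\xi}^{cu}$ equals the quotient projection for $\xi$ composed with $\hpi$. Hence $\hmu^{cu}_Q$ corresponds to $(\hpi_*\hmu)_Q=\mu_Q$.

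Now disintegrate: $\hmu=\int \hmu^{\hat{\xi}^{cu}}_{\widehat P^{cu}}\,\der\hmu^{cu}_Q$. Pushing forward by $\hpi$ gives
\[
\mu=\hpi_*\hmu=\int \hpi_*\hmu^{\hat{\xi}^{cu}}_{\hpi^{-1}(P)}\,\der\mu_Q(P).
\]
Each measure $\hpi_*\hmu^{\hat{\xi}^{cu}}_{\hpi^{-1}(P)}$ is a probability measure supported on $\hpi(\hpi^{-1}(P))=P$. The assignment $P\mapsto \hpi_*\hmu^{\hat{\xi}^{cu}}_{\hpi^{-1}(P)}$ is measurable, since for Borel $A$ its value on $A$ is $\hmu^{\hat{\xi}^{cu}}_{\cdot}(\hpi^{-1}A)$. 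This family is therefore a disintegration of $\mu$ along $\xi$ with quotient $\mu_Q$. By essential uniqueness of Rokhlin disintegrations, $\hpi_*\hmu^{\hat{\xi}^{cu}}_{\hpi^{-1}(P)}=\mu^\xi_P$ for $\mu_Q$-almost every $P$. Translating this back to points gives the claim for $\hmu$-almost every $(x,[v])$, because $\hmu$-null sets of atoms correspond to $\mu_Q$-null sets.

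The main obstacle is the bookkeeping in the uniqueness step. One must check that the almost-everywhere statements transfer correctly between $\hmu$ and $\mu$ via $\hpi$. One must also check that the pushed-forward family is measurable in the required sense. Both reduce to the identity $\hpi_*\hmu=\mu$ and the saturation of $\hat{\xi}^{cu}$ by fibers, so no dynamics or holonomy is needed in this lemma.
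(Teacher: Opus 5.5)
Your proposal is correct and follows the paper's proof. Both arguments identify $\hM/\hat{\xi}^{cu}$ with $M/\xi$, use $\hpi_*\hmu=\mu$ to show the quotient measures coincide, push the disintegration of $\hmu$ forward by $\hpi$, and conclude by essential uniqueness of disintegrations. Your extra checks (measurability of $\hat{\xi}^{cu}$, that the pushed-forward conditionals live on $P$ and depend measurably on $P$, and the transfer of null sets) spell out details the paper leaves implicit.
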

\begin{proof}
 Observe that $\widehat{M}/ \hat{\xi}^{cu}$ can be naturally identified with $M/\xi$. We have that 
\[
\hmu =  \int_{M/\xi} \hmu^{\hat{\xi}^{cu}}_{\widehat{P}^{cu}} \dd\hmu_{Q}(\widehat{P}^{cu}) \textrm{ and } \mu = \int_{M/\xi} \mu^{\xi}_P \dd\mu_{Q}(P),
\]
Given a measurable set $B\subset M/\xi$, we have that
\[
\mu_Q(B) = \mu\left(\bigcup_{P\in B} P \right)  = \hmu\left(\hpi^{-1} \left(\bigcup_{P\in B} P \right) \right) = \hmu_{Q}(B).
\]
Therefore, $\hmu_{Q} = \mu_{Q}$.  Thus,
\[
\int_{M/\xi} \mu^{\xi}_Pd\mu_{Q}(P) = \mu = \hpi_* \hmu = \hpi_*\int_{M/\xi} \hmu^{\hat{\xi}^{cu}}_P \dd\hmu_{Q}(P) = \int_{M/\xi} \hpi_* \hmu^{\hat{\xi}^{cu}}_P \dd\mu_{Q}(P),
\]
and, by the essential uniqueness of the disintegration of a measure,  we have that 
\begin{equation}\label{eq.equalquotientmeasures}
\mu^{\xi}_{x} = \hpi_*\hmu^{\hat{\xi}^{cu}}_{(x,[v])}
\end{equation}
 for $\hmu$-almost every $(x,[v])$. 
 \end{proof}
 
 \begin{proof}[Proof of \Cref{thm.invarianceprinciple}]
 
For $\mu_{Q}$-almost every $P$, fix a point $x_P \in P$. Write $\widehat{\xi}_P$ the partition $\widehat{\xi}$ restricted to $\widehat{P}^{cu}_P$. Observe that $D_P:= \hpi^{-1}(x_P)$ can be naturally identified with $\widehat{P}^{cu}_P / \hat{\xi}_P$. Write $\hat{\mu}^P_Q$ the quotient measure of $\hat{\mu}^{cu}_P$ on $\widehat{P}^{cu}_P/\hat{\xi}_P$.  So, 
\begin{equation}\label{eq.furtherdecomposition}
\hmu^{\hat{\xi}^{cu}}_P = \displaystyle \int_{D_P} \hmu^{\widehat{\xi}}_t \dd\hmu^P_{Q}(t), 
\end{equation}
where  $\hmu^{\widehat{\xi}}_t$ is the conditional measure on $\hat{\xi}_P(x,t)$. 

Since $\xi$ is an increasing $u$-subordinated partition,  for each $P$ as above, there a subset of the naturals $I_P$, which is at most countable, and sets $P^{-1}_i$, where $i\in I_P$, such that
\[
P = \displaystyle \bigsqcup_{i\in I_P} P^{-1}_i.
\]
This gives a partition of $\widehat{P}^{cu}_P$ by
\[
\widehat{P}^{cu}_P = \displaystyle \bigsqcup_{i\in I_p} \widehat{P}^{cu,-1}_i,
\]
where $\widehat{P}^{cu,-1}_i = \hpi^{-1}(P^{-1}_i)$. This follows from the fact that $f^{-1}\xi$ is $u$-subordinated and refines $\xi$, so every element of the partition contains an open ball inside a strong unstable manifold. In particular, an element of $\xi$ contains at most countably many elements of $f^{-1}\xi$.  Therefore,  by \Cref{lem.equivalentquotientmeasures} and by \eqref{eq.furtherdecomposition}
\[
\begin{array}{lll}
h^u_\mu(f) & = & \displaystyle \int - \log \mu^\xi_x(f^{-1}\xi(x)) \dd\mu(x) = \int_{M/\xi} \sum_{i\in I_P} -\mu^\xi_P(P^{-1}_i) \log \left(\mu^\xi_P(P^{-1}_i)\right) \dd\mu_Q(P) \\
& = & \displaystyle \int_{M/\xi} \sum_{i\in I_P} - \hat{\mu}^{\xi^{cu}}_P (\widehat{P}^{cu,-1}_i) \log \left(\hat{\mu}^{{\xi}^{cu}}_P(\widehat{P}^{cu,-1}_i)\right) \dd\mu_Q(P)\\
& = & \displaystyle \int_{M/\xi} \sum_{i\in I_P} -  \left(\int_{D_P} \hmu^{\widehat{\xi}}_t(\widehat{P}^{cu,-1}_i) \dd\hmu^P_{Q}(t)\right) \log \left(\int_{D_P} \hmu^{\widehat{\xi}}_t(\widehat{P}^{cu,-1}_i) \dd\hmu^P_{Q}(t)\right) \dd\mu_Q(P).
\end{array}
\]
Consider $\psi(t) = -t\log t$, for $t>0$. Observe that $\psi$ is a concave function. By Jensen inequality,
\[
\begin{array}{l}
 h^u_\mu(f) = \displaystyle \int_{M/\xi} \sum_{i\in I_P} -  \left(\int_{D_P} \hmu^{\widehat{\xi}}_t(\widehat{P}^{cu,-1}_i) \dd\hmu^P_{Q}(t)\right) \log \left(\int_{D_P} \hmu^{\widehat{\xi}}_t(\widehat{P}^{cu,-1}_i) \dd\hmu^P_{Q}(t)\right) d\mu_Q(P)\\
 =\displaystyle  \int_{M/\xi} \sum_{i\in I_P} \psi \left(\int_{D_P} \hmu^{\widehat{\xi}}_t(\widehat{P}^{cu,-1}_i) \dd\hmu^P_{Q}(t)\right) \dd\mu_Q(P)\\
\displaystyle \geq  \int_{M/\xi} \sum_{i\in I_P} \int_{D_P}\psi \left( \hmu^{\widehat{\xi}}_t(\widehat{P}^{cu,-1}_i) \right) \dd\hmu^P_{Q}(t) \dd\mu_Q(P) = \\
 \displaystyle  \int_{M/\xi} \sum_{i\in I_P} \int_{D_P}- \hmu^{\widehat{\xi}}_t(\widehat{P}^{cu,-1}_i)\log \left( \hmu^{\widehat{\xi}}_t(\widehat{P}^{cu,-1}_i)\right) \dd\hmu^P_{Q}(t) \dd\mu_Q(P)\\
 = \displaystyle  \int_{M/\xi} \int_{D_P} \sum_{i\in I_P} - \hmu^{\widehat{\xi}}_t(\widehat{P}^{cu,-1}_i)\log \left( \hmu^{\widehat{\xi}}_t(\widehat{P}^{cu,-1}_i)\right) \dd\hmu^P_{Q}(t) \dd\mu_Q(P)\\
 = \displaystyle \int_{M/\xi} \int_{\widehat{P}^{cu}_P} - \log \left( \hmu^{\widehat{\xi}}_t\left(\hat{f}^{-1}\hat{\xi}(x,[v])\right)\right) \dd\hmu^{\hat{\xi}^{cu}}_{P}(x,[v]) \dd\mu_Q(P)\\
 = \displaystyle \int_{\widehat{M}} -\log \left(\hat{\mu}^{\hat{\xi}}_{(x,[v])} \left( \hat{f}^{-1} \hat{\xi}(x,[v])\right) \right) \dd\hat{\mu}(x,[v]) = h^u_{\hat{\mu}}(\hat{f}).
\end{array}
\]
This concludes the proof of $h^u_{\hat{\mu}}(\hat{f}) \leq h^u_\mu(f)$. Moreover, equality occurs if and only if for $\mu_Q$-almost every $P$, $\hat{\mu}^P_Q$-almost every $t$ and any $i\in I_P$,
\[
\hat{\mu}^{\hat{\xi}}_t(\widehat{P}_i^{cu,-1}) = \mu^{\xi}_P(P^{-1}_i).
\]
This is translated to the measures $\hat{\pi}_* \mu^{\hat{\xi}}_{(x,[v])} = \mu^\xi_x$ on the subalgebra generated by $f^{-1} \xi$.  Since $h^u_{\hat{\mu}}(\hat{f}^n) = n h^u_{\hat{\mu}}(\hat{f}) = n h^u_\mu(f) = h^u_\mu(f^n)$,  by repeating the same argument, we obtain that $\hat{\pi}_* \mu^{\hat{\xi}}_{(x,[v])} = \mu^\xi_x$ on the subalgebra generated by $\{f^{-n}\xi\}_{n\in \mathbb{N}}$. Observe that the size of each atom of  the partition $f^{-n}\xi $ is going to zero as $n$ increases, and that $f^{-n}\xi$ refines $\xi$. Therefore,  $\bigvee_n f^{-n} \xi$ generates the Borel $\sigma$-algebra on each atom of $\xi$. Thus, $\hat{\pi}_* \mu^{\hat{\xi}}_{(x,[v])} = \mu^\xi_x$.  \qedhere

 \end{proof}

\subsection{Semicontinuity of unstable projective entropy}

It will be important for us to obtain some type of semi continuity of the unstable entropy of projective measures.  This is an adaptation of the main theorem of \cite{Yangunstableentropy}.

\begin{theorem}\label{thm.semicontinuityuentropy}
Suppose that $\hat{\mu}_n$ is a sequence of $\hat{f}$-invariant measures converging to $\hat{\mu}$ in the weak*-topology. Then,
\[
\displaystyle \limsup_{n\to +\infty} h^u(\hat{\mu}_n) \leq h^u(\hat{\mu}).
\]
\end{theorem}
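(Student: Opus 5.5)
I will follow the strategy of \cite{Yangunstableentropy} (upper semicontinuity of unstable entropy for partially hyperbolic systems) and transport it to $\hM$ through the projective unstable foliation. The first step is to express $h^u$ by a finite-partition formula rather than by increasing $u$-subordinated partitions. Fix a finite Borel partition $\mathcal{A}$ of $M$ into sets of small diameter, and let $\hat{\mathcal{A}}=\hpi^{-1}\mathcal{A}$. Choose a regular box structure, and let $\hat\eta$ be the partition of $\hM$ into local projective unstable plaques $\hWu{\hp}\cap\hpi^{-1}(U_i)$; these plaques are the lifts of local $u$-plaques. I then aim to prove the analogue of Yang's formula:
\[
h^u_{\hmu}(\hf)=\lim_{n\to\infty}\frac1n H_{\hmu}\Bigl(\bigvee_{j=0}^{n-1}\hf^{-j}\hat{\mathcal A}\,\Big|\,\hat\eta\Bigr)
=\inf_n \frac1n H_{\hmu}\Bigl(\bigvee_{j=0}^{n-1}\hf^{-j}\hat{\mathcal A}\,\Big|\,\hat\eta\Bigr).
\]
This formula should hold for every $\hf$-invariant $\hmu$, with $\mathcal A$ independent of $\hmu$.

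To get the formula, I will redo Yang's proof with $\hat\xi$ built as in \eqref{eq.unstableprojectivepartition}, and use \Cref{lemma.hatmumeasurable} for measurability. The key geometric inputs are the following.
\begin{itemize}
\item $\hf^{-1}$ contracts projective unstable plaques.
\item $\hpi$ restricted to each projective unstable manifold is a homeomorphism onto the unstable leaf.
\item Two points on the same local projective plaque lie in the same atom of $\bigvee_{j\ge0}\hf^{-j}\hat{\mathcal A}\vee\hat\eta$ exactly when their projections lie in the same atom of $\bigvee_{j\ge 0} f^{-j}\mathcal A\vee\eta$.
\end{itemize}
The third point holds because $\hat{\mathcal A}$ is saturated by fibers. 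As a result, the atoms of the refined partitions restricted to a plaque are lifts of the corresponding atoms downstairs. The combinatorics in Yang's argument (comparing $\hat\xi$ with $\hat\eta\vee\bigvee_{j}\hf^{-j}\hat{\mathcal A}$, where increasing partitions are built by pulling back plaques) therefore transfers verbatim. The subadditivity in $n$ comes from invariance of $\hmu$ and $\hf^{-1}\hat\eta\succ\hat\eta$ on plaques.

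Semicontinuity then follows in the standard way. For fixed $n$, I need the map
\[
\hmu\mapsto H_{\hmu}\Bigl(\bigvee_{j<n}\hf^{-j}\hat{\mathcal A}\,\Big|\,\hat\eta\Bigr)
\]
to be upper semicontinuous. Since $h^u$ is an infimum of these functions, it is then upper semicontinuous. I will choose $\mathcal A$ and the boxes with boundaries of $\hpi_*\hmu$-measure zero; a generic choice of radii works because $\hpi_*\hmu$ is fixed. Taking plaque boundaries of measure zero is also needed.

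Upper semicontinuity of the conditional entropy given $\hat\eta$ is the main obstacle, because $\hat\eta$ is a non-finite partition and conditional measures are not continuous in general. I will use Yang's trick. Approximate $\hat\eta$ from below by finite partitions $\hat\eta_k\prec\hat\eta$, obtained by partitioning the transversal $D^{cs}_{p_0}\times\mathbb{P}E^c_{p_0}$ of each lifted box $\widehat U$ (see \Cref{fig:regular-lifted-box}) into small sets with null boundaries. Then use
\[
H(\cdot\,|\,\hat\eta)=\inf_k H(\cdot\,|\,\hat\eta_k),
\]
which holds by monotone convergence since $\hat\eta_k\uparrow\hat\eta$. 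Each $H_{\hmu}(\cdot|\hat\eta_k)$ is continuous at $\hmu$ when all boundaries are null, so the infimum is upper semicontinuous.

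The delicate point is that the product coordinates of $\widehat U$ are only homeomorphic, because the holonomies are merely continuous by \Cref{thm:saghin}. Finite partitions of the transversal are still Borel with boundaries that can be made null, so this should suffice. Finally, I exchange the two infima (over $n$ and $k$) to conclude.
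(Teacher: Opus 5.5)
Your scheme is the paper's (Yang's), recast as ``$h^u$ is an infimum of functions that are continuous at $\hmu$''. The part that replaces $\hat\eta$ by finite transversal refinements $\hat\eta_k\uparrow\hat\eta$ with $\hmu$-null boundaries is fine. The gap is in the identity everything rests on, which is also the only place the measures $\hmu_n$ enter. As written,
\[
h^u_{\hmu}(\hf)=\inf_{N}\frac1N H_{\hmu}\Bigl(\bigvee_{j=0}^{N-1}\hf^{-j}\hat{\mathcal A}\,\Big|\,\hat\eta\Bigr)
\]
is false. The $N=1$ term is $H_{\hmu}(\hat{\mathcal A}\,|\,\hat\eta)$. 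Take the natural plaque partition, i.e.\ plaques of the atoms of $\hat{\mathcal A}$ as in Yang's $\mathcal A^{W^u}$ (the sets $\hWu{\hp}\cap\hpi^{-1}(U_i)$ for overlapping boxes do not form a partition). Then $\hat\eta\succ\hat{\mathcal A}$, this term is $0$, and you would get $h^u\equiv 0$. For other choices there is still no reason for $H_{\hmu}(\hat{\mathcal A}\,|\,\hat\eta)\geq h^u$. Your justification is also wrong: $\hf^{-1}\hat\eta\succ\hat\eta$ fails, because $\hf^{-1}\bigl(\hat\eta(\hf\hat x)\bigr)$ can cross $\partial\hat{\mathcal A}(\hat x)$. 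A plaque partition of a fixed finite partition is not increasing, and this is exactly the defect the paper flags for $\mathcal A^{W^u}$.

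What is true, and what the paper imports as Yang's Corollary 4.6 (there $\mathcal A^m=\bigvee_{j=1}^m f^{-j}\mathcal A$), is the index-shifted statement, with a different reason for subadditivity. Take $\hat\eta\succ\hat{\mathcal A}$ and set $b_N(\nu)=H_\nu\bigl(\bigvee_{j=0}^{N}\hf^{-j}\hat{\mathcal A}\,\big|\,\hat\eta\bigr)$. By the chain rule and invariance of $\nu$,
\[
b_{M+N}(\nu)=b_M(\nu)+H_\nu\Bigl(\bigvee_{j=1}^{N}\hf^{-j}\hat{\mathcal A}\,\Big|\,\hf^{M}\bigl(\bigvee_{j=0}^{M}\hf^{-j}\hat{\mathcal A}\vee\hat\eta\bigr)\Bigr).
\]
The geometric input you need is $\hf^{M}\bigl(\bigvee_{j=0}^{M}\hf^{-j}\hat{\mathcal A}\vee\hat\eta\bigr)\succ\hat\eta$. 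That is, a piece of plaque whose first $M$ images each stay in one atom of $\hat{\mathcal A}$ is carried into a single local plaque. This holds provided $\norm{Df}$ times the plaque diameter is smaller than the leafwise distance between distinct plaques of one leaf in a box. Prove it in $M$ by induction on $j$, then lift along projective unstable leaves using that $\hat{\mathcal A}$ is fibre-saturated. Since $\hat\eta\succ\hat{\mathcal A}$, the last term is at most $b_N(\nu)$, so $(b_N)$ is subadditive. Together with the limit formula you planned to transfer from Yang, this gives $h^u(\nu)=\lim_N b_N(\nu)/N=\inf_N b_N(\nu)/N$ for every $\hf$-invariant $\nu$, with the same $\mathcal A$.

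With this correction the rest of your argument goes through:
\[
h^u(\hmu_n)\le\tfrac1N b_N(\hmu_n)\le\tfrac1N H_{\hmu_n}\bigl(\bigvee_{j=0}^{N}\hf^{-j}\hat{\mathcal A}\,\big|\,\hat\eta_k\bigr).
\]
Let $n\to\infty$ using the $\hmu$-null boundaries, then take $\inf_k$ (martingale convergence at $\hmu$), and finally $\inf_N$.
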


The proof follows \cite{Yangunstableentropy} almost verbatim. Let us give a sketch of the proof explaining the small changes needed.

\begin{proof}
    Let us first explain the setting in \cite{Yangunstableentropy}.  Let $f$ be a partially hyperbolic $C^1$-diffeomorphism, let $\mu_n$ be a sequence of $f$-invariant measures converging to $\mu$. Then the main result in \cite{Yangunstableentropy} proves that
    \[
\displaystyle \limsup_{n\to +\infty} h^u(\mu_n) \leq h^u(\mu).
    \]
    Let $\{\left(B_i,\Phi_i,D_i\right)\}_{i=1}^k$ be a finite sequence of foliated charts for the strong unstable foliation, where $(B_i,\Phi_i)$ is a  chart and $D_i$ is a choice of transversal to the foliation in the chart, such that the union of $B_i$ cover $M$.  
    
    \textbf{Step 1: Building the partitions $\mathcal{A}^{W^u}$ and $\mathcal{A}^m$.} One constructs a finite partition $\mathcal{A}$ of $M$ whose elements are contained in the foliated charts $B_i$ and with certain decay of measure property of neighborhoods of the boundary of the partition (see the beginning of Section 3 from \cite{Yangunstableentropy}). Once this partition $\mathcal{A}$ is fixed, one defines $\mathcal{A}^{W^u}$ (which is denoted by $\mathcal{A}^{\mathcal{F}}$ or $\mathcal{A}^{\mathcal{F}_n}$ in \cite{Yangunstableentropy}) to be the partition whose elements are the intersection of an element of $\mathcal{A}$ with the local plaque of $W^u$ of the corresponding   foliation box $B_i$.  For each $m\in \mathbb{N}$ one defined $\mathcal{A}^m = \bigvee_{j=1}^m f^{-j}(\mathcal{A})$.  

    \textbf{Step 2: Building some new partitions.} For each $i$, let $C_{i,1} \leq C_{i,2} \leq \cdots$ be a sequence of finite partitions of $D_i$ with the following properties:
    \begin{itemize}
\item $\mathrm{diam}(C_{i,t}) \to 0$ as $t\to +\infty$;
\item for any $i,t \geq 0$, and any element $C$ of $C_{i,t}$, 
\begin{equation}\label{eq.zeromeasure} \mu\left( \bigcup_{x\in \partial C} \mathcal{A}^{W^u}(x) \right) = 0.\end{equation}
    \end{itemize}
The partitions $C_{i,t}$ induces a new partition
\[
\widetilde{C}_{i,t}= \{ \bigcup_{x\in C} \mathcal{A}^{W^u}(x): \textrm{ $C$ is an element of $C_{i,t}$}\}.
\]
For $m\in \mathbb{N}$ and an element $P$ of $\mathcal{A}^m$, observe that there is $i$ such that $P$ is contained in $B_i$, the partition $\widetilde{C}_{i,t}$ induces a partition on $P$ given by $P_t= \{P \cap \widetilde{C}: \textrm{ $\widetilde{C}$ is an element of $\widetilde{C}_{i,t}$}\}$. One then defines
\[
\mathcal{A}^m_t= \{P_t: \textrm{ $P$ is an element of $\mathcal{A}^m$.}\}.
\]
These are built in Section $4.2.1$ in \cite{Yangunstableentropy}. 

\textbf{Step 3: Proving the semicontinuity.} Given $\varepsilon>0$, Proposition $4.1$ in  \cite{Yangunstableentropy} gives that for $m_0$ large
\begin{equation}
    \label{eq.entropyeq1}
    \frac{1}{m_0} H_{\mu} \left( \bigvee_{j=0}^{m_0} f^{-j}\mathcal{A}^{W^u}| \mathcal{A}^{W^u} \right) - \frac{\varepsilon}{3} \leq h^u(\mu).
\end{equation}
The partition $\mathcal{A}^{W^u}$ is not $u$-subordinated (the main problem is the increasing property), however, this  first inequality states that unstable entropy can be approximated by the finite time refinement $\bigvee_{j=0}^{m_0} f^{-j} \mathcal{A}^{W^u}$. 

Proposition $4.5$ in \cite{Yangunstableentropy} then gives that for $t_0$ large
\begin{equation}
    \label{eq.entropyeq2}
    H_\mu \left( \mathcal{A}^{m_0}_{t_0} | \mathcal{A}^0_{t_0}\right) -\frac{\varepsilon}{3} \leq  H_{\mu} \left( \bigvee_{j=0}^{m_0} f^{-j}\mathcal{A}^{W^u}| \mathcal{A}^{W^u} \right).
\end{equation}
One can think of  elements of $\mathcal{A}^m_t$ as a ``thickening'' of elements of $\bigvee_{j=0}^{m_0} f^{-j}\mathcal{A}^{W^u}$, this is good for approximating it by measures nearby. Indeed, the property about zero measure from \eqref{eq.zeromeasure} implies that for any $P$ in $\mathcal{A}^{m_0}_{t_0}$ we have $\mu(\partial P) = 0$, among some other properties. In particular,
\[
\displaystyle\lim_{n\to +\infty} \mu_n(P) = \mu(P).
\]
Therefore,
\[
\displaystyle\lim_{n\to +\infty} H_{\mu_n}\left(\mathcal{A}^{m_0}_{t_0} |\mathcal{A}^0_{t_0} \right) =  H_{\mu}\left(\mathcal{A}^{m_0}_{t_0} |\mathcal{A}^0_{t_0} \right).
\]
For $n$ sufficiently large,
\begin{equation}
    \label{eq.entropyeq3}
     H_{\mu_n}\left(\mathcal{A}^{m_0}_{t_0} |\mathcal{A}^0_{t_0} \right) -\frac{\varepsilon}{3} \leq  H_{\mu}\left(\mathcal{A}^{m_0}_{t_0} |\mathcal{A}^0_{t_0} \right).
\end{equation}
Combining \eqref{eq.entropyeq1}, \eqref{eq.entropyeq2} and \eqref{eq.entropyeq3}, we obtain that 
\[
\frac{1}{m_0}  H_{\mu_n}\left(\mathcal{A}^{m_0}_{t_0} |\mathcal{A}^0_{t_0} \right) -\varepsilon \leq h^u(\mu).
\]
Corollary $4.6$ in \cite{Yangunstableentropy} gives that for any $m$ and $t$, 
\[
\frac{1}{m} H_{\mu_n}\left(\mathcal{A}^{m}_{t} |\mathcal{A}^0_{t} \right) \geq h^u(\mu_n).
\]
Hence,
\[
h^u(\mu_n) - \varepsilon \leq h^u(\mu).
\]

\textbf{The proof of \Cref{thm.semicontinuityuentropy}} The only difference of the proof in our setting is that we use the foliated boxes built in \Cref{subsection.projectivedynamics} for $\widehat{\Fu}$ instead of the foliated boxes in $B_i$. The proof then follows verbatim the scheme above. 
\end{proof}

\subsection{Proof of the strong positive recurrence property}

In this subsection, we show that if $f$ has the expanding on average property and $\Gap>0$, then $f$ is SPR for every Borel homoclinic class supporting an SRB measure.  We will prove the following theorem.

\begin{theorem}\label{thm.sprueap}
Let $f\in \PHb[2]$ have the expanding on average property, and satisfying $\Gap>0$. Let $\mu$ be an ergodic SRB measure for $f$, and let $X$ be the Borel homoclinic class carrying $\mu$. Then $f$ is SPR on $X$. 
\end{theorem}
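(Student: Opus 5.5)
We argue by contradiction, aiming at the SPR definition with the geometric potential $\phi$. Since SRB measures have $P(\mu)=0=P_{\mathrm{TOP}}(f|_X,\phi)$ (Ruelle plus Ledrappier--Young), it suffices to fix $\chi$ with $0<\chi<\min\{\CEA,\Gap\}$ and show the following. For every $\varepsilon>0$ there are a $(C,\chi,\varepsilon)$-Pesin block $\Lambda$, a level $P_0<0$ and $\tau>0$ such that every ergodic $\nu$ on $X$ with $P(\nu)>P_0$ has $\nu(\Lambda)>\tau$. If this fails, there are ergodic $\nu_k$ carried by $X$ with $P(\nu_k)\to 0$ and $\nu_k(\Lambda)\to 0$ for every fixed Pesin block $\Lambda$ of the type produced by \Cref{prop.uniform.pesin}. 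We pass to a weak* limit $\nu_k\to\nu$ and derive a contradiction by showing that $\nu$ is a $u$-Gibbs measure. Then \Cref{prop.uniform.pesin} gives $\nu(\Delta_{C,\chi,\varepsilon})>1-\delta$, and since the Pesin block condition is essentially closed, the measures $\nu_k$ must eventually charge a slight enlargement of it. The contradiction has to be set up carefully: use the open-neighbourhood formulation from the proof of \Cref{prop.generaluniformpesin}, applied to lifts on Grassmannians, instead of plain weak* convergence on $M$.

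The core step is upgrading $P(\nu_k)\to 0$ into information about the limit. Every ergodic $\nu_k$ on $X$ is hyperbolic with the index of $\mu$, namely $\dim E^u+1$ positive exponents, since homoclinically related measures share the unstable index. So $-\int\phi\,d\nu_k = J^u(\nu_k)+\lambda^{+c}(\nu_k)$. We split the entropy using the unstable entropy: by the Ledrappier--Young type inequality along the center--unstable Pesin direction, $h(\nu_k)\le h^u_{\nu_k}(f)+\lambda^{+c}(\nu_k)$. Together with $h^u_{\nu_k}(f)\le J^u(\nu_k)$, this means $P(\nu_k)\to0$ forces
\[
J^u(\nu_k)-h^u_{\nu_k}(f)\to 0 \quad\text{and}\quad \lambda^{+c}(\nu_k)-\big(h(\nu_k)-h^u_{\nu_k}(f)\big)\to 0 .
\]
We lift to the projective bundle, taking $\hnu_k=\hnu_k^+$ concentrated on $E^{+c}$. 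By \Cref{thm.invarianceprinciple} applied to $\hnu_k^+$, which has $h^u_{\hnu_k^+}(\hf)=h^u_{\nu_k}(f)$ because the $E^{+c}$ lift is $h^u$-invariant by \Cref{lem:invariantEc}, its projective unstable entropy is nearly $J^u(\nu_k)$. Passing to a limit $\hnu_k^+\to\hnu$, the semicontinuity \Cref{thm.semicontinuityuentropy}, the continuity of $J^u$ and \Cref{thm.invarianceprinciple} give $h^u_{\hnu}(\hf)\ge J^u(\nu)\ge h^u_\nu(f)\ge h^u_{\hnu}(\hf)$. Hence $\nu$ is $u$-Gibbs (Ledrappier's characterization $h^u=J^u$) and $\hnu$ is projective $u$-Gibbs. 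By \Cref{cor:onlygibbs}, $\hnu=\hnu^+$.

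With $\hnu=\hnu^+$ in hand, continuity of the top exponent follows as in the proof of \Cref{mainthm:TheoremB}: $\lambda^{+c}(\nu_k)=\int\psi\,d\hnu_k^+\to\int\psi\,d\hnu^+=\lambda^{+c}(\nu)$. Combined with the continuity of $\int\log|\det D^cf|$, this makes $\nu_k\mapsto\hnu_k^{\pm}$ continuous along the sequence. We then rerun the argument of \Cref{prop.generaluniformpesin}: the open sets $\widetilde U_i,\widetilde U_{d-i}$ built around $\nu$ eventually carry $\hnu_k^{\pm}$-mass $>1-\delta/2$. Pliss (\Cref{lem.pliss}) and \Cref{prop.hyptimestopesinblock} then give $\nu_k(\Delta_{C,\chi',\varepsilon})>1-\theta$ with constants independent of $k$, which contradicts $\nu_k(\Lambda)\to0$. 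One cannot take $P_0$ uniform in advance; compactness of the set of limits handles that.

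The main obstacle is the entropy step. It requires establishing $h(\nu_k)\le h^u_{\nu_k}+\lambda^{+c}(\nu_k)$ for non-$u$-Gibbs measures, and making sure that the lift used in the semicontinuity is the $E^{+c}$ one, so that its unstable entropy equals $h^u_{\nu_k}(f)$. I would first try to prove both directly from \Cref{lem:invariantEc} and the Ledrappier--Young partial entropy formula in the center--unstable direction.
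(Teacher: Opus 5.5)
Your proposal is correct and follows the paper's route. You argue by contradiction with ergodic $\nu_k$ on $X$ and $P(\nu_k)\to0$. You use the Ledrappier--Young splitting $h\le h^u+\lambda^{+c}$ together with $h^u\le J^u$ to force $P^u(\nu_k)\to 0$. You pass to the $E^{+c}$-lifts, whose unstable entropy equals $h^u_{\nu_k}(f)$ by \Cref{lem:invariantEc} and \Cref{thm.invarianceprinciple}. Semicontinuity of projective unstable entropy then makes the limit $u$-Gibbs with projective lift $\hat\nu^+$, which gives convergence of the center exponents. Finally you run the uniform Pesin block argument of \Cref{prop.generaluniformpesin} on $\{\nu_k\}\cup\{\nu\}$. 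This is exactly the content of \Cref{prop.convergenceofexponents} and the proof of \Cref{thm.sprueap}.

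There are only two cosmetic differences. You get $u$-Gibbsness of $\nu$ from the single projective chain $h^u_{\hat\nu}(\hat f)\ge J^u(\nu)\ge h^u_\nu(f)\ge h^u_{\hat\nu}(\hat f)$, instead of first invoking Yang's semicontinuity on $M$. Also, the block you obtain is a $(C,\chi/4,\varepsilon)$-block, so the SPR constant should be taken to be $\chi/4$, as the paper does by choosing $4\chi<\min\{\CEA,\Gap\}$.
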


Recall that to show that $f$ is SPR on $X$, we need to show the existence of a Pesin block such that  any measure with large enough pressure gives uniform mass to this Pesin block.  To prove this, we will need the following proposition.

\begin{proposition}\label{prop.convergenceofexponents}
Let $f\in \PHb[2]$ have the expanding on average property, and satisfying $\Gap>0$.  Let $X$ be a Borel homoclinic class.  Suppose that $\mu_n$ is a sequence of ergodic measures carried by $X$  such that $P(\mu_n)$ converges to $0$. Take any limit measure $\mu$ of the sequence $\mu_n$.  Then,
\begin{enumerate}
\item $\mu$ is a $u$-Gibbs measure,
\item $\displaystyle \lim_{n\to +\infty} \lambda^{+c}(\mu_n) = \lambda^{+c}(\mu) \textrm{ and } \lim_{n\to +\infty} \lambda^{-c}(\mu_n) = \lambda^{-c}(\mu). $
\end{enumerate}
\end{proposition}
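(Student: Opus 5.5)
Here is the plan. For each $\mu_n$, consider its top center Oseledets lift $\hmu_n^+$; when $\lambda^{-c}(\mu_n)=\lambda^{+c}(\mu_n)$, take any lift. The argument has three parts.

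\textbf{Step 1: convert the pressure condition into an unstable entropy condition on the lifts.} Since $\mu_n$ lives on $X\subset\mathrm{NUH}'(f)$, it is hyperbolic. Assume first that $\mu_n$ has one positive center exponent. Then $P(\mu_n)=h(\mu_n)-J^u(\mu_n)-\lambda^{+c}(\mu_n)$. By the Ledrappier--Young type decomposition of entropy along the flag $E^u\subset E^u\oplus E^{+c}$, we have
\[
h(\mu_n)\le h^u_{\mu_n}(f)+\lambda^{+c}(\mu_n),
\qquad
h^u_{\mu_n}(f)\le J^u(\mu_n).
\]
Hence $P(\mu_n)\to0$ forces $J^u(\mu_n)-h^u_{\mu_n}(f)\to0$. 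If instead $\mu_n$ has two positive center exponents or none, Ruelle's inequality along each piece again gives $h^u_{\mu_n}(f)\ge J^u(\mu_n)+P(\mu_n)$.

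Next, I would show that $h^u_{\hmu_n^+}(\hf)=h^u_{\mu_n}(f)$. The lift $\hmu^+_n$ is supported on the graph of $E^{+c}$, which is $h^u$-invariant by \Cref{lem:invariantEc}. So the conditionals of $\hmu_n^+$ on $\hat\xi$ project exactly to those of $\mu_n$, and the equality case of \Cref{thm.invarianceprinciple} applies.

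\textbf{Step 2: pass to the limit.} Extract a subsequence with $\hmu_n^+\to\hmu$, so that $\hpi_*\hmu=\mu$. By \Cref{thm.semicontinuityuentropy} and the fact that $J^u$ is continuous,
\[
h^u_{\hmu}(\hf)\ \ge\ \limsup_n h^u_{\hmu_n^+}(\hf)=\limsup_n J^u(\mu_n)=J^u(\mu).
\]
\Cref{thm.invarianceprinciple} then gives $h^u_\mu(f)\ge h^u_{\hmu}(\hf)\ge J^u(\mu)$. The Ledrappier characterization (unstable entropy equals $J^u$ exactly for $u$-Gibbs measures) shows that $\mu\in\Gibbs$, which is item 1. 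In addition, equality holds in the invariance principle, so $\hmu$ is a projective $u$-Gibbs measure.

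\textbf{Step 3: identify the exponents.} By \Cref{mainthm:TheoremD} and \Cref{cor:onlygibbs}, the only projective $u$-Gibbs lift of $\mu$ is $\hmu^+$. Therefore, with $\psi(x,[v])=\log\norm{D_xf|_{[v]}}$,
\[
\lambda^{+c}(\mu_n)=\int\psi\,\der\hmu_n^+\ \longrightarrow\ \int\psi\,\der\hmu^+=\lambda^{+c}(\mu).
\]
One point needs care here: when $\mu_n$ has equal center exponents, $\int\psi\,\der\hmu_n^+$ still equals $\lambda^{+c}(\mu_n)$ for an appropriate lift, namely the one maximizing $\int\psi$ among lifts. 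Since every sequence has such a convergent subsequence, the whole sequence converges. The bottom exponent follows as in the proof of \Cref{mainthm:TheoremB}: the sum $\lambda^{+c}+\lambda^{-c}=\int\log|\det D^cf|$ is weak* continuous.

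\textbf{Main obstacle.} The main obstacle is Step 1, and specifically relating $P(\mu_n)$ to $J^u-h^u$ when the center exponents of $\mu_n$ have arbitrary signs. This requires the partial entropy inequality $h\le h^u+\lambda^{+c}\vee0+\lambda^{-c}\vee0$, which I would take from the Ledrappier--Young theory, together with the fact that $h^u$ is the entropy along the strong unstable foliation. A second point to settle in Step 1 is justifying the equality $h^u_{\hmu^+_n}=h^u_{\mu_n}$ for non-$u$-Gibbs measures. For this I would use $h^u$-invariance of $E^{+c}$, so that $\hat\xi$ restricted to the graph is isomorphic to $\xi$.
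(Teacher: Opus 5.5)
Your proposal is correct and follows the paper's proof essentially step by step. It bounds $P(\mu_n)\le h^u_{\mu_n}(f)-J^u(\mu_n)\le 0$ through the Ledrappier--Young decomposition, transfers unstable entropy to $\hmu_n^+$ via the $h^u$-invariance of $E^{+c}$ and the equality case of \Cref{thm.invarianceprinciple}, and then uses \Cref{thm.semicontinuityuentropy} together with \Cref{mainthm:TheoremD} to force $\hmu_n^+\to\hmu^+$.

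The only deviation is in item 1. You get it from the projective semicontinuity combined with $h^u_\mu(f)\ge h^u_{\hmu}(\hf)$, while the paper applies the semicontinuity of unstable entropy for $f$ directly to the $\mu_n$; both work. One small caveat, which the paper shares: your entropy-equality argument does not cover the case $\lambda^{-c}(\mu_n)=\lambda^{+c}(\mu_n)$, since there is no $E^{+c}$ there. That case cannot occur when the measures of $X$ have one positive and one negative center exponent, which is the only case the paper treats.
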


\begin{proof}[Proof of \Cref{prop.convergenceofexponents}]
Recall that $\phi$ is the geometric potential. Let $\nu$ be an invariant measure such that $\nu$-almost every points has one positive and one negative center Lyapunov exponent. Define the strong unstable geometric potential by 
\[
\phi^u(x) := -\log |\det D_xf |_{E^u}|,
\]
and for an invariant measure $\nu$, define the unstable pressure of $\nu$ by 
\[
P^u(\nu) := h^u(\nu) + \int \phi^u(x) \dd\nu(x).
\]
For $\nu$-almost every $x$, there is $e(x) \in \{1, \cdots, \dim E^u\}$ and Lyapunov exponents $\lambda_1^{u}(x)>\lambda_2^u(x)>\cdots \lambda_{e(x)}^u(x)>0$ associated to the directions subordinated to $E^u$, and let $E^u_{i}(x)$ denote the corresponding Oseledets bundle., for $i\in \{1, \cdots, e(x)\}$. As a consequence of the Ledrappier-Young entropy formula \cite{LedYoungII}, 
\begin{align*}
h^u_{\nu}(f)&= \displaystyle\int \sum_{i=1}^{e(x)} \gamma_i^u(x)\lambda_i^u(x)\der\nu(x)\\
h_{\nu}(f)&= \displaystyle h^u_{\nu}(f)+\int \gamma^{+c}(x)\lambda^{+c}(x)\der\nu(x)
\end{align*}
where $0\leq \gamma_i^u(x)\leq \dim E^u_i(x), 0\leq \gamma^{+c}(x) \leq 1$. Ruelle's inequality implies that
\[
h_\nu(f) \leq \displaystyle \int \sum_{i=1}^{e(x)} \left( \dim E^u_i(x) \right) \lambda^u_i(x) d\nu(x) + \int \lambda^{+c}(x) d\nu(x).
\]
Moreover, Oseledets Theorem also gives that 
\[
\displaystyle\int \phi^u(x) d\nu(x) = \displaystyle - \int \sum_{i=1}^{e(x)} \left(\dim E^u_i(x) \right) \lambda^u_i(x) \dd\nu(x).
\]
and 
\[
\displaystyle \int \phi(x) \dd\nu(x) = - \int \sum_{i=1}^{e(x)} \left( \dim E^u_i(x) \right) \lambda^u_i(x) \dd\nu(x) - \int \lambda^{+c}(x) \dd\nu(x),
\]
Therefore,
\[
\displaystyle P^u(\nu) = h^u_\nu(f) +\int \phi^u(x) d\nu(x) =\int \sum_{i=1}^{e(x)} (\gamma_i^u(x) - \dim E^u_i(x))\lambda_i^u \leq 0,
\]
since $\gamma^u_i(x) \leq \dim E^u_i(x)$ for $\nu$-almost every $x$. 
Thus,
\begin{align*}
P(\nu)&= \displaystyle h_{\nu}(f)- \int \sum_{i=1}^{e(x)} \left(\dim E^u_i(x) \right) \lambda_i^u(x)\der\nu(x) -\int \lambda^{+c}(x)\der\nu(x)\\
&=\displaystyle h^u_\nu(f) + \int \gamma^{+c}(x) \lambda^{+c}(x)d\nu(x)+ \int \phi^u(x)\der\nu(x) -\int \lambda^{+c}(x)\der\nu(x)\\
& = \displaystyle h^u_\nu(f)+\int \phi^u(x)\der\nu(x) + \int \left(\gamma^{+c}(x) - 1 \right) \lambda^{+c}(x) d\nu(x)\\
& \leq P^u(\nu) \leq 0,
\end{align*}
where we used that $\gamma^{+c}(x) -1 \leq 0$. We showed that
\begin{align}\label{eq:pressures}
0 \geq P^u(\nu) \geq P(\nu).
\end{align}

Suppose that $\mu_n$ is a sequence of measures such that $P(\mu_n)$ converges to $0$ and, up to a subsequence, suppose that they converge to a measure $\mu$.  Since 
\[
\displaystyle \lim_{n\to +\infty} h^u_{\mu_n}(f) + \int \phi^u \dd\mu_n = 0,
\]
 we have that
\[
\displaystyle \limsup_{n\to +\infty} h_{\mu_n}^u(f) = \limsup_{n\to +\infty} -\int \phi^u (x) \dd\mu_n(x).
\]
By the semi-continuity of unstable entropy \cite{Yangunstableentropy},
\[
\displaystyle \limsup_{n \to +\infty} h^u_{\mu_n}(f) \leq h^u_{\mu}(f).
\]
Therefore, 
\[
 \displaystyle \limsup_{n\to +\infty} -\int \phi^u (x) \dd\mu_n(x) =  \limsup_{n\to +\infty} h_{\mu_n}^u(f) \leq h_{\mu}^u(f) \leq -\int \phi^u(x) \dd\mu(x).
\]
Since the function $\phi^u$ is continuous and $\mu_n$ converges to $\mu$ in the weak*-topology, we conclude that 
\[
h^u_{\mu}(f) = -\int \phi^u(x) \dd\mu(x)
\]
and by Proposition $5.2$ in \cite{Yangunstableentropy}, $\mu$ is an $u$-Gibbs measure. 

For an invariant measure $\nu$, given a projective lift $\hat{\nu}$, we define its unstable pressure as
\[
\displaystyle P^u(\hat{\nu}) := h_{\hat{\nu}}^u(\hat{f}) + \int \phi^u(x) \dd\nu(x). 
\]

As explained in \Cref{sub:projetive_dynamics_continuity_of_exponents}, and since the measures $\mu_n$ and $\mu$ are hyperbolic, each has exactly two ergodic lifts to $\widehat{M}$ (denoted $\hat{\mu}_n^+, \hat{\mu}_n^-$ and  $\hat{\mu}^+, \hat{\mu}^-$, respectively).

\begin{claim}
$P^u(\hat{\mu}_n^+) = P^u(\mu_n).$
\end{claim}
\begin{proof}
Fix $n\in \Nat$ and let $\xi$ be an $u$-subordinated $\mu_n$-measurable partition and let $\widehat{\xi}$ and $\widehat{\xi}^{cu}$be the partitions given in \Cref{subsection.unstableentropy}.  Since $E^{+c}$ is invariant by unstable holonomies \Cref{lem:invariantEc}, for $\mu_n$-almost every point, we have that the measure $\widehat{\mu_n}_{(p,[v])}^{\widehat{\xi}^{cu}}$ is supported on a single piece of projective unstable manifold, and, therefore, it coincides with $\widehat{\mu_n}_{(p,[v])}^{\widehat{\xi}^u}$, for $\widehat{\mu_n}^+$-almost every $(p,[v])$.  Therefore,
\[
\widehat{\pi}_* \widehat{\mu_n}_{(p,[v])}^{\widehat{\xi_n}^u} = \widehat{\pi}_* \widehat{\mu_n}_{(p,[v])}^{\widehat{\xi}^{cu}} = \mu_{n,p}^{\xi}.
\]
By \Cref{thm.invarianceprinciple}, $h^u_{\widehat{\mu_n}}(\widehat{f}) = h^u_{\mu_n}(f)$.  Hence,
\[
\displaystyle P^u(\hmu_n^+) = h^u_{\widehat{\mu_n}^+}(\widehat{f}) + \int \phi^u \dd\mu_n = h^u_{\mu_n}(f) + \int \phi^u \dd\mu_n = P^u(\mu_n). \qedhere
\]
\end{proof}

Take any limit measure $\hat{\mu}$ of the sequence $\hat{\mu}_n^+$.  We will show that $\hat{\mu} = \hat{\mu}^+$. 

By Theorem \ref{thm.semicontinuityuentropy}, we have that
\[
\displaystyle 0 \geq h^u(\hat{\mu}) + \int \phi^u \dd\mu \geq \limsup_{n\to +\infty} h^u(\hat{\mu}_n^+) + \int \limsup_{n\to +\infty} \phi^u \dd\mu_n = \limsup_{n\to +\infty} P^u(\hat{\mu}_n^+) = 0.
\]
Hence, 
\[
P^u(\hat{\mu}) = 0.
\]
Which implies that $h^u_{\hat{\mu}}(\hf) =- \int \phi^u \dd\mu = h^u_{\mu}(f)$, since $\mu$ is a $u$-Gibbs measure.  By \Cref{thm.invarianceprinciple}, we have that $\hat{\mu}$ is a projective $u$-Gibbs measure. Since $f$ is expanding on average and $\Gap >0$, by \Cref{mainthm:TheoremD},  we have that the only projective $u$-Gibbs measure that projects to $\mu$ is given by $\hat{\mu}^+$.  We conclude that $\lim_n \hat{\mu}_n^+ = \hat{\mu}^+$.

Since the top center exponent is given by the integral of the function $\psi(p,E) = \log \|D_pf|_E\|$, which is continuous in $\widehat{M}$, we obtain that
\[
\displaystyle \lim_{n \to +\infty} \lambda^{+c}(\mu_n) = \lim_{n\to +\infty} \int \psi \dd\hat{\mu}_n^+ = \int \psi \dd\hat{\mu}^+ = \lambda^{+c}(\mu).
\]
Which gives the continuity of the top exponent. Since for an invariant measure $\nu$ we have $\lambda^{+c}(\nu) + \lambda^{-c}(\nu) = \int \log |\det Df|_{E^c}| \dd\nu$, which is continuous with respect to $\nu$, we conclude that
\[
\lim \lambda^{-c}(\mu_n) = \lambda^{-c}(\mu).
\]
This finishes the proof of \Cref{prop.convergenceofexponents}.\qedhere
\end{proof}

\begin{proof}[Proof of \Cref{thm.sprueap}]

Let $\mu$ be an SRB measure carried by $X$, the corresponding Borel homoclinic class. Since $\mu$ is an $SRB$ measure, we have that $P(\mu) = 0$.  Fix $\chi>0$ small such that $4\chi \in (0, \min\{\CEA, \Gap\}$, and fix $\varepsilon>0$ small. Suppose that SPR fails for $(\chi,\varepsilon)$, then, there exist a sequence of ergodic measures $\mu_n$ carried by $X$ such that $P(\mu_n) \to 0$ and $\mu_n (\Delta_{n,\chi, \varepsilon}) \to 0$. Up to a subsequence, we may assume that $\mu_n$ converges to a measure $\mu'$ carried by $X$. 
\Cref{prop.convergenceofexponents} implies that $\mu'$ is $u$-Gibbs and $\lambda^{\pm c}(\mu_n) \to \lambda^{\pm c}(\mu')$. Applying \Cref{prop.generaluniformpesin} to $\mathcal{K} = \{\mu_n\} \cup \{\mu'\}$, implies that there exists a constant $C$ such that $\mu_n(\Delta_{C,\chi,\varepsilon})>\frac{1}{2}$ for all $n$, which is a contradiction. 
\end{proof}

\begin{proof}[Proof of \Cref{mainthm:TheoremC}]

    Suppose that $f\in \PHb[2]$ is expanding on average and satisfies $\Gap>0$. Let $\mu$ be an ergodic SRB measure. Let $X$ be its Borel homoclinic class carrying $\mu$. By \Cref{thm.sprueap}, $f$ is SPR on $X$. \Cref{mainthm:TheoremC} follows from \Cref{thm.sprexponential}. 
\end{proof}

\section{Examples of expanding on average partially hyperbolic systems}
\label{sec:Examples}

In this section, we will present several examples of partially hyperbolic diffeomorphisms with the expanding on average property, and we will make some remarks about their abundance. Recall the discussion on the finitary versions of the expanding on average properties (see \Cref{def:finitaryutillings} and the text after).

\subsection{Skew products with fiber maps small perturbations of the identity and conservative H\'enon-type maps.}\label{subsec:henonlike}

Our first examples are skew products over hyperbolic maps such that the maps on the center fibers are arbitrarily close to the rotation by $-\frac{\pi}2$, which makes the fourth iterates arbitrarily close to the identity. In the same time, the center fiber maps of our examples are conservative H\'enon-like maps.

In order to simplify the computations, we will consider the case when the base map is a linear multiplication by the natural number $\lambda\geq 4$ on the circle. Equivalently, one can take the base map the solenoid induced by the multiplication by $\lambda$ on the circle: $U_{\lambda}:T_3\to T_3,$ 
\[
  U_{\lambda}(u,v,t)=(\eta_1\cos t+\eta_2u,\eta_1\sin t+\eta_2v,\lambda t).
\]
where $T_3=\mathbb{D}\times\mathbb{S}^1$ is the solid torus. For $0<\eta_2<<\eta_1$ sufficiently small, its maximal invariant set $\Lambda_{\lambda}$ is a hyperbolic attractor, and $U_{\lambda}|\Lambda_{\lambda}$ factors over the multiplication by $\lambda$ on $\mathbb T^1$.

For parameters $\ep>0, t\in(0,2\pi)$ we consider the conservative H\'enon-like map $g_{\ep,t}:\Tor^2\rightarrow\Tor^2$, 
\[
g_{\ep,t}(x,y)=(y+\ep\sin(x+t),-x).  
\]

We define the skew-product $f_{\ep}:\Tor^3\rightarrow\Tor^3$ over the multiplication by $\lambda$ on $\mathbb T^1$: 
\begin{align}
    f_{\ep,\lambda}(x,y,t)=(g_{\ep,t}(x,y),\lambda t)=(y+\ep\sin(x+t),-x,\lambda t).\label{eq.henonlike}
\end{align}

Observe that $f_{\ep,\lambda}$ lifts to an invertible skew product $F_{\ep,\lambda}$ on $T_3\times\mathbb T^2$, and for $\lambda$ large enough we have the partially hyperbolic attractor $\Lambda_{\lambda}\times\mathbb T^2$. By a mild abuse of language, we will say that $f_{\ep,\lambda}$ verifies the expansion on average property, provided that its lift $F_{\ep,\lambda}$ does (of course restricted to its partially hyperbolic attractor).

We have the following result.

\begin{proposition}\label{prop:Henon}
    There exists $\ep_0>0$ that satisfies the following: for any $0<\ep<\ep_0$, there exists a $\lambda_{\ep}\in\mathbb N$ such that for any $\lambda>\lambda_{\ep}$, the map $f_{\ep,\lambda}$ satisfies is expanding on average.
\end{proposition}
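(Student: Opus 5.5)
The plan is to verify the finitary expanding on average property of \Cref{def:finitaryutillings} for the lift $F_{\ep,\lambda}$ with respect to a $u$-tiling bounded from above, and then invoke \Cref{pro:finitarydim1}. The iterate will be $N=4$, since $g_{\ep,t}$ is an $\ep$-perturbation of the rotation by $-\pi/2$ and $Dg^4$ is close to the identity. The unstable leaves are, up to the solenoid fibre, curves projecting diffeomorphically onto the $t$-circle. Because the $\mathbb{T}^2$-fibre derivatives have norm at most $1+\ep$ while the base expands by $\lambda$, these leaves are nearly horizontal: the $(x,y)$ coordinates vary by $O(\ep/\lambda)$ along a leaf piece of unit $t$-length. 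I take the $u$-cells to be pieces of unstable leaves whose $t$-projection is an interval of fixed length $\ell$, say $\ell=2\pi$ or a fixed fraction of it. I use the constant trivialization of $T\mathbb{T}^2$.

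The first step is to control the geometry. For $q,p$ in the same cell, the linear holonomy $\hu_{q,p}=\lim_n D g^n_{f^{-n}q}\,(D g^n_{f^{-n}p})^{-1}$ is a telescoping product of terms $A(a_k(q))A(a_k(p))^{-1}$. Here
\[
A(a)=\begin{pmatrix} a & 1\\ -1 & 0\end{pmatrix},\qquad a=\ep\cos(x+t),
\]
and the backward distances shrink like $\lambda^{-k}$. This gives $\norm{\hu_{q,p}-\mathrm{Id}}=O(\ep/\lambda)$. The same smallness holds for the variation of the canonical density $\rho^u_Q$ relative to the normalized $t$-length on the cell. So, up to errors $O(\ep/\lambda)$, the quantity $I_4(Q,p,v)$ equals
\[
\frac14\int_Q \log\norm{A(a_4)A(a_3)A(a_2)A(a_1)v}\,\frac{\der t}{\ell},\qquad a_k=\ep\cos(x_0+\lambda^{k-1}t+\text{small}),
\]
with $v$ fixed.

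The second step is to pass to $\lambda\to\infty$. The map $t\mapsto(t,\lambda t,\lambda^2t,\lambda^3t)\bmod 2\pi$ on an interval of fixed length equidistributes in $\mathbb{T}^4$ as $\lambda\to\infty$. Hence the integral converges, uniformly in $x_0$ and $v\in S^1$, to the average
\[
\Phi_\ep(v)=\mathbb{E}\log\norm{A(\ep\cos\theta_4)\cdots A(\ep\cos\theta_1)v}
\]
over independent uniform phases $\theta_1,\dots,\theta_4$. The integrand is Lipschitz in the phases, so this is a Weyl/Riemann-sum argument.

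The third step is the main computation, which I expect to be the main obstacle. I must show that $\Phi_\ep(v)\ge c\ep^2$ uniformly in $v$ for small $\ep$. Expanding the product gives
\[
A(a_4)\cdots A(a_1)=\mathrm{Id}+\ep B_1+\ep^2 B_2+O(\ep^3),
\]
where $B_1$ is linear in the $\cos\theta_i$ and so has mean zero. Moreover $\det=1$, so $\operatorname{tr}(B_1)=0$ and $\operatorname{tr}(B_2)$ is controlled by $\det$. Using $\log\norm{(\mathrm{Id}+E)v}=\langle Ev,v\rangle+\frac12\norm{Ev}^2-\langle Ev,v\rangle^2+O(\norm{E}^3)$, I will check that $\mathbb{E}\langle B_2v,v\rangle$ contributes no negative term; I expect it to reduce, via the determinant identity, to a term compensated by the $B_1$ quadratic terms. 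The leading term should then be
\[
\frac{\ep^2}{2}\,\mathbb{E}\bigl(\norm{B_1v}^2-2\langle B_1v,v\rangle^2\bigr).
\]
For a traceless $B_1$ this equals $\frac{\ep^2}{2}\,\mathbb{E}\,\norm{B_1v-\langle B_1v,v\rangle v}^2$ modulo the symmetric-part correction. Positivity then amounts to the non-degeneracy statement that no fixed direction $v$ is an almost sure eigenvector of the random traceless matrix $B_1$. This I will check directly from the explicit form of $B_1$, namely a combination of $\cos\theta_k$ times conjugates of $\begin{pmatrix}1&0\\0&0\end{pmatrix}$ by powers of the rotation. Compactness of $S^1$ then gives a uniform $c>0$. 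If the $\ep^2$ bookkeeping fails, the fallback is to use $N=4m$ and the classical result that conservative i.i.d. random products near rotations have positive exponent of order $\ep^2$.

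Finally, the constants are chosen in order. First choose $\ep_0$ so that the $O(\ep^3)$ remainder is below $\frac{c}{2}\ep^2$. Then, for each $\ep<\ep_0$, choose $\lambda_\ep$ so that the equidistribution error and the $O(\ep/\lambda)$ holonomy and density errors are also below $\frac{c}{4}\ep^2$. This yields the finitary property with constants $(4,\frac{c}{4}\ep^2)$, and \Cref{pro:finitarydim1} concludes the proof.
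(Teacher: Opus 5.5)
Your first two steps (nearly horizontal leaves, $\norm{\hu_{q,p}-\mathrm{Id}}=O(\ep/\lambda)$, canonical density uniform in $t$, passage to $\lambda\to\infty$) are broadly in line with what the paper does. The gap is in the third step: the uniform bound $\Phi_\ep(v)\geq c\ep^2$ is false.

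Write $A(a)=R+aE_{11}$ with $R=\begin{pmatrix}0&1\\-1&0\end{pmatrix}$, so $R^4=\mathrm{Id}$. Your own expansion then gives
\[
B_1=\begin{pmatrix}0&-(\cos\theta_2+\cos\theta_4)\\ \cos\theta_1+\cos\theta_3&0\end{pmatrix},\qquad \mathbb{E}B_2=0,
\]
because every term of $B_2$ carries a factor $\cos\theta_i\cos\theta_j$ with $i\neq j$. Hence, for a unit vector $v=(v_1,v_2)$,
\[
\Phi_\ep(v)=\ep^2\Bigl(\tfrac12\mathbb{E}\norm{B_1v}^2-\mathbb{E}\langle B_1v,v\rangle^2\Bigr)+O(\ep^3)=\ep^2\bigl(\tfrac12-2v_1^2v_2^2\bigr)+O(\ep^3)=\tfrac{\ep^2}{2}\bigl(v_1^2-v_2^2\bigr)^2+O(\ep^3),
\]
which vanishes at $v=(1,\pm1)/\sqrt2$.

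Your non-degeneracy criterion does hold here: $\mathbb{E}\langle B_1v,v^\perp\rangle^2=v_1^4+v_2^4>0$, so no direction is an almost sure eigenvector. But the ``symmetric-part correction'' $-\tfrac{\ep^2}{2}\mathbb{E}\langle B_1v,v\rangle^2$ is of the same order and cancels that term exactly on the diagonals. These are precisely the directions $a=\pm1$ at which the paper's second derivative $\pi(1-a^2)^2/(1+a^2)^2$ vanishes.

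The paper's remedy is to go to higher order in $\ep$. The third derivative vanishes by symmetry, and the fourth equals $15\pi/4>0$ at $a=\pm1$. Continuity in $a$ then lets one use the $\ep^4$ term near the diagonals and the $\ep^2$ term away from them. This gives $E\geq K\ep^4$ uniformly, already with two iterates.

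Your fallback does not repair this as stated. At order $\ep^2$ the degeneracy persists for every fixed $N$. By independence, the $\ep^2$-coefficient of $\mathbb{E}\log\norm{S_Nv}$, with $S_N=A(\ep\cos\psi_N)\cdots A(\ep\cos\psi_1)$, is the sum of the one-step gains $\tfrac{\ep^2}{4}w_1^2(w_1^2-w_2^2)$ evaluated at $w=R^kv$, $0\leq k<N$. These gains vanish on the diagonals, and $R$ permutes the two diagonals. So $N=4m$ with $m$ fixed still forces a higher-order computation.

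A random-matrix argument can give a positive constant only if $N=N(\ep)\to\infty$, heuristically on the scale $\ep^{-2}$. It would need two ingredients:
\begin{itemize}
\item Furstenberg's theorem for the i.i.d.\ family $A(\ep\cos\psi)$. The generated group is non-compact, since $A(a)A(b)^{-1}$ is a non-trivial shear, and it is strongly irreducible.
\item Uniform convergence of $\frac1N\mathbb{E}\log\norm{S_Nv}$ to the exponent over the unit circle.
\end{itemize}
The constants would then be chosen in the order $\ep$, $N(\ep)$, $\lambda_\ep$, and the pair $(4,\tfrac c4\ep^2)$ in your last paragraph is not available.

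Finally, in your second step the phase offsets are $O(\ep)$, and they cycle through $x,y,-x,-y$ rather than being $x_0$ each time. An $O(\ep)$ offset is not negligible at the order you need. To identify the limit with independent uniform phases, you must observe that $\psi_k=\lambda^{k-1}t+h_k(\psi_1,\dots,\psi_{k-1})$ is a Haar-preserving triangular change of variables on $\mathbb{T}^4$.
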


\begin{remark}
  Note that $f^4_{\ep,\lambda}$ is a perturbation of $f^4_{0,\lambda}$ which acts by identity on the center fibers. If $f_{\ep,\lambda}$ has the expanding on average property, the same holds for $f^4_{\ep,\lambda}$.

  The existence of non-uniformly perturbations of partially hyperbolic systems acting as the identity in the center were studied by K. Marin in \cite{Marin2016}. The present method is different from hers, and permits to obtain stronger statistical consequences.
\end{remark}

\begin{proof}
Let us use some notations. For $\theta\in \mathbb R$, let $c(\theta)=\cos\theta, s(\theta)=\sin\theta$. Define the quantities
\begin{align*}
    &C=C(y,\lambda,t)=y+\lambda t\\
    &B=B(x,t)=x+t\\
    &A=A(x,y,\lambda,\ep,t)=C+\ep\sin B=C+\ep s(B).
 \end{align*}

Consider a direction $v_a=(1,a)$. We are interested in the action of the derivative of $f_{\ep,\lambda}$ on $v_a$ (since the center direction corresponds to the first two coordinates, we will make an abuse of notation and consider that $Df_{\ep,\lambda}$ acts on $v_a$).
\begin{align*}
&D_{(x,y)}g_{\ep,t}=\begin{bmatrix}
    \ep c(B) & 1\\
    -1 & 0 
    \end{bmatrix}\Rightarrow D_{(x,y,t)}f_{\ep,\lambda}v_a=\begin{bmatrix}
    a+\ep c(B)\\
    -1  
    \end{bmatrix}=\begin{bmatrix}
    u_{\ep,a}\\
    -1  
    \end{bmatrix}\\
    \intertext{thus}
    &D_{(x,y,t)}f^2_{\ep,\lambda}v_a=\begin{bmatrix}
    \ep c(A) & 1\\
    -1  & 0 
    \end{bmatrix}\begin{bmatrix}
    u_{\ep,a}\\
    -1  
    \end{bmatrix}=\begin{bmatrix}
    \ep c(A)u_{\ep,a}-1\\
    -u_{\ep,a}  
    \end{bmatrix}=\begin{bmatrix}
    \ep c(A)(a+\ep c(B))-1\\
    -a-\ep c(B)  
    \end{bmatrix}.
\end{align*}

\begin{lemma}\label{lem:linearunstable}
    There exists $K>0$, $\ep_0>0$ so that for every $0<\ep<\ep_0$, for all $(x,y)\in \Tor^2,\ a\in\mathbb R,$ and all $\lambda\in\mathbb N$, $\lambda\geq 4$, we have
\[ 
E(x,y,\ep,\lambda,a)=\int_0^{2\pi}\log\frac{\|D_{(x,y,t)}f^2_{\ep,\lambda}v_a\|}{\|v_a\|}\dd t \geq K\ep^4.
\]
\end{lemma}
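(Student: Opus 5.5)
The plan is to compute the integrand exactly, expand in $\ep$, and integrate term by term using the fact that $A=y+\lambda t+\ep s(x+t)$ oscillates at frequency $\lambda\geq 4$ while $B=x+t$ has frequency $1$. To make the bound uniform in $a\in\Real$, I would replace $v_a$ by the unit vector $w_\phi=(\cos\phi,\sin\phi)$; the ratio $\|D f^2 v\|/\|v\|$ is invariant under scaling of $v$, so it suffices to work with $\phi\in S^1$, a compact parameter. Writing $u=\cos\phi\,\ep c(B)+\sin\phi$, one gets
\[
\|D_{(x,y,t)}f^2_{\ep,\lambda}w_\phi\|^2=(\ep c(A)u-\cos\phi)^2+u^2 = 1+X,
\]
where
\[
X=2\ep\sin\phi\cos\phi\, c(B)+\ep^2\cos^2\phi\, c(B)^2-2\ep\cos\phi\, c(A)u+\ep^2c(A)^2u^2 .
\]
The quantity to bound is then $E=\frac12\int_0^{2\pi}\log(1+X)\,\dd t$. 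Since $|X|=O(\ep)$ uniformly, I would use $\log(1+X)=X-\frac{X^2}{2}+\frac{X^3}{3}-\frac{X^4}{4}+O(\ep^5)$, with all constants uniform in $(x,y,\phi,\lambda)$.

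The second step is to compute the Fourier averages that appear in $\int X^k\,\dd t$ for $k\leq 4$, keeping every contribution up to order $\ep^4$. Terms built only from $c(B)$ are elementary: for instance $\int c(B)\,\dd t=0$ and $\int c(B)^2\,\dd t=\pi$. For terms containing $c(A)^j$, I would use the Jacobi--Anger expansion
\[
e^{i(y+\lambda t+\ep s(x+t))}=e^{i(y+\lambda t)}\sum_{k\in\mathbb Z} J_k(\ep)e^{ik(x+t)},
\]
so that $c(A)$ has frequencies $\lambda+k$ with coefficients $J_k(\ep)=O(\ep^{|k|})$. A product $c(A)^j c(B)^m$ with $j\in\{1,2\}$ and $m\leq 3$ has a nonzero mean only through the $\lambda$-harmonics cancelling against the low frequencies coming from $c(B)^m$. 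For $j=1$ this requires $|k|\geq \lambda-m\geq 1$, so the resulting contribution is $O(\ep^{\lambda-m})$ times its prefactor; with $\lambda\geq 4$ these terms are pushed beyond order $\ep^4$ or have explicitly computable sign. For $j=2$, the averages $\int c(A)^2(\cdots)\,\dd t$ reduce to $\frac12\int(\cdots)\,\dd t$ plus similarly small corrections.

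Collecting terms should give
\[
E=\ep^2 Q_2(\phi)+\ep^3Q_3(\phi)+\ep^4Q_4(\phi)+O(\ep^5),
\]
where each $Q_i$ is an explicit trigonometric polynomial in $\phi$, independent of $\lambda\geq 4$, with $Q_2\geq 0$. I expect the key gain to come from the averaged term $\ep^2\int c(A)^2u^2$ together with $-\frac12\int X^2$. Because $c(A)$ is asymptotically uncorrelated with everything else, the linear-in-$\ep$ part of $X$ contributes the variance of a random rotation-like kick, and in the average over $t$ the net second-order effect of such a kick on $\log\|\cdot\|$ is positive. This is the Furstenberg-type mechanism behind the expanding on average condition.

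The main obstacle is the set of directions where $Q_2$ vanishes, which I expect to be around $\phi\in\{0,\pi/2,\pi,3\pi/2\}$ or wherever $\cos\phi\,\sin\phi$ degenerates; there the leading term drops out and the fourth-order term is what yields the $\ep^4$ in the statement. At those directions I would show $Q_4(\phi)>0$ by direct computation, and argue that $Q_3$ vanishes to sufficient order there. Then I would split $S^1$ into a neighborhood $\{|Q_2(\phi)|\leq \delta\}$, where the bound $\ep^2 Q_2+\ep^4 Q_4-O(\ep^5)\geq \frac12\ep^4\min Q_4$ holds with $\delta$ and $\ep_0$ fixed independently of $\lambda$, and its complement, where $E\geq \ep^2\delta-O(\ep^3)\geq \ep^4$. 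The remaining delicate point is keeping all error terms uniform in $\lambda\geq 4$; the Bessel bounds $|J_k(\ep)|\leq \frac{(\ep/2)^{|k|}}{|k|!}$ summed over $|k|\geq\lambda-3$ give exactly this uniformity.
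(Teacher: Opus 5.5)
Your plan is correct and has the same skeleton as the paper's proof. Both expand $E$ to fourth order in $\ep$, show the $\ep^2$-coefficient is nonnegative and vanishes at finitely many directions, show the $\ep^3$-coefficient vanishes, check the $\ep^4$-coefficient is positive at those directions, and then split the compact set of directions.

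What you do differently is technical but worthwhile. You parametrize by unit vectors $w_\phi$ instead of slopes $a\in\R\cup\{\infty\}$. You also evaluate the $t$-averages by Jacobi--Anger, instead of Taylor-expanding $c(A)=\cos(C+\ep s(B))$ around $c(C)$ as the paper does. Since $X$ is a polynomial in $\ep c(A)$ and $\ep c(B)$, every monomial $c(A)^jc(B)^m$ carries a factor $\ep^{j+m}$. Consequently:
\begin{itemize}
\item only $j+m\le 4$ matters, which includes $j=3,4$; your list omits these, but power reduction handles them the same way;
\item every resonant correction is $O(\ep^{\lambda+j})=O(\ep^5)$;
\item up to $O(\ep^5)$, $E=\pi\,\mathbb{E}\log(1+X)$ with $c(A),c(B)$ replaced by cosines of two independent uniform angles.
\end{itemize}
This makes uniformity in $\lambda,x,y$ transparent and the algebra short. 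The paper's route instead carries the extra $s(B)s(C)$-type terms from the Taylor expansion (inside $\alpha_3,\alpha_4$), which cancel only after integration. In exchange, it needs nothing beyond orthogonality of trigonometric monomials.

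However, your guess about the degenerate directions is backwards, and that is exactly where the lemma lives. Write $X=\sum_{k=1}^4\ep^kX_k$. Then $X_1=\sin 2\phi\,(c(B)-c(A))$, $\mathbb{E}X_2=\tfrac12$ and $\mathbb{E}X_1^2=\sin^2 2\phi$, so $Q_2(\phi)=\frac{\pi}{2}\cos^2 2\phi$.
\begin{itemize}
\item $Q_2$ vanishes at $\phi\equiv\pi/4\pmod{\pi/2}$, which is the paper's $a=\pm1$. These are the directions where $|\sin\phi\cos\phi|$ is maximal, so $\frac12\mathbb{E}X_1^2$ exactly cancels $\mathbb{E}X_2$.
\item At the axes, where you expected degeneracy, $Q_2$ is in fact maximal.
\item $Q_3\equiv 0$ identically, because every cubic monomial in $(c(A),c(B))$ is odd in one variable. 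So no order-of-vanishing argument is needed.
\end{itemize}
At $\phi=\pi/4$ one has $X_1=c(B)-c(A)$, $X_2=\frac12X_1^2$, $X_3=c(A)^2c(B)$ and $X_4=\frac12c(A)^2c(B)^2$. Therefore
\[
\mathbb{E}\bigl[X_4-\tfrac12X_2^2-X_1X_3+X_1^2X_2-\tfrac14X_1^4\bigr]=\tfrac18-\tfrac{9}{32}-\tfrac14+\tfrac98-\tfrac{9}{16}=\tfrac{5}{32},
\]
so $Q_4=\frac{5\pi}{32}>0$, and the same value holds at $3\pi/4$. This is precisely the paper's $\partial^4_\ep E=\frac{15\pi}{4}$. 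With these values your splitting argument closes.
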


\begin{proof}
We remark first that
\[
  E(x,y,0,\lambda,a)=0,\ \forall x,y,\lambda,a.
\]

We start writing

\begin{align*}
N_{\ep,a} &:= \norm{D_{(x,y,t)}f^2_{\ep,\lambda}v_a}^2=(1-\ep c(A)u_{\ep,a})^2+u_{\ep,a}^2=[1-\ep c(A)(a+\ep c(B))]^2+(a+\ep c(B))^2\\
&=1+a^2+2a\ep(c(B)-c(A))+\ep^2 (c(B)^2+a^2 c(A)^2-2c(B)c(A))+\\
&\ \ \ \ +2a\ep^3 c(B)c(A)^2+\ep^4 c(B)^2c(A)^2 
\end{align*}

Note that we can expand
\begin{align*}
c(A)&=\cos(C+\ep s(B))=c(C)-\ep s(B)s(C)-\frac{\ep^2}2s(B)^2c(C)+\frac{\ep^3}6s(B)^3s(C) + O(\ep^4)\\
c(A)^2&=c(C)^2-2\ep s(B)s(C)c(C)+\ep^2s(B)^2(s(C)^2-c(C)^2)+O(\ep^3).
\end{align*}
and therefore (if $\rho_a=\norm{v_a}^2=1+a^2$)
\begin{align*}
\frac{N_{\ep,a}}{\rho_a}&=1+\frac{2a\ep}{\rho_a}\left[c(B)-c(C)+\ep s(B)s(C)+\frac{\ep^2}2s(B)^2c(C)-\frac{\ep^3}6s(B)^3s(C)\right]+\\
&+\frac{\ep^2}{\rho_a}\left[c(B)^2+a^2c(C)^2-2a^2\ep s(B)s(C)c(C)+a^2\ep^2s(B)^2(s(C)^2-c(C)^2)-\right.\\
&\left.-2c(B)c(C)+2\ep s(B)c(B)s(C)+\ep^2s(B)^2c(B)c(C)\right]+\\
&+\frac{2a\ep^3}{\rho_a}c(B)[c(C)^2-2\ep s(B)s(C)c(C)]+\frac{\ep^4}{\rho_a}c(B)^2c(C)^2+O(\ep^5)\\
&=1+\alpha_1\ep+\alpha_2\ep^2+\alpha_3\ep^3+\alpha_4\ep^4+O(\ep^5);
\end{align*}
where
\begin{align*}
    \al_1&=\frac{2a(c(B)-c(C))}{\rho_a}\\
    \al_2&=\frac{2as(B)s(C)+c(B)^2+a^2c(C)^2-2c(B)c(C)}{\rho_a}\\   \al_3&=\frac{as(B)^2c(C)-2a^2s(B)s(C)c(C)+2s(B)c(B)s(C)+2ac(B)c(C)^2}{\rho_a}\\
    \al_4&=\frac{-2as(B)^3s(C)+6a^2s(B)^2(s(C)^2-c(C)^2)+6s(B)^2c(B)c(C)-12as(B)c(B)s(C)c(C)+}{6\rho_a}\\
    &\ \ \ \ \ \frac{+6c(B)^2c(C)^2}{}
\end{align*}

Recall that
$$\log(1+x)=x-\frac{x^2}2+\frac{x^3}3-\frac{x^4}4+O(x^5),$$
then
\begin{align*}
\log\frac{N_{\ep,a}}{\rho_a}&=\log(1+\al_1\ep+\al_2\ep^2+\al_3\ep^3+\al_4\ep^4+O(\ep^5))\\
&=(\ep\al_1+\ep^2\al_2+\ep^3\al_3+\ep^4\al_4)-\frac{\ep^2\al_1^2+2\ep^3\al_1\al_2+\ep^4\al_2^2+2\ep^4\al_1\al_3}2+\frac{\ep^3\al_1^3+3\ep^4\al_1^2\al_2}3-\\
&\ \ \ \ \ -\frac{\ep^4\al_1^4}4+O(\ep^5)\\
&=\ep\al_1+\ep^2\left( \al_2-\frac{\al_1^2}2\right)+\ep^3\left( \al_3-\al_1\al_2+\frac{\al_1^3}3\right)+\\
&\ \ \ \ \ \ +\ep^4\left( \al_4-\frac{\al_2^2+2\al_1\al_3}2+\al_1^2\al_2-\frac{\al_1^4}4\right)+O(\ep^5).
\end{align*}
Observe that 
\begin{align*}
E(x,y,\ep,\lambda,a)=\int_0^{2\pi} \frac{1}{2}\log \frac{N_{a,\ep}}{\rho_a}\dd t.
\end{align*}
Then the derivatives with respect to $\ep$ in 0 are
\begin{align*}
\frac{\partial E}{\partial \ep}(x,y,0,\lambda,a)&=\int_0^{2\pi}\frac 12\al_1\dd t=\int_0^{2\pi}\frac{a(c(B)-c(C))}{1+a^2}\dd t=0\\
\frac{\partial^2 E}{\partial \ep^2}(x,y,0,\lambda,a)&=\int_0^{2\pi}\al_2-\frac{\al_1^2}2\dd t\\
&=\int_0^{2\pi}\frac{2as(B)s(C)+c(B)^2+a^2c(C)^2-2c(B)c(C)}{1+a^2}-\frac{2a^2(c(B)-c(C))^2}{(1+a^2)^2}\dd t\\
&=\int_0^{2\pi}c(B)^2\left( \frac{1+a^2}{1+a^2}-\frac{4a^2}{(1+a^2)^2} \right)\dd t=\pi\frac{(1-a^2)^2}{(1+a^2)^2}\geq 0,
\end{align*}
Since 
\[
\int_0^{2\pi}c(B)dt=\int_0^{2\pi}c(C)\dd t=0,
\]
\[
    \int_0^{2\pi} c(B)^2 \dd t=\int_0^{2\pi} c(C)^2 \dd t=\pi,
\]
and by orthogonality of the Fourier polynomials,
\[
    \int_0^{2\pi} c(B)c(C) \dd t=\int_0^{2\pi} s(B)s(C) \dd t=0.
\]

In particular,
\[
    \frac{\partial^2 E}{\partial \ep^2}(x,y,0,\lambda,a)>0\quad a\neq \pm 1
\]

Let us remark here that the formula works also for $a=\pm\infty$, corresponding to the vertical vector $v_a$; in this case the second derivative is $\pi$ (the limit when $a\to\infty$).

Now we address the special directions $a=\sigma$, where $\sigma=\pm 1$. 

First we see that the third derivative is
\[
   \frac{\partial^3 E}{\partial \ep^3}(x,y,0,\lambda,a)=\int_0^{2\pi}3\al_3-3\al_1\al_2+\al_1^3\dd t 
\]
is an integral of linear combinations of products of 3 trigonometric functions belonging to the set $\{\sin(x+t),\cos(x+t),\sin(y+\lambda t),\cos(y+\lambda t)\}$. It is not difficult to see that any integral of the product of such 3 functions is zero (for example by symmetry, the positive part is equal to the negative part), so we have
\[
\frac{\partial^3 E}{\partial \ep^3}(x,y,0,\lambda,a)=0.
\]

For the fourth derivative, for $\sigma=\pm1$ we compute
\begin{align*}
\frac{\partial^4 E}{\partial^4\ep}(x,y,0,\lambda,\sigma)= \int_0^{2\pi}12\al_4-12\al_1\al_3-6\al^2_2+12\al_1^2\al_2-3\al_1^4 \dd t
\end{align*}
We will use the relations
\begin{align*}
&\int_0^{2\pi} c(B)^4 \dd t = \int_0^{2\pi} c(C)^4 \dd t=\frac{3\pi}{4}\\
&\int_0^{2\pi} c(B)^2 c(C)^2 \dd t = \int_0^{2\pi} s(B)^2c(C)^2 \dd t= \int_0^{2\pi} s(B)^2s(C)^2 \dd t= \int_0^{2\pi} c(B)^2s(C)^2 \dd t=\frac {\pi}2\\
\end{align*}
while all the integrals where some trigonometric function has odd exponent are zero (same symmetry argument for example).

From here it follows that
\begin{align*}
\int_0^{2\pi} 12\al_4 \dd t &= \int_0^{2\pi} 6s(B)^2s(C)^2-6s(B)^2c(C)^2+6c(B)^2c(C)^2dt=3\pi.
\end{align*}

Next, 

\[
   \int_0^{2\pi}  12\al_1\al_3 \dd t =\int_0^{2\pi}12c(B)^2c(C)^2-6s(B)^2c(C)^2\dd t=3\pi.
\]

For the next term, 

\begin{align*}
\int_0^{2\pi}6\al^2_2\dd t=\int_0^{2\pi}\left(6s(B)^2s(C)^2+\frac 32c(B)^4+\frac 32c(C)^4+9c(B)^2c(C)^2\right)\dd t=\frac{39\pi}{4}.
\end{align*}

We continue,
\[
    \int_0^{2\pi}12\al_1^2\al_2 \dd t=\int_0^{2\pi}\left(6c(B)^4+6c(C)^4+36c(B)^2c(C)^2\right)\dd t =27\pi.
\]

Finally,
\[
\int_0^{2\pi} 3\al_1^4 \dd t=\int_0^{2\pi}\left(3c(B)^4+3c(C)^4+18c(B)^2c(C)^2\right)\dd t=\frac{27\pi}{2}
\]
Putting everything together we get
\begin{align*}
\frac{\partial^4 E}{\partial\ep^4}(x,y,0,\lambda,\sigma)=3\pi-3\pi-\frac{39\pi}4+27\pi-\frac{27\pi}2=\frac{15\pi}4>0
\end{align*}

We observe that $\frac{\partial^4 E}{\partial\ep^4}(x,y,0,\lambda,a)$ is a function which depends only on $a$ and is continuous, so there is $\delta>0$ such that if $\frac{|a^2-1|}{a^2+1}<\delta$ then $\frac{\partial^4 E}{\partial\ep^4}(x,y,0,\lambda,a)>3\pi$.

We also observe that $\frac{\partial^k E}{\partial\ep^k}(x,y,\ep,\lambda,a)$ is uniformly bounded by some constant $D_k$ for all $\ep<1$, independent of $x,y,a,\lambda$.

We deduce that for every $x,y,a,\lambda$ and $\ep<1$, there exists $\ep'$ such that
\[
E(x,y,\ep,\lambda,a)=\frac{(a^2-1)^2}{2(a^2+1)^2}\pi\ep^2+\frac 1{24}\frac{\partial^4 E}{\partial^4\ep}(x,y,0,\lambda,a)\ep^4+\frac 1{120}\frac{\partial^5 E}{\partial^4\ep}(x,y,\ep',\lambda,a)\ep^5.
\]
If $\frac{|a^2-1|}{a^2+1}<\delta$ we have
\[
E(x,y,\ep,\lambda,a)\geq\frac 1{24}3\pi\ep^4+\frac 1{120}\frac{\partial^5 E}{\partial^4\ep}(x,y,\ep',\lambda,a)\ep^5\geq\frac{\pi}8\ep^4-\frac{D_5}{120}\ep^5,
\]
while if $\frac{|a^2-1|}{a^2+1}>\delta$ we have
\[
E(x,y,\ep,\lambda,a)\geq\frac{\delta^2}2\pi\ep^2-\frac{D_4}{24}\ep^4-\frac{D_5}{120}\ep^5
\]
From here we can find an $\ep_0$ such that for any $\ep<\ep_0$ and any $x,y,a,\lambda$ we have
\[
  E (x,y,\ep,\lambda,a)\geq \frac{\ep^4}4.
\]
This proves the lemma.
\end{proof}

Now let us continue with the proof of the Proposition. We will consider the lift $F_{\ep,\lambda}$ of $f_{\ep,\lambda}$ to $T_3\times\mathbb T^2$ and its partially hyperbolic attractor $\Lambda_{\lambda}\times\mathbb T^2$. We fix the increasing partition $\xi$ subordinated to the unstable foliation of $F_{\ep,\lambda}$ such that each element $P$ of the partition projects homeomorphically to the interval $(0,2\pi)$. If $P\in\xi$, $p\in P\in\Lambda_{\lambda}\times\mathbb T^2$, $v_a\in E^c_p$, $v_a^*=\frac{v_a}{\|v_a\|}$ its normalization, recall that
\[
I_2(p,v_a^*,F_{\ep,\lambda})=\int_P\log\|D_qF^2_{\ep,\lambda}\chi_{q,p}(v_a^*)\|\dd \mu_P^u(q).
\]
One can check that if we parametrize $P$ with respect to $t$, $q=q(t)$, then $d\mu_P^u(q)=\frac 1{2\pi}\dd t$, so the integral can be rewritten
\[
I_2(p,v_a^*,F_{\ep,\lambda})=\frac 1{2\pi}\int_0^{2\pi}\log\|D_{q(t)}F_{\ep,\lambda}^2\chi_{q(t),p}(v_a^*)\|\dd t.
\]

Assume that $p=(u,v,t_0,x,y)$ with $(u,v,t_0)\in\Lambda_{\lambda}$ in the base and $(x,y)\in\mathbb T^2$ in the central fiber. Consider $v_a\in E^c_p$. Denote $(x(t),y(t))=Pr_{\mathbb T^2}q(t)$ the projection of the unstable leaf $q(t)$ on the center fiber $\mathbb T^2$. Then $D_{q(t)}F_{\ep,\lambda}^c=D_{(x(t),y(t),t)}f_{\ep,\lambda}^c$.

When $\ep=0$ we have
\[
I_2(p,v_a^*,F_{\ep,\lambda})=\frac 1{2\pi}E(x,y,0,\lambda,a)=0.
\]
In general we have
\begin{equation}\label{eq:integrals}
I_2(p,v_a^*,F_{\ep,\lambda})-\frac 1{2\pi}E(x,y,\ep,\lambda,a)=\frac 1{2\pi}\int_0^{2\pi}\log\frac{\|D_{(x(t),y(t),t)}f^2_{\ep,\lambda}\chi_{q(t),p}(v_a^*)\|}{\|D_{(x,y,t)}f^2_{\ep,\lambda}(v_a^*)\|}\dd t.
\end{equation}

\begin{lemma}\label{lem:approximation}
    When $\lambda$ tends to infinity, $x(t)$ converges uniformly to $x$, $y(t)$ converges uniformly to $y$ and $\chi_{p,q(t)}$ converges uniformly to the identity.
\end{lemma}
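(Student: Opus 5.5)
The plan is to exploit that $F_{\ep,\lambda}$ is a skew product whose fiber maps $g_{\ep,s}$ depend on the base only through the circle coordinate $s$, and whose base expands the $t$-direction by the factor $\lambda$. Write $F_{\ep,\lambda}(b,z)=(U_\lambda(b),g_{\ep,t(b)}(z))$ with $z\in\Tor^2$. Three elementary facts do all the work.
\begin{enumerate}
\item $\|Dg_{\ep,s}\|,\|Dg_{\ep,s}^{-1}\|\le 1+C\ep$.
\item $\|\partial_s g_{\ep,s}\|\le \ep$ and $\|\partial_s Dg_{\ep,s}\|\le \ep$.
\item $Dg_{\ep,s}$ is $C\ep$-Lipschitz in $z$.
\end{enumerate}
Fix $\ep<\ep_0\le 1$. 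Along an unstable leaf of the base, parametrized by $t$, the backward iterates of two base points with parameters $t,t'$ satisfy $|t_{-k}-t'_{-k}|=|t-t'|\lambda^{-k}\le 2\pi\lambda^{-k}$. The $(u,v)$ coordinates of the solenoid play no role, since the fiber maps do not see them.

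The first step is to show that the unstable leaf through $p$ is a graph $t\mapsto (x(t),y(t))$ over the $t$-parameter with slope $O(\ep/\lambda)$. I will use the graph transform (equivalently, invariance of a cone field). Suppose a leaf is a Lipschitz graph over the base unstable curve with constant $L$. Its image under $F_{\ep,\lambda}$ is a graph with constant at most $\bigl((1+C\ep)L+\ep\bigr)/\lambda$: the base parameter stretches by $\lambda$, while the fiber displacement is bounded by $(1+C\ep)L\,|\Delta t|+\ep|\Delta t|$ using facts 1 and 2. For $\lambda\ge 2(1+C\ep)$ the cone of slopes $L\le L_*:=2\ep/\lambda$ is therefore invariant. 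Since the unstable leaves are the limits of forward iterates of graphs, the true unstable leaf has slope at most $L_*$. Integrating over $t\in(0,2\pi)$ gives $|x(t)-x|+|y(t)-y|\le 4\pi\ep/\lambda\to 0$ uniformly, which proves the first two assertions.

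For the holonomy I will use the standard formula valid under $u$-center bunching (Theorem \ref{thm:saghin}):
\[
h^u_{q,p}=\lim_{n\to\infty}D_{q_{-n}}F^n\,\bigl(D_{p_{-n}}F^n\bigr)^{-1},
\]
restricted to the center. I will expand the product telescopically. The $k$-th difference factor $Dg_{\ep,t(q_{-k})}(z(q_{-k}))-Dg_{\ep,t(p_{-k})}(z(p_{-k}))$ has norm at most $C\ep(1+L_*)\,2\pi\lambda^{-k}$, by facts 2 and 3 together with the slope bound applied to $q_{-k},p_{-k}$ (which lie on the same leaf at base distance $2\pi\lambda^{-k}$). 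The surrounding products have norm at most $(1+C\ep)^{2k}$ by fact 1. Hence
\[
\|h^u_{q,p}-\mathrm{Id}\|\le \sum_{k\ge 1}C\ep\,(1+C\ep)^{2k}\lambda^{-k}\le \frac{C'\ep}{\lambda},
\]
once $\lambda>2(1+C\ep)^2$. Consequently $\chi_{q(t),p}(v)=h^u(v)/\|h^u(v)\|$ is within $O(\ep/\lambda)$ of $v$ uniformly in $t$, $p$ and unit $v$, which is the third assertion.

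The main obstacle is justifying the limit formula for $h^u$ and its convergence rate uniformly. Here I rely on fact 1: the center is nearly isometric, so $(1+C\ep)^2<\lambda$ is exactly the bunching inequality. This also makes all the constants above uniform in $p$, $t$ and $v$.
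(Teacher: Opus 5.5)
Your proposal is correct and takes the same route as the paper. The paper controls the unstable leaves by a horizontal cone that shrinks as $\lambda\to\infty$ (your slope bound $2\ep/\lambda$), and controls the holonomy through its expression as an infinite product of center derivatives; you make both quantitative, ending with $\|h^u_{q,p}-\mathrm{Id}\|\le C'\ep/\lambda$, and your telescoping estimate already shows the partial products are Cauchy, so the existence of the limit you flagged as the main obstacle comes for free here.
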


\begin{proof}
To see that $(x(t),y(t))$ converges uniformly to $(x,y)$ it is enough to remark that the unstable manifold is inside an horizontal unstable cone which gets smaller when $\lambda$ goes to infinity.

The fact that $\chi_{p,q(t)}$ converges uniformly to the identity follows from the formula of $\chi_{p.q(t)}$ as the projectivization of an infinite product of matices (derivatives).

\end{proof}

The proof of the Proposition \ref{prop:Henon} follows from the formula \ref{eq:integrals} and Lemmas \ref{lem:linearunstable} and \ref{lem:approximation}

\end{proof}

\subsection{Random Standard (or standard-like) Maps}

Here we present other examples of skew products over Anosov with the expanding on average property. Unlike the previous section, now the fiber maps in the centers are far away from identity (they exhibit strong shears or hyperbolic regions).

\subsubsection{The Berger-Carrasco example}\label{subsec:bcexample}

Consider the standard map
 \[
     s_N(x,y)=\bigl(2x-y+N\sin x,x\bigr),\quad N\in\mathbb N.
 \]
 Fix a hyperbolic matrix \(A\in\mathtt{SL}(2,\mathbb Z)\) with eigenvalues $0<\lambda<1<\lambda^{-1}$ and let
\[
    P_1\colon\Tor^2\longrightarrow\Tor^2,
    \qquad
    P_1(z,w)=(z,0),
\]
denote projection onto the first coordinate. For \(N\geq 1\), define
\[
    f_N\colon\Tor^2\times\Tor^2 \longrightarrow \Tor^2\times\Tor^2
\]
by
\begin{equation}\label{eq.bcexample}
    f_N(x,y,z,w)=\left(s_N(x,y)+P_1\bigl(A^N(z,w)\bigr),
        A^{2N}(z,w)\right).
\end{equation}
Thus, \(f_N\) is obtained by coupling the standard map with the fast Anosov automorphism \(A^{2N}\), the coupling being introduced
through the first coordinate of \(A^N(z,w)\).

The map \(f_N\) is real analytic and preserves Lebesgue measure. Its derivative has the block form
\[
    Df_N(x,y,z,w) =
    \begin{bmatrix}
        Ds_N(x,y) & P_1\circ A^N \\
        0          & A^{2N}
    \end{bmatrix}.
\]
In particular,
\[
    E^c=\mathbb R^2\times\{0\}
\]
is invariant, and the restriction of \(Df_N\) to \(E^c\) coincides with \(Ds_N\). This allows to think of $f_N$ as a ``random standard map'', the randomness driven by the dynamics in the base. Observe that for $N$ large, the expansion and contraction rates of \(A^{2N}\) dominate the derivative of \(s_N\), and the map \(f_N\) is partially hyperbolic.

\begin{proposition}\label{proposition:bcisea}
    There exists $N_0\in\mathbb N$ such that, for any $N\geq N_0$, the map $f_N$ is expanding on average.
\end{proposition}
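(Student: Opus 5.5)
The plan is to verify the finitary expanding on average property with respect to a suitable $u$-tiling, with $N_{\mathrm{time}}=1$, and then conclude with \Cref{pro:finitarydim1}. The analogue of \Cref{lem:linearunstable} is the model computation.

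\textbf{Step 1: a one-step averaged estimate for the standard map with random translation.}
Along an unstable leaf of $A^{2N}$ the center fibre coordinate of $f_N$ is $(x,y)\mapsto s_N(x,y)+(\theta,0)$. Here $\theta$ is the first coordinate of $A^N(z,w)$. As $(z,w)$ runs over an unstable segment of $A^{2N}$ of length $\sim \lambda^{2N}$ (after one iterate), $\theta$ winds around the circle roughly $\lambda^{N}$ times, nearly uniformly. So the next center point $x_1=2x-y+N\sin x+\theta$ is essentially uniformly distributed on $\mathbb S^1$.

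For a direction $v=(\cos\phi,\sin\phi)$ we have $Ds_N(x_1,\cdot)=\begin{bmatrix}2+N\cos x_1&-1\\1&0\end{bmatrix}$. The first component of $Ds_N v$ is $(2+N\cos x_1)\cos\phi-\sin\phi$. Averaging over $x_1$ uniform on the circle, we aim for
\[
\frac1{2\pi}\int_0^{2\pi}\log\norm{Ds_N(x_1)v}\,dx_1\;\geq\;\log N - C_0
\]
whenever $|\cos\phi|$ is not tiny. This uses $\int\log|a+N\cos t|\,dt\ge 2\pi\log(N/2)$.

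For $v$ nearly vertical ($|\cos\phi|\lesssim N^{-1}$), one step gives only $\norm{\cdot}\approx1$, but the image is nearly horizontal. Hence we use two steps: the average of $\log\norm{D(s_N^2)v}$ is $\geq \log N-C_0$ uniformly in $v$. The bad set where $|2+N\cos x_1|$ is small has measure $O(N^{-1})$, and there the log is bounded below by $-\log\norm{Ds_N^{-1}}\ge -\log(N+3)$. The key tool is a uniform logarithmic-integrability estimate, Jensen-type, of the form $\int\log|\alpha+N\cos t|\,dt\ge 2\pi\log(N/2)$ for all $\alpha$.

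\textbf{Step 2: transfer to unstable leaves and holonomies.}
We take $u$-cells to be unstable plaques of fixed length $R$ for the base Anosov. Their images under $f_N$ (respectively $f_N^2$) are long curves. We check, as in \Cref{lem:approximation}, that the center holonomy $\chi_{q,p}$ is $O(\lambda^{N})$-close to the identity, since the unstable cone of $f_N$ is $O(N\lambda^{N})$-thin. Likewise the center coordinates along the plaque vary by $O(\lambda^N)$, and the canonical density $\mu^u_Q$ is uniform, $A^{2N}$ being linear.

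The integral $\int_Q\log\norm{D_qf_N^2\chi_{q,p}(v)}\,d\mu^u_Q$ then equals the Step 1 average of the random-translation integrals up to errors $o(1)$ as $N\to\infty$. The equidistribution error of $\theta$ (and of the second-step translation) over the plaque is $O(\lambda^{N})$ in the winding count. The Lipschitz-in-$x$ error is controlled by the integrable log singularity via Step 1's uniform bound. This yields finitary expanding on average for $f_N^2$ with constants $(1,\log N-C_0-o(1))>0$ for $N\ge N_0$.

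\textbf{Step 3: conclusion.}
By the Claim inside \Cref{pro:finitarydim1}, $f_N$ is finitary expanding on average with respect to the tiling. Then \Cref{pro:finitarydim1} gives the expanding on average property.

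\textbf{Main obstacle.}
The hardest part is making Step 2 uniform near the critical set $\{2+N\cos x_1\approx 0\}$ and for near-vertical $v$. There the log is unbounded, so replacing the pushed-forward plaque measure by the uniform measure requires a weak-convergence argument with a log-singular test function. I would handle this by truncating $\log$ at level $-\log(N+3)$ (valid since $\norm{Ds_N^{-1}}\le N+3$) and bounding the measure of the $\eta$-neighborhood of the two critical points by $O(\eta)+O(\lambda^N)$, using near-uniform distribution.
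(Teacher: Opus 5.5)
There is a genuine gap in Step 2, where you transfer the model computation to the cell integral. You take $u$-cells of fixed base length $R$ and note, correctly, that the center coordinates vary only by $O(\lambda^N)$ along $Q$. But then in $D_qf_N^2=D_{f_N(q)}f_N\circ D_qf_N$ the first factor is $Ds_N$ evaluated at $x(q)\approx x(p)$, so it is essentially constant on $Q$. Only the base point $x_1=x(f_N(q))$ of the second factor is equidistributed. Your two-step bound in Step 1 averages the derivative at $x_1$ \emph{and} at the following point, which models $D_{f_N(q)}f_N^2$, not $D_qf_N^2$.

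For the actual integral the bound fails. Take $p$ with $x(p)=0$ and $v$ the most contracted unit vector of $Ds_N(0,\cdot)=\begin{bmatrix}N+2&-1\\1&0\end{bmatrix}$. Then $\log\norm{D_qf_N\chi_{q,p}(v)}\approx-\log(N+2)$ for every $q\in Q$, and the image direction is $\approx(-1,N+2)$. The averaged second factor then contributes about $\frac1{2\pi}\int_0^{2\pi}\log(1+\cos t)\,dt=-\log 2$. Hence
\[
\int_Q\log\norm{D_qf_N^2\chi_{q,p}(v)}\,d\mu^u_Q(q)\approx-\log(N+2)-\log 2<0 .
\]
So $f_N^2$ is not finitary expanding on average with time $1$ for your tiling, and Step 3 has nothing to apply to. With three iterates the same configuration still gives about $-2\log 2$.

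The idea can be repaired in two ways.
\begin{itemize}
\item Use cells along which $x$ itself winds around the circle, i.e.\ of base length $\asymp\lambda^{-N}$. Telescoping $\hu_{q,p}=\lim_n D_{f_N^{-n}(q)}f_N^n\,(D_{f_N^{-n}(p)}f_N^n)^{-1}$ (fibers identified via $E^c=\mathbb R^2\times\{0\}$) gives $\norm{\hu_{q,p}-\mathrm{Id}}=O(N^2\lambda^{3N}d(p,q))$. This is still $O(N^2\lambda^{2N})$ on such cells. On them $(x(q),x(f_N(q)))$ is nearly jointly uniform, which is what your Step 1 actually computes.
\item Keep short cells but use $k\geq 4$ iterates (the example forces this), so that the deterministic first-step loss $\leq\log(N+3)$ is amortized. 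This needs near-joint equidistribution of $(x_1,\dots,x_{k-1})$ along $Q$ and a uniform-in-direction lower bound for the resulting random product.
\end{itemize}
The first option is exactly Berger--Carrasco's $u$-curves. The paper does not attempt any finite-time estimate. It quotes the Berger--Carrasco asymptotic bound $\frac1n\int_\gamma\log\norm{D_qf_N^nX(q)}\,d\mu^u_\gamma>\frac1{20}\log N$ for all large $n$, valid for $(20N^2\lambda^N,\frac12)$-H\"older unit fields along $u$-curves. It then checks that the holonomy fields $q\mapsto\chi_{q,p}(v)$ belong to this class, using Lipschitz holonomy (since $E^c$ is smooth) and invariance of that H\"older class under $Df_N$.
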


\begin{proof}
    We will give a sketch of the proof. Let $e^u$ be the unstable unit vector of $A$ and $P_1e^u$ its projection to the first coordinate. Following \cite{Berger2014} we say that $\gamma$ is a u-curve if it is contained in an unstable manifold of $f_N$ and projects to an unstable segment of $A^{2N}$ of length $\frac 1{\lambda^N\|P_1e^u\|}$. In \cite{Berger2014} the authors prove the following fact: assuming the $N$ is sufficiently large, then fixing any u-curve $\gamma$ of $f_N$, and fixing any unit vector field $X$ along $\gamma$ and with values in $E^c$, which is $(C_X,\frac 12)$-H\"older, with $C_x=20N^2\lambda^N$, for all large enough $n$ we have
\[
\frac 1{n}\int_{\gamma}\log\|D_qf^nX(q)\|\dd\mu_{\gamma}^u(q)>\frac 1{20}\log N.
\]    
(The authors state that there exists a vector field $X$ with this expanding property, but they prove, in fact, that every vector field $X$ satisfying the required properties has the expanding property).

The maps $f_N$ are center-bunched for $N$ large, so we have the holonomy $\chi_{q,p}$ along unstable leaves. The center bundle $E^c$ of $f_N$ is smooth, so in this case \cite{SAGHIN_2024} proves, in fact, that $\chi_{q,p}$ is Lipschitz with respect to $p,q$: $\|\chi_{q,p}v-v\|\leq Cd(p,q)$ for some constant $C>0$. Given an u-curve $\gamma$, there is some preimage $f_N^{-k}\gamma$ small enough such that the vector fields given by the unstable holonomy are $(C_X,\frac 12)$-H\"older. \cite{Berger2014} shows that the $(C_X,\frac 12)$-H\"older property of unit vector fields along unstable curves is invariant under $Df_N$, and since the unstable holonomy is also invariant under $Df$, we get that all the vector fields $\chi_{q,p}v$ are $(C_X,\frac 12)$-H\"older along all u-curves.

We obtain that for any u-curve $\gamma$, any $p\in\gamma$, $v\in E^c(p)$, we have that for all $n$ large enough
\[
\frac 1{n}\int_{\gamma}\log\|D_qf^n\chi_{q,p}v\|\dd\mu_{\gamma}^u(q)>\frac 1{20}\log N.
\]

Then $f_N$ has the expanding on average property.
    
\end{proof}

In \cite{Obata2023}, SRB measures for dissipative perturbations of $f_N$ were studied. The author shows that in a $C^2$-neighborhood of $f_N$ there is at most one SRB measure and using measure rigidity argument inspired in \cite{Brown_2017}, he shows that ``most fibered'' perturbations admits an SRB measure. Using our results from this paper, we are able to extend that to the entire neighborhood. 

\begin{corollary}\label{cor:bchasitall}
    Let $N_0$ be as in \Cref{proposition:bcisea}. Then, for every $N\geq N_0$, there exists $\mathcal{U} \subset \PHb[2]$ a $C^2$-neighborhood of $f_N$ such that any $g\in \mathcal{U}$ has a unique SRB measure $\mu_g$. Moreover, $\mu_g$ is Bernoulli, it has full basin, it is exponentially mixing and verifies all of the strong statistical properties given by \Cref{thm.sprexponential}.
\end{corollary}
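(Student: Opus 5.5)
The plan is to reduce \Cref{cor:bchasitall} to \Cref{thm:Theorem E}, applied to $f=f_N$ for $N\geq N_0$. Each hypothesis of that theorem has to be checked for $f_N$, and then its conclusions have to be read off.

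\emph{Step 1: $f_N\in\PHbvol[2]$ with the required gap.} The map $f_N$ is real analytic and preserves Lebesgue measure. For $N$ large it is partially hyperbolic with center $E^c=\mathbb R^2\times\{0\}$, and the rates of $A^{2N}$ dominate $\|Ds_N\|\lesssim N$. Because of this domination, $u$-center bunching holds: $\|Ds_N\|/\conorm{Ds_N}\lesssim N^2$, which is much smaller than the unstable rate $\lambda^{-2N}$. Hence $f_N\in\PHbvol[2]$, enlarging $N_0$ if necessary. \Cref{proposition:bcisea} shows that $f_N$ is expanding on average. Since $\det Ds_N\equiv 1$, we have $\sup_x|\det D^c_xf_N|=1$, and the remark before \Cref{cor:detlessonecoro} (equivalently, the last statement of \Cref{mainthm:TheoremD}) gives $\mathrm{Gap}(f_N)>0$.

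\emph{Step 2: ergodicity and Bernoullicity of volume.} This is the only input not already established in the paper, and I expect it to be the main point. I would invoke \cite{Obata2018}, where $f_N$ is shown to be ($C^2$-stably) ergodic for $N$ large. The Bernoulli property I would obtain as follows. By \Cref{cor:Gibbshyperbolic}, volume, being a $u$-Gibbs measure, is hyperbolic with one positive and one negative center exponent. Pesin theory then makes volume a hyperbolic ergodic measure with absolutely continuous conditionals, so it is an SRB measure. It is therefore Bernoulli up to a finite cyclic permutation, and it is Bernoulli itself once it is mixing, i.e.\ once the period from \Cref{mainthm:TheoremC} equals $1$. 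For this last step I would either rely on the Bernoulli statement already recorded in \cite{Obata2018}, or use the accessibility/K-property argument available for such skew products. If $N_0$ must be increased here, we do so.

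\emph{Step 3: conclude.} With these hypotheses in place, \Cref{thm:Theorem E} produces a $C^2$-neighborhood $\mathcal U$ of $f_N$ in which every $g$ has a unique SRB measure $\mu_g$ with $\mathrm{vol}(\mathcal B(\mu_g))=1$. Because volume is Bernoulli for $f_N$, the same theorem makes $\mu_g$ Bernoulli and exponentially mixing for $g$ itself, with the period equal to $1$. The remaining strong statistical properties follow from \Cref{thm.sprexponential}, through \Cref{thm.sprueap} and \Cref{mainthm:TheoremC} applied to $g$. These results apply to $g$ because \Cref{THM:robustnessofea} guarantees that $g$ is still expanding on average with $\mathrm{Gap}(g)>0$, after shrinking $\mathcal U$ if needed.
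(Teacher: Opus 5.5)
Your proposal is correct and follows the same approach as the paper. The paper's proof simply cites \cite{Obata2018} for the Bernoulli property of volume under $f_N$ and then applies \Cref{thm:Theorem E}; your Step 1 makes explicit the hypotheses the paper leaves implicit ($u$-center bunching, expanding on average via \Cref{proposition:bcisea}, and $\mathrm{Gap}(f_N)>0$ from $\det Ds_N\equiv 1$), and the Pesin-theory detour in Step 2 is unnecessary once you rely on \cite{Obata2018}, as you ultimately do.
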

\begin{proof}
   By \cite{Obata2018}, $f_N$ is Bernoulli. The result follows from \Cref{thm:Theorem E}.
\end{proof}

\subsubsection{Other related examples}

The main idea used in \cite{Berger2014} is the following. Assume that $f:M\rightarrow M$ is a partially hyperbolic diffeomorphism. Assume that in $M$ we have a large (non-invariant) hyperbolic set $H$, where $Df|_{E^c}$ is strongly hyperbolic, and a small critical set $C$ where the map may not be hyperbolic, but we have some uniform control from above and below for $Df|_{E^c}$. Assume that any large enough unstable piece crossing the manifold $M$ spends much more time inside the hyperbolic part $H$ that in the critical part $C$. 

Assume that $\gamma$ is a large unstable piece and $\chi_{q,p}(v)$ is a unit vector field on $\gamma$ given by the unstable holonomy. If the vector field $\chi_{q,p}(v)$ is inside the expanding cone $C^u$ of $Df_{E^c}$ in the hyperbolic part $H$, then it gets (strongly) expanded. This together with the control on the critical part gives the expanding on average of $\chi_{q,p}v$ along $\gamma$ in one iterate:
\[
\int_{\gamma}\log\|D_qf\chi_{q,p}(v)\|\dd\mu_{\gamma}^u(q)\gg 0.
\]
If $\chi_{q,p}(v)$ is not in the unstable cone $C^u$, then it may get contracted. We need a mechanism to move the vectors inside $C^u$, and this can be done in the following way:
\begin{itemize}
    \item Choose different regions $H_1,H_2,\dots H_k\subset H$ with (considerably) different stable directions in each region. Thus, for any given central vector $v$, maybe we have contraction in some $H_i$, but in others we have expansion, or at least the pushed forward of the vector, $Dfv$, moves inside the expanding cone $C^u$.
    \item Obtain good control on the variation of the vector fields $\chi_{q,p}(v)$ along large enough unstable pieces, in order to apply the previous argument. This can be done by choosing a strong enough expansion on the base, thus making the holonomy $\chi_{q,p}$ close to the identity.
\end{itemize} 

This method was used in various papers like \cite{Carrasco2020}, \cite{extendedflex}, \cite{Andersson2022}, etc., in order to obtain non-uniformly hyperbolic maps. It is flexible enough and can be easily adapted in order to obtain various families of skew products with the expanding on average property, inside various homotopy classes, or exhibiting various $u$-Gibbs measures.

\subsection{Non-compact center leaves}\label{subsec:noncompactcenter}

Here we will provide examples of partially hyperbolic diffeomorphisms with the expanding on average property and with non-compact center leaves. The examples are obtained perturbing 4-dimensional linear Anosov diffeomorphisms with a 2-dimensional weak unstable bundle (considered as a center bundle, and which is clearly not compact).

\begin{proposition}
    Let $L$ be a hyperbolic automorphism of $\mathbb T^4$ with the eigenvalues $\lambda_u>1>\lambda_{ws}>\lambda_{ms}>\lambda_{ss}$. Assume that $\lambda_{ws}\lambda_{ms}>\lambda_{ss}$. Then there exists a $C^{\infty}$ diffeomorphism $f$ of $\mathbb T^4$ and $N\in\mathbb N$ such that $L^N\circ f$ is a partially hyperbolic diffeomorphism with the expanding on average property.
\end{proposition}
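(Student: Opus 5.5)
Take $f = R\circ h$ with $h$ a $C^\infty$ perturbation of the identity acting inside the two-dimensional weak bundle $E^{ws}\oplus E^{ms}$ of $L$, and $R$ chosen so that $g_N=L^N\circ f$ has center $E^c\approx E^{ws}\oplus E^{ms}$. The plan is to reproduce the Berger--Carrasco mechanism described in the previous subsection. For $N$ large, $L^N$ is strongly partially hyperbolic with splitting $E^{ss}\oplus(E^{ws}\oplus E^{ms})\oplus E^u$. The inequality $\lambda_{ws}\lambda_{ms}>\lambda_{ss}$ gives $\norm{D^c}/\conorm{D^c}\approx(\lambda_{ws}/\lambda_{ms})^N<\lambda_u^N$ and also domination over $E^{ss}$, so $u$-center bunching holds robustly for $g_N$ once $f$ is fixed and $N$ is large. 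Hence $g_N\in\PHb[2]$ and the holonomies $\chi_{q,p}$ exist by \Cref{thm:saghin}. By \Cref{pro:finitarydim1}, applied to a tiling as in \Cref{lem:goodtiling}, it then suffices to prove the \emph{finitary} expanding on average property with respect to a $u$-tiling by unstable segments. These segments are long in the $f$-scale and equidistribute on $\Tor^4$ because the $E^u$-direction of $L$ is irrational.

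To build $f$, choose finitely many disjoint balls $H_1,\dots,H_k$ in which $D^cf$ is strongly hyperbolic, with expanding cones $C^u_i$ in pairwise transverse directions, and such that $\mathbf{P}(C^u_i)$ cover $\mathbf{P}E^c$ in the sense needed. Outside these balls, $f$ is conformal or close to it, with uniform bounds on $\log\norm{D^cf}$ and $\log\conorm{D^cf}$. Volume preservation of $f$ is not needed; only $\Gap$ is irrelevant here, since the statement only asks for expanding on average.

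The key estimate is a one-step (or two-step) version of \eqref{eq.inequalityea}. First, for $N$ large the holonomy $\chi_{q,p}$ along an unstable segment of $g_N$ of fixed length is $C^0$-close to the linear identification given by $L$. Its deviation is controlled by $(\lambda_{ws}/\lambda_u)^{N}$ through the infinite product formula, as in \Cref{lem:approximation}, so the transported field is essentially constant. Second, an unstable segment of length $1$ equidistributes among the $H_i$ up to error $o(1)$ as $N\to\oo$, uniformly in the starting point. Third, for any fixed direction $[v]$, the cone $C^u_i$ contains $[v]$, or $D^cf[v]$ lands in it, for a definite proportion of the $H_i$. Averaging then gives
\[
\frac1{2}\int_Q\log\norm{D_qg_N^2\chi_{q,p}(v)}\der\mu^u_Q(q)\geq c>0
\]
uniformly in $Q,p,v$. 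The main obstacle is this third step. A genuinely averaged gain is needed rather than mere cancellation, because $L^N$ rotates nothing in $E^c$: it acts diagonally with different rates, pushing directions toward $E^{ws}$. I would therefore first choose the hyperbolic regions so that $E^{ws}$ itself lies deep inside most expanding cones. The convexity of $\log$ combined with the bounded contraction in the critical region then yields a positive average, as in \Cref{lem:linearunstable}, possibly with a small-parameter expansion if needed.
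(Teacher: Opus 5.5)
There is a genuine gap, and it is at the very start: with $f$ fixed and $N\to\infty$, the map $g_N=L^N\circ f$ cannot be expanding on average.

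You assert that $E^c_{g_N}$ lies in a thin cone around $E^{ws}\oplus E^{ms}$. Then, for $v\in E^c_{g_N}(x)$, the vector $L^N(D_xf\,v)$ lies in that cone again. So its $E^u$ and $E^{ss}$ components are dominated by its center component, and $\norm{D_xg_Nv}\leq C\lambda_{ws}^N\norm{Df}_\infty\norm{v}$. For $N$ large this gives $\norm{D^cg_N}<1$ uniformly: the center is uniformly contracted, and in fact $g_N$ is Anosov. Consequently the left-hand side of \eqref{eq.inequalityea} is at most $\log(C\lambda_{ws}^N\norm{Df}_\infty)<0$ for every disk and every $v$. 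No arrangement of hyperbolic balls $H_i$, cones, equidistribution or holonomy control can compensate, because the linear part shrinks every center vector by at least $\lambda_{ws}^N$ at each step.

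The missing idea is that $f$ must depend on $N$ and produce center expansion beyond $\lambda_{ws}^{-N}$ on most of the torus. In the paper, following \cite{extendedflex}, $f=f_t$ is a standard-map-like map preserving the planes parallel to $E^{ws}\oplus E^{ms}$, with $t=\lambda_{ws}^{-N(1+\ep)}$. On the large hyperbolic region the center then gains roughly a factor $\lambda_{ws}^{-N\ep}$ per iterate, while the loss on the small critical set is uniformly controlled.

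This is also where the hypothesis $\lambda_{ws}\lambda_{ms}>\lambda_{ss}$ is actually used. In your argument it plays no role: $\lambda_{ws}/\lambda_{ms}<\lambda_u$ and $\lambda_{ss}<\lambda_{ms}$ hold for every such $L$, so bunching and domination for $L^N$ and its small perturbations are automatic. That should be a warning sign. With an area-preserving center map of strength $t$, the conorm of $D^cg_N$ drops to about $\lambda_{ms}^N t^{-1}=(\lambda_{ms}\lambda_{ws}^{1+\ep})^N$. Keeping it above $\lambda_{ss}^N$ is exactly the hypothesis, with $\ep$ small.

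At this scale your bound on the holonomy deviation by $(\lambda_{ws}/\lambda_u)^N$ is no longer available either. The paper instead uses a $D(L^N\circ f_t)$-invariant class of Lipschitz unit center fields, with Lipschitz constant explicit in $t$, $\lambda_{ws}^N$ and $\lambda_{ss}^N$. This class contains the fields $\chi_{q,p}(v)$ and varies little along the atoms of an increasing partition. That partition is obtained by pulling back a Markov partition of $L$ and iterating until the atoms wrap around the torus several times.

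Note also that the cells of \Cref{lem:goodtiling} have diameter at most $12R$ with $R$ small. They therefore cannot give the equidistribution among the $H_i$ that your second step requires. You need long atoms that are bounded from above, which is all that \Cref{pro:finitarydim1} asks for.
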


\begin{proof}
    The proof of this result is basically contained in the proof of Theorem B of \cite{extendedflex}. The conclusion of that Theorem B is that $L^N\circ f$ is non-uniformly hyperbolic for the volume, however one in fact proves that $f$ is expanding on average.

    A sketch of the proof is the following. $f_t$ is a family of maps which preserve the center planes corresponding to the eigenvalues $\lambda_{ws},\lambda_{ms}$. Using the semi-conjugacy between $L^N\circ f_t$ and $L^N$, one can obtain an increasing partition $\xi$ subordinated to $W^u_{L^N\circ f_t}$ by pulling back a Markov partition for $L$ (and also $L^N$). Iterating forward if necessary, we can assume that the atoms of $\xi$ are large enough so they wrap around the torus several times (but still uniformly bounded from above).

    Each map $f_t$ is similar to a standard map, in the sense that it creates a large set on $\mathbb T^4$ with very strong hyperbolicity, and a small critical set where there is uniform control on the derivative. The authors in \cite{extendedflex} prove the following result. If $N$ is sufficiently large, and $t$ satisfies some specific conditions (for example $t=\lambda_{ws}^{-N(1+\ep)}$ with $\ep$ small enough) then we have the following: For any $P$ an element of $\xi$, and any $X$ an unit vector field on $P$ with values in $E^c$ and $l$-Lipschitz with $l=l_Lt^2\lambda_{ws}^{2n}\lambda_{ss}^n$ ($l_L$ depends only on $L$), we have 
\[
\liminf_{n\rightarrow\infty}\frac 1{n}\int_{P}\log\|D_q(L^N\circ  f_t)^nX(q)\|\dd\mu_P^u(q)>C.
\]
for some constant $C$ independent of $P,X$. The fact the $X$ is $l$-Lipschitz is used for two things: the $l$-Lipschitz unit vector fields are invariant under $D(L^N\circ f_t)$, and the $l$-Lipschitz conditions implies that $X$ has a small enough variation along the elements $P$ of the partition $\xi$. Both these conditions are satisfied by unit vector fields given by the unstable holonomy $\chi_{q,p}$ if $N$ is large enough, so we get a similar result for $\chi_{q,p}$: for any $P\in\xi$ and any $v\in E^c(p),\ p\in P$ we have
\[
\liminf_{n\rightarrow\infty}\frac 1{n}\int_{P}\log\|D_q(L^N\circ f_t)^n\chi_{q,p}v\|\dd\mu_P^u(q)>C.
\]

    This implies that $L^N\circ f_t$ is expanding on average.
\end{proof}

\section*{Acknowledgements}
\phantomsection
\addcontentsline{toc}{section}{Acknowledgements}

PDC was partially supported by CNPq (Produtividade em Pesquisa) 305197/2025-8 and Projeto Universal CNPq 401737/2025-0. 

DO was partially supported by the National Science Foundation under Grant No. DMS-2349380. 

PDC and DO were partially supported by FAPEMIG project APQ-05983-25. 

RS was partially supported by the ANID project Fondecyt Regular 1230632. 

JY was partially supported by CAPES–Finance Code 001, CNPq, FAPERJ, PRONEX. 

PDC and JY were partially supported by CAPES MATH-AMSUD project ``Entropy and Dimension in Non-uniformly Hyperbolic Dynamics''.

DO benefited from conversations with Sylvain Crovisier especially about showing the SPR property. 

AI Use: Both ChatGPT (model Astra) and Claude (model Opus) were used to revise the manuscript and improve the writing. These models were not used for assisting with the mathematical content.


\appendix

\section{Uniform partitions and \texorpdfstring{$u$}{u}-tilings}
\label{sec:appendix}

\subsection{A measurable partition adapted to every \texorpdfstring{$u$-}{u}Gibbs measure.}
\label{ssec:uniformmeasurable}
It will useful to have measurable partitions with respect to every $u$-Gibbs measure. Let $f\in\PH[1+\alpha], \alpha>0$.

\begin{theorem}\label{thm:totaluGibbspartition}
  Given $R>0$ there exists a measurable partition $\eta$ of $M$, $S$ a compact submanifold of codimension-one, a Borel function $r:M\to [0,1]$, and a constant $C>0$ which verify the following.
  \begin{enumerate}
    \item $\eta$ is subordinated to the unstable partition of $f$, and for every $E\in\eta, \partial E \subset S$.
    \item $\diam_n\eta(x)\leq R$, for every $x\in M$.
    
    \item If $\xi=\bigvee_{n\geq 0} f^n\xi$, then $r(x)>0$ implies $\Wu{x,r(x)} \subset \xi(x)$.
    \item For every $\mu\in \Gibbs$,
        \begin{enumerate}
          \item $r>0$ for $\mu$-almost every point, and
          \item for $t\in[0,1]$
          \begin{align}\label{eq:sublinearr}
          \mu(r<t)\leq Ct.
          \end{align}
        \end{enumerate}
    \end{enumerate}
\end{theorem}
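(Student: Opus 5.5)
The plan follows the classical construction of increasing partitions subordinated to unstable leaves (Sinai, Ledrappier--Strelcyn, Yang), with one change: the measure is replaced by a uniform control valid for every $u$-Gibbs state. First we build $\eta$. Cover $M$ by finitely many small foliation boxes $U_1,\dots,U_k$ of $\Fu$, each of diameter less than $R$. In each box the plaques are $C^1$ disks with uniformly bounded geometry. Take a finite partition of $M$ into pieces $B_j$ whose boundaries are smooth hypersurfaces transverse to $E^u$ (for instance, pieces cut out by small balls $B(x_j,\rho_j)$), and let $S$ be the union of these boundaries. Then $\eta(x)$ is the connected component of $\Wu{x}\cap B_j$ containing $x$, intersected with the local plaque. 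Hence $\partial\eta(x)\subset S$ and $\diam\eta(x)\le R$, which gives items 1 and 2. Measurability of $\eta$ follows because it is the restriction of the measurable plaque partition to finitely many Borel sets.

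Next we define $\xi=\bigvee_{n\ge0}f^n\eta$ and the function $r$. A point $x$ lies in the interior of $\xi(x)$ as long as $f^{-n}(\Wu{x,t})$ avoids $S$ for every $n\ge0$. Since $f^{-1}$ contracts unstable leaves by a factor $\lambda<1$, it suffices that $d^u(f^{-n}x,S)>\lambda^n t$ for all $n\ge0$, where $d^u$ is the distance along the leaf (comparable to the ambient distance to $S$ by transversality). So we set
\[
r(x)=\min\Bigl(1,\inf_{n\ge0}\lambda^{-n}d(f^{-n}x,S)\Bigr).
\]
This is Borel, and item 3 holds by construction. Then $\{r<t\}\subset\bigcup_{n\ge0}f^n\bigl(\{d(\cdot,S)<\lambda^n t\}\bigr)$. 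For any invariant $\mu$ this gives
\[
\mu(r<t)\le\sum_{n\ge0}\mu\bigl(\{d(\cdot,S)<\lambda^n t\}\bigr).
\]

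The key estimate, and the main obstacle, is to show that $\mu(\{d(\cdot,S)<\varepsilon\})\le C_0\varepsilon$ holds uniformly over all $\mu\in\Gibbs$. If this fails for a fixed $S$, the standard remedy is to choose the radii $\rho_j$ generically. Here genericity must be uniform over the whole compact convex set $\Gibbs$, not for a single measure, so we instead exploit the $u$-Gibbs structure directly. Write $\mu$ in a box as $\int\rho^U(p,\cdot)\,\der\Leb^u_p\,\der\mu_T(p)$. By \Cref{lem:regularrho} and the remark preceding it, the densities $\rho^U$ are bounded above by a constant depending only on the box. Because $S$ is transverse to $E^u$ with a uniform angle, the $\varepsilon$-neighbourhood of $S$ meets each plaque in a set of $\Leb^u$-measure at most $C_1\varepsilon$. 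Integrating over the transversal then gives $\mu(\{d(\cdot,S)<\varepsilon\})\le C_0\varepsilon$ for every $u$-Gibbs $\mu$, with no genericity needed. Summing the geometric series yields $\mu(r<t)\le C_0 t\sum_n\lambda^n=Ct$, which is \eqref{eq:sublinearr}. In particular $\mu(r=0)=0$ for every $\mu\in\Gibbs$.

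The delicate points are arranging a transverse $S$ globally, which is why we use small balls whose boundary spheres are transverse once the radii are small compared with the unstable plaque size, and checking that the compact boundary hypersurfaces remain uniformly transverse so that the leafwise estimate holds with a single constant.
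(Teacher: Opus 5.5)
Your overall scheme is the paper's. You take a finite partition whose boundary set $S$ is transverse to $E^u$, refine it by local unstable plaques, set $\xi=\bigvee_{n\ge0}f^n\eta$, define $r$ through $\inf_n\lambda^{-n}d(f^{-n}x,S)$, and sum a geometric series using the uniform upper bound on $u$-Gibbs densities in a box. (The paper uses $b=\min\{\delta,\tfrac12 d(\cdot,S)\}$ in place of $d(\cdot,S)$, which is cosmetic.)

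\textbf{The gap is in how you realize $S$.} Small balls do not have boundaries transverse to $E^u$. In fact no closed smooth hypersurface bounding a small region can be transverse. Put $\overline{B}$ inside a foliation box and take the plaque $D_{t^*}$, where $t^*$ is a boundary point of the projection of $\overline{B}$ to the transversal. Then $D_{t^*}$ meets $\overline{B}$ but not the open set $B$, since otherwise nearby plaques would meet $B$ too. So $D_{t^*}$ touches $\partial B$ at some $y$. At that point $E^u(y)\subset T_y\partial B$, because a plaque crossing $\partial B$ transversally at $y$ would enter $B$. In your construction the whole sphere $\partial B(x_1,\rho_1)$ lies in $S$, so $S$ has such tangencies.

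At a tangency your key estimate $\Leb^u(N_\varepsilon(S)\cap D)\le C_1\varepsilon$ fails. Already for $\dim E^u=1$, the situation of most of the paper's examples, a plaque at transverse distance $s$ inside the tangent one crosses the sphere at an angle of order $\sqrt{s}$. Hence $\Leb^u(N_\varepsilon(S)\cap D)$ is of order $\min\{\sqrt{\varepsilon},\varepsilon/\sqrt{s}\}$, and its supremum over plaques is of order $\sqrt{\varepsilon}$, not $\varepsilon$. Transverse measures of $u$-Gibbs states can be singular; for $A\times\mathrm{id}_{\mathbb{T}^2}$, every $\mu_A\otimes\delta_q$ is $u$-Gibbs and is carried by a single center fibre. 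So nothing forces them to spread out near the tangent plaques, and the loss cannot be integrated away uniformly in $\mu$. What your argument actually yields is $\mu(r<t)\le C\sqrt{t}$, which gives (4a) but not \eqref{eq:sublinearr}.

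The paper avoids this by using small coordinate cubes $Q_j$ whose faces $\{z_i=\pm1\}$ are each uniformly transverse to $E^u$. For example, take linear coordinates at $p$ in which no coordinate hyperplane contains $E^u(p)$, then shrink. It then sets $S=\bigcup_j\partial Q_j$ and $A_i=Q_i\setminus\bigcup_{j<i}Q_j$. The unavoidable tangencies now sit only on edges and corners. Since $N_\varepsilon(S)$ is covered by the $\varepsilon$-neighbourhoods of finitely many transverse faces, each meeting a plaque in leaf volume $O(\varepsilon)$, the per-plaque linear bound holds. With this replacement the rest of your argument goes through. You also do not need leafwise and ambient distances to $S$ to be comparable; the only direction used is $d^u\ge d$.
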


\begin{definition}
  We say that a Borel set $G$ is of total $u$-Gibbs probability if $\mu\in\Gibbs$ implies $\mu(G)=1$.
\end{definition}

The theorem above implies $G=\{r>0\}$ is of total $u$-Gibbs probability, and that if $\mu\in \Gibbs$, then $\xi$ is an increasing measurable partition with respect to $\mu$, which contains relatively open sets for $\mu$-almost every point. The partition $\eta$ has the advantage that the boundary of its atom are contained in a regular submanifold. 

\begin{remark}
  The same theorem holds for projective $u$-Gibbs measures. One simply lifts the partition using $\hat\pi:\hat M\to M$.
\end{remark}

We assume that $f$ is equipped with an adapted metric, and let $\lambda=\max_x\{\norm{D^u_xf^{-1}}\}$. We work with foliation boxes $U$ for the unstable foliation, whose plaques are disks $D$ with $\diam_u(D)\geq d_0$, hence in particular $\Leb^u(D)\geq v_0$, for some uniform constants $d_0. v_0$.

If $\mu\in \Gibbs$, then disintegrating $\mu|U$ along $\Fu|U$ we get that for $\mu$-almost every $u$-disk $D \subset U$, the densities of the conditional probabilities verify
\[
  y,x\in D\Rightarrow \frac{\rho_D(y)}{\rho_D(x)}=\Delta(x,y) \coloneq \prod_{k=1}^{\oo} \frac{J^uf(y)}{J^uf(x)}.
\]
See \eqref{eq:Delta}: here we omit the dependence on $U$, if clear from the context. By uniform contraction to the past one gets that $\rho_D$ is Lipschitz, bounded away from zero and $\oo$, and $D\mapsto \rho_D$ is continuous (\Cref{lem:regularrho}). In particular there exists some constant $K_U>0$ so that for every $\mu\in \Gibbs$,
\begin{align}\label{eq:comparisonconditionalvsleb}
\frac{1}{K_U}\leq \frac{\mu_D}{\Leb^u_D}\leq K_U,
\end{align}
provided that the conditional $\mu^D$ is defined on $D$.

Next we construct a partition subordinated to the unstable foliation. Given $p\in M$ it is easy to construct coordinates $z:Q\to \R^{\dim M}$ around $p$, with $Q$ compactly contained in a foliation box $U$, and so that $\partial Q$ consists of sets of the form 
\[
  S_i=\{z_i=a\}\quad a=\pm 1
\]
which are transverse to $E^u$. Observe that if $N_{\eps}(S_i)$ denotes the $\eps$-neighborhood of $S_i$, then its volume is comparable with $C_i\eps$, and hence by \eqref{eq:comparisonconditionalvsleb} we get a constant $C_Q>0$ and $\eps_Q>0$ so that for every $\mu\in \Gibbs$,
\[
  0<\eps<\eps_Q\Rightarrow \mu(N_{\eps}(\partial Q))\leq C_Q \eps.
\]
Cover $M$ with finitely many $Q_1,\cdots, Q_s$. With no loss of generality we assume that if $Q_i \subset U_i, Q_j \subset U_j$ and $Q_i \cap Q_j\neq \emptyset$, then $U_i\cup U_j$ is contained in a foliation box of diameter less than $R$, for a fixed prescribed $R>0$. Define
\[
   S=\bigcup_j \partial Q_j\qquad C_S=\max_j C_{Q_j}\qquad \eps_S=\max_j \eps_{Q_j}
\]

Consider the partition
\begin{align*}
A_1= Q_1,\qquad A_i=Q_i\setminus \bigcup_{j<i} Q_j.
\end{align*}
Then $\mathcal{A}=\{A_1,A_2,\ldots\}$ is a finite partition, and the frontier of their atoms is piecewise differentiable. 

Let $\eta$ be the partition obtained refining $\mathcal A$ with the local unstable plaques inside $U_1,\cdots, U_s$. It is direct to check that $\eta$ is measurable, subordinated to the unstable foliation of $f$.

Each $Q_i$ is away from the boundary of $U_i$: we let $\delta=\min_i d(\partial Q_i,\partial U)$, and define
\[
  b(x)\coloneq \min\{\delta, \frac{1}{2}d(x,S)\}.
\]
Note that if $\gamma$ is a curve starting at $x\in U_i$ has length less than $b(x)$, then it stays in $U_i$ and cannot intersect $S$. It follows that if $x\in Q_i$, the curve remains inside, and $\Im(\gamma) \subset \mathcal{A}(x)$. Therefore
\[
  b(x)>0\Rightarrow \Wu{x,b(x)}\subset \eta(x).
\] 
Next, we replace $\eta$ by an increasing partition. Let
\[
  \xi=\bigvee_{n=0}^{\oo} f^n\eta;
\]
recall that $f^n\eta$ is the partition with atoms $f^n\eta(x)=f^{n}(\eta(f^{-n}x))$. By definition, $f\xi\leq \xi$, so $\xi$ is increasing, and countably generated.

Define 
\[
  r(x)=\inf\{\frac{b(f^{-n}x)}{\lam^n}\}.
\]
Then $r:M\to[0,1]$ is Borel. If $r(x)>0$ and $y\in \Wu{x,r(x)}$, it follows that for every $n\geq 0$,
\begin{align*}
\disu{f^{-n}x}{f^{-n}y}\leq \lam^n r(x)\leq b(f^{-n}x)\Rightarrow f^{-n}(y)\in\eta(f^{-n}x) 
\end{align*}
hence\footnote{Here $\disu{x}{y}$ denotes the intrinsic distance between two points in the same $u$-leaf} $y\in \xi(x)$. That is,
\[
  \Wu{x,r(x)} \subset \xi(x)\qquad\text{if } r(x)>0.
\]
Take $t<\min\{\delta,\frac{\eps_S}{2}\}$. Then, if $r(x)<t$ there exists some $n\geq 0$ so that
\[
b(f^{-n}x)\leq t\lam^n\Rightarrow d(f^{-n}x,S)<2t\lam^n<\eps_S \lam^n   
\]
and therefore
\[
  \{r<t\} \subset \bigcup_{n\geq 0} f^n(N_{2t\lam^n}(S)).
\]
If $\mu\in \Gibbs$, 
\begin{align*}
\mu(r<t)\leq \sum_{n\geq 0}\mu(N_{2t\lam^n}(S))\leq t\frac{2C_S}{1-\lam}.
\end{align*}
which in turn implies
\[
  \mu\{r=0\}=0.
\]
Since $r\leq 1$, we can find $C\geq \frac{2C_S}{1-\lam}$ so that for every $0\leq t\leq 1, \mu(r<t)\leq C t$.

\begin{remark}
  Note that since $\diam_u(f^{-n}\xi)\leq  \lam^n R$, it follows that
  \[
  \bigvee_{n\geq 0} f^{-n}\xi=\text{ partition by points.} 
  \]
\end{remark}

We note the following consequence of \eqref{eq:sublinearr}. 

\begin{corollary}
There exists an $f$-invariant subset $G_* \subset G$ so that
\[
  x\in G_*\Rightarrow \bigwedge_{n\geq 0} f^n\xi(x)=\Wu{x}. 
\]  
\end{corollary}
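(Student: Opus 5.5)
The plan is to build $G_*$ by a Borel--Cantelli argument on the bound \eqref{eq:sublinearr}, then let forward iterates of local plaques exhaust the leaf. Fix $x\in M$. By construction $\xi$ is increasing, so $f^{n+1}\xi\leq f^n\xi$, and the atoms $f^n\xi(x)=f^n(\xi(f^{-n}x))$ form an increasing family of subsets of $\Wu{x}$. Their union is what $\bigwedge_{n\geq 0} f^n\xi(x)$ means, and it is automatically contained in $\Wu{x}$. It therefore suffices to show that every point of $\Wu{x}$ eventually falls into some $f^n\xi(x)$.

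By item 3 of \Cref{thm:totaluGibbspartition}, $f^n\xi(x)\supset f^n\bigl(\Wu{f^{-n}x,r(f^{-n}x)}\bigr)$. Using an adapted metric with minimal unstable expansion $\lambda^{-1}>1$, this set contains $\Wu{x,\lambda^{-n}r(f^{-n}x)}$. Hence it is enough to find infinitely many $n$ with $r(f^{-n}x)\geq c>0$ for some fixed $c$, or more weakly with $\lambda^{-n}r(f^{-n}x)\to\infty$ along a subsequence. The natural choice is
\[
G_*=\bigl\{x:\ \limsup_{n\to\infty} r(f^{-n}x)>0\bigr\}\cap\bigcap_{k\in\mathbb Z} f^k(G).
\]
This set is $f$-invariant: the $\limsup$ condition is unchanged under replacing $x$ by $f(x)$, since that only shifts the index. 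For $x\in G_*$ the radii $\lambda^{-n}r(f^{-n}x)$ are unbounded along a subsequence, so the increasing union covers every ball $\Wu{x,L}$, which gives equality with $\Wu{x}$.

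It remains to check that $G_*$ is large, namely of total $u$-Gibbs probability. For $\mu\in\Gibbs$, Poincar\'e recurrence for $f^{-1}$ applied to the set $\{r\geq t\}$ gives infinitely many returns for $\mu$-almost every point of that set. Moreover \eqref{eq:sublinearr} gives $\mu(r\geq t)\geq 1-Ct$, so taking $t\to0$ almost every point lies in some $\{r\geq t\}$. Countable unions and intersections preserve full measure, so $\mu(G_*)=1$.

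I expect the only delicate point to be whether ``$\subset G$'' should be read literally. The set $G_*$ as defined is contained in $G$ because we intersect with $G$, and invariance is preserved because we intersect with all iterates $f^k(G)$ and the rest of the definition is invariant. An alternative avoiding recurrence would be Borel--Cantelli on $\mu(r(f^{-n}x)<\lambda^{n/2})$, but the recurrence argument is simpler.
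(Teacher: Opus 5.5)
Your proof is correct, but it takes a different route from the paper's.

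\textbf{The paper's route.} The paper sets $g=-\log r$ and uses \eqref{eq:sublinearr} to get $\mu\{g>t\}\le Ce^{-t}$ for every $\mu\in\Gibbs$. It fixes $\eps>0$ with $\kappa=e^{-\eps}/\lambda>1$ and applies Borel--Cantelli to the sets $X_n=\{x: r(f^{-n}x)\ge e^{-\eps n}\}$. For $x\in G_*=\liminf_n X_n\cap G$ this yields $f^n\xi(f^{-n}x)\supset \Wu{x,\kappa^n}$ for \emph{all} large $n$. This is essentially the alternative you sketched with $\lambda^{n/2}$.

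\textbf{What your route uses and gives.} Your Poincar\'e recurrence argument uses only $\mu(r>0)=1$ and the invariance of $\mu$. The linear bound $\mu(r\ge t)\ge 1-Ct$ that you invoke is not actually needed. It gives exhaustion of the leaf only along a subsequence of times, which is enough for the union to be $\Wu{x}$.

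\textbf{What you gain.} Your set is genuinely $f$-invariant: the $\limsup$ condition is shift-invariant, and $\bigcap_{k\in\mathbb Z}f^k(G)$ is invariant. The paper's $\liminf_n X_n\cap G$ is not obviously invariant. Indeed $\liminf_n X_n$ is only forward invariant, since the threshold $e^{-\eps n}$ does not survive an index shift to the past. Likewise $G$ is only backward invariant, because $r(x)=\min\{b(x),r(f^{-1}x)/\lambda\}$. The paper does not address this point.

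\textbf{What the paper gains.} The paper's route gives a quantitative statement: the atoms $f^n\xi(f^{-n}x)$ exhaust the leaf at a uniform exponential rate for all large $n$. This reflects the integrability of $\log r$, and it is the kind of control the appendix has in mind when it says $\xi$ verifies the properties needed for Ledrappier--Young-type arguments.

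You were also right to read the intended content as ``$G_*$ has total $u$-Gibbs probability.'' The statement as written omits this, but it is what the paper's proof establishes.
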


\begin{proof}
Denote, for $x\in G$,
\[
  g(x)=-\log r(x):
\]
observe that
\[
  \{r<e^{-t}\}=\{g>t\}\Rightarrow \forall\mu\in \Gibbs,\ \mu\{g>t\}\leq Ce^{-t}.
\]
Since $\xi(f^{-n}x) \supset \Wu{f^{-n}x.r(f^{-n}x)}$, for every $x$, $f^n\xi(f^{-n})(x) \supset \Wu{x,\frac{r(f^{-n}x)}{\lam^n}}$. Fix $\eps>0$ so that $\kappa=\frac{e^{-\eps}}{\lam}>1$, and consider $X_n=\{x:r(f^{-n}x)\geq e^{-\eps n}\}=\{x:g(f^{-n}x)\leq \eps n\}$. By \eqref{eq:sublinearr}, for any $\mu\in \Gibbs$
  \[
  \sum_{n\geq 0}\mu(x:g(f^{-n}x)>\eps n)<\oo
  \]
  therefore, by Borel--Cantelli, $X=\liminf_n X_n$ has total $u$-Gibbs probability. Consider $G_*=X\cap G$ and note that for $x\in G_*$,
  \[
    f^n\xi(f^{-n}x)\supset \Wu{x,\kappa^n}, 
  \]
  which, as $n\mapsto\oo$ increases to the whole unstable leaf $\Wu{x}$.
\end{proof}

It follows that $\xi$ verifies all properties needed for the arguments in \cite{LedYoungI,LedYoungII}, with the advantage of not depending on a reference $u$-Gibbs measure.

\subsection{\texorpdfstring{$u$}{u}-tilings}
\label{ssec:utilings}

In this part we prove \Cref{lem:goodtiling}.

We start by constructing a Borel set $Z \subset M$ so that for every $x$, $Z\cap \Wu{x}$ is $4R$ separated: this set will provide the center of the cells.  

Take $T_1,\cdots, T_q$ transverse disks to $\Fu$ so that $M=\bigcap_i \Wu{T_i,R}$, and, reducing $R$ if necessary, so that $x\in T_i$ implies $\Wu{x,4R}\cap T_i=\{x\}$. We start with $T_1$, and remove all points in $\bigcup_{i>1} T_i\cap \Wu{T_1,4R}$; then we proceed with $T_2$, and so on. This gives us a Borel set $Z \subset \bigcup_i T_i$ so that
\begin{align*}
&z\neq z'\in Z\Rightarrow d_u(z,z')\geq 4R\\
&\Wu{Z,5R}=M.
\end{align*}

Now we use the partition $\eta$ of the previous part and make a measurable selection to group atoms near the elements in $Z$. We forget about the atoms of $\eta$ contained in $S$. For the others $E\in \eta$ with $E\setminus S\neq\emptyset$, we claim the following

\begin{claim}
There exists a measurable function $p:M/\eta\to M$ so that $p_E\in E\setminus S$, whenever this set is non-empty.
\end{claim}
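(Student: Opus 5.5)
We must produce a measurable function $p:M/\eta\to M$ with $p_E\in E\setminus S$ whenever $E\setminus S\neq\emptyset$. The plan is to reduce the claim to a standard measurable selection statement by exploiting the explicit structure of $\eta$. Each atom $E\in\eta$ has the form $A_i\cap D$, where $A_i$ is an atom of the finite partition $\mathcal A$ and $D$ is a local unstable plaque of the foliation box $U_i$ containing $Q_i$. Fix $i$. Using the foliation chart $\phi_i:U_i\to(-1,1)^{\dim E^u}\times(-1,1)^{\dim E^{cs}}$, the atoms of $\eta$ inside $A_i$ are indexed by the transversal coordinate $t\in T_i:=(-1,1)^{\dim E^{cs}}$. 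Each is the set $E_t=\phi_i^{-1}\big(\{(s,t): (s,t)\in \phi_i(A_i)\}\big)$. So it suffices to construct, for each $i$, a Borel map $t\mapsto p_i(t)$, defined on the Borel set $T_i'=\{t: E_t\setminus S\neq\emptyset\}$, with $p_i(t)\in E_t\setminus S$. Since $\eta$ is obtained from finitely many such pieces, gluing the $p_i$ gives $p$, and measurability with respect to the quotient $\sigma$-algebra of $M/\eta$ follows because the quotient map is identified with $x\mapsto(i,t)$.

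For the selection we use that $E_t\setminus S$ is relatively open in the Borel set $A_i\cap D_t$. Indeed $S=\bigcup_j\partial Q_j$ is closed, and $A_i$ is a finite Boolean combination of the sets $Q_j$, whose boundaries lie in $S$. Hence off $S$, membership in $A_i$ is locally constant, so $E_t\setminus S$ is an open subset of the plaque $D_t$. Then enumerate a countable dense set $\{s_k\}_{k\ge1}\subset(-1,1)^{\dim E^u}$ and define
\[
p_i(t)=\phi_i^{-1}(s_{k(t)},t),\qquad k(t)=\min\{k:\ \phi_i^{-1}(s_k,t)\in A_i\setminus S\}.
\]
The minimum exists whenever $E_t\setminus S\neq\emptyset$, since a nonempty relatively open subset of the plaque contains some $\phi_i^{-1}(s_k,t)$. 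Each set $\{t:\phi_i^{-1}(s_k,t)\in A_i\setminus S\}$ is Borel, being the preimage of a Borel set under a continuous map. Hence $k(t)$ is Borel, $T_i'=\bigcup_k\{t:\phi_i^{-1}(s_k,t)\in A_i\setminus S\}$ is Borel, and $p_i$ is Borel.

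The only delicate point, and the one we expect to be the main obstacle, is the openness claim: we must verify that $A_i\setminus S$ is open in $M$. We would check this directly. For $x\notin S$, each indicator $\mathbf 1_{Q_j}$ is locally constant near $x$, since $x$ is either in $\mathrm{int}\,Q_j$ or outside $\overline{Q_j}$. Therefore $\mathbf 1_{A_i}=\mathbf 1_{Q_i}\prod_{j<i}(1-\mathbf 1_{Q_j})$ is locally constant near $x$ as well. If any ambiguity about closures arises, we would alternatively invoke the Kuratowski–Ryll-Nardzewski selection theorem applied to the closed-valued map $t\mapsto\overline{E_t\setminus S}$, and then correct the selection using the countable dense family above.
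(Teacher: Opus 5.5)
Your proof is correct and follows essentially the same route as the paper: both fix a foliation chart and an atom $A\in\mathcal{A}$, use that $A\setminus S$ is open, and select along countably many horizontal sections $t\mapsto\Psi(z,t)$ with a first-index rule. The paper gets its countable family from a Lindel\"of cover of the good transversal parameters by neighborhoods $T_{t_0}$ on which a fixed $z$ works, whereas your dense set $\{s_k\}$ supplies the family directly; you also prove the openness of $A_i\setminus S$, which the paper only asserts, and the Kuratowski--Ryll-Nardzewski fallback is not needed.
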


\begin{proof}
Recall that $\eta$ is obtained by refining $\mc A$ by local unstable plaques. It suffices then to fix some $U$ foliation box with chart
$\Psi: D^u\times T\to U$. Denote $D_t=\Psi(D^u\times \{t\})$ the corresponding plaque. We fix $A\in\mc A$: again, it suffices to make the measurable choice on each of these finitely many atoms. The atoms of $\eta$ inside $A\cap U$ are of the form $E_r=A\cap D_t$

If $t_0\in T$ is so that $E_{t_0}\setminus S\neq\emptyset$, then we can choose $z\in D^u$ so that $\Psi(z,t_0)\in E_{t_0}\setminus S$, and since $A\setminus S$ is open, we can find a neighborhood $T_{t_0} \subset T$ so that $t\mapsto s_0(t)=\Psi(z,t_0)\in E_{t_0}\setminus S$. Cover $T$ by countably many of such neighborhoods $T_0, T_1,\ldots$ and define $p_{E_t}=s_j(t)$, where $j$ is the first index so that $t\in T_j$. It follows that $p_E$ is measurable.
\end{proof}

Now we do an assignment rule for the atoms of $\eta$: given $E\in\eta$ we choose $z\in Z$ which is closest to $p_E$ and assign $E$ to $z$. Note that there are finitely many elements of $Z$ inside $\Wu{p_E,5R}$, and we select one (measurably, for example, using the previous construction of centers). We denote
\[
  O_z=\bigcup_{E\text{ assigned to }z} (E\setminus S)\qquad Q_z=\overline{O_z}^u
\]
in the relative $u$-closure. Note that $O_z$ is leafwise open.

Take $x\in \Wu{z,R}\setminus S$ and let $E=\eta(x)$. Then $d^u(p_E,z)<2R$, and if $z'\neq z$, 
\[
\disu{p_E}{z'}\geq 4R- \disu{p_E}{z}>2R.  
\]
It follows that $E$ is assigned to $z$, and $\Wu{z,R}\setminus S \subset O_z$. Conversely, if $x\in O_z$, then $\disu{x}{p_E}\leq R$ and $\disu{p_E}{z}\leq 5R$ so
\[
  \Wu{z,R}\setminus S \subset O_z \subset \Wu{z,6R}
\]
Taking closure, and using that $S$ has empty interior on every leaf, we get 
\[
  \Wu{z,R} \subset Q_z \subset \Wu{z,6R}
\]
This also gives the uniform diameter and volume estimates for $Q_z$: $\diam_u(Q_z)\leq D\coloneq 12R$ and $v\leq \Leb^u(Q_z)\leq V$, for some $v,V>0$.

Denote $\mc Q=\{Q_z:z\in Z\}$: this is locally finite, since the centers of the cells are separated, and the union of all cells cover the manifold, since the complement of $S$ is dense.  Note that if $x\not\in S$, then it has a relative neighborhood inside one and only one $Q_z$. This shows that
\begin{align*}
Q_z\setminus O_z \subset S\Rightarrow \partial_u Q_z \subset S 
\end{align*} 
and
\begin{align*}
z\neq z'\Rightarrow Q_z\cap Q_{z'} \subset S.
\end{align*}

This finishes the proof.


\printbibliography[heading=bibintoc]

\bigskip

\begingroup
\small
\setlength{\parindent}{0pt}
\setlength{\parskip}{1em}

\textsc{Pablo D. Carrasco}\\
Departamento de Matemática, ICEx,
Universidade Federal de Minas Gerais,
Belo Horizonte, MG, Brazil\\
Email: \href{mailto:pdcarrasco@mat.ufmg.br}
{\texttt{pdcarrasco@mat.ufmg.br}}

\textsc{Davi Obata}\\
Department of Mathematics,
Brigham Young University,
Provo, UT 84602, USA\\
Email: \href{mailto:davi.obata@mathematics.byu.edu}
{\texttt{davi.obata@mathematics.byu.edu}}

\textsc{Radu Saghin}\\
Instituto de Matemáticas,
Pontificia Universidad Católica de Valparaíso,
Valparaíso, Chile\\
Email: \href{mailto:radu.saghin@pucv.cl}
{\texttt{radu.saghin@pucv.cl}}

\textsc{Jiagang Yang}\\
Departamento de Geometria,
Instituto de Matemática e Estatística,
Universidade Federal Fluminense,
Niterói, RJ, Brazil\\
Email: \href{mailto:jiagangyang@id.uff.br}
{\texttt{jiagangyang@id.uff.br}}

\par
\endgroup
\end{document}